\documentclass[12pt,a4paper]{amsart}
\setlength{\textwidth}{\paperwidth}
\addtolength{\textwidth}{-2in}
\calclayout
\usepackage[utf8]{inputenc}
\usepackage[T1]{fontenc}
\usepackage{mathtools}
\mathtoolsset{showonlyrefs}
\usepackage{stackrel}
\usepackage{mathrsfs}
\usepackage{hyperref}
\usepackage{comment}
\usepackage{amsthm}
\theoremstyle{plain}
\newtheorem{thm}{Theorem}[section]
\newtheorem{lem}[thm]{Lemma}
\newtheorem{prop}[thm]{Proposition}
\newtheorem{cor}[thm]{Corollary}

\theoremstyle{definition}

\newtheorem{rem}[thm]{Remark}

\newtheorem{asp}{Assumption}
\numberwithin{equation}{section}
\allowdisplaybreaks
\begin{document}
\title
[stable CLT for super-OU processes]
{Stable Central Limit Theorems for Super Ornstein-Uhlenbeck Processes}
\author
[Y.-X. Ren, R. Song, Z. Sun and J. Zhao]
{Yan-Xia Ren, Renming Song, Zhenyao Sun and Jianjie Zhao}
\address{
  Yan-Xia Ren \\
  LMAM School of Mathematical Sciences \& Center for Statistical Science \\
  Peking University \\
  Beijing, P. R. China, 100871}
\email{yxren@math.pku.edu.cn}
\thanks{The research of Yan-Xia Ren is supported in part by NSFC (Grant Nos. 11671017  and 11731009) and LMEQF.}
\address{
  Renming Song \\
  Department of Mathematics \\
  University of Illinois at Urbana-Champaign \\
  Urbana, IL, USA, 61801}
\email{rsong@illinois.edu}
\thanks{The Research of Renming Song is support in part by a grant from the Simons Foundation (\#429343, Renming Song)}
\address{
  Zhenyao Sun \\
  School of Mathematics and Statistics\\
  Wuhan University \\
  Hubei, P. R. China, 100871}
\email{zhenyao.sun@gmail.com}
\address{
  Jianjie Zhao \\
  School of Mathematical Sciences \\
  Peking University \\
  Beijing, P. R. China, 100871}
\email{zhaojianjie@pku.edu.cn}
\thanks{Jianjie Zhao is the corresponding author}
\begin{abstract}
  In this paper, we study the asymptotic behavior of a supercritical $(\xi,\psi)$-superprocess $(X_t)_{t\geq 0}$
  whose underlying spatial motion $\xi$ is an Ornstein-Uhlenbeck process on $\mathbb R^d$ with generator $L = \frac{1}{2}\sigma^2\Delta - b x \cdot \nabla$ where $\sigma, b >0$;
  and whose branching mechanism $\psi$ satisfies Grey's condition and some perturbation condition which guarantees that,
 when $z\to 0$, $\psi(z)=-\alpha z + \eta z^{1+\beta} (1+o(1))$ with $\alpha > 0$, $\eta>0$ and $\beta\in (0, 1)$.
  Some law of large numbers and $(1+\beta)$-stable central limit theorems are established for
  $(X_t(f) )_{t\geq 0}$, where the function $f$ is
  assumed to be of polynomial growth.
A phase transition arises  for the central limit theorems in the sense 
that the forms of the central limit theorem are different in three different regimes corresponding the branching rate 
being relatively small, large or critical at a balanced value.
\end{abstract}
\subjclass[2010]{60J68, 60F05}
\keywords{Superprocesses, Ornstein-Uhlenbeck processes, Stable distribution, Central limit theorem, Law of large numbers, Branching rate regime}
\maketitle
\section{Introduction}
\subsection{Motivation}
\label{subsec:M}
Let $d \in \mathbb N:= \{1,2,\dots\}$ and $\mathbb R_+:= [0,\infty)$.
Let $\xi=\{(\xi_t)_{t\geq 0}; (\Pi_x)_{x\in \mathbb R^d}\}$ be an $\mathbb R^d$-valued Ornstein-Uhlenbeck process (OU process) with generator
\begin{align}
  Lf(x)
  = \frac{1}{2}\sigma^2\Delta f(x)-b x \cdot \nabla f(x)
  , \quad  x\in \mathbb R^d, f \in C^2(\mathbb R^d),
\end{align}
where $\sigma > 0$ and $b > 0$ are constants.
Let $\psi$ be a function on $\mathbb R_+$ of the form
\begin{align}
  \label{eq: honogeneou branching mechanism}
  \psi(z)
  =- \alpha z + \rho z^2 + \int_{(0,\infty)} (e^{-zy} - 1 + zy)~\pi(dy)
  , \quad  z \in \mathbb R_+,
\end{align}
where $\alpha > 0 $, $\rho \geq0$ and $\pi$ is a measure on $(0,\infty)$ with $\int_{(0,\infty)}(y\wedge y^2)~\pi(dy)< \infty$.
$\psi$ is referred to as a branching mechanism and $\pi$ is referred to as the L\'evy measure of $\psi$.
Denote by $\mathcal M(\mathbb R^d)$ the space of all finite Borel measures on $\mathbb R^d$.
For $f,g\in \mathcal B(\mathbb R^d, \mathbb R)$ and $\mu \in \mathcal M(\mathbb R^d)$,
 write $\mu(f)= \int f(x)\mu(dx)$
and $\langle f, g\rangle = \int f(x)g(x) dx$ whenever the integrals make sense.
We say a real-valued Borel function $f:(t,x)\mapsto f(t,x)$ on $\mathbb R_+\times \mathbb R^d$ is \emph{locally bounded} if, for each $t\in \mathbb R_+$, we have $ \sup_{s\in [0,t],x\in \mathbb R^d} |f(s,x)|<\infty. $
We say that an $\mathcal M(\mathbb R^d)$-valued Hunt process $X = \{(X_t)_{t\geq 0}; (\mathbb{P}_{\mu})_{\mu \in \mathcal M(\mathbb R^d)}\}$
on  $(\Omega, \mathscr{F})$
is a \emph{super Ornstein-Uhlenbeck process (super-OU process)} with branching mechanism $\psi$, or a $(\xi, \psi)$-superprocess, if for each non-negative bounded Borel function $f$ on $\mathbb R^d$, we have
\begin{align}
  \label{eq: def of V_t}
  \mathbb{P}_{\mu}[e^{-X_t(f)}]
  = e^{-\mu(V_tf)}
  , \quad t\geq 0, \mu \in \mathcal M(\mathbb R^d),
\end{align}
where $(t,x) \mapsto V_tf(x)$ is the unique locally bounded non-negative solution to the equation
\begin{align}
  V_tf(x) + \Pi_x \Big[ \int_0^t\psi (V_{t-s}f(\xi_s) )~ds\Big]
	= \Pi_x [f(\xi_t)]
  , \quad x\in \mathbb R^d, t\geq 0.
\end{align}	
The existence of such super-OU process $X$ is well known, see \cite{Dynkin1993Superprocesses} for instance.

Recently, there have been quite a few papers on laws of large numbers for superdiffusions.
In \cite{Englander2009Law, EnglanderWinter2006Law, EnglanderTuraev2002A-scaling}, some weak laws of large numbers (convergence in law or in probability) were established.
The strong law of large numbers for superprocesses was first studied in \cite{ChenRenWang2008An-almost}, followed by \cite{ChenRenSongZhang2015Strong-law, ChenRenYang2019Skeleton, EckhoffKyprianouWinkel2015Spines, KouritzinRen2014A-strong, LiuRenSong2013Strong, Wang2010An-almost} under different settings.
For a good survey on recent developments in laws of large numbers for branching Markov processes and superprocesses, see \cite{EckhoffKyprianouWinkel2015Spines}.

The strong law of large numbers for the super-OU process $X$ above can be stated as follows:
Under some conditions on $\psi$ (these conditions are satisfied under our Assumptions 1 and 2 below),
there exists an $\Omega_0$ of $\mathbb{P}_\mu$-full probability for every $\mu\in\mathcal M(\mathbb R^d)$ such that on $\Omega_0$, for every Lebesgue-a.\/e. continuous bounded non-negative function $f$ on $\mathbb R^d$, we have
 $\lim_{t\to\infty} e^{-\alpha t} X_t(f) =H_\infty\langle f, \varphi\rangle $,
where $H_\infty$ is the limit of the martingale $e^{-\alpha t}X_t(1)$
and $\varphi$ is the invariant density of the OU process $\xi$ defined in \eqref{invariantdensity} below.
See \cite[Theorem 2.13 \& Example 8.1]{ChenRenYang2019Skeleton} and \cite[Theorem 1.2 \& Example 4.1]{EckhoffKyprianouWinkel2015Spines}.

In this paper, we will establish some spatial central limit theorems (CLTs) for the super-OU process $X$ above.
Our key assumption is that $\psi$ satisfies Grey's condition and some perturbation condition which guarantees that,
when  $z\to 0$, $\psi(z)=-\alpha z + \eta z^{1+\beta} (1+o(1))$ with $\alpha > 0$, $\eta>0$ and $\beta\in (0, 1)$.
Our  goal is to find $(F_t)_{t\geq 0}$ and $(G_t)_{t\geq 0}$ so that
$ (X_t(f) -G_t)/F_t $ converges weakly to some non-degenerate random variable as $t\rightarrow\infty$, for a large class of functions $f$.
Note that, in the setting of this paper, $X_t(f)$ typically has infinite second moment.

There are many papers on CLTs for branching processes, branching diffusions and superprocesses under the second moment condition.
See \cite{Heyde1970A-rate, HeydeBrown1871An-invariance, HeydeLeslie1971Improved} for supercritical Galton-Watson processes (GW processes),
\cite{KestenStigum1966Additional,KestenStigum1966A-limit} for supercritical multi-type GW processes, \cite{Athreya1969Limit,Athreya1969LimitB,Athreya1971Some}
for supercritical multi-type continuous time branching processes and \cite{AsmussenHering1983Branching} for general supercritical branching Markov processes under certain conditions.
Some spatial CLTs for supercritical branching OU processes with binary branching mechanism were proved in \cite{AdamczakMilos2015CLT} and some
spatial CLTs for supercritical super-OU processes with branching mechanisms satisfying a fourth moment condition were proved in \cite{Milos2012Spatial}.
These two papers made connections between CLTs and branching rate regimes.
Some spatial CLTs  for supercritical super-OU  processes with branching mechanisms satisfying only a second moment condition were established in \cite{RenSongZhang2014Central}.
Moreover, compared with the results of \cite{AdamczakMilos2015CLT,Milos2012Spatial}, the limit distributions in \cite{RenSongZhang2014Central} are non-degenerate.
Since then, a series of spatial CLTs for a large class of general supercritical branching Markov processes and superprocesses with spatially dependent branching mechanisms were proved in \cite{RenSongZhang2014CentralB,RenSongZhang2015Central,RenSongZhang2017Central}.
The functional version of the CLTs were established in \cite{Janson2004Functional} for supercritical multitype branching processes, and in \cite{RenSongZhang2017Functional} for supercritical superprocesses.

There are also many limit theorems for supercritical branching processes and branching Markov processes with branching mechanisms of infinite second moment.
Heyde \cite{Heyde1971Some} established some  CLTs for supercritical GW processes when the offspring distribution belongs to the domain of attraction of a stable law of index $\alpha\in (1, 2]$, and proved that the limit laws are stable laws.
Similar results  for supercritical multi-type GW processes and supercritical continuous time branching processes,
under some $p$-th ($p\in(1,2]$) moment condition on the offspring distribution, were given in Asmussen \cite{Asmussen76Convergence}.
 Recently, Marks and Milo\'s \cite{MarksMilos2018CLT} considered the limit behavior of supercritical branching OU processes with a special stable offspring distribution.
They established some spatial CLTs in the small and critical branching rate regimes, but they did not prove any CLT type result in the large branching rate regime.
We also mention here that very recently \cite{IksanovKoleskoMeiners2018Stable-like} considered stable fluctuations of Biggins' martingales in the context of branching random walks and \cite{RenSongSun2018Limit} considered the asymptotic behavior
of a class of critical superprocesses with spatially dependent stable branching mechanism.

As far as we know, this paper is the first to study spatial CLTs for supercritical superprocesses without the second moment condition.

\subsection{Main results}
\label{sec:I:R}
We will always assume that the following assumption holds.
\begin{asp}
  \label{asp: Greys condition}
  The branching mechanism $\psi$ satisfies Grey's condition, i.e., there exists $z' > 0$ such that $\psi(z) > 0$ for all $z>z'$ and  $\int_{z'}^\infty \psi(z)^{-1}dz < \infty$.
\end{asp}
For $\mu \in \mathcal M(\mathbb R^d)$, write $\|\mu\| = \mu(1)$.
It is known (see \cite[Theorems 12.5 \& 12.7]{Kyprianou2014Fluctuations} for example) that, under Assumption \ref{asp: Greys condition}, the \emph{extinction event} $D :=\{\exists t\geq 0,~\text{s.t.}~ \|X_t\| =0 \}$ has positive probability with respect to $\mathbb P_\mu$ for each  $\mu \in \mathcal M(\mathbb R^d)$.
In fact, $ \mathbb{P}_{\mu} (D) = e^{-\bar v \|\mu\|}$ where $ \bar v := \sup\{\lambda \geq 0: \psi(\lambda) = 0\} \in (0,\infty) $ is the largest root of $\psi$.

Denote by $\Gamma$ the gamma function.
For any $\sigma$-finite signed measure $\mu$, we use $|\mu|$ to denote the total variation measure of $\mu$.
In this paper, we will also assume the following:
\begin{asp}
  \label{asp: branching mechanism}
  There exist constants $\eta > 0$ and $\beta \in (0,1)$ such that
  \begin{align}
    \label{eq: asp of branching mechanism}
    \int_{(1,\infty)}y^{1+\beta +\delta}~\Big|\pi(dy)-\frac{\eta~dy}{\Gamma(-1-\beta)y^{2+\beta}}\Big| <\infty
  \end{align}
	for some $\delta > 0$.
\end{asp}
We will show in Subsection \ref{sec: branching mechanism} that if Assumption \ref{asp: branching mechanism} holds, then $\eta$ and $\beta$ are uniquely determined by the L\'evy measure $\pi$.
In the reminder of the paper, we will always use $\eta$ and $\beta$ to denote the constants in Assumption  \ref{asp: branching mechanism}.
	Note that $\delta$ is not uniquely determined by $\pi$.
	In fact, if $\delta>0$ is a constant such that \eqref{eq: asp of branching mechanism} holds, then replacing $\delta$ by any smaller positive number, \eqref{eq: asp of branching mechanism} still holds.
	Therefore, Assumption \ref{asp: branching mechanism} is equivalent to the following statement:
	There exist constants $\eta > 0$ and $\beta \in (0,1)$ such that, for all small enough $\delta>0$, \eqref{eq: asp of branching mechanism} holds.

\begin{rem}
  \label{rem:SP}
Roughly speaking, Assumption \ref{asp: branching mechanism} says that $\psi$ is ``not too far away'' from $\widetilde \psi(z) := - \alpha z + \eta z^{1+\beta}$ near $0$.
In fact, if we consider their difference
\begin{align}
  \label{eq:PB}
  & \psi_1(z)
  := \psi(z) - \widetilde \psi(z)
  \\ &= \rho z^2+ \int_{(0,\infty)}(e^{-yz}-1+yz) \Big(\pi(dy) - \frac{\eta~dy}{\Gamma(-1-\beta) y^{2+\beta}}\Big),
  \quad z\geq 0,
\end{align}
then it can be verified that (see Lemma \ref{lem:CEP} below) $\psi_1(z)/z^{1+\beta} \xrightarrow[z\to 0]{} 0$.
Therefore, we can write $ \psi(z)  = - \alpha z + z^{1+\beta}(\eta + o(1))$ as $z\to 0$.
One can further write that $\psi(z) = - \alpha z + z^{1+\beta} l(z)$ where $l$ is a function on $[0,\infty)$ which is slowly varying at $0$.
\end{rem}

\begin{rem}
It will be proved in Lemma \ref{lem: LlogL criterion} that, under Assumption \ref{asp: branching mechanism},
 $\psi$ satisfies the $L \log L$ condition, i.e., $ \int_{(1,\infty)} y\log y~\pi(dy) < \infty. $
This guarantees that $H_\infty$, the limit of the non-negative martingale $(e^{-\alpha t} \|X_t\|)_{t\geq 0}$, is non-degenerate.
\end{rem}
Let us introduce some notation in order to give the precise formulation of our main result.
Denote by $\mathcal B(\mathbb R^d, \mathbb R)$ the space of all $\mathbb R$-valued Borel functions on $\mathbb R^d$.
Denote by $\mathcal B(\mathbb R^d, \mathbb R_+)$ the space of all $\mathbb R_+$-valued Borel functions on $\mathbb R^d$.
We use  $(P_t)_{t\geq 0}$ to denote the transition semigroup of $\xi$.	
Define
\(
P^{\alpha}_t f(x)
  := e^{\alpha t} P_t f(x)
  = \Pi_x [e^{\alpha t}f(\xi_t)]
\)
for each $x\in \mathbb R^d$, $t\geq 0$ and $f\in \mathcal B(\mathbb R^d, \mathbb R_+)$.
It is known that, see \cite[Proposition 2.27]{Li2011Measure-valued} for example, $(P^\alpha_t)_{t\geq 0}$ is the \emph{mean semigroup} of $X$ in the sense that
\(
  \mathbb{P}_{\mu}[X_t (f)]  = \mu( P^\alpha_t f)
\)
for all $\mu\in \mathcal M(\mathbb R^d)$, $t\geq 0$ and $f\in \mathcal B(\mathbb R^d, \mathbb R_+)$.

The limit behavior of $X$  is closely related to the spectral property of the OU semigroup $(P_t)_{t\geq 0}$ which we now recall (See \cite{MetafunePallaraPriola2002Spectrum} for more details).
It is known that the OU process $\xi$ has an invariant probability on $\mathbb R^d$
\begin{align}
  \label{invariantdensity}
  \varphi(x)dx
  :=\Big (\frac{b}{\pi \sigma^2}\Big )^{d/2}\exp \Big(-\frac{b}{\sigma^2}|x|^2 \Big)dx
\end{align}
which is a   symmetric multivariate Gaussian distribution.
Let $L^2(\varphi)$ be the Hilbert space with inner product
\begin{align}
  \langle f_1, f_2 \rangle_{\varphi}
  := \int_{\mathbb R^d}f_1(x)f_2(x)\varphi(x) dx, \quad f_1,f_2 \in L^2(\varphi).
\end{align}
Let $\mathbb Z_+ := \mathbb N\cup\{0\}$.
For each $p = (p_k)_{k = 1}^d \in \mathbb{Z}_+^{d}$, write $|p|:=\sum_{k=1}^d p_k$, $p!:= \prod_{k= 1}^d p_k!$ and $\partial_p:= \prod_{k = 1}^d(\partial^{p_k}/\partial x_k^{p_k})$.
The \emph{Hermite polynomials} are defined by
\begin{align}
  H_p(x)
  :=(-1)^{|p|}\exp(|x|^2) \partial_p \exp(-|x|^2)
  , \quad x\in \mathbb R^d, p \in \mathbb{Z}_+^{d}.
\end{align}
It is known that $(P_t)_{t\geq 0}$ is a strongly continuous semigroup in $L^2(\varphi)$ and its generator $L$ has discrete spectrum $\sigma(L)= \{-bk: k \in \mathbb Z_+\}$.
For $k \in \mathbb Z_+$, denote by $\mathcal{A}_k$ the eigenspace corresponding to the eigenvalue $-bk$, then $ \mathcal{A}_k = \operatorname{Span} \{\phi_p : p\in \mathbb Z_+^d, |p|=k\}$ where
\begin{align}
  \label{eigenfunction}
  \phi_p(x)
  := \frac{1}{\sqrt{ p! 2^{|p|} }} H_p \Big(\frac{ \sqrt{b} }{\sigma}x \Big)
  , \quad x\in \mathbb R^d, p\in \mathbb Z_+^d.
\end{align}
In other words,
\(
  P_t\phi_p(x)
  = e^{-b|p|t}\phi_p(x)
\)
for all $t\geq 0$, $x\in \mathbb R^d$ and $p\in \mathbb Z_+^d$.
Moreover, $\{\phi_p: p \in \mathbb Z_+^d\}$ forms a complete orthonormal basis of $L^2(\varphi)$.
Thus for each $f\in L^2(\varphi)$, we have
\begin{align}
  \label{semicomp1}
  f
  = \sum_{k=0}^{\infty}\sum_{p\in \mathbb Z_+^d:|p|=k}\langle f, \phi_p \rangle_{\varphi} \phi_p
  , \quad \text{in~} L^2(\varphi).
\end{align}
For each function $f\in L^2(\varphi)$, define the order of $f$ as
\[
  \kappa_f
  := \inf \left \{k\geq 0: \exists ~ p\in \mathbb Z_+^d , {\rm ~s.t.~} |p|=k {\rm ~and~}  \langle f, \phi_p \rangle_{\varphi}\neq 0\right \}
\]
which is the lowest non-trivial frequency in the eigen-expansion \eqref{semicomp1}.
Note that $ \kappa_f\geq 0$ and that, if $f\in L^2(\varphi)$ is non-trivial, then $\kappa_f<\infty$.
In particular, the order of any constant non-zero function is zero.

Denote by $\mathcal M_c(\mathbb R^d)$ the space of all finite Borel measures of compact support on $\mathbb R^d$.
For $p\in \mathbb{Z}_+^d$, define
\(
  H_t^p
  := e^{-(\alpha-|p|b)t}X_t(\phi_p)
\)
for all $t\geq 0$.
If $\alpha \tilde \beta>|p|b, \tilde \beta := \beta/(1+\beta)$, then for all $\gamma\in (0, \beta)$ and $\mu\in \mathcal M_c(\mathbb R^d)$, we will prove in Lemma \ref{lem:M:L:ML} that $(H_t^p)_{t\geq 0}$ is a $\mathbb{P}_{\mu}$-martingale bounded in $L^{1+\gamma}(\mathbb{P}_{\mu})$.
Thus the limit $H^p_{\infty}:=\lim_{t\rightarrow \infty}H_t^p$ exists $\mathbb{P}_{\mu}$-almost surely and in $L^{1+\gamma}(\mathbb{P}_{\mu})$.

We first present a law of large numbers for our model which extends the strong laws of large numbers of \cite{ChenRenYang2019Skeleton, EckhoffKyprianouWinkel2015Spines} in which the first order asymptotic ($\kappa_f=0$) was identified.
Denote by $\mathcal P$ the class of functions of polynomial growth on $\mathbb R^d$, i.e.,
\begin{align}
  \label{eq: polynomial growth function}
  \mathcal{P}
  := \{f\in \mathcal B(\mathbb R^d, \mathbb R):\exists C>0, n \in \mathbb Z_+ \text{~s.t.~} \forall x\in \mathbb R^d, |f(x)|\leq C(1+|x|)^n \}.
\end{align}
It is clear that $\mathcal{P} \subset L^2(\varphi)$.
\begin{thm}
  \label{thm: law of large number}
  If $f \in \mathcal{P}$ satisfies $\alpha\tilde \beta>\kappa_f b$, then for all $\gamma\in (0, \beta)$ and  $\mu\in \mathcal M_c(\mathbb R^d)$,
  \[
    e^{-(\alpha-\kappa_fb)t}X_t(f)
    \xrightarrow[t\to \infty]{}\sum_{p\in \mathbb Z_+^d:|p|=\kappa_f}\langle f, \phi_p\rangle_{\varphi} H_{\infty}^p
    \quad in~ L^{1+\gamma}(\mathbb{P}_{\mu}).
  \]
  Moreover, if $f$ is twice differentiable and all its second order partial derivatives are in $\mathcal{P}$, then we also have almost sure convergence.
\end{thm}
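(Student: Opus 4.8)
The plan is to split $f$ into the part of its eigenexpansion \eqref{semicomp1} living in the lowest occupied eigenspace $\mathcal A_{\kappa_f}$ and a remainder of strictly higher order, and to treat the two by completely different means. Since $\{p\in\mathbb Z_+^d:|p|=\kappa_f\}$ is finite, put $f_0:=\sum_{|p|=\kappa_f}\langle f,\phi_p\rangle_{\varphi}\phi_p\in\mathcal P$ and $g:=f-f_0\in\mathcal P$; from \eqref{semicomp1} and orthonormality of $\{\phi_p\}$ one reads off that $\langle g,\phi_p\rangle_\varphi=0$ whenever $|p|\le\kappa_f$, i.e.\ $\kappa_g\ge\kappa_f+1$. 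For each $p$ with $|p|=\kappa_f$ the hypothesis gives $\alpha\tilde\beta>\kappa_f b=|p|b$, so by Lemma \ref{lem:M:L:ML} the process $H^p_t=e^{-(\alpha-\kappa_f b)t}X_t(\phi_p)$ is, for every $\gamma\in(0,\beta)$ and every $\mu\in\mathcal M_c(\mathbb R^d)$, a $\mathbb P_\mu$-martingale bounded in $L^{1+\gamma}(\mathbb P_\mu)$ converging $\mathbb P_\mu$-a.s.\ and in $L^{1+\gamma}(\mathbb P_\mu)$ to $H^p_\infty$. Hence $e^{-(\alpha-\kappa_f b)t}X_t(f_0)=\sum_{|p|=\kappa_f}\langle f,\phi_p\rangle_\varphi H^p_t$ already converges, a.s.\ and in every $L^{1+\gamma}(\mathbb P_\mu)$, to the asserted limit, and it remains to show that $e^{-(\alpha-\kappa_f b)t}X_t(g)\to0$ in $L^{1+\gamma}(\mathbb P_\mu)$ (and $\mathbb P_\mu$-a.s.\ under the extra smoothness hypothesis). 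Since $\mathbb P_\mu$ is a probability measure, $L^{1+\gamma'}$-convergence implies $L^{1+\gamma}$-convergence for $\gamma\le\gamma'$, so for the $L^{1+\gamma}$-part it suffices to treat a single $\gamma\in(0,\beta)$ taken as close to $\beta$ as needed.

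For the remainder I would combine two ingredients. The first is an $L^{1+\gamma}$-moment estimate for centred linear functionals of $X$: for $h$ of at most sub-Gaussian growth and $\gamma\in(0,\beta)$,
\[
  \mathbb P_\mu\!\left[\bigl|X_t(h)-\mu(P^{\alpha}_t h)\bigr|^{1+\gamma}\right]
  \;\le\; C\int_0^t\mu\!\left(P^{\alpha}_s\!\left[\,|P^{\alpha}_{t-s}h|^{1+\gamma}\,\right]\right)ds
\]
(plus, when $\rho>0$, a second-moment term handled exactly as in the finite-second-moment theory), which follows from the log-Laplace relation \eqref{eq: def of V_t} and a Burkholder--Davis--Gundy inequality in $L^{1+\gamma}$ for the martingale $s\mapsto X_s(P^{\alpha}_{t-s}h)$, whose jumps are governed by $\pi$; here Assumption \ref{asp: branching mechanism} enters through $\int_{(1,\infty)}y^{1+\gamma}\pi(dy)<\infty$ (valid since $\gamma<\beta$), while $\int_{(0,1)}y^2\pi(dy)<\infty$ handles the small jumps. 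The second ingredient is the decay of the OU semigroup on $\mathcal P$: writing $g=\sum_{k\ge\kappa_g}g_k$ with $g_k$ the projection of $g$ onto $\mathcal A_k$, the uniform Hermite estimate $\sup_{p\in\mathbb Z_+^d}|\phi_p(x)|\le C e^{b|x|^2/(2\sigma^2)}$ makes $P_ug=\sum_{k\ge\kappa_g}e^{-kbu}g_k$ converge locally uniformly for $u>0$, giving $|P_ug(x)|\le C_g\, e^{-\kappa_g bu}\,e^{b|x|^2/(2\sigma^2)}$ for $u\ge1$ and the standard polynomial-growth bound for $0\le u\le1$. Substituting this into the moment estimate, and using $\gamma<1$ to keep the resulting Gaussian weight integrable under $\Pi_x[\,\cdot\,(\xi_s)]$ uniformly in $s\ge0$ for $x$ in the compact support of $\mu$, one arrives at
\[
  \mathbb P_\mu\!\left[\bigl|X_t(g)-\mu(P^{\alpha}_t g)\bigr|^{1+\gamma}\right]
  \;\le\; C\int_0^t e^{\alpha s}\,\max\!\bigl(1,\,e^{(1+\gamma)(\alpha-\kappa_g b)(t-s)}\bigr)\,ds .
\]
Evaluating this elementary integral, dividing by $e^{(1+\gamma)(\alpha-\kappa_f b)t}$, and using $\mu(P^{\alpha}_t g)=O(e^{(\alpha-\kappa_g b)t})$ together with $\kappa_g\ge\kappa_f+1$, one checks that the whole quantity is $\le C e^{-ct}$ for some $c=c(\gamma)>0$, provided $\gamma$ is chosen close enough to $\beta$ that $\kappa_f b<\tfrac{\gamma}{1+\gamma}\alpha$ — possible precisely because $\kappa_f b<\tilde\beta\alpha$. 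This yields $e^{-(\alpha-\kappa_f b)t}X_t(g)\to0$ in $L^{1+\gamma}(\mathbb P_\mu)$, hence the first assertion of the theorem.

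For the almost sure statement, this exponential decay along $t=n\in\mathbb N$, together with Markov's inequality and the Borel--Cantelli lemma, gives $e^{-(\alpha-\kappa_f b)n}X_n(g)\to0$ $\mathbb P_\mu$-a.s.; to fill the gaps one uses that when $f$ is twice differentiable with all second order partials in $\mathcal P$, so is $g$, and $(L+\alpha)g\in\mathcal P$ with $\kappa_{(L+\alpha)g}\ge\kappa_g$ (because $L$ is self-adjoint on $L^2(\varphi)$ and acts on $\mathcal A_k$ by $-kb$). Dynkin's formula then gives $X_t(g)=X_n(g)+\int_n^t X_s\bigl((L+\alpha)g\bigr)\,ds+\bigl(\mathcal M^g_t-\mathcal M^g_n\bigr)$ for a martingale $\mathcal M^g$; the drift oscillation $\int_n^{n+1}|X_s((L+\alpha)g)|\,ds$ has $e^{-(\alpha-\kappa_f b)n}$-rescaled $L^{1+\gamma}$-norm summable in $n$ by the estimate above applied to $(L+\alpha)g$, and the martingale oscillation $\sup_{t\in[n,n+1]}|\mathcal M^g_t-\mathcal M^g_n|$ is controlled in $L^{1+\gamma}$ by Doob's maximal inequality and likewise has summable rescaled norm, so a further Borel--Cantelli step upgrades the convergence to $e^{-(\alpha-\kappa_f b)t}X_t(g)\to0$ $\mathbb P_\mu$-a.s. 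Combined with the a.s.\ convergence of the $f_0$-part, this completes the proof.

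The crux is the moment machinery of the second paragraph. One really needs the $L^{1+\gamma}$-moment estimate in the \emph{signed} form above — with $P^{\alpha}_{t-s}g$, not $P^{\alpha}_{t-s}|g|$, inside the $(1+\gamma)$-th power — so that the genuine cancellation in $g$, encoded in the faster decay $e^{-\kappa_g bu}$ of $P_ug$, can be exploited; a crude $|g|$-estimate destroys it and does not suffice. Beyond that, the proof is a matter of carefully tracking the three competing exponential rates $\alpha$, $(1+\gamma)(\alpha-\kappa_g b)$ and $(1+\gamma)(\alpha-\kappa_f b)$, which for this law of large numbers collapse into the single sharp threshold $\kappa_f b<\tilde\beta\alpha$. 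The decay estimate for the OU semigroup on $\mathcal P$ and the $L^{1+\gamma}$ Burkholder--Davis--Gundy step are the technical underpinnings, and the passage from integer times to all times in the a.s.\ statement, though routine, must be carried out with some care since no second moment is available.
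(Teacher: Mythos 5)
Your skeleton coincides with the paper's: the same split $f=f_0+g$ with $\kappa_g\ge\kappa_f+1$, Lemma \ref{lem:M:L:ML} for the $f_0$-part, and, for the a.s.\ statement, a Dynkin-type martingale decomposition (the paper's Lemma \ref{lemma25}) with drift and martingale parts controlled in $L^{1+\gamma}$; your Doob-plus-Borel--Cantelli interpolation differs only cosmetically from the paper's choice of an auxiliary rate $a_0\in(\kappa_f,\kappa_f+1)$ and monotone drift. Where you genuinely diverge is the key moment bound for the remainder: the paper derives $\|X_t(g)\|_{\mathbb P_\mu;1+\gamma}$ from tail/characteristic-function estimates built on the complex log-Laplace equation and the controller machinery (Lemmas \ref{lem: upper bound for usgx}--\ref{lem:P:M:uc}), while you propose a direct Burkholder--Davis--Gundy estimate for the martingale $s\mapsto X_s(P^\alpha_{t-s}h)$ with jumps split at size one. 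That route is viable and more direct, and your insistence on keeping the signed $P^\alpha_{t-s}g$ (not $P^\alpha_{t-s}|g|$) inside the $(1+\gamma)$-th power is exactly the right point; but as written there is a gap in how the quadratic part is handled.

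First, the second-moment term is not present only ``when $\rho>0$'': Assumption \ref{asp: branching mechanism} constrains $\pi$ only on $(1,\infty)$, so $\int_{(0,1)}y^{1+\gamma}\pi(dy)$ may be infinite and the jumps of size $\le 1$ cannot be pushed through the $(1+\gamma)$-BDG step; they always leave a term of the form $\bigl(\int_0^t\mu\bigl(P^\alpha_s[(P^\alpha_{t-s}g)^2]\bigr)ds\bigr)^{(1+\gamma)/2}$. Second, for this term your own semigroup bound $|P_ug|\le C e^{-\kappa_g bu}e^{b|x|^2/(2\sigma^2)}$ is insufficient: squaring it produces the weight $e^{b|x|^2/\sigma^2}$, which is exactly critical --- it is not $\varphi$-integrable, and $P_s\bigl[e^{b|\cdot|^2/\sigma^2}\bigr](x)=e^{dbs}e^{b|x|^2/\sigma^2}$ grows exponentially in $s$ --- so the ``uniform in $s\ge0$'' integrability you invoke (valid for the $(1+\gamma)$-power precisely because $(1+\gamma)/2<1$) fails here, your displayed bound $C\int_0^te^{\alpha s}\max\bigl(1,e^{(1+\gamma)(\alpha-\kappa_gb)(t-s)}\bigr)ds$ does not account for this contribution, and in high dimension the spurious factor $e^{dbs}$ can destroy the rate. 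The repair is to use the polynomial-weight decay $\sup_{u\ge0}e^{\kappa_g bu}|P_ug|\in\mathcal P$ (the operator $Q$ of \eqref{eq:Q}, which the paper quotes from \cite{MarksMilos2018CLT}); with that bound both the $(1+\gamma)$-term and the quadratic term integrate against $\mu(P^\alpha_s\,\cdot)$ uniformly in $s$, and your rate computation closes: the quadratic term requires $\kappa_fb<\alpha/2$, which follows from $\kappa_fb<\tilde\beta\alpha$ and $\tilde\beta<1/2$, while the $(1+\gamma)$-term gives your threshold $\kappa_fb<\tilde\gamma\alpha$. With that substitution --- and an actual proof, via the martingale-measure/jump decomposition of $X$, of the asserted $L^{1+\gamma}$ BDG inequality, which is standard but is the technical heart and is only sketched --- your argument would prove the theorem by a route genuinely different from the paper's.
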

If $f\in \mathcal B(\mathbb R^d, \mathbb R_+)$ is non-trivial and  bounded, then $\kappa_f=0$.
Hence, Theorem \ref{thm: law of large number} says that for any $\gamma\in (0, \beta)$ and  $\mu\in \mathcal M_c(\mathbb R^d)$, as $t\rightarrow \infty$,
\(
  e^{-\alpha t}X_t(f)
  \rightarrow \langle f, \varphi\rangle H_{\infty}
\)
in $L^{1+\gamma}(\mathbb{P}_{\mu})$.
Moreover, if $f$ is twice differentiable and all its second order partial derivatives are in $\mathcal{P}$, then we also have a.s.\ convergence.
However, to get a.s.\ convergence for bounded non-negative
Lebesgue-a.e.\ continuous functions $f$, we do not need $f$ to be twice differentiable.
See \cite[Theorem 2.13 \& Example 8.1]{ChenRenYang2019Skeleton} and \cite[Theorem 1.2 \& Example 4.1]{EckhoffKyprianouWinkel2015Spines}.

For the rest of this subsection, we focus on the CLTs of $X_t(f)$ for a large collection of $f\in \mathcal P\setminus \{0\}$.
Write $\tilde u = \frac{u}{ 1+ u}$ for each $u \neq -1$.
It turns out that there is a phase transition in the sense that the results are different in the following three cases:
\begin{enumerate}
\item
  the small branching rate case where
$f$ satisfies $\alpha \tilde \beta < \kappa_f b$;
\item
  the critical branching rate case where
$f$ satisfies $\alpha \tilde \beta = \kappa_f b$; and
\item
  the large branching rate case  where
$f$ satisfies $\alpha \tilde \beta > \kappa_f b$.
\end{enumerate}
Here, the small (resp. large) branching rate case means that the branching rate $\alpha$ is small (resp. large) compared to $\kappa_f$;
 and the critical branching rate means that the branching rate $\alpha$ is at a critical balanced value compared to $\kappa_f$.
To present our result, we define a family of operators $(T_t)_{t\geq 0}$ on $\mathcal P$ by
\begin{align}
  \label{eq:I:R:1}
  T_t f
  := \sum_{p \in \mathbb Z_+^d} e^{-| |p|b - \alpha \tilde \beta |t} \langle f, \phi_p \rangle_{\varphi} \phi_p
  ,\quad t\geq 0, f\in \mathcal P,
\end{align}
and a family of $\mathbb C$-valued functionals $(m_t)_{0 \leq t < \infty}$ on $\mathcal P$ by
\begin{align}
  \label{eq:I:R:2}
  m_t[f]
  := \eta \int_0^t ~du \int_{\mathbb R^d} (-iT_u f(x))^{1+\beta} \varphi(x) ~dx
  , \quad 0 \leq t< \infty, f\in \mathcal P.
\end{align}
Define $ \mathcal C_s := \mathcal P \cap \overline{\operatorname{Span}} \{ \phi_p: \alpha \tilde \beta < |p| b \}$, $\mathcal C_c   := \mathcal P \cap \operatorname{Span} \{ \phi_p : \alpha \tilde \beta = |p| b \} $
and $ \mathcal C_l   := \mathcal P \cap \operatorname{Span} \{ \phi_p: \alpha \tilde \beta > |p| b \}$. 
Note that $\mathcal C_s$ is an infinite dimensional space, $ \mathcal C_l$ and $\mathcal C_c$
are finite dimensional spaces, and $\mathcal C_c$ might be empty.
For $f\in \mathcal P\setminus \{0\}$, in Lemma \ref{lem:m} and Proposition \ref{prop:PL:S} below, we will show that
\begin{align}
  \label{eq:I:R:3}
  m[f]
  := \begin{cases}
    \lim_{t\to \infty} m_t[f], &
    f \in \mathcal C_s \oplus \mathcal C_l, \\
    \lim_{t\to \infty} \frac{1}{t} m_t[f], & f\in \mathcal P \setminus \mathcal C_s \oplus \mathcal C_l,
  \end{cases}
\end{align}
is well defined, and moreover, there exists a $(1+\beta)$-stable random variable $\zeta^f$ with characteristic function $\theta \mapsto e^{m[\theta f]}$.
The main result of this paper is as follows.

\begin{thm}
  \label{thm:M}
 If $\mu\in \mathcal M_c(\mathbb R^d)\setminus \{0\}$, then under $\mathbb{P}_{\mu}(\cdot|D^c)$, the following hold:
\begin{enumerate}
\item
  \label{thm:M:1}
  if $f\in \mathcal C_s\setminus\{0\}$, then $\|X_t\|^{- \frac{1}{1+\beta}} X_t(f)  \xrightarrow[t\to \infty]{d} \zeta^f$;
\item
  \label{thm:M:2}
  if $f\in \mathcal C_c\setminus\{0\}$, then $ \|t X_t\|^{-\frac{1}{1+\beta}} X_t(f) \xrightarrow[t\to \infty]{d} \zeta^f$;
\item
  \label{thm:M:3}
  if $f\in \mathcal C_l\setminus\{0\}$, then
  \[
    \|X_t\|^{-\frac{1}{1+\beta}} \Big( X_t(f) - \sum_{p\in \mathbb Z^d_+:\alpha \tilde \beta>|p|b}\langle f,\phi_p\rangle_\varphi e^{(\alpha-|p|b)t}H^p_{\infty}\Big)
    \xrightarrow[t\to \infty]{d}
    \zeta^{-f}.
  \]
\end{enumerate}
\end{thm}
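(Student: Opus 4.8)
\emph{Reduction.} The plan is to prove all three parts simultaneously by first establishing, under the unconditioned law $\mathbb P_\mu$, a joint convergence of the mass martingale together with the renormalized statistic, and then dividing out the mass. Write $\tilde\beta=\beta/(1+\beta)$, so that $1/(1+\beta)=1-\tilde\beta$ and $(\alpha-\alpha\tilde\beta)(1+\beta)=\alpha$, and let $R_t$ be the second coordinate occurring in part \eqref{thm:M:1}, \eqref{thm:M:2} or \eqref{thm:M:3} \emph{before} the division by a power of $\|X_t\|$ or $\|tX_t\|$; put $f_\star:=f$ in parts \eqref{thm:M:1}--\eqref{thm:M:2} and $f_\star:=-f$ in part \eqref{thm:M:3}. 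The aim is to show
\[
  \Big(e^{-\alpha t}\|X_t\|,\ R_t\Big)\ \xrightarrow[t\to\infty]{d}\ \Big(H_\infty,\ H_\infty^{1/(1+\beta)}\zeta^{f_\star}\Big)\qquad\text{under }\mathbb P_\mu,
\]
with $\zeta^{f_\star}$ independent of $H_\infty$. Granting this, since $H_\infty>0$ $\mathbb P_\mu$-a.s.\ on $D^c$ and $H_\infty=0$ on $D$ (the $L\log L$ property, Lemma~\ref{lem: LlogL criterion}), and $(a,b)\mapsto b\,a^{-1/(1+\beta)}$ is continuous on $\{a>0\}$, the continuous mapping theorem and the independence give, under $\mathbb P_\mu(\cdot\mid D^c)$, that the ratios in parts \eqref{thm:M:1}--\eqref{thm:M:3} converge in law to $\zeta^{f_\star}$, i.e.\ to $\zeta^f$ in parts \eqref{thm:M:1}--\eqref{thm:M:2} and to $\zeta^{-f}$ in part \eqref{thm:M:3}.

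\emph{Laplace functionals.} Fix $\lambda\ge0$, $\theta\in\mathbb R$, and set $g_t:=\lambda e^{-\alpha t}-i\theta h_t$, where $h_t$ is the deterministic $f$-part of $R_t$ (namely $e^{-(\alpha-\alpha\tilde\beta)t}f$, or $t^{-1/(1+\beta)}e^{-(\alpha-\alpha\tilde\beta)t}f$; in part \eqref{thm:M:3} the $H^p_\infty$-centering is not deterministic and is handled by first conditioning on $\mathscr F_t$, see below), so that $X_t(g_t)=\lambda e^{-\alpha t}\|X_t\|-i\theta R_t$. The first technical step is to continue \eqref{eq: def of V_t} analytically in its argument, so that $V_tg_t$ is a well-defined solution of the same mild equation and, crucially, $\operatorname{Re}V_tg_t$ is bounded below uniformly in $t$; this keeps $|\exp(-\mu(V_tg_t))|$ bounded and yields $\mathbb P_\mu[\exp(-\lambda e^{-\alpha t}\|X_t\|+i\theta R_t)]=\exp(-\mu(V_tg_t))$, reducing everything to the asymptotics of $\mu(V_tg_t)$. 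Splitting $\psi=\widetilde\psi+\psi_1$ as in \eqref{eq:PB} and using the mean semigroup $(P^\alpha_t)_{t\ge0}$ of $X$, one has
\[
  V_tg_t=P^\alpha_tg_t-\eta\int_0^tP^\alpha_{t-r}\big((V_rg_t)^{1+\beta}\big)\,dr-\int_0^tP^\alpha_{t-r}\big(\psi_1(V_rg_t)\big)\,dr .
\]

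\emph{Two-stage limit.} Fix $s\ge0$. I would show $V_{t-s}g_t\to\Phi_s$ as $t\to\infty$. The linear term $P^\alpha_{t-s}g_t$ has $f$-part a scalar multiple of $T_{t-s}f$ (because $P^\alpha_u\phi_p=e^{(\alpha-|p|b)u}\phi_p$ and by the definition \eqref{eq:I:R:1}; in part \eqref{thm:M:3} the centering removes exactly the modes with $|p|b<\alpha\tilde\beta$), which tends to $0$ since $|p|b-\alpha\tilde\beta>0$ on all surviving Hermite modes; but the nonlinear $\eta$-term does \emph{not} vanish, and a computation using $(\alpha-\alpha\tilde\beta)(1+\beta)=\alpha$, the ergodic smoothing $P_uF\to\langle F,\varphi\rangle$ of the OU semigroup, and the integral defining $m_t$ in \eqref{eq:I:R:2} produces $\Phi_s$ with $e^{\alpha s}\Phi_s\to\lambda-m[\theta f_\star]$ (a constant function) as $s\to\infty$, where $m[\cdot]$ is the functional of \eqref{eq:I:R:3}, well-defined by Lemma~\ref{lem:m}; in the critical case \eqref{thm:M:2} one has $T_uf=f$, so $m_t[\theta f]$ grows linearly in $t$, and the factor $t^{-1/(1+\beta)}$ in $h_t$ exactly compensates this growth. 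The $\psi_1$-term is negligible because $\psi_1(z)/z^{1+\beta}\to0$ as $z\to0$ (Remark~\ref{rem:SP}, Lemma~\ref{lem:CEP}), and all error estimates rely on the $L^{1+\gamma}$ moment bounds behind Lemma~\ref{lem:M:L:ML} together with the hypothesis $\mu\in\mathcal M_c(\mathbb R^d)$. By the Markov property, $\mathbb P_\mu[\exp(-\lambda e^{-\alpha t}\|X_t\|+i\theta R_t)\mid\mathscr F_s]=\exp(-X_s(V_{t-s}g_t))\to\exp(-X_s(\Phi_s))$ $\mathbb P_\mu$-a.s.; letting $s\to\infty$ and using $e^{\alpha s}\Phi_s\to\lambda-m[\theta f_\star]$, the law of large numbers $e^{-\alpha s}\|X_s\|\to H_\infty$ (Theorem~\ref{thm: law of large number}), and OU-ergodicity to replace $e^{\alpha s}\Phi_s$ by its $\varphi$-average, one gets $X_s(\Phi_s)\to H_\infty(\lambda-m[\theta f_\star])$ $\mathbb P_\mu$-a.s. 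Dominated convergence (legitimate by the uniform lower bound on $\operatorname{Re}V_tg_t$) then gives $\mathbb P_\mu[\exp(-\lambda e^{-\alpha t}\|X_t\|+i\theta R_t)]\to\mathbb P_\mu[e^{-\lambda H_\infty+H_\infty m[\theta f_\star]}]$. Since $\zeta^{f_\star}$ has characteristic function $u\mapsto e^{m[uf_\star]}$ (Proposition~\ref{prop:PL:S}) and $m[cf_\star]=c^{1+\beta}m[f_\star]$ for $c>0$, the limit equals $\mathbb P_\mu[e^{-\lambda H_\infty}e^{m[\theta H_\infty^{1/(1+\beta)}f_\star]}]$, which is the joint transform of $(H_\infty,H_\infty^{1/(1+\beta)}\zeta^{f_\star})$ with $\zeta^{f_\star}$ independent of $H_\infty$; this identifies the joint limit. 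For part \eqref{thm:M:3} one additionally writes $X_t(f)-\sum_{p:\alpha\tilde\beta>|p|b}\langle f,\phi_p\rangle_\varphi e^{(\alpha-|p|b)t}H^p_\infty=-\sum_{p:\alpha\tilde\beta>|p|b}\langle f,\phi_p\rangle_\varphi e^{(\alpha-|p|b)t}(H^p_\infty-H^p_t)+\sum_{p:\alpha\tilde\beta\le|p|b}\langle f,\phi_p\rangle_\varphi X_t(\phi_p)$, conditions on $\mathscr F_t$, and runs the same $V$-analysis over $[t,\infty)$ for the (conditionally future) martingale remainders $H^p_\infty-H^p_t$ (which behave as in part \eqref{thm:M:1} but built up on $[t,\infty)$); the leading minus sign turns the limit into $-\zeta^f$, and $-\zeta^f\overset{d}{=}\zeta^{-f}$ since the characteristic function of $-\zeta^f$ at $u$ is $e^{m[-uf]}=e^{m[u(-f)]}$.

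\emph{Main obstacle.} The heart of the argument is the analysis of $\mu(V_tg_t)$, and the two genuinely hard points are: (i) setting up the $\mathbb C$-valued solution and, above all, bounding $\operatorname{Re}V_tg_t$ below uniformly in $t$ — without which the Laplace transforms need not even be bounded — which I expect to require comparison with the real solution $V_t(\lambda e^{-\alpha t})$ together with the $L^{1+\gamma}$ moment estimates; and (ii) the two-stage limit, namely showing that the accumulated nonlinear contribution is \emph{exactly} $m[\theta f_\star]$ and that it couples to the mass limit $H_\infty$ as claimed. The $\psi_1$-perturbation, a truncation to finitely many Hermite modes (needed because $\mathcal C_s=\mathcal P\cap\overline{\operatorname{Span}}\{\phi_p:\alpha\tilde\beta<|p|b\}$ is infinite-dimensional), and the careful bookkeeping of the $\mathscr F_\infty$-measurable centering in part \eqref{thm:M:3}, are substantial but essentially technical matters.
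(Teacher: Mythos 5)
Your plan is a genuinely different route from the paper's. The paper decomposes $X_t(f)$ into unit-time increments $\mathcal I_{t-k-1}^{t-k}X_t(f)$, proves a conditional stable CLT for a single increment (Proposition \ref{thm:Key}, via the characteristic-exponent equation \eqref{eq:chareq2} and the controller estimates of Lemma \ref{lem: upper bound for usgx}), telescopes it (Corollary \ref{cor:MI}), and then kills the remainders using the moment bounds of Lemma \ref{lem: control of mgtrs}, the refined OU estimate of Lemma \ref{lem:P:R} and the small-value estimate of Proposition \ref{lem: control of XT}. You instead propose a one-shot joint Fourier--Laplace transform $\mathbb P_\mu[\exp(-\lambda e^{-\alpha t}\|X_t\|+i\theta R_t)]=\exp(-\mu(V_tg_t))$ and a two-stage limit ($t\to\infty$, then $s\to\infty$) for the complex log-Laplace functional. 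Your bookkeeping of exponents is right: using $(1-\tilde\beta)(1+\beta)=1$ and $T_u f=e^{\alpha\tilde\beta u}P_uf$ the accumulated nonlinear term does produce $e^{-\alpha s}m[\theta f_\star]$, and the limit transform $\mathbb P_\mu[e^{-\lambda H_\infty+H_\infty m[\theta f_\star]}]$ is exactly the transform of $(H_\infty,H_\infty^{1/(1+\beta)}\zeta^{f_\star})$ with independence, which is consistent with the theorem. If completed, your method would give somewhat more than the paper (stable/joint convergence and independence from $H_\infty$, and it bypasses Proposition \ref{lem: control of XT} and the random-normalization bookkeeping of Substeps 2.1--2.2), at the price of a global-in-time nonlinear analysis that the paper deliberately avoids by localizing to unit intervals.

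However, as written the proposal has genuine gaps precisely where the theorem's difficulty lives. (i) The object $V_tg_t$ for mixed arguments $\lambda e^{-\alpha t}-i\theta h_t$ does not exist in the paper: the appendix constructs only the purely imaginary case $U_tf$ (Proposition \ref{prop: complex FKPP-equation}), and ``continuing \eqref{eq: def of V_t} analytically in its argument'' is not a routine operation for a nonlinear equation --- existence, uniqueness and the Markov identification $\mathbb P_\mu[e^{-X_t(g_t)}\mid\mathscr F_s]=e^{-X_s(V_{t-s}g_t)}$ must be re-derived (e.g.\ via the Kuznetsov-measure argument of the appendix). The uniform lower bound on $\operatorname{Re}V_tg_t$ you worry about is actually automatic (it is $\geq 0$ once the identification holds), but the identification itself is not. (ii) The two-stage limit is only asserted. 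To conclude $X_s(\Phi_s)\to H_\infty(\lambda-m[\theta f_\star])$ a.s.\ you need $e^{\alpha s}\Phi_s-(\lambda-m[\theta f_\star])$ to go to $0$ with a polynomially weighted, quantitative error, uniformly over the family of test functions and over horizons $t$; this is the analogue of Lemma \ref{lem:P:R} plus the controller bounds of Lemma \ref{lem: upper bound for usgx} extended from $[0,1]$ to $[0,t]$ with the horizon-dependent scaling, and nothing in the proposal supplies it. (Your remark that these errors ``rely on the $L^{1+\gamma}$ moment bounds behind Lemma \ref{lem:M:L:ML}'' is off target: the analysis of $V_tg_t$ is deterministic, and what is needed are controller-type estimates, not moments.) (iii) Part \eqref{thm:M:3} is the weakest point: the centering $H^p_\infty$ is not $\mathscr F_t$-measurable, so the device ``$X_t(g_t)$ with deterministic $g_t$'' breaks down, and the one-line fix ``run the same $V$-analysis over $[t,\infty)$'' hides an infinite-horizon limit of the conditional characteristic functional of $e^{(\alpha-|p|b)t}(H^p_t-H^p_\infty)$ jointly with the mass, which needs its own existence and uniformity statements --- this is essentially a separate theorem, which the paper proves through Lemma \ref{lem: control of wt} and its Section on $\mathcal C_l$. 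Finally, the passage from the unconditioned joint limit to the law under $\mathbb P_\mu(\cdot|D^c)$ is not immediate from the continuous mapping theorem: you must use the a.s.\ convergence $e^{-\alpha t}\|X_t\|\to H_\infty$, the identification $\{H_\infty=0\}=D$ a.s., and an approximation of $\mathbf 1_{D^c}$ by continuous functions of the mass. In short, the strategy is coherent and attractive, but the steps you label ``essentially technical'' are the core analytic content of the paper, and they are not carried out.
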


At this point, we should mention that the theorem above does not cover all $f\in \mathcal P$.
Theorem \ref{thm:M}.(1) can be rephrased as if $f\in \mathcal P\setminus\{0\}$ satisfies   $\alpha \tilde \beta < \kappa_f b$, then  under $\mathbb{P}_{\mu}(\cdot|D^c)$, $\|X_t\|^{- \frac{1}{1+\beta}} X_t(f)  \xrightarrow[t\to \infty]{d} \zeta^f$.
Combining the first two parts of Theorem \ref{thm:M}, one can easily get that  if $f\in \mathcal P$ satisfies   $\alpha \tilde \beta = \kappa_f b$, then  under $\mathbb{P}_{\mu}(\cdot|D^c)$,
$ \|t X_t\|^{-\frac{1}{1+\beta}} X_t(f) \xrightarrow[t\to \infty]{d} \zeta^f$.
A general  $f \in \mathcal P$ can be decomposed as $f_s + f_c + f_l$ with $f_s \in \mathcal C_s$, $f_c \in \mathcal C_c$ and $f_l \in \mathcal C_l$. 
For $f\in  \mathcal P$ satisfying $\alpha \tilde \beta > \kappa_f b$, $f_s$ and $f_c$ maybe non-trivial.
In this case, we do not have a CLT yet.
We conjecture that the limit random variables in Theorem \ref{thm:M} for $ f\in \mathcal C_s$, $f\in \mathcal C_c$ and $ f\in \mathcal C_l$ are independent. If this is valid, we can
get a CLT for $ X_t(f)$ for all $f\in  \mathcal P$.
This independence is valid  under the second moment condition, see \cite{RenSongZhang2015Central}.
We leave the question of independence of the limit stable random variables to a future project.

 We now give some intuitive explanation of the branching rate regimes and the  phase transition.
Similar explanation has been given in the context of branching-OU processes, see \cite{MarksMilos2018CLT}.
First we mention that a super-OU process arises as the ``high density'' limit of a sequence of branching-OU processes, see \cite{Li2011Measure-valued} for example. A superprocess can be thought of as a cloud of infinitesimal branching ``particles'' moving in space.
The phase transition is due to an interplay of two competing effects in the system: coarsening and smoothing.
The coarsening effect corresponds to the increase of the spatial inequality and is a consequence of the branching: simply an area with more particles will produce more offspring.
The smoothing effect corresponds to the decrease of the spatial inequality and is a consequence of the mixing property of the OU processes: each OU ``particle'' will ``forget'' its initial position exponentially fast.

Let us consider $X_t(\phi_p)$ as an example and discuss how the parameters $\alpha, \beta, b$ and $|p|$
influence those two effects:
\begin{itemize}
\item
  The branching rate $\alpha$ captures the mean intensity of the branching in the system.
  Therefore, the lager the branching rate $\alpha$, the stronger the coarsening effect.
\item
  The tail index $\beta$ describes the heaviness of the tail of the offspring distribution which belongs to the domain of attraction of some $(1+\beta)$-stable random variable.
When $\beta$ is smaller i.e. the tail is heavier, then it is more likely that 
one particle can suddenly have a large amount of offspring.
In other words, the larger the tail index $\beta$, the smaller the fluctuation of offspring number, and then the stronger the coarsening effect.
\item
 The drift parameter $b$ is related to the level of the mixing property of the OU particles.
  The larger the drift parameter $b$, the faster the OU-particles forgetting their initial position, and therefore the stronger the smoothing effect.
\item
    The order $|p|$ is related to the capability of $\phi_p$ capturing the mixing property of the OU particles.
  In particular, in the case that $|p| = 0$, no mixing property can be captured by $\phi_p \equiv 1$ since we are only considering the total mass $\|X_t\|$.
  In general, the higher the order $|p|$, the more mixing property can be captured by $\phi_p$, and therefore the stronger the smoothing effect.
\end{itemize}
Here we discuss the role of the other parameters $\rho, \eta$ and $\sigma$ in our model:
\begin{itemize}
\item
  The coefficient $\rho$ dose not influence the result since $\rho z^2$ in the branching mechanism $\psi$ is a part of the small perturbation $\psi_1$
  (see Remark \ref{rem:SP}).
\item
  The coefficients $\eta$ and $\sigma$ are hidden in the definition of the functional $m[f]$, and therefore influence the actual distribution of the limiting $(1+\beta)$-stable random variable $\xi^f$.
  Their role in the coarsening and smoothing effects are negligible compared to the four parameters $\alpha, \beta, b$ and $|p|$ mentioned above.
\end{itemize}

\subsection{An outline of the methodology}
Let us give some intuitive explanation of the methodology used in this paper.
For any $\mu\in \mathcal M_c(\mathbb R^d)$ and any random variable $Y$ with finite mean, we define
$
  \mathcal I_s^t Y
  := \mathcal I_s^t [Y, \mu]
  := \mathbb P_\mu[Y|\mathscr F_t] - \mathbb P_\mu[Y|\mathscr F_s]
$
  where $0 \leq s \leq t <\infty.$
We will use the shorter notation $\mathcal I_s^t Y$ when there is no danger of confusion.
For $f\in \mathcal{P}$, consider the following decomposition over the time interval $[0,t]$:
\begin{align}
  X_t(f)
  := \sum_{k=0}^{\lfloor t \rfloor-1} \mathcal I_{t-k-1}^{t-k} X_t (f)+\mathcal I_0^{t-\lfloor t \rfloor} X_t(f) + X_0( P^\alpha_tf),
  \quad t\geq 0.
\end{align}
To find the fluctuation of $X_t(f)$, we will investigate the fluctuation of each term on the right hand side above.
The second term and third term are negligible after the rescaling, and for the first term we will establish
a multi-variate unit interval CLT  which says that
\[
  \Big( \|X_t\|^{-\frac{1}{1+\beta}}\mathcal I^{t-k}_{t-k-1} X_t(f) \Big)_{k=0}^n
  \xrightarrow [t\to \infty]{d} (\zeta^f_k)_{k=0}^n,
\]
where $(\zeta^f_k)_{k \in \mathbb N}$ are independent $(1+\beta)$-stable random variables.
If $f \in \mathcal C_s\setminus\{0\}$, then it can be argued that $\sum_{k=0}^{\lfloor t \rfloor} \zeta^f_k \xrightarrow[t\to \infty]{d} \zeta^f$ and then intuitively we have
\(
  \|X_t\|^{-\frac{1}{1+\beta}}  X_t(f)
  \xrightarrow[t\to \infty]{d} \zeta^f.
  \)
If $f \in \mathcal C_c \setminus \{0\}$, then it can be argued that
\(
t^{-\frac{1}{1+\beta}} \sum_{k=0}^{\lfloor t\rfloor} \zeta_k \xrightarrow[t\to \infty]{ d} \zeta^f
\)
and then intuitively we have
\(
\|tX_t\|^{-\frac{1}{1+\beta}}  X_t(f)
\xrightarrow[t\to \infty]{d} \zeta^f.
\)
If $f\in \mathcal C_l$, the general idea is almost the same, except that we need to consider the decomposition over the time interval $[t,\infty)$.

This paper is our first attempt on stable CLTs for superprocesses.
There are still many open questions.
Ren, Song and Zhang have established some spatial  CLTs in \cite{RenSongZhang2015Central} for a class of superprocesses with general spatial motions under
the assumption that the branching mechanisms satisfy a second moment condition.
We hope to prove spatial CLTs for superprocesses with general motions without the second moment assumption on the branching mechanism in a future project.

Recall that our Assumption \ref{asp: branching mechanism} says that the branching mechanism $\psi$ is $-\alpha z +\eta z^{1+\beta}$ plus a small perturbation
$\psi_1(z)$
which satisfies \eqref{eq: asp of branching mechanism} with some $\delta>0$.
It would be interesting to consider more general branching mechanisms.

The following correspondence between (sub)critical branching mechanisms and Bernstein functions is well known, see, for instance,
\cite[Theorem VII.4(ii)]{Bertoin} and \cite[Proposition 7]{BRY}. Suppose that $f, g:(0, \infty)\to [0, \infty)$ are related by $f(x)=xg(x)$.
Then $f$ is a (sub)critical branching mechanism with $\lim_{x\to 0}f(x)=0$ iff $g$ is a Bernstein function with a decreasing L\'evy density.
We now use this correspondence to give some examples of branching mechanisms satisfying Assumptions \ref{asp: Greys condition} and \ref{asp: branching mechanism}.
If $h$ is a complete Bernstein function which is regularly varying at 0 with index $\beta_1\in (\beta, 1)$, then
\[
  \psi(z)
  := -\alpha z + \rho z^2+\eta z^{1+\beta}+zh(z)
  , \qquad z>0,
\]
satisfies Assumptions \ref{asp: Greys condition} and \ref{asp: branching mechanism}.
If $\beta_1\in (\beta, 1)$, $c_1\in (0, \eta/\Gamma(-1-\beta))$ and $c_2\ge 1$, then
\[
  \psi(z)
  :=-\alpha z + \rho z^2+\eta z^{1+\beta}-\int^\infty_{c_2} (e^{-yz}-1+yz)\frac{c_1dy}{y^{1+\beta_1}}
  , \qquad z\in \mathbb R_+,
\]
satisfies Assumptions \ref{asp: Greys condition} and \ref{asp: branching mechanism}.

The rest of the paper is organized as follows:
In Subsection \ref{sec: branching mechanism} we will give some preliminary results for the branching mechanism $\psi$.
In Subsections \ref{sec: controller} and \ref{sec: h-controller} we will give some estimates for some operators related to the super-OU process $X$.
In Subsection \ref{sec: stable distributions} we will give the definitions of the $(1+\beta)$-stable random variables involved in this paper.
In Subsection \ref{sc:refined} we will give some refined estimate for the OU semigroup.
In Subsection \ref{sec: Small value probability} we will give some estimates for the small value probability of continuous state branching processes.
In Subsection \ref{sec: Moments for super-OU processes} we will give upper bounds for the $(1+\gamma)$-moments for our superprocesses.
These estimates and upper bounds will be crucial in the proofs of our main results.
In Subsection \ref{sec: large rate lln}, we will give the proof of Theorem \ref{thm: law of large number}.
In Subsections \ref{sec:critical}--\ref{sec: large rate clt}, we will give the proof of Theorem \ref{thm:M}.
In the Appendix, we consider a general superprocess $(X_t)_{t\geq 0}$ and we prove that the characteristic exponent of $X_t(f)$ satisfies a complex-valued non-linear integral equation.
This fact will be used at several places in this paper, and we think it is of independent interest.

\section{Preliminaries}
\subsection{Branching mechanism}
\label{sec: branching mechanism}
Let $\psi$ be the branching mechanism given in \eqref{eq: honogeneou branching mechanism}.
Suppose that Assumptions \ref{asp: Greys condition} and \ref{asp: branching mechanism} hold.
In this subsection, we give some preliminary results on $\psi$.
Recall that $\eta$ and $\beta$ are the constants in Assumption \ref{asp: branching mechanism}.
Let $\mathbb C_+:= \{x+iy: x\in \mathbb R_+, y \in \mathbb R\}$ and $\mathbb C^0_+:= \{x+iy: x\in (0,\infty), y \in \mathbb R\}$.
\begin{lem}
  \label{lem:CEP}
	The function $\psi_1$ given by \eqref{eq:PB} can be uniquely extended as a complex-valued continuous function on $\mathbb C_+$ which is holomorphic on $\mathbb C^0_+$.
  Moreover, for all $\delta > 0$ small enough, there exists $C>0$ such that for all $z\in \mathbb C_+$, we have $|\psi_1(z)| \leq C |z|^{1+\beta+\delta} + C|z|^2.$
\end{lem}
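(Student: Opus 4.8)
The plan is to \emph{define} the extension by the obvious complexification of the integral formula in \eqref{eq:PB}, and then to read off both the analytic properties and the growth bound from two elementary estimates on $w\mapsto e^{-w}-1+w$ valid on the closed right half-plane. Throughout, write $\nu(dy):=\pi(dy)-\eta\,\Gamma(-1-\beta)^{-1}y^{-2-\beta}\,dy$ for the signed measure on $(0,\infty)$ occurring in \eqref{eq:PB}, so that on $(0,\infty)$ one has $\psi_1(z)=\rho z^2+\int_{(0,\infty)}(e^{-yz}-1+yz)\,\nu(dy)$. First I would check the integrability of $|\nu|$ in two ranges. On $(0,1]$: $\int_{(0,1]}y^2\,\pi(dy)<\infty$ by the standing hypothesis $\int(y\wedge y^2)\,\pi(dy)<\infty$, and $\int_{(0,1]}y^2\,\eta|\Gamma(-1-\beta)|^{-1}y^{-2-\beta}\,dy=\eta\,|\Gamma(-1-\beta)|^{-1}(1-\beta)^{-1}<\infty$ since $\beta<1$; hence $\int_{(0,1]}y^2\,|\nu|(dy)<\infty$. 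On $(1,\infty)$: Assumption \ref{asp: branching mechanism}, in the equivalent form recorded just after it, gives $M_\delta:=\int_{(1,\infty)}y^{1+\beta+\delta}\,|\nu|(dy)<\infty$ for every small enough $\delta>0$; from now on fix such a $\delta$ with also $0<\delta<1-\beta$ (still possible).

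Next I would establish, for all $w$ with $\operatorname{Re}w\ge 0$, the bounds $|e^{-w}-1+w|\le\tfrac12|w|^2$ and $|e^{-w}-1|\le|w|$ (hence $|e^{-w}-1+w|\le 2|w|$). Both follow from the integral-remainder identities $e^{-w}-1+w=w^2\int_0^1(1-t)e^{-tw}\,dt$ and $e^{-w}-1=-w\int_0^1 e^{-tw}\,dt$ together with $|e^{-tw}|=e^{-t\operatorname{Re}w}\le 1$. Comparing the two bounds on $|w|\le 4$ and on $|w|>4$ separately, and using $1<1+\beta+\delta<2$, one obtains a finite constant $c_\delta$ with
\[
|e^{-w}-1+w|\ \le\ c_\delta\,|w|^{1+\beta+\delta}\qquad(\operatorname{Re}w\ge 0).
\]
Here the restriction $\delta<1-\beta$ is precisely what keeps $c_\delta$ finite, since one needs the exponent of $|w|$ to lie in $[1,2]$.

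Granting these, for $z\in\mathbb C_+$ I set $\psi_1(z):=\rho z^2+\int_{(0,\infty)}(e^{-yz}-1+yz)\,\nu(dy)$, the integral being absolutely convergent because $\int_{(0,1]}|e^{-yz}-1+yz|\,|\nu|(dy)\le\tfrac12|z|^2\int_{(0,1]}y^2\,|\nu|(dy)$ and $\int_{(1,\infty)}|e^{-yz}-1+yz|\,|\nu|(dy)\le c_\delta|z|^{1+\beta+\delta}M_\delta$. Adding these to $|\rho z^2|$ gives exactly $|\psi_1(z)|\le C|z|^{1+\beta+\delta}+C|z|^2$ with $C:=\max\{c_\delta M_\delta,\ \rho+\tfrac12\int_{(0,1]}y^2|\nu|(dy)\}$. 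On $(0,\infty)$ this definition agrees with \eqref{eq:PB}, so it is an extension. Continuity on $\mathbb C_+$ is dominated convergence: on $\{|z|\le R\}$ the integrand is dominated by $\tfrac12 y^2R^2$ on $(0,1]$ and by $c_\delta y^{1+\beta+\delta}R^{1+\beta+\delta}$ on $(1,\infty)$, both $|\nu|$-integrable. Holomorphy on $\mathbb C^0_+$ is Morera's theorem: for any triangle $\gamma\subset\mathbb C^0_+$, Fubini (justified by the same dominating functions) interchanges $\oint_\gamma$ with $\int_{(0,\infty)}\nu(dy)$, while $\oint_\gamma z^2\,dz=\oint_\gamma(e^{-yz}-1+yz)\,dz=0$ by Cauchy's theorem since these integrands are entire; thus $\oint_\gamma\psi_1=0$. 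Uniqueness is then immediate: any $g$ continuous on $\mathbb C_+$, holomorphic on $\mathbb C^0_+$ and agreeing with $\psi_1$ on $(0,\infty)$ must equal $\psi_1$ on $\mathbb C^0_+$ by the identity theorem (as $(0,\infty)$ accumulates there) and hence on all of $\mathbb C_+$ by continuity and the density of $\mathbb C^0_+$ in $\mathbb C_+$.

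The only genuinely non-routine step is the interpolation in the second paragraph producing the $|w|^{1+\beta+\delta}$ bound with a finite constant; this is where the hypothesis is used, through both the moment bound $M_\delta<\infty$ and the constraint $\delta<1-\beta$. Everything after that is bookkeeping across the two integrability ranges of $|\nu|$.
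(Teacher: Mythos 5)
Your proof is correct and follows essentially the same route as the paper: both arguments rest on the two-range split of the signed measure ($y^2$-integrability on $(0,1]$, $(1{+}\beta{+}\delta)$-moment on $(1,\infty)$ from Assumption \ref{asp: branching mechanism}) together with a right-half-plane bound on $e^{-w}-1+w$, giving exactly the stated $C|z|^{1+\beta+\delta}+C|z|^2$ estimate. The only differences are cosmetic: the paper delegates the extension, continuity and holomorphy to its appendix Lemma \ref{lem: extension lemma for branching mechanism} plus uniqueness of holomorphic extensions and uses $|e^{-w}-1+w|\le |w|\wedge|w|^2\le |w|^{1+\beta+\delta}$ on $(1,\infty)$, whereas you verify holomorphy directly for the signed measure via dominated convergence, Fubini and Morera, and obtain the $|w|^{1+\beta+\delta}$ bound by interpolating the bounds $\tfrac12|w|^2$ and $2|w|$.
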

\begin{proof}
  According to Lemma \ref{lem: extension lemma for branching mechanism} below and the uniqueness of holomorphic extensions, we know that $\psi_1$ can be uniquely extended as a complex-valued continuous function on $\mathbb C_+$ which is holomorphic on $\mathbb C^0_+$.
	The extended $\psi_1$ has the following form:
  \[
    \psi_1(z)
    = \rho z^2 + \int_{(0,\infty)}(e^{-yz}-1+yz) \Big(\pi(dy) - \frac {\eta~dy} {\Gamma(-1-\beta)y^{2+\beta}} \Big)
    , \quad z\in \mathbb C_+.
  \]
	Now, according to  Assumption \ref{asp: branching mechanism}, for all small enough $\delta > 0$, we have
  \begin{align}
    |\psi_1(z)|
    & \leq \rho |z|^2 + \int_{(0,\infty)} (|yz|\wedge |yz|^2) \Big|\pi(dy) - \frac{\eta~dy}{\Gamma(-1-\beta)y^{2+\beta}}\Big| \\
    & \leq  |z|^2 \Big(\rho + \int_{(0,1)} y^2 \Big|\pi(dy) - \frac{\eta~dy}{\Gamma(-1-\beta)y^{2+\beta}}\Big|\Big) \\
    & \quad + |z|^{1+\beta +\delta}\int_{(1,\infty)} y^{1+\beta + \delta} \Big|\pi(dy) - \frac{\eta~dy}{\Gamma(-1-\beta)y^{2+\beta}}\Big|,
      \quad z \in \mathbb C_+,
  \end{align}
	as desired.
\end{proof}
The following lemma says that the constants $\eta, \beta$ in Assumption \ref{asp: branching mechanism} are uniquely determined by the L\'evy measure $\pi$.
\begin{lem}
  \label{lem: unique of beta and eta}
  Suppose Assumption  \ref{asp: branching mechanism} holds. Suppose that there are $\eta', \delta'>0$ and $\beta'\in (0,1)$ such that
  \[
    \int_{(1,\infty)} y^{ 1 + \beta'  + \delta' }~ \Big| \pi(dy) - \frac {\eta' ~dy} {\Gamma (- 1 - \beta ) y^{2 + \beta'}} \Big|
    < \infty.
  \]
	Then $\eta'= \eta$ and $\beta ' = \beta$.
\end{lem}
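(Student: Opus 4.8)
The plan is to compare the two candidate L\'evy densities near $y=\infty$ via the triangle inequality for total variation, using the elementary fact that $y\mapsto y^{a}$ is integrable on $(1,\infty)$ if and only if $a<-1$, with the ``extra'' exponents $\delta,\delta'$ serving as a safety margin. Throughout, since $\beta,\beta'\in(0,1)$ give $\Gamma(-1-\beta)>0$, I set $c:=\eta/\Gamma(-1-\beta)>0$ and $c':=\eta'/\Gamma(-1-\beta)>0$, and on $(1,\infty)$ I write $\mu_\beta(dy):=c\,y^{-2-\beta}\,dy$ and $\mu'_{\beta'}(dy):=c'\,y^{-2-\beta'}\,dy$.

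First I would pass to a common weight. Since $y^{1+\beta+\delta''}\le y^{1+\beta+\delta}$ for $y\ge1$ whenever $0<\delta''\le\delta$, Assumption \ref{asp: branching mechanism} remains valid when $\delta$ is decreased, and the same monotonicity lets me decrease $\delta'$ in the hypothesis of the lemma. Hence, putting $\varepsilon:=\min(\delta,\delta')>0$ and $\lambda:=1+\min(\beta,\beta')+\varepsilon$, one has $\lambda\le1+\beta+\delta$ and $\lambda\le1+\beta'+\delta'$, so that
\[
  \int_{(1,\infty)} y^{\lambda}\,|\pi-\mu_\beta|(dy)<\infty
  \qquad\text{and}\qquad
  \int_{(1,\infty)} y^{\lambda}\,|\pi-\mu'_{\beta'}|(dy)<\infty .
\]
Adding these and using $|\mu_\beta-\mu'_{\beta'}|\le|\mu_\beta-\pi|+|\pi-\mu'_{\beta'}|$ for total variation yields
\[
  \int_{1}^{\infty}\big|\,c\,y^{\lambda-2-\beta}-c'\,y^{\lambda-2-\beta'}\,\big|\,dy
  =\int_{(1,\infty)} y^{\lambda}\,|\mu_\beta-\mu'_{\beta'}|(dy)<\infty .
\]

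Next I would rule out $\beta\ne\beta'$. By symmetry assume $\beta<\beta'$; then $\min(\beta,\beta')=\beta$, so $\lambda-2-\beta=\varepsilon-1>-1$ and $\lambda-2-\beta'=\lambda-2-\beta-(\beta'-\beta)$ is strictly smaller. Factoring out $y^{\lambda-2-\beta}$, the integrand equals $y^{\lambda-2-\beta}\,|c-c'y^{-(\beta'-\beta)}|$, and since $y^{-(\beta'-\beta)}\to0$ and $c>0$ there is $y_0>1$ with integrand $\ge\tfrac12 c\,y^{\lambda-2-\beta}$ for all $y\ge y_0$; as $\lambda-2-\beta>-1$ this forces $\int_{y_0}^{\infty}y^{\lambda-2-\beta}\,dy=\infty$, contradicting the display above. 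Hence $\beta=\beta'$. Finally, with $\beta=\beta'$ the integrand becomes $|c-c'|\,y^{\varepsilon-1}$ and $\int_1^\infty y^{\varepsilon-1}\,dy=\infty$ (as $\varepsilon>0$), so finiteness forces $c=c'$, i.e.\ $\eta=\eta'$.

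I do not anticipate a genuine obstacle; the only point needing a little care is the exponent bookkeeping in the last paragraph, namely that the strictly positive margin $\varepsilon$ keeps the relevant tail exponent $\lambda-2-\beta=\varepsilon-1$ strictly above $-1$, so that the weighted integral in the second display diverges unless the dominant coefficient vanishes — which is exactly what pins down first $\beta$ and then $\eta$.
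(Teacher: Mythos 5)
Your proof is correct, but it takes a genuinely different route from the paper's. Both arguments begin the same way: lower the weight exponent to a common value (you use $\lambda=1+\min(\beta,\beta')+\min(\delta,\delta')$, the paper uses WLOG $\beta+\delta\le\beta'+\delta'$ and the weight $y^{1+\beta+\delta}$) and apply the triangle inequality for total variation to conclude that the weighted integral of $\big|\eta\,\Gamma(-1-\beta)^{-1}y^{-2-\beta}-\eta'\,\Gamma(-1-\beta)^{-1}y^{-2-\beta'}\big|$ over $(1,\infty)$ is finite. From there the paper treats $\widetilde\pi(dy)=\eta'\Gamma(-1-\beta)^{-1}y^{-2-\beta'}dy$ as a L\'evy measure satisfying Assumption \ref{asp: branching mechanism}, invokes Lemma \ref{lem:CEP} to get $|\eta z^{1+\beta}-\eta'z^{1+\beta'}|\le cz^{1+\beta+\delta}+cz^2$, divides by $z^{1+\beta}$ and lets $z\to0$, so the identification of $\beta'$ and $\eta'$ happens at the level of the branching mechanism's small-$z$ asymptotics. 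You instead stay entirely at the level of the densities and derive a contradiction from the divergence of $\int^\infty y^{\varepsilon-1}dy$: the dominant power $y^{\lambda-2-\min(\beta,\beta')}=y^{\varepsilon-1}$ has a strictly positive coefficient unless first $\beta=\beta'$ and then $c=c'$. Your route is more elementary and self-contained (no appeal to Lemma \ref{lem:CEP}, no transform of the measure), at the cost of the exponent bookkeeping you flag; the paper's route is shorter on the page because it recycles an estimate it has already proved and will use elsewhere. All the individual steps in your write-up check out: $\Gamma(-1-\beta)>0$ for $\beta\in(0,1)$, the symmetry justifying the WLOG $\beta<\beta'$ (the reduced condition is symmetric in $(c,\beta)$ and $(c',\beta')$), the lower bound $\tfrac12 c\,y^{\varepsilon-1}$ for large $y$, and the final divergence forcing $c=c'$, hence $\eta=\eta'$.
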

\begin{proof}
	Without loss of generality, we assume that $\beta+\delta \leq \beta'+ \delta'$.
	Using  the fact that $y^{1+\beta+ \delta} \leq y^{1+\beta'+\delta'}$ for $y \geq 1$, we get
  \[
    \int_{(1, \infty)} y^{1 + \beta + \delta}   \Big| \pi(dy) - \frac {\eta' ~dy} {\Gamma( - 1 - \beta)y^{2 + \beta'}} \Big|
    < \infty .
  \]
	Comparing this with Assumption \ref{asp: branching mechanism}, we get
  \[
    \int_{(1,\infty)} y^{ 1 + \beta + \delta} \Big| \frac { \eta ~dy} {\Gamma (- 1 - \beta) y^{2 + \beta}} - \frac {\eta' ~dy} {\Gamma (- 1 - \beta) y^{2 + \beta'}} \Big| < \infty.
  \]
	In other words, if we denote by $\widetilde \pi(dy)$ the measure $\eta' \Gamma(-1-\beta)^{-1} y^{-2-\beta'} dy$, then $\widetilde \pi$ is a L\'evy measure which satisfies Assumption \ref{asp: branching mechanism}.
	Applying Lemma \ref{lem:CEP} to $\widetilde \pi$, we have that there exists $c>0$ such that
  \[
    | \eta z^{ 1 + \beta } - \eta' z^{ 1 + \beta' } |
    \leq c z^{ 1 + \beta + \delta } + c z^2
    , \quad z \in \mathbb R_+.
  \]
  Dividing both sides by $z^{1+\beta}$ we have
   $
   | \eta - \eta' z^{ \beta' - \beta } |
    \leq cz^{\delta}+cz^{1-\beta}
    ,	z \in \mathbb R_+.
 $
	This implies that $ \eta' z^{\beta' - \beta} \xrightarrow[\mathbb R^+\ni z\to 0]{} \eta >0. $
	So we must have $\beta'= \beta$ and $\eta'= \eta$.
\end{proof}
\begin{lem}
  \label{lem: LlogL criterion}
  If $\psi$ satisfies Assumption  \ref{asp: branching mechanism}, then $\psi$ satisfies the $L \log L$ condition, i.e.,
   $
    \int_{(1,\infty)} y \log y~\pi(dy)
    < \infty.
 $
\end{lem}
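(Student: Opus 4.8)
The plan is to split the L\'evy measure $\pi$ on $(1,\infty)$ into a ``main part'' $\nu(dy):=\eta\,\Gamma(-1-\beta)^{-1}y^{-2-\beta}\,dy$ and a ``perturbation'' $\pi-\nu$, and to bound $\int_{(1,\infty)}y\log y\,\pi(dy)$ against each piece separately. First I would observe that for $\beta\in(0,1)$ one has $-1-\beta\in(-2,-1)$, hence $\Gamma(-1-\beta)>0$, so $\nu$ is a genuine non-negative measure (in any case only $|\Gamma(-1-\beta)|$ enters the estimates). Since $\pi=(\pi-\nu)+\nu$ as $\sigma$-finite (signed) measures on $(1,\infty)$ and both $\pi$ and $\nu$ are non-negative, for the non-negative function $y\mapsto y\log y$ we get
\[
  \int_{(1,\infty)}y\log y\,\pi(dy)
  \le \int_{(1,\infty)}y\log y\,|\pi-\nu|(dy)+\int_{(1,\infty)}y\log y\,\nu(dy).
\]

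Next I would handle the two terms. The $\nu$-term is finite by direct computation: it equals $\eta\,\Gamma(-1-\beta)^{-1}\int_1^\infty y^{-1-\beta}\log y\,dy$, which converges because $\beta>0$. For the $|\pi-\nu|$-term, fix $\delta>0$ small enough that Assumption \ref{asp: branching mechanism} holds, and use the elementary estimate $\log y\le C_\delta\,y^{\beta+\delta}$ for all $y\ge1$ (with $C_\delta$ depending only on $\beta+\delta>0$), which gives $y\log y\le C_\delta\,y^{1+\beta+\delta}$ for $y\ge1$. Hence
\[
  \int_{(1,\infty)}y\log y\,|\pi-\nu|(dy)
  \le C_\delta\int_{(1,\infty)}y^{1+\beta+\delta}\,\Big|\pi(dy)-\frac{\eta\,dy}{\Gamma(-1-\beta)y^{2+\beta}}\Big|<\infty,
\]
where finiteness is precisely Assumption \ref{asp: branching mechanism}. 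Combining the two bounds yields $\int_{(1,\infty)}y\log y\,\pi(dy)<\infty$, which is the $L\log L$ condition.

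There is no genuine obstacle here; the argument is a routine comparison. The only two points requiring a word are the sign of $\Gamma(-1-\beta)$ for $\beta\in(0,1)$ (so that $\nu$ is non-negative), noted above, and the elementary growth bound $\log y=O(y^{\beta+\delta})$, which is immediate from the fact that $\log$ is dominated by any positive power at infinity and is $\le 0$-dominated away from the problematic region, so no additive constant is needed on $[1,\infty)$.
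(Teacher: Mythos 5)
Your proof is correct and is essentially the paper's argument: both split $\pi$ against the stable density $\eta\,\Gamma(-1-\beta)^{-1}y^{-2-\beta}dy$, bound the difference term using $y\log y\lesssim y^{1+\beta+\delta}$ on $(1,\infty)$ together with Assumption \ref{asp: branching mechanism}, and check the stable part by direct integration. Your extra remark on the sign of $\Gamma(-1-\beta)$ is a harmless (and correct) addition.
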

\begin{proof}
	Using  Assumption \ref{asp: branching mechanism} and the fact that $y\log y \leq y^{1+\beta+\delta}$ for $y$ large enough, we get
  \[
    \int_{(1,\infty)} y \log y ~\Big| \pi(dy) - \frac { \eta ~dy } { \Gamma ( - 1 - \beta ) y^{ 2 + \beta } } \Big|
    < \infty.
  \]
	Therefore we have
  \[
    \int_{ ( 1, \infty ) } y \log y ~\Big( \pi(dy) - \frac { \eta ~dy } { \Gamma ( - 1 - \beta ) y^{ 2 + \beta } } \Big)
    < \infty.
  \]
  Combining this with
  $
  \int_{ ( 1, \infty ) } \frac { \eta \log y ~dy } { \Gamma ( - 1 - \beta ) y^{ 1 + \beta } }
    < \infty,
 $
  we immediately get the desired result.
\end{proof}

\subsection{Definition of controller}
\label{sec: controller}
Denote by $\mathcal B(\mathbb R^d, \mathbb C)$ the space of all $\mathbb C$-valued Borel functions on $\mathbb R^d$.
Recall that $\mathcal P$ is given in \eqref{eq: polynomial growth function}.
Define $\mathcal P^+:= \mathcal P \cap \mathcal B(\mathbb R^d, \mathbb R_+)$ and $\mathcal P^*:= \{f\in \mathcal B(\mathbb R^d, \mathbb C): |f|\in \mathcal P^+\}$.

In this paper, we say $R$ is a \emph{monotone} operator on $\mathcal P^+$ if $R:\mathcal P^+ \to \mathcal P^+$ satisfies that $Rf\leq Rg$ for all $f\leq g$ in $\mathcal P^+$.
For a function $h: [0,\infty) \to [0,\infty)$, we say $R$ is an \emph{$h$-controller} if $R$ is a monotone operator on $\mathcal P^+$ and that $R(\theta f)\leq h(\theta) Rf$ for all $f\in \mathcal P^+$ and $\theta \in [0,\infty)$.
For subsets $\mathcal D, \mathcal I\subset \mathcal P^*$ and an operator $R$ on $\mathcal P^+$, we say an operator $A$ is \emph{controlled by $R$ from $\mathcal D$ to $\mathcal I$} if $A:\mathcal D \to \mathcal I$ and that $|Af| \leq R|f|$ for all $f\in \mathcal D$;
we say a family of operators $\mathscr O$ is \emph{uniformly controlled by $R$ from $\mathcal D$ to $\mathcal I$} if
each operator $A\in \mathscr O$  is controlled by $R$ from $\mathcal D$ to $\mathcal I$.
For subsets $\mathcal D, \mathcal I\subset \mathcal P^*$ and a function $h:[0,\infty) \to [0,\infty)$, we say an operator $A$ (resp. a family of operators $\mathscr O$) is \emph{$h$-controllable} (resp. \emph{uniformly $h$-controllable}) from $\mathcal D$ to $\mathcal I$ if there exists an $h$-controller $R$ such that $A$ (resp. $\mathscr O$) is controlled (resp. uniformly controlled) by $R$ from $\mathcal D$ to $\mathcal I$.

For two operators $A: \mathcal D_A \subset \mathcal P^*\to \mathcal P^*$ and $B: \mathcal D_B \subset \mathcal P^*\to \mathcal P^*$, define $(A \times B)f (x):= Af(x) \times Bf(x)$ for all $f\in \mathcal D_A \cap \mathcal D_B$ and $x\in \mathbb{R}^d$.
For any $a \in \mathbb R$ and any operator $A :\mathcal D_A \to \mathcal B(\mathbb R^d, \mathbb C\setminus (-\infty, 0])$, define $A^{\times a}f(x):= (Af(x))^a$ for all $f\in \mathcal D_A$ and $x\in \mathbb R^d$.

The following lemma is easy to verify.
\begin{lem}
  \label{lem: property of controllable operators}
  For each $i \in \{0,1\}$, let $\mathscr O_i$ be a family of operators which is  uniformly controlled by an $h_i$-controller $R_i$ from $\mathcal D_i \subset \mathcal P^*$ to $ \mathcal I_i \subset \mathcal P^*$.
  Then the followings hold:
  \begin{enumerate}
  \item
    If $\mathcal I_0 \subset \mathcal D_1$, then $\{A_1A_0: A_i \in \mathscr O_i, i = 0,1\}$ is uniformly controlled by the $(h_1 \circ h_0)$-controller $R_1R_0$ from $\mathcal D_0$ to $\mathcal I_1$.
  \item
    $\{ A_1 \times A_0: A_i \in \mathscr O_i, i = 0,1\}$ is uniformly controlled by the $(h_1\times h_0)$-controller $R_1 \times R_0$ from $\mathcal D_0 \cap \mathcal D_1$ to $\mathcal P^*$.
  \item
    $\{ A_1 + A_0: A_i \in \mathscr O_i, i = 0,1\}$ is uniformly controlled by the $(h_1 \vee h_0)$-controller $R_1 + R_0$ from $\mathcal D_0 \cap \mathcal D_1$ to $\mathcal P^*$.
  \item
    If $\mathcal I_0 \subset \mathcal B(\mathbb R^d, \mathbb C \setminus (\infty, 0])$ and $a>0$, then $\{A^{\times a} : A \in \mathscr O_0\}$ is uniformly controlled by the $(h_0^a)$-controller $R_0^{\times a}$ from $\mathcal D_0$ to $\mathcal P^*$.
  \item
    Suppose that $\mathscr O_0 = \{A_\theta: \theta \in \Theta \}$ where $\Theta$ is an index set.
    Further suppose that $(\Theta, \mathcal J )$ is a measurable space and that $(\theta,x) \mapsto A_\theta f(x)$ is $\mathcal J \otimes \mathcal B(\mathbb R^d)$-measurable for each $f\in \mathcal D$.
    Then the following space of operators
    \[
      \Big\{ f \mapsto \int_{\Theta} A_\theta f~\nu(d\theta) : \nu \text{ is a probability measure on } (\Theta, \mathcal J) \Big\}
    \]
    is uniformly controlled by $R_0$ from $\mathcal D_0$ to $\mathcal P^*$.
  \end{enumerate}
\end{lem}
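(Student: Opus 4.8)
The plan is to check the five claims one at a time, each by directly unwinding the definitions. In every case there are exactly two things to verify: (a) that the operator proposed as the new controller — namely $R_1R_0$, $R_1\times R_0$, $R_1+R_0$, $R_0^{\times a}$, or simply $R_0$ in part (5) — is a monotone operator $\mathcal P^+\to\mathcal P^+$ obeying the scaling inequality $R(\theta f)\le h(\theta)Rf$ with the advertised dominating function $h$; and (b) that each operator in the new family sends the advertised domain into the advertised codomain and is dominated in modulus by this controller. I would record once, at the start, the closure facts that get used repeatedly: pointwise products, sums, and positive powers of nonnegative functions of polynomial growth are again of polynomial growth (for a power one absorbs a non-integer exponent via $(1+|x|)^{na}\le(1+|x|)^{\lceil na\rceil}$), and these operations preserve Borel measurability.

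For (1), monotonicity of $R_1R_0$ follows by composing monotonicity of $R_0$ and then of $R_1$, and the scaling bound from $R_1R_0(\theta f)\le R_1\!\big(h_0(\theta)R_0f\big)\le h_1\!\big(h_0(\theta)\big)R_1R_0f$, where the middle step uses monotonicity of $R_1$. The hypothesis $\mathcal I_0\subset\mathcal D_1$ is precisely what makes $A_1A_0$ well defined, with $A_1A_0\colon\mathcal D_0\to\mathcal I_1$, and then $|A_1A_0f|\le R_1|A_0f|\le R_1R_0|f|$, again using monotonicity of $R_1$ together with $|A_0f|\le R_0|f|$.

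Parts (2) and (3) follow the same template: monotonicity is inherited because $t\mapsto st$ and $t\mapsto s+t$ are nondecreasing in $t$ for $s\ge 0$; the scaling constants combine as $h_1(\theta)h_0(\theta)$ (resp.\ $h_1(\theta)\vee h_0(\theta)$) after using both scaling inequalities and, in the additive case, $aR_1f+bR_0f\le(a\vee b)(R_1f+R_0f)$; and the modulus bounds are $|A_1f\cdot A_0f|=|A_1f|\,|A_0f|\le (R_1\times R_0)|f|$ and $|A_1f+A_0f|\le|A_1f|+|A_0f|\le(R_1+R_0)|f|$. For (4) the extra ingredients are that $t\mapsto t^a$ is nondecreasing on $[0,\infty)$ when $a>0$ and that $|z^a|=|z|^a$ for the principal branch on $\mathbb C\setminus(-\infty,0]$; the hypothesis $\mathcal I_0\subset\mathcal B(\mathbb R^d,\mathbb C\setminus(-\infty,0])$ is exactly what guarantees $A^{\times a}f$ is well defined, and then $|A^{\times a}f|=|Af|^a\le(R_0|f|)^a=R_0^{\times a}|f|$, with scaling constant $h_0(\theta)^a$.

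For (5), fix a probability measure $\nu$ and a function $f\in\mathcal D_0$, and set $B_\nu f(x):=\int_\Theta A_\theta f(x)\,\nu(d\theta)$. Since $(\theta,x)\mapsto A_\theta f(x)$ is $\mathcal J\otimes\mathcal B(\mathbb R^d)$-measurable and bounded in modulus by the $x$-only function $R_0|f|(x)<\infty$, the integral exists for each $x$, and $x\mapsto B_\nu f(x)$ is Borel by the standard parametric-integral argument; then $|B_\nu f|\le\int_\Theta|A_\theta f|\,\nu(d\theta)\le R_0|f|$ because $\nu(\Theta)=1$, which yields both $B_\nu f\in\mathcal P^*$ and control by $R_0$. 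The whole argument is bookkeeping; the only points requiring a moment of care — hence the closest thing to an obstacle — are the measurability/Fubini step in (5) and, in (4), confirming that the principal-branch power is compatible with taking moduli and with polynomial growth.
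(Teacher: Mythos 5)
Your verification is correct: the paper states this lemma without proof ("easy to verify"), and your routine unwinding of the definitions — checking monotonicity, the scaling inequality with the advertised $h$, and the modulus domination, plus the closure of $\mathcal P^+$ under products, sums, powers and the measurability/domination step for the parametric integral in (5) — is exactly the intended argument. No gaps; the details you flag (the identity $|z^a|=|z|^a$ on $\mathbb C\setminus(-\infty,0]$ and the probability-measure bound $\nu(\Theta)=1$) are the right points of care.
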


\subsection{Controllers for the super-OU processes}
\label{sec: h-controller}
Let $X$ be our super-OU process with branching mechanism $\psi$ satisfying
Assumptions \ref{asp: Greys condition} and \ref{asp: branching mechanism}.
In this subsection, we will define several operators and study some of their properties that will be used in this paper.

Define $\psi_0(z) = \psi(z) + \alpha z$ for $z\in \mathbb{R}_+$.
According to Lemma \ref{lem:CEP}, $\psi, \psi_1$ and $\psi_0$ can all be uniquely extended as complex-valued continuous functions on $\mathbb C_+$ which are also holomorphic on $\mathbb C^0_+$.
For all $f\in \mathcal B(\mathbb R^d, \mathbb C_+)$ and $x\in \mathbb R^d$, define $\Psi f (x) = \psi\circ f(x)$, $\Psi_0 f(x)= \psi_0 \circ f(x)$ and $\Psi_1 f(x)= \psi_1 \circ f(x)$.

For all $t\in [0,\infty), x\in \mathbb R^d $ and $f \in \mathcal{P}$, let $ U_tf(x) := \operatorname{Log} \mathbb P_{\delta_x}[e^{i\theta X_t(f)}]|_{\theta = 1} $ be the value of the characteristic exponent of the infinitely divisible random variable $X_t(f)$ (See the paragraph after Lemma \ref{lem: Lip of power function}).
It follows from \eqref{eq: -v has positive real part} that $-U_tf(x)$ takes values in $\mathbb C_+$. Furthermore, we know from Proposition \ref{prop: complex FKPP-equation} that
\begin{align}
  \label{eq:chareq2}
  U_tf(x) - \int_0^t P^\alpha_{t-s} \Psi_0(-U_sf)(x)ds
  = i P^{\alpha}_t f(x)
  , \quad t\in [0,\infty), x\in \mathbb{R}^d, f\in \mathcal P.
\end{align}

For all $t\geq 0$ and $f\in \mathcal P$, we define
\begin{align}
  \label{eq: def of Zf}
  Z_t f
  := \int_0^t P^\alpha_{t-s}\big( \eta (-i P^\alpha_sf)^{1+\beta}\big)ds,
  & \qquad Z'_t f
    := \int_0^t P^\alpha_{t-s}\big( \eta (-U_s f)^{1+\beta}\big)ds,
  \\ Z''_t f
  := \int_0^t P^\alpha_{t-s}\Psi_1(-U_s f)ds,
  & \qquad\  Z'''_t f
    := (Z'_t - Z_t+ Z''_t)f.
\end{align}
Then we have that
\begin{align}
  \label{eq: key equality}
  U_t - i P^\alpha_t
  = Z'_t + Z''_t
  = Z_t + Z'''_t
  , \quad t\geq 0.
\end{align}
For all $\kappa \in \mathbb Z_+$ and $f\in \mathcal P$, define
\begin{align}
  \label{eq:Q}
  Q_\kappa f
  := \sup_{t\geq 0} e^{\kappa b t}|P_t f|,
  \qquad  Q f
  := Q_{\kappa_f}f.
\end{align}
Then according to \cite[Fact 1.2]{MarksMilos2018CLT}, $Q$ is an operator from $\mathcal P$ to $\mathcal P$.

\begin{lem}
  \label{lem: upper bound for usgx}
  Under Assumptions \ref{asp: Greys condition} and \ref{asp: branching mechanism}, the following statements are true:
  \begin{enumerate}
  \item
    $(-U_t)_{0\leq t\leq 1}$ is uniformly $\theta$-controllable from $\mathcal P$ to $\mathcal P^*\cap \mathcal B(\mathbb R^d, \mathbb C_+)$.
  \item
    $(P^\alpha_t)_{0\leq t\leq 1}$ is uniformly $\theta$-controllable on $\mathcal P^*$.
  \item
    $\Psi_0$ is $(\theta^2\vee \theta^{1+\beta})$-controllable from $\mathcal P^* \cap \mathcal B(\mathbb R^d, \mathbb C_+)$ to $\mathcal P^*$.
  \item
    $(U_t- iP_t^{\alpha})_{0\leq t\leq 1}$ is uniformly $(\theta^2\vee \theta^{1+\beta})$-controllable from $\mathcal P$ to $\mathcal P^*$.
  \item
    $(Z'_t-Z_t)_{0\leq t\leq 1}$ is uniformly $(\theta^{2+\beta}\vee \theta^{1+2\beta})$-controllable from $\mathcal P$ to $\mathcal P^*$.
  \item
    For all $\delta > 0$ small enough, we have that $(Z''_t)_{0\leq t\leq 1}$ is uniformly $(\theta^2\vee \theta^{1+\beta+\delta})$-controllable from $\mathcal P$ to $\mathcal P^*$.
  \item
    For all $\delta > 0$ small enough, we have that $(Z'''_t)_{0\leq t\leq 1}$ is uniformly $(\theta^{2+\beta}\vee \theta^{1+\beta+\delta})$-controllable from $\mathcal P$ to $\mathcal P^*$.
  \end{enumerate}
\end{lem}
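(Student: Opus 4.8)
We first record two ``static'' controllers. Define $R_{P}g:=\sup_{0\le t\le 1}P^{\alpha}_{t}g$ for $g\in\mathcal P^{+}$. Since $P_{t}g(x)=\Pi_{x}[g(\xi_{t})]$ is dominated by a polynomial in $|x|$ uniformly for $t\in[0,1]$ (essentially \cite[Fact 1.2]{MarksMilos2018CLT}), $R_{P}$ maps $\mathcal P^{+}$ into $\mathcal P^{+}$, is monotone, and satisfies $R_{P}(\theta g)=\theta R_{P}g$, so it is a $\theta$-controller; as $|P^{\alpha}_{t}f|\le P^{\alpha}_{t}|f|\le R_{P}|f|$ for $t\in[0,1]$, this already gives (2). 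Next, from $\psi_{0}(z)=\psi(z)+\alpha z=\eta z^{1+\beta}+\psi_{1}(z)$ and Lemma~\ref{lem:CEP} we obtain a constant $C>0$ with $|\psi_{0}(z)|\le C(|z|^{1+\beta}+|z|^{2})$ for all $z\in\mathbb C_{+}$; hence $R_{0}g:=C(g^{1+\beta}+g^{2})$ is a $(\theta^{2}\vee\theta^{1+\beta})$-controller and $|\Psi_{0}f|\le R_{0}|f|$ on $\mathcal P^{*}\cap\mathcal B(\mathbb R^{d},\mathbb C_{+})$, which is (3). The same reasoning applied to the other bound in Lemma~\ref{lem:CEP} shows that, for every small $\delta>0$, $\Psi_{1}$ is $(\theta^{2}\vee\theta^{1+\beta+\delta})$-controllable from $\mathcal P^{*}\cap\mathcal B(\mathbb R^{d},\mathbb C_{+})$ to $\mathcal P^{*}$ via $R_{1}''g:=C(g^{1+\beta+\delta}+g^{2})$.

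Part (1) is the heart of the matter, the subtlety being that the scaling must be \emph{linear}. The plan is to use that, by the branching property, $X_{t}$ under $\mathbb P_{\delta_{x}}$ is infinitely divisible, and hence so is the $\mathbb R_{+}^{2}$-valued pair $(X_{t}(f^{+}),X_{t}(f^{-}))$, which moreover has finite mean $(P^{\alpha}_{t}f^{+}(x),P^{\alpha}_{t}f^{-}(x))$. A non-negative infinitely divisible vector has no Gaussian component, so its Laplace exponent takes the form $\Phi(\lambda_{1},\lambda_{2})=a_{1}\lambda_{1}+a_{2}\lambda_{2}+\int\bigl(1-e^{-\lambda_{1}y_{1}-\lambda_{2}y_{2}}\bigr)\Lambda(dy)$ with $a_{i}\ge0$ and $\int(y_{1}+y_{2})\Lambda(dy)<\infty$ (the last by finiteness of the mean). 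Continuing analytically to $(\lambda_{1},\lambda_{2})=(-i,i)$ yields $U_{t}f(x)=i(a_{1}-a_{2})-\int\bigl(1-e^{i(y_{1}-y_{2})}\bigr)\Lambda(dy)$, whence, using $|1-e^{is}|\le|s|$ and $|y_{1}-y_{2}|\le y_{1}+y_{2}$,
\[
  |U_{t}f(x)|\le a_{1}+a_{2}+\int(y_{1}+y_{2})\Lambda(dy)=P^{\alpha}_{t}f^{+}(x)+P^{\alpha}_{t}f^{-}(x)=P^{\alpha}_{t}|f|(x).
\]
Together with $-U_{t}f\in\mathbb C_{+}$ from \eqref{eq: -v has positive real part}, this shows $(-U_{t})_{0\le t\le1}$ is controlled by the $\theta$-controller $R_{P}$ from $\mathcal P$ to $\mathcal P^{*}\cap\mathcal B(\mathbb R^{d},\mathbb C_{+})$, proving (1). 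I expect this infinite-divisibility argument, together with the attendant analytic-continuation bookkeeping, to be the only genuinely non-routine point; everything else is assembled mechanically from (1)--(3) via Lemma~\ref{lem: property of controllable operators}.

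For (4), rewrite \eqref{eq:chareq2} as $U_{t}-iP^{\alpha}_{t}=\int_{0}^{t}P^{\alpha}_{t-s}\Psi_{0}(-U_{s}\,\cdot)\,ds$. By (1), (3), (2) and part~(1) of Lemma~\ref{lem: property of controllable operators}, the family $\{P^{\alpha}_{t-s}\Psi_{0}(-U_{s}):0\le s\le t\le1\}$ is uniformly controlled by the $(\theta^{2}\vee\theta^{1+\beta})$-controller $R_{P}R_{0}R_{P}$; integrating in $s$ over $[0,t]$ with $t\le1$ (part~(5) of the same lemma, together with monotonicity) preserves this, which is (4). For (5), note $(Z'_{t}-Z_{t})f=\eta\int_{0}^{t}P^{\alpha}_{t-s}\bigl((-U_{s}f)^{1+\beta}-(-iP^{\alpha}_{s}f)^{1+\beta}\bigr)ds$, where both $-U_{s}f$ and $-iP^{\alpha}_{s}f$ are $\mathbb C_{+}$-valued. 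Using the Lipschitz estimate for the power map (Lemma~\ref{lem: Lip of power function}), of the form $|z^{1+\beta}-w^{1+\beta}|\le C|z-w|(|z|^{\beta}+|w|^{\beta})$ for $z,w\in\mathbb C_{+}$, controlling $|z-w|$ by (4) and $|z|^{\beta}+|w|^{\beta}$ by (1)--(2), and invoking parts~(2) and (4) of Lemma~\ref{lem: property of controllable operators}, one finds that the integrand is uniformly $(\theta^{2}\vee\theta^{1+\beta})\cdot\theta^{\beta}=(\theta^{2+\beta}\vee\theta^{1+2\beta})$-controllable; applying $\eta P^{\alpha}_{t-s}$ and integrating finishes it. Part (6) is the exact analogue of (4) with $\Psi_{1}$ and $R_{1}''$ in place of $\Psi_{0}$ and $R_{0}$.

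Finally, (7) follows from $Z'''_{t}=(Z'_{t}-Z_{t})+Z''_{t}$ and part~(3) of Lemma~\ref{lem: property of controllable operators}: the sum is controlled with $h=(\theta^{2+\beta}\vee\theta^{1+2\beta})\vee(\theta^{2}\vee\theta^{1+\beta+\delta})$, and a case check at $\theta=1$ (using $\beta<1$, $\delta<\beta$ and $\delta<1-\beta$) shows $\theta^{1+2\beta}\vee\theta^{2}\le\theta^{2+\beta}\vee\theta^{1+\beta+\delta}$ for all $\theta\ge0$, so in fact $h=\theta^{2+\beta}\vee\theta^{1+\beta+\delta}$, as required. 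In each of (1) and (3)--(7), the assertion that the image lands in $\mathcal P^{*}$ (respectively $\mathcal P^{*}\cap\mathcal B(\mathbb R^{d},\mathbb C_{+})$) is automatic once the controlling bound $|Af|\le R|f|\in\mathcal P^{+}$ is in hand, using additionally $-U_{t}f\in\mathbb C_{+}$ and that every intermediate power $(\cdot)^{1+\beta}$ is evaluated at a $\mathbb C_{+}$-valued function.
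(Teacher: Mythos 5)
Your proposal is correct, and for parts (2)--(7) it follows the paper's proof essentially verbatim: the same controllers $\sup_{0\le u\le 1}P^{\alpha}_u$ and $C(g^{1+\beta}+g^2)$ (the paper keeps three terms $\eta g^{1+\beta}+Cg^2+Cg^{1+\beta+\delta}$, an immaterial difference), part (4) via \eqref{eq:chareq2} and composition/averaging from Lemma \ref{lem: property of controllable operators}, part (5) via the Lipschitz bound of Lemma \ref{lem: Lip of power function} applied to $(-U_sf)^{1+\beta}-(-iP^{\alpha}_sf)^{1+\beta}$, part (6) as the $\Psi_1$-analogue of (4), and part (7) by summation with the elementary comparison of exponents (which the paper leaves implicit).

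The one place where you genuinely diverge is part (1). The paper does not argue from scratch there: it simply quotes \eqref{eq: -v has positive real part} and the bound \eqref{eq: upper bound for vf}, $|U_tf(x)|\le P^{\alpha}_t|f|(x)$, which is established in the appendix from the Kuznetsov-measure identity $U_tf(x)=\mathbb N_x[e^{iW_t(f)}-1]$ (Campbell's formula) together with $|e^{is}-1|\le|s|$ and the first-moment formula $\mathbb N_x[W_t(|f|)]=P^{\alpha}_t|f|(x)$. You instead re-derive the same inequality from the L\'evy--Khintchine representation of the non-negative infinitely divisible pair $(X_t(f^+),X_t(f^-))$ with finite mean (no Gaussian part, nonnegative drift, L\'evy measure on $\mathbb R_+^2$ with $\int(y_1+y_2)\,\Lambda(dy)<\infty$), evaluated in the direction $(1,-1)$. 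This is a valid alternative: it buys you independence from the canonical-measure machinery, at the cost of having to justify infinite divisibility of $(X_t(f^+),X_t(f^-))$ under $\mathbb P_{\delta_x}$, the representation for unbounded $f\in\mathcal P\subset L_1(\xi)$, and the identification of the distinguished logarithm with the exponent (the ``bookkeeping'' you flag); in effect this reproves, in finite dimensions, exactly what the paper's appendix supplies, so within this paper the shorter route is simply to cite \eqref{eq: upper bound for vf}, as the authors do. No gaps otherwise.
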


\begin{proof}
  (1). According to \eqref{eq: -v has positive real part}, $-U_t$ is an operator from $\mathcal P$ to $\mathcal B(\mathbb R^d, \mathbb C_+)$.
  It follows from \eqref{eq: upper bound for vf} that for all $g\in \mathcal P$, $0\leq t\leq 1$ and $x\in \mathbb R^d$, we have $ |U_t g(x)| \leq \sup_{0\leq u\leq 1}P_u^\alpha |g| (x). $
  We claim that $f\mapsto\sup_{0\leq u\leq 1}P^{\alpha}_u f$ is a map from $\mathcal P^+$ to $\mathcal P^+$. In fact, if $f\in \mathcal P^+$, there exists constant $c>0$ such that
  \[
    0
    \leq \sup_{0\leq u\leq 1}P^{\alpha}_u f
    \leq \sup_{0\leq u\leq 1} P_u (e^{\alpha u} e^{-\kappa_f u} e^{\kappa_f u} f )
    \leq c \sup_{0\leq u\leq 1} (e^{\kappa_fu}P_u f) \leq c Qf \in \mathcal P.
  \]
	It is clear that $f\mapsto\sup_{0\leq u\leq 1}P^{\alpha}_u f$ is a $\theta$-controller.

  (2). Similar to the proof of (1).

  (3). By Lemma \ref{lem:CEP}, there exist $C, \delta >0$ satisfying $\beta+\delta< 1$ such that for all $ f \in \mathcal P^* \cap \mathcal B( \mathbb R^d, \mathbb C_+ )$, it holds that $ |\Psi_0 f| \leq \eta |f|^{1+\beta} + |\Psi_1 f| \leq \eta |f|^{1+\beta} + C|f|^2+ C |f|^{1+\beta + \delta}.$
  Note that the operator
  \[
    f \mapsto \eta f^{1+\beta} + Cf^2+ Cf^{1+\beta + \delta}
    , \quad f\in \mathcal P^+,
  \]
  is a $(\theta^2 \vee \theta^{1+\beta})$-controller.

  (4). From (1)--(3) above and Lemma \ref{lem: property of controllable operators}.(1), we know that the operators
  \[
    f \mapsto P^\alpha_{t-s}\Psi_0(-U_sf)
    , \quad 0\leq s\leq t\leq 1,
  \]
  are uniformly $(\theta^2\vee \theta^{1+\beta})$-controllable.
  Combining this with \eqref{eq:chareq2} and
  Lemma \ref{lem: property of controllable operators}.(5), we get the desired result.

  (5). Notice that from Lemma \ref{lem: Lip of power function},
  \[
    |(-U_t f)^{1+\beta} - (-iP^\alpha_t f)^{1+\beta} |
    \leq  (1+\beta) |U_t f-iP^\alpha_t f|(|U_t f|^{\beta}+|i P^\alpha_t f|^{\beta}).
  \]
  Now using (1), (2) and (4) above, and Lemma \ref{lem: property of controllable operators}, we get that the operators
  \[
    f \mapsto (-U_t f)^{1+\beta} - (-iP^\alpha_t f)^{1+\beta},\quad 0\leq t\leq 1,
  \]
  are uniformly $(\theta^{2+\beta}\vee \theta^{1+2\beta})$-controllable.
  Combining with Lemma \ref{lem: property of controllable operators}, and
  \[
    (Z'_t - Z_t)f
    = \int_0^t P^\alpha_{t-s}\Big( \eta ((-U_s f)^{1+\beta} - (-iT_s^\alpha f)^{1+\beta} )\Big)ds
    , \quad 0\leq t\leq 1, f\in \mathcal P,
  \]
  we get the desired result.

  (6). By  Lemma \ref{lem:CEP}, for all $\delta > 0$ small enough, there exists  $C>0$ such that
  \[
    |\Psi_1(f)|
    \le C(|f|^2+|f|^{1+\beta+ \delta})
    , \quad f\in \mathcal P^*\cap\mathcal B(\mathbb R^d, \mathbb C_+).
  \]
  Note that, for all $\delta, C>0$,
  \[
    f \mapsto C(f^2+f^{1+\beta+\delta})
    , \quad f\in \mathcal P^+
  \]
  is a $(\theta^2 \vee \theta^{1+\beta+\delta})$-controller.
  Therefore, for all $\delta > 0$ small enough, we have that $\Psi_1$ is a $(\theta^2 \vee \theta^{1+\beta+\delta})$-controllable operator from $\mathcal P^*\cap\mathcal B(\mathbb R^d, \mathbb C_+)$ to $\mathcal P^*$.
  Combining  this with  (1)--(2) above,
  and Lemma \ref{lem: property of controllable operators}, we get that, for all $\delta > 0$ small enough, the operators
  \[
    f
    \mapsto Z_t'' f
    = \int_0^t P_{t-s}^\alpha \Psi_1(-U_sf)ds
    , \quad 0\leq t\leq 1,
  \]
  are uniformly $(\theta^2 \vee \theta^{1+\beta+\delta})$-controllable from $\mathcal P$ to $\mathcal P^*$.

  (7). Since $Z'''_t = (Z'_t-Z_t)+Z''_t$, the desired result follows from (5)--(6) above and Lemma \ref{lem: property of controllable operators}.(3).
\end{proof}

\subsection{Stable distributions}
\label{sec: stable distributions}
Recall that the operators $(T_t)_{t\geq 0}$ are defined by \eqref{eq:I:R:1}, and the functionals $(m_{t})_{0\leq t< \infty}$ and $m$ are given by \eqref{eq:I:R:2} and \eqref{eq:I:R:3} respectively.
\begin{lem}
  \label{lem:m}
$(T_t)_{t\geq 0}$,  $(m_{t})_{0\leq t< \infty}$ and  $m$ are well defined.
\end{lem}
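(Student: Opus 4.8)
The plan is to reduce all three assertions to the spectral decomposition of $\mathcal P$ together with the fact, recalled after \eqref{eq:Q}, that $Q$ maps $\mathcal P$ into $\mathcal P$. Fix $f\in\mathcal P$. Since there are only finitely many $p\in\mathbb Z_+^d$ with $|p|b\le\alpha\tilde\beta$, the partial sum $f_c+f_l:=\sum_{|p|b\le\alpha\tilde\beta}\langle f,\phi_p\rangle_\varphi\phi_p$ is a finite linear combination of Hermite polynomials, hence lies in $\mathcal P$; consequently $f_s:=f-f_c-f_l\in\mathcal P$, and since $\langle f_s,\phi_p\rangle_\varphi=0$ whenever $|p|b\le\alpha\tilde\beta$, the three pieces satisfy $f_s\in\mathcal C_s$, $f_c\in\mathcal C_c$, $f_l\in\mathcal C_l$. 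Because $e^{-||p|b-\alpha\tilde\beta|t}\le1$ for $t\ge0$, the series in \eqref{eq:I:R:1} converges in $L^2(\varphi)$; sorting its terms by the sign of $|p|b-\alpha\tilde\beta$ identifies the sum with
\[
  T_tf=e^{\alpha\tilde\beta t}P_tf_s+f_c+\sum_{|p|b<\alpha\tilde\beta}e^{-(\alpha\tilde\beta-|p|b)t}\langle f,\phi_p\rangle_\varphi\phi_p .
\]
The last two terms are finite Hermite sums, so they lie in $\mathcal P$; for the first, every $p$ contributing to $f_s$ has $|p|\ge\kappa_{f_s}$ with $\kappa_{f_s}b>\alpha\tilde\beta$, so
\[
  |T_tf_s|=e^{\alpha\tilde\beta t}|P_tf_s|\le e^{(\alpha\tilde\beta-\kappa_{f_s}b)t}\,Qf_s\le Qf_s\in\mathcal P,\qquad t\ge0 .
\]
Hence $T_t$ is a well-defined operator from $\mathcal P$ to $\mathcal P$, and moreover there is a single $g_f\in\mathcal P$, depending only on $f$, with $\sup_{t\ge0}|T_tf|\le g_f$.

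Granting this, I would obtain $m_t[f]$ at once. As $f$ is real-valued, $T_uf(x)$ is real, so $-iT_uf(x)$ is purely imaginary and $(-iT_uf(x))^{1+\beta}$ (principal branch, with value $0$ at $0$) is well defined with $|(-iT_uf(x))^{1+\beta}|=|T_uf(x)|^{1+\beta}\le g_f(x)^{1+\beta}$, which is $\varphi$-integrable since $g_f$ has polynomial growth. The map $(u,x)\mapsto T_uf(x)$ is jointly measurable (a finite Hermite sum plus $e^{\alpha\tilde\beta u}\Pi_x[f_s(\xi_u)]$), so by Tonelli $u\mapsto\int_{\mathbb R^d}(-iT_uf(x))^{1+\beta}\varphi(x)\,dx$ is measurable and bounded uniformly in $u$; hence its integral over $[0,t]$ is finite and $m_t[f]\in\mathbb C$ for every $t\in[0,\infty)$ and $f\in\mathcal P$.

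For $m$, let $f\in\mathcal P\setminus\{0\}$. If $f\in\mathcal C_s\oplus\mathcal C_l$, then $f_c=0$ and the bound above sharpens: $|T_uf(x)|\le e^{-cu}h_f(x)$ for all $u\ge0$, where $h_f\in\mathcal P$ and, with the obvious conventions when $f_s$ or $f_l$ vanishes,
\[
  c:=\min\Big(\kappa_{f_s}b-\alpha\tilde\beta,\ \min_{|p|b<\alpha\tilde\beta,\ \langle f,\phi_p\rangle_\varphi\neq0}(\alpha\tilde\beta-|p|b)\Big)>0 ;
\]
therefore $\big|\int_{\mathbb R^d}(-iT_uf(x))^{1+\beta}\varphi(x)\,dx\big|\le e^{-c(1+\beta)u}\int_{\mathbb R^d}h_f^{1+\beta}\varphi\,dx$, which is integrable over $[0,\infty)$, so $m_t[f]$ converges as $t\to\infty$ to $\eta\int_0^\infty\!\!\int_{\mathbb R^d}(-iT_uf(x))^{1+\beta}\varphi(x)\,dx\,du=:m[f]$. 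If instead $f\notin\mathcal C_s\oplus\mathcal C_l$, then $f_c\neq0$ and $T_uf(x)\to f_c(x)$ pointwise as $u\to\infty$ (the $\mathcal C_s$ and $\mathcal C_l$ pieces decay); dominated convergence gives $\int_{\mathbb R^d}(-iT_uf(x))^{1+\beta}\varphi(x)\,dx\to\int_{\mathbb R^d}(-if_c(x))^{1+\beta}\varphi(x)\,dx$, and the Cesàro theorem for the time average yields $\tfrac1t m_t[f]\to\eta\int_{\mathbb R^d}(-if_c(x))^{1+\beta}\varphi(x)\,dx=:m[f]$. In either case the limit in \eqref{eq:I:R:3} exists.

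The routine parts — joint measurability, the finite Hermite sums lying in $\mathcal P$, and the applications of Tonelli and Cesàro — cause no trouble. The one step that genuinely needs care is the claim that $T_tf$ remains in $\mathcal P$ together with the uniform and exponentially decaying bounds on $|T_uf|$, since these are precisely what make the two time integrals defining $m_t$ and $m$ converge; they rest on the identity $T_tf_s=e^{\alpha\tilde\beta t}P_tf_s$ and the boundedness of $Q$ on $\mathcal P$, and I expect that (modest) point to be the main obstacle.
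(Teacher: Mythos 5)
Your proposal is correct and follows essentially the same route as the paper: decompose $f$ into its $\mathcal C_s$, $\mathcal C_c$, $\mathcal C_l$ pieces, identify $T_t$ on the high-frequency part with $e^{\alpha\tilde\beta t}P_t$ and bound it via $Q$ (a polynomial-growth majorant with exponential decay), and observe $T_tf_c=f_c$ for the critical part. The only deviation is in the critical case, where you use dominated convergence plus a Ces\`aro argument instead of the paper's Lipschitz estimate for $z\mapsto z^{1+\beta}$ (which gives the explicit rate in \eqref{eq:P:S:1}); this is a slightly more elementary variant and fully sufficient for the well-definedness claimed in the lemma.
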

\begin{proof}
  \emph{Step 1.} We will show that for each $f \in \mathcal P$, there exists $h \in \mathcal P$ such that $ |T_tf| \leq  e^{- \delta t} h$ for each $t\geq 0$, where
  \begin{align}
    \label{eq:m:1}
    \delta
    := \inf \big\{ |\tilde \beta \alpha - |p|b| : p \in \mathbb Z_+^d, \langle f, \phi_p\rangle_\varphi \neq 0 \big\}
    \geq 0.
  \end{align}
  From this upper bound, it can be verified that $(T_t)_{t\geq 0}$ and $(m_{t})_{0 \leq t < \infty}$ are well defined.
  In fact, we can write $f = f_0 + f_1$ with $f_0\in \mathcal C_s \oplus \mathcal C_c$ and $f_1 \in \mathcal C_l$.
  According to \cite[Lemma 2.7]{MarksMilos2018CLT}, there exists $h_0 \in \mathcal P$ such that for each $t\geq 0$,
  \[
    |T_t f_0|
    = \Big| \sum_{p \in  \mathbb Z_+^d: \tilde \beta \alpha \leq |p|b } e^{- ( |p| b - \tilde \beta \alpha ) t} \langle f, \phi_p \rangle_\varphi \phi_p \Big|
    = e^{\tilde \beta \alpha t} | P_t f_0 |
    \leq e^{- ( \kappa_{(f_0)} b - \tilde \beta \alpha) t} h_0
    \leq e^{- \delta t} h_0.
  \]
  On the other hand
  \[
    |T_t f_1|
    \leq e^{- \delta t}\sum_{p \in \mathbb Z_+^d : \tilde \beta \alpha > |p|b} |\langle f, \phi_p \rangle_\varphi \phi_p|
    =: e^{- \delta t} h_1,
    \quad t\geq 0.
  \]
  So the desired result in this step follows with $h := h_0 + h_1$.

  \emph{Step 2.} We will show that if $f \in \mathcal C_s \oplus \mathcal C_l$, then $m[f]$ is well defined.
  In fact, let $\delta$ be given by \eqref{eq:m:1}, then in this case $\delta > 0$.
  Now, according to Step 1 there exists $h \in \mathcal P$ such that $|T_tf| \leq e^{- \delta t} h$ for each $t\geq 0$.
  This exponential decay implies the desired result in this step.

  \emph{Step 3.} We will show that if $f\in \mathcal P \setminus (\mathcal C_s \oplus \mathcal C_l)$, then $m[f]$ is also well defined.
  In fact, $f$ can be decomposed as $f = f_c + f_{sl}$ where $f \in \mathcal C_c\setminus \{0\}$ and $f_{sl}\in \mathcal C_s \oplus \mathcal C_l$.
  Note that $T_t f_c = f_c$ for each $t\geq 0$.
  Also note that in Step 2, we already have shown that there exist $\delta > 0$ and $h \in \mathcal P^+$ such that for each $t\geq 0$, we have $|T_t f_{sl}| \leq e^{- \delta t}h$.
  Therefore, using Lemma \ref{lem: Lip of power function} we have
  \begin{align}
    &|(-iT_t f)^{1+\beta} - (-i f_c)^{1+\beta}|
      \leq (1+\beta) ( |T_tf|^\beta + |f_c|^\beta) |T_tf_{sl}|
    \\&\leq (1+\beta) ( |f_c + T_t f_{sl}|^\beta + |f_c|^\beta) e^{- \delta t} h
    \leq (1+\beta) ( (|f_c| + |h|)^\beta + |f_c|^\beta) e^{- \delta t} h
    =: e^{- \delta t} g,
  \end{align}
  where $g\in \mathcal P^+$.
  Therefore
  \begin{align}
    \label{eq:P:S:1}
    &\Big| \frac{1}{t} m_t[f] - \langle (-if_c)^{1+\beta}, \varphi\rangle\Big|
      = \Big| \frac{1}{t} \cdot t \int_0^1  \big\langle (-iT_{rt}f)^{1+\beta} - (-if_c)^{1+\beta}, \varphi\big \rangle ~dr \Big|\\
    &\leq \int_0^1  \big\langle |(-iT_{rt}f)^{1+\beta} - (-if_c)^{1+\beta}|, \varphi\big \rangle ~dr 
      \leq \langle g,\varphi\rangle\int_0^1 e^{-\delta rt} ~dr
      \xrightarrow[t\to \infty]{} 0.
      \qedhere
  \end{align}
\end{proof}

\begin{prop}
  \label{prop:PL:S}
  For each $f \in \mathcal P\setminus \{0\}$, there exists a non-degenerate $(1+\beta)$-stable random variable $\zeta^f$ such that $ E[e^{i\theta\zeta^f}] = e^{m[\theta f]}$ for all $\theta \in \mathbb R$.
\end{prop}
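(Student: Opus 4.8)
The plan is to extract a closed form for $\theta\mapsto m[\theta f]$ from Lemma \ref{lem:m}, and then to recognize it as the characteristic exponent of an explicitly constructed $(1+\beta)$-stable random variable.

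First I would record, using Lemma \ref{lem:m} and its proof, that
$m[f]=\eta\int_0^\infty\int_{\mathbb R^d}(-ig_u(x))^{1+\beta}\varphi(x)\,dx\,du$, where $g_u:=T_uf$ if $f\in\mathcal C_s\oplus\mathcal C_l$ (the $u$-integral converging absolutely thanks to the bound $|T_uf|\le e^{-\delta u}h$ with $\delta>0$ and $h\in\mathcal P\subset L^{1+\beta}(\varphi)$ from Step~1 of Lemma \ref{lem:m}), while $g_u:=f_c$, the nonzero $\mathcal C_c$-component of $f$, for every $u$ if $f\notin\mathcal C_s\oplus\mathcal C_l$, in which case the $u$-integral is redundant, cf.\ \eqref{eq:P:S:1}. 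Each $g_u$ is real-valued, lies in $\mathcal P$, and depends linearly on $f$, so $g_u(\theta f)=\theta g_u(f)$. Using that the principal branch satisfies $(-ir)^{1+\beta}=e^{-i\pi(1+\beta)/2}(r\vee0)^{1+\beta}+e^{i\pi(1+\beta)/2}((-r)\vee0)^{1+\beta}$ for $r\in\mathbb R$, and splitting into the cases $\theta>0$ and $\theta<0$ (when $\theta<0$ the point $-i\theta g_u$ lies on the positive imaginary axis, which simply interchanges the two exponentials), one obtains, for all $\theta\in\mathbb R$,
\[
  m[\theta f]=|\theta|^{1+\beta}\Big(e^{-i\operatorname{sgn}(\theta)\pi(1+\beta)/2}\,c_++e^{i\operatorname{sgn}(\theta)\pi(1+\beta)/2}\,c_-\Big),
\]
where $c_+:=\eta\int_0^\infty\int_{\mathbb R^d}(g_u(x)\vee0)^{1+\beta}\varphi(x)\,dx\,du\ge0$ and $c_-$ is the analogous integral with $g_u(x)\vee0$ replaced by $(-g_u(x))\vee0$; in particular $c_++c_-=\eta\int_0^\infty\|g_u\|_{L^{1+\beta}(\varphi)}^{1+\beta}\,du$.

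Next I would prove $c_++c_->0$, which is exactly what gives non-degeneracy. When $g_u\equiv f_c$ this is immediate, since $f_c\ne0$ in $L^2(\varphi)$ forces $\|f_c\|_{L^{1+\beta}(\varphi)}>0$; when $g_u=T_uf$, note $T_0f=f\ne0$ in $L^2(\varphi)$ and that $u\mapsto T_uf$ is continuous into $L^{1+\beta}(\varphi)$ near $0$ (continuity in $L^2(\varphi)$ from \eqref{eq:I:R:1}, together with the domination $|T_uf|\le h\in L^{1+\beta}(\varphi)$), so $\|T_uf\|_{L^{1+\beta}(\varphi)}\to\|f\|_{L^{1+\beta}(\varphi)}>0$ as $u\downarrow0$ and the integral is strictly positive. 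For the construction of $\zeta^f$, I would invoke the identity $\int_0^\infty(e^{-\lambda x}-1+\lambda x)\,\Gamma(-1-\beta)^{-1}x^{-2-\beta}\,dx=\lambda^{1+\beta}$ for $\lambda\ge0$ — valid since $\Gamma(-1-\beta)>0$ for $\beta\in(0,1)$, and the same identity behind the reference Lévy measure in Assumption \ref{asp: branching mechanism} — which, by analytic continuation to $\lambda=-i\theta$, shows that the infinitely divisible law with characteristic exponent $\int_0^\infty(e^{i\theta x}-1-i\theta x)\,\Gamma(-1-\beta)^{-1}x^{-2-\beta}\,dx=(-i\theta)^{1+\beta}$ is a non-degenerate spectrally positive $(1+\beta)$-stable law, with characteristic function $\theta\mapsto e^{(-i\theta)^{1+\beta}}$. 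Taking $S_1,S_2$ i.i.d.\ with this law and setting $\zeta^f:=c_+^{1/(1+\beta)}S_1-c_-^{1/(1+\beta)}S_2$, independence together with $\bigl(-ic^{1/(1+\beta)}\theta\bigr)^{1+\beta}=c(-i\theta)^{1+\beta}$ and $E[e^{i\theta(-S_2)}]=e^{(i\theta)^{1+\beta}}$ yields $E[e^{i\theta\zeta^f}]=e^{c_+(-i\theta)^{1+\beta}+c_-(i\theta)^{1+\beta}}=e^{m[\theta f]}$; and $\zeta^f$ is $(1+\beta)$-stable (stability is preserved under scaling, reflection and independent sums) and non-degenerate because $c_++c_->0$.

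I expect the main obstacle to be this last identification: keeping the branch of $z\mapsto z^{1+\beta}$ and the $\operatorname{sgn}(\theta)$-bookkeeping exactly straight and correctly pinning down the spectrally positive building block, so that $e^{(-i\theta)^{1+\beta}}$ really is a characteristic function (this is where the $\Gamma(-1-\beta)$ normalisation enters). The remaining steps — the closed form of $m[\theta f]$, which is immediate from Lemma \ref{lem:m}, and the reduction of non-degeneracy to $\|f\|_{L^{1+\beta}(\varphi)}>0$ — are routine.
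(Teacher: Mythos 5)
Your proposal is correct, but it takes a genuinely different route from the paper. The paper reduces the statement to its Lemma \ref{lem: charactreisticfunction}: it views $m[\theta f]$ as $\int (i\theta\cdot x)^{1+\beta}q(dx)$ for the measure $q$ induced by $(s,x)\mapsto -T_sf(x)$ under $\eta\,ds\,\varphi(x)dx$ (resp.\ by $-f_c$ in the critical case), and then builds the stable law abstractly by polar disintegration of $q$, producing a spherical measure $\lambda_0$ and a L\'evy measure of stable form, quoting Sato; the only analytic input is \eqref{eq: stable branching on C+} plus the integrability/positivity of $\int_0^\infty\!\!\int |T_sf|^{1+\beta}\varphi\,dx\,ds$ (exactly your $c_++c_-$). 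You instead compute $m[\theta f]$ in the classical one-dimensional stable form $|\theta|^{1+\beta}\bigl(e^{-i\operatorname{sgn}(\theta)\pi(1+\beta)/2}c_++e^{i\operatorname{sgn}(\theta)\pi(1+\beta)/2}c_-\bigr)$ and realize $\zeta^f$ concretely as $c_+^{1/(1+\beta)}S_1-c_-^{1/(1+\beta)}S_2$ with i.i.d.\ spectrally positive $(1+\beta)$-stable $S_i$, whose characteristic function $e^{(-i\theta)^{1+\beta}}$ you justify by the same Gamma-identity \eqref{eq: stable branching on C+} (with $z=-i\theta$, and $\Gamma(-1-\beta)>0$ making the density a genuine L\'evy density); your branch bookkeeping and the scaling identity $(-ic^{1/(1+\beta)}\theta)^{1+\beta}=c(-i\theta)^{1+\beta}$ check out, and your non-degeneracy argument ($c_++c_->0$, via $\|f\|_{L^{1+\beta}(\varphi)}>0$; in fact $T_uf\neq0$ $\varphi$-a.e.\ for every $u$, so no continuity argument is even needed) is sound. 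What each buys: the paper's lemma is stated in $\mathbb R^d$ and is reused elsewhere (e.g.\ to see that $\theta\mapsto e^{\langle Z_1(\theta f),\varphi\rangle}$ is a stable characteristic function in the unit-interval CLT), whereas your construction is more elementary and explicit, exhibiting the skewness weights $c_\pm$, but is tailored to the one-dimensional marginal. Two cosmetic points: your displayed definition of $c_\pm$ carries the $\int_0^\infty du$ even in the case $f\notin\mathcal C_s\oplus\mathcal C_l$, where it must be read without the $u$-integral (as you indicate, cf.\ \eqref{eq:P:S:1}) lest it be infinite; and you correctly retain the factor $\eta$ in the critical case, which \eqref{eq:PL:S:2} in the paper omits.
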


The proof of the above proposition relies on the following lemma:
\begin{lem}
  \label{lem: charactreisticfunction}
  Let $q$ be a measure on $\mathbb R^d\setminus\{0\}$ with
  $\int_{\mathbb R^d\setminus\{0\}} |x|^{1+\beta} q(dx) \in (0,\infty)$.
  Then
  \[
    \theta
    \mapsto  \exp\Big\{\int_{\mathbb R^d\setminus\{0\}} (i\theta \cdot x)^{1+\beta} q(dx)\Big\},
    \quad \theta \in \mathbb R^d,
  \]
  is the characteristic function of an $\mathbb R^d$-valued $(1+\beta)$-stable random variable.
\end{lem}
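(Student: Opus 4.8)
The plan is to recognize $\theta\mapsto\exp\{\int_{\mathbb R^d\setminus\{0\}}(i\theta\cdot x)^{1+\beta}q(dx)\}$ as the characteristic function of a strictly $(1+\beta)$-stable $\mathbb R^d$-valued random variable by matching it with the classical L\'evy--Khintchine form of such a law. Throughout write $\alpha:=1+\beta\in(1,2)$ and read $z^\alpha$ as $e^{\alpha\operatorname{Log}z}$ with the principal branch (so $0^\alpha:=0$), consistent with the rest of the paper. \emph{First} I would pass to polar coordinates: let $\nu$ be the image of the finite measure $|x|^{1+\beta}q(dx)$ under $x\mapsto x/|x|$; it is a finite Borel measure on the unit sphere $S^{d-1}$ with total mass $\int_{\mathbb R^d\setminus\{0\}}|x|^{1+\beta}q(dx)\in(0,\infty)$. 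Since $(rz)^\alpha=r^\alpha z^\alpha$ for $r>0$ whenever $z\notin(-\infty,0]$, and $i\,\theta\cdot\xi$ lies off the negative real axis for every $\xi\in S^{d-1}$, writing $x=|x|\,\xi$ yields $\int_{\mathbb R^d\setminus\{0\}}(i\theta\cdot x)^{1+\beta}q(dx)=\int_{S^{d-1}}(i\theta\cdot\xi)^{1+\beta}\nu(d\xi)$ for each $\theta\in\mathbb R^d$.

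\emph{The analytic heart} of the argument is the identity
\[
  \int_0^\infty\big(e^{-iur}-1+iur\big)\,r^{-1-\alpha}\,dr=\Gamma(-\alpha)\,(iu)^\alpha,\qquad u\in\mathbb R,
\]
both sides being $0$ at $u=0$, where $\Gamma(-\alpha)=\Gamma(2-\alpha)/(\alpha(\alpha-1))>0$. I would establish this in the same spirit as the holomorphic extension in Lemma~\ref{lem:CEP} and the appendix: for real $\lambda>0$, two integrations by parts reduce $\int_0^\infty(e^{-\lambda r}-1+\lambda r)r^{-1-\alpha}\,dr$ to $\tfrac{\lambda}{\alpha}\cdot\tfrac{\lambda^{\alpha-1}}{\alpha-1}\,\Gamma(2-\alpha)=\Gamma(-\alpha)\lambda^\alpha$; using $|e^{-\lambda r}-1+\lambda r|\le\tfrac12|\lambda r|^2$ for $\operatorname{Re}\lambda\ge0$ (handling $r\to0$) together with $|e^{-\lambda r}-1+\lambda r|\le 2+|\lambda|r$ (handling $r\to\infty$), the integral converges absolutely for $\lambda\in\mathbb C_+\setminus\{0\}$ and, by dominated convergence and Morera's theorem, defines a function continuous on $\mathbb C_+\setminus\{0\}$ and holomorphic on $\mathbb C^0_+$; the same holds for $\lambda\mapsto\Gamma(-\alpha)\lambda^\alpha$, so the two coincide on $\mathbb C_+\setminus\{0\}$ by analytic continuation, and specializing $\lambda=iu$ gives the identity.

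\emph{Finally} I would assemble the pieces. Taking $u=\theta\cdot\xi$ and using $e^{-ir\theta\cdot\xi}-1+ir\theta\cdot\xi=e^{ir\theta\cdot(-\xi)}-1-ir\theta\cdot(-\xi)$, the identity reads $(i\theta\cdot\xi)^{1+\beta}=\Gamma(-\alpha)^{-1}\int_0^\infty(e^{ir\theta\cdot(-\xi)}-1-ir\theta\cdot(-\xi))r^{-1-\alpha}\,dr$. Integrating against $\nu$ and substituting $\xi\mapsto-\xi$, put $\sigma:=$ the image of $\Gamma(-\alpha)^{-1}\nu$ under the antipodal map (a finite \emph{positive} measure on $S^{d-1}$, since $\Gamma(-\alpha)>0$) and let $\Lambda$ be the image of $r^{-1-\alpha}\,dr\otimes\sigma(d\xi)$ under $(r,\xi)\mapsto r\xi$; then
\[
  \int_{\mathbb R^d\setminus\{0\}}(i\theta\cdot x)^{1+\beta}q(dx)=\int_{\mathbb R^d\setminus\{0\}}\big(e^{i\theta\cdot y}-1-i\theta\cdot y\big)\,\Lambda(dy),\qquad\theta\in\mathbb R^d.
\]
Because $\int(1\wedge|y|^2)\,\Lambda(dy)<\infty$ (near $0$ since $\alpha<2$, near $\infty$ since $\alpha>1$), $\Lambda$ is a L\'evy measure of exactly the form that characterizes strictly $(1+\beta)$-stable laws on $\mathbb R^d$ (see any standard treatment of stable distributions), so the right-hand side is a genuine L\'evy exponent and its exponential is the characteristic function of the desired $(1+\beta)$-stable random variable.

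\emph{The main obstacle} is the integral identity above, and within it the need to pin down the constant as $\Gamma(-\alpha)$ \emph{with its positive sign}: positivity is precisely what makes $\sigma$, hence $\Lambda$, a bona fide positive measure rather than a signed one. Everything else is routine --- polar-coordinate bookkeeping plus a citation to the classification of stable L\'evy measures --- although one should also check at the outset that the branch convention used for $(i\theta\cdot x)^{1+\beta}$ here matches the principal-branch convention (with $0^{1+\beta}:=0$) fixed elsewhere in the paper.
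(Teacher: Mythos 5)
Your proposal is correct and follows essentially the same route as the paper: both reduce the exponent to polar/radial form, invoke the identity $(iu)^{1+\beta}=\Gamma(-1-\beta)^{-1}\int_0^\infty(e^{-iur}-1+iur)\,r^{-2-\beta}dr$ on $\mathbb C_+$ (which the paper records as equation \eqref{eq: stable branching on C+}, obtained by exactly the real-case-plus-analytic-continuation argument you sketch), and then identify the result as the L\'evy exponent of a measure of the radial form $r^{-2-\beta}dr\,\sigma(d\xi)$ with $\sigma$ finite and non-zero, citing the classification of $(1+\beta)$-stable L\'evy measures as in \cite[Remark 14.4]{Sato2013Levy}. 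The only cosmetic differences are that you push $|x|^{1+\beta}q(dx)$ directly onto the sphere rather than disintegrating $q$ into angular and radial parts, and you apply the antipodal flip so as to land on the form $e^{i\theta\cdot y}-1-i\theta\cdot y$ instead of the paper's $e^{-i\theta\cdot y}-1+i\theta\cdot y$.
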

\begin{proof}
  It follows from disintegration that there exist a measure $\lambda$ on $S:= \{\xi\in \mathbb R^d:|\xi| = 1\}$ and a kernel $k(\xi,dt)$ from $S$ to $\mathbb R_+$ such that
  \[
    \int_{\mathbb R^d\setminus \{0\}} f(x)q(dx)
    = \int_S \lambda(d\xi) \int_{\mathbb R_+} f(t \xi)k(\xi,dt)
    , \quad f\in \mathcal B(\mathbb R^d\setminus \{0\}, \mathbb R_+).
  \]
  We define another measure $\lambda_0$ on $S$ by
  \[
    \lambda_0(d\xi)
    := \frac1{\Gamma(-1-\beta)}\int_0^\infty t^{1+\beta}k(\xi,dt) \lambda (d\xi),
  \]
  where $\Gamma$ is the Gamma function.
  Then $\lambda_0$ is a non-zero finite measure, since
  \begin{align}
    \lambda_0(S)
      = &\frac{1}{\Gamma(-1-\beta)} \int_S \lambda (d\xi) \int_0^\infty |t\xi|^{1+\beta}k(\xi,dt) \\
   =  & \frac{1}{\Gamma(-1-\beta)} \int_{\mathbb R^d\setminus\{0\}} |x|^{1+\beta} q(dx) \in (0,\infty).
  \end{align}
  Define a measure $\nu$ on $\mathbb R^d\setminus\{0\}$ by
  \[
    \int_{\mathbb R^d\setminus\{0\}}f(x)\nu(dx)
    = \int_{S} \lambda_0(d\xi) \int_0^\infty f(r\xi) \frac{dr}{r^{2+\beta}}
    ,\quad f\in \mathcal B(\mathbb R^d\setminus \{0\}, \mathbb R_+).
  \]
  Then, according to \cite[Remark 14.4]{Sato2013Levy}, $\nu$ is the L\'evy measure of a $(1+\beta)$-stable distribution on $\mathbb R^d$, say $\mu$, whose characteristic function is
  \[
    \hat \mu(\theta)
    = \exp \Big \{ \int_{\mathbb R^d\setminus\{0\}} (e^{-i\theta \cdot y}-1+i\theta \cdot y) \nu(dy) \Big \}
    , \quad \theta \in \mathbb R.
  \]
  Finally, according to \eqref{eq: stable branching on C+}, we have
  \begin{align}
    & \int_{\mathbb R^d\setminus\{0\}} (e^{-i\theta \cdot y}-1+i\theta \cdot y) \nu(dy)
      = \int_S \lambda_0(d\xi) \int_0^\infty (e^{-ir\theta \cdot \xi}-1+ir\theta \cdot \xi) \frac{dr}{r^{2+\beta}} \\
    & = \int_S \lambda (d\xi) \int_0^\infty (e^{-ir\theta \cdot \xi}-1+ir\theta \cdot \xi) \frac{dr}{\Gamma(-1-\beta)r^{2+\beta}}\int_0^\infty t^{1+\beta} k(\xi,dt) \\
    & = \int_S \lambda (d\xi) \int_0^\infty (i\theta\cdot \xi)^{1+\beta} t^{1+\beta} k(\xi,dt)
      = \int_S \lambda(d\xi) \int_0^\infty (i\theta \cdot t\xi)^{1+\beta} k(\xi,dt) \\
    & = \int_{\mathbb R^d} (i\theta \cdot x)^{1+\beta} q(dx).
      \qedhere
  \end{align}
\end{proof}

\begin{proof}[Proof of Proposition \ref{prop:PL:S}]
	Suppose that $f\in \mathcal C_s \oplus \mathcal C_l$.
Note that $m[\theta f]$ can be written as
  \begin{align}
    \label{eq:PL:S:1}
    m[\theta f]
    = \eta \int_0^{\infty}~ds\int_{\mathbb R^d} (-i\theta T_s f(x))^{1+\beta} \varphi(x)~dx,
    \quad \theta \in \mathbb R.
  \end{align}
	Therefore, according to Lemma \ref{lem: charactreisticfunction}, in order to show that $\zeta^f$ is a $(1+\beta)$-stable random variable we only need to show that
\begin{equation}\label{e:new}
    \int_0^{\infty}~ds\int_{\mathbb R^d} | T_{s} f(x)|^{1+\beta} \varphi(x)~dx
    \in (0, \infty).
\end{equation}
  According to the Step 1 in the proof of Lemma \ref{lem:m}, we know that there exist $\delta> 0$ and $h \in \mathcal P$ such that $|T_sf| \leq e^{- \delta s} h$ for each $s\geq 0$.
The claim \eqref{e:new} then follows.

  If $f \in \mathcal P \setminus (\mathcal C_s \oplus \mathcal C_l)$, then $f$ can be written by $f = f_c +(f - f_c)$ where $f_c \in \mathcal C_c\setminus\{0\}$ and $f - f_c \in \mathcal C_s \oplus \mathcal C_l$.
  In this case, according to \eqref{eq:P:S:1}, $m[\theta f]$ has an integral representation:
  \begin{align}
    \label{eq:PL:S:2}
    m[\theta f]
    = \int_{\mathbb R^d} (-i\theta f_c(x))^{1+\beta} \varphi(x) ~dx,
    \quad \theta \in \mathbb R.
  \end{align}
  Finally, according to Lemma \ref{lem: charactreisticfunction} and the fact that
 $
    \int_{\mathbb R^d} | f_c(x)|^{1+\beta} \varphi(x)~dx
    \in (0, \infty),
  $
  We have that $\zeta^f$ is a non-degenerate $(1+\beta)$-stable random variable.
\end{proof}

\subsection{A refined estimate for the OU semigroup}\label{sc:refined}
It turns out that our proof of the CLT relies on the following refined estimate for the OU semigroup.

\begin{lem}
  \label{lem:P:R}
  Suppose that $g \in \mathcal P$, then there exists $h \in \mathcal P^+$ such that for all $ f \in \mathcal P_g := \{\theta T_n g: n \in \mathbb Z_+, \theta \in [-1,1]\} $ and $t\geq 0$, we have $ | P_t (Z_1 f - \langle Z_1 f, \varphi \rangle )| \leq e^{-bt} h$.
\end{lem}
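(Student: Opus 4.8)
The plan is to reduce the statement to a uniform spectral-gap estimate for the OU semigroup and then integrate that estimate against the defining formula for $Z_1$. First I would rewrite $Z_1f$ so that its $\varphi$-mean is separated out. Since $P^\alpha_r=e^{\alpha r}P_r$ and $z\mapsto z^{1+\beta}$ is homogeneous of degree $1+\beta$ under multiplication by positive reals, setting $h_s:=(-iP_sf)^{1+\beta}\in\mathcal P^*$ one gets $Z_1f=\eta e^\alpha\int_0^1 e^{\alpha\beta s}P_{1-s}h_s\,ds$. Because $P_r$ preserves constants and the measure $\varphi$, we have $\langle P_{1-s}h_s,\varphi\rangle=\langle h_s,\varphi\rangle$ and $P_{1-s}h_s-\langle h_s,\varphi\rangle=P_{1-s}(h_s-\langle h_s,\varphi\rangle)$, so applying $P_t$ and the semigroup property gives
\[
  P_t\bigl(Z_1f-\langle Z_1f,\varphi\rangle\bigr)=\eta e^\alpha\int_0^1 e^{\alpha\beta s}\,P_{t+1-s}\bigl(h_s-\langle h_s,\varphi\rangle\bigr)\,ds .
\]

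Next I would record a uniform polynomial envelope for the family $\{h_s:s\ge 0,\ f\in\mathcal P_g\}$. By Step 1 of the proof of Lemma \ref{lem:m} there is $h_g\in\mathcal P^+$, depending only on $g$, with $|T_ng|\le h_g$ for every $n$; hence $|f|\le h_g$ for all $f\in\mathcal P_g$, so $|P_sf|\le P_sh_g\le \sup_{r\ge0}P_rh_g=:\widehat h\in\mathcal P^+$ (this last function has polynomial growth, e.g.\ by the Mehler formula below, or by \cite[Fact 1.2]{MarksMilos2018CLT}). Consequently $|h_s|=|P_sf|^{1+\beta}\le\widehat h^{\,1+\beta}=:\bar h\in\mathcal P^+$ uniformly in $s\ge0$ and $f\in\mathcal P_g$, and in particular $|\langle h_s,\varphi\rangle|\le\langle\bar h,\varphi\rangle<\infty$.

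The main step — the ``refined estimate'' of this subsection — is the following claim: for every $G\in\mathcal P^*$ with $|G|\le\bar h$ there is $\check h\in\mathcal P^+$, depending only on $\bar h$ (hence only on $g$), such that $|P_r(G-\langle G,\varphi\rangle)|\le e^{-br}\check h$ for all $r\ge0$. I would prove this from the Mehler representation $P_rG(x)=\mathbb E\bigl[G(e^{-br}x+c_rY)\bigr]$, where $Y$ is a standard $d$-dimensional Gaussian, $c_r^2=\frac{\sigma^2}{2b}(1-e^{-2br})\uparrow c_\infty^2:=\frac{\sigma^2}{2b}$, and $\langle G,\varphi\rangle=\mathbb E[G(c_\infty Y)]$. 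For $r\le1$ the bound is immediate from $|G|\le\bar h$ together with $e^{-br}\ge e^{-b}$. For $r\ge1$ one writes $P_r(G-\langle G,\varphi\rangle)=P_{r-1}\widetilde G$ with $\widetilde G:=P_1G-\langle P_1G,\varphi\rangle$; since $P_1G$ is the convolution of $G$ against a smooth rapidly-decaying kernel, $\widetilde G\in C^1$ with $|\widetilde G|+|\nabla\widetilde G|\le C(1+|\cdot|)^N$ for constants $C,N$ depending only on $\bar h$, and $\langle\widetilde G,\varphi\rangle=0$. A first-order Taylor expansion inside the Mehler formula, $\widetilde G(e^{-br'}x+c_{r'}Y)-\widetilde G(c_{r'}Y)=\int_0^1 e^{-br'}x\cdot\nabla\widetilde G(c_{r'}Y+\rho e^{-br'}x)\,d\rho$ with $r'=r-1$, yields an $e^{-br'}$ factor in the $x$-dependent part, while $|\mathbb E[\widetilde G(c_{r'}Y)]|=|\mathbb E[\widetilde G(c_{r'}Y)-\widetilde G(c_\infty Y)]|=O(e^{-2br'})$ because $|c_\infty-c_{r'}|\le c_\infty e^{-2br'}$ and $\langle\widetilde G,\varphi\rangle=0$; all constants track back to $C,N$, so $|P_{r-1}\widetilde G|\le e^{-b(r-1)}\check h$ with $\check h\in\mathcal P^+$ depending only on $\bar h$. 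The same argument applies verbatim to $\mathbb C$-valued $G\in\mathcal P^*$ by treating real and imaginary parts separately.

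Finally, combining the three steps, for every $f\in\mathcal P_g$ and $t\ge0$,
\[
  \bigl|P_t\bigl(Z_1f-\langle Z_1f,\varphi\rangle\bigr)\bigr|
  \le\eta e^\alpha\int_0^1 e^{\alpha\beta s}\,e^{-b(t+1-s)}\,\check h\,ds
  =e^{-bt}\Bigl(\eta e^\alpha\,\check h\int_0^1 e^{(\alpha\beta+b)s-b}\,ds\Bigr)=:e^{-bt}h ,
\]
with $h\in\mathcal P^+$ depending only on $g$, which is precisely the assertion. The point that needs care, and the likely main obstacle, is the uniformity of $\check h$ in the spectral-gap claim: one must ensure that every constant in the Mehler–Taylor estimate depends only on the fixed envelope $\bar h$ and on the model parameters $\alpha,\beta,b,\sigma,d$, and not on $f$, on $s$, or on the (entirely uncontrolled) order $\kappa_{\,h_s-\langle h_s,\varphi\rangle}$; this is exactly why the smoothing step $G\mapsto P_1G$ is inserted before the Taylor expansion. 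If a uniform estimate of this kind is already available from \cite[Fact 1.2]{MarksMilos2018CLT} and \cite[Lemma 2.7]{MarksMilos2018CLT}, the argument shortens accordingly.
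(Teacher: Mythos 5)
Your argument is correct, but it reaches the conclusion by a genuinely different mechanism than the paper. The paper never commutes $P_t$ into the definition of $Z_1$; instead it differentiates: using the explicit eigen-structure of $f=\theta T_n g$ (the commutation $\partial_j P_t=e^{-bt}P_t\partial_j$, the fact that differentiation lowers the order $\kappa$ by at most one, and \cite[Facts 1.2--1.3]{MarksMilos2018CLT}) it proves a bound on $|\nabla Z_1 f|$ that is uniform in $n$ and $\theta$, and then concludes with exactly the Mehler-formula mean-value estimate you use at the end, applied directly to $Z_1f-\langle Z_1f,\varphi\rangle$. You avoid all of Steps 2--5 of that argument: after writing $Z_1f=\eta e^{\alpha}\int_0^1 e^{\alpha\beta s}P_{1-s}h_s\,ds$ (the homogeneity $(\lambda z)^{1+\beta}=\lambda^{1+\beta}z^{1+\beta}$ for $\lambda>0$ and the invariance of $\varphi$ make this and the mean-subtraction legitimate), the only input about $\mathcal P_g$ you need is the common polynomial envelope $|f|\le h_g$ from Step 1 of Lemma \ref{lem:m}; the required gradient bound is then manufactured by the smoothing of $P_1$ itself, with constants depending only on the envelope $\bar h$, and your two-term Taylor/Mehler estimate (the $x$-dependent part giving $e^{-br'}$, the centring term giving $O(e^{-2br'})$ via $|c_\infty-c_{r'}|\le c_\infty e^{-2br'}$ and $\langle\widetilde G,\varphi\rangle=0$) is sound and uniform in $r'$, $s$ and $f$. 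The payoff of your route is generality and modularity: you in fact prove the pointwise spectral-gap statement for \emph{any} family dominated by a fixed function of polynomial growth, not just $\mathcal P_g$, and you need no smoothness of $f$ or of $Z_1f$; the paper's route buys explicit uniform control of $\nabla Z_1f$, which is what it chooses to push through the Mehler formula. The only housekeeping you should make explicit is the interchange of $P_t$, the $ds$-integral and the $\varphi$-average (Fubini, justified by the envelope $|P_{1-s}h_s|\le Q_0\bar h\in\mathcal P$) and the differentiation under the integral sign defining $P_1G$; both are routine.
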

\begin{proof}
Fix $g \in \mathcal P$.
We write  $g = g_0 + g_1$ with $g_0 \in \mathcal C_s \oplus \mathcal C_c$ and $g_1 \in \mathcal C_l$,  and $q_f:=Z_1f - \langle Z_1f, \varphi \rangle\in \mathcal P^*$ for each $f\in \mathcal P$.
  We need to prove that there exists $h \in \mathcal P^+$ such that for each $f\in \mathcal P_g$, $|P_tq_f| \leq e^{-bt} h$.

  \emph{Step 1.} We claim  that we only need to prove the result for all
  $f \in \widetilde{\mathcal P}_g:= \{T_{n+1} g : n \in \mathbb Z_+\}$.
  In fact, both $\operatorname{Re} q_g$ and $\operatorname{Im} q_g$ are functions in $\mathcal P$ of order $\geq 1$.
  The result is valid for $f = T_0 g = g$ according to \cite[Fact 1.2]{MarksMilos2018CLT}.
  Also, note that if the result is valid for some $f \in \mathcal P$, it is also valid for any $\theta f$ with $\theta \in [-1,1]$.

  \emph{Step 2.} We show that $\{T_s g: s> 0\} \subset C_\infty (\mathbb R^d) \cap \mathcal P$.
  In fact, for each $s > 0$,
  \[
    T_s g
    = T_s (g_0 + g_1)
    = e^{\alpha \tilde \beta s}P_s g_0 + \sum_{p \in \mathbb Z_+^d: \alpha \tilde \beta > |p|b} 
    \langle g_1, \phi_p \rangle_\varphi e^{-(\alpha \tilde \beta - |p|b)s} \phi_p.
  \]
  Notice that the second term is in $C_\infty(\mathbb R^d)\cap \mathcal P$ since it is a finite sum of polynomials, and the first term is also in $C_\infty (\mathbb R^d) \cap \mathcal P$ according to \cite[Fact 1.1]{MarksMilos2018CLT}.

  \emph{Step 3.} We show that there exists $h_3 \in \mathcal P^+$ such that for all $j \in \{1,\dots, d\}$ and $f \in \widetilde {\mathcal P}_g$, it holds that $|\partial_j f| \leq h_3$.
  In fact, it is known that  (see \cite{MetafunePallaraPriola2002Spectrum} for example)
  \begin{align}
    \label{eq:P:R:3:-1}
    P_t f(x)
    = \int_{\mathbb R^d} f\big(x e^{-bt} + y \sqrt{1-e^{-2bt}}\big) \varphi(y)~dy,
    \quad t\geq 0, x\in \mathbb R^d, f\in \mathcal P.
  \end{align}
  For $f \in C_\infty(\mathbb R^d)\cap \mathcal P$ it can be verified from above that
  \begin{align}
    \label{eq:P:R:3:1}
    \partial_j P_t f
    = e^{-bt} P_t \partial_j f,
    \quad t \geq 0, j \in \{1,\dots, d\}.
  \end{align}
  Thanks to Step 2, $T_1 g_0 \in C_\infty(\mathbb R^d)\cap \mathcal P$.
  According to \cite[Fact 1.3]{MarksMilos2018CLT} and the fact that $\alpha \tilde \beta \leq \kappa _{g_0} b$, we have for each $j \in \{1,\dots, d\}$,
  \[
    \kappa_{(\partial_j T_1 g_0)}
    \geq \kappa_{(T_1 g_0)} - 1
    = \kappa_{g_0} - 1
    \geq \frac{\alpha \tilde \beta}{b} - 1.
  \]
  Therefore, there exists  $h'_3\in \mathcal P^+$ such that for all $n \in \mathbb Z_+$ and $j\in \{1,\dots,d\}$,
  \begin{align}
    & | \partial_j T_{n+1}g_0 |
      = | \partial_j e^{\alpha \tilde \beta n}P_n T_1g_0 |
      = e^{\alpha \tilde \beta n-bn} |P_n \partial_j T_1 g_0| \\
    & \leq e^{\alpha \tilde \beta n-bn} \exp\{-\kappa_{(\partial_j T_1 g_0)}bn\}Q \partial_j T_1g_0
      \leq Q\partial_j T_1g_0
      \leq h'_3.
  \end{align}
  On the other hand, there exists $h_3''\in \mathcal P^+$ such that for all $n \in \mathbb Z_+$ and $j\in \{1,\dots,d\}$,
  \begin{align}
    & |\partial_j T_{n+1}g_1 |
      = \Big| \sum_{p\in \mathbb Z_+^d: \alpha \tilde \beta > |p|b} e^{- (\alpha \tilde \beta - |p|b)(n+1)} \langle g_1, \phi_p \rangle_\varphi \partial_j \phi_p \Big| \\
    & \leq \sum_{p\in \mathbb Z_+^d: \alpha \tilde \beta > |p|b} |\langle g_1, \phi_p \rangle_\varphi \partial_j \phi_p |
      \leq h_3''.
  \end{align}
  Then the desired result in this step follows.

  \emph{Step 4.} We show that there exists $h_{4} \in \mathcal P^+$ such that for all $j \in \{1,\dots, d\}, u \in [0, 1]$ and $f \in \widetilde {\mathcal P}_g$, it holds that $ | \partial_j  P_{1-u}^\alpha (- i P_u^\alpha f)^{1+\beta} | \leq h_4$.
  In fact, thanks to Step 2 and \eqref{eq:P:R:3:1}, for all $j \in \{1,\dots, d\}, u \in [0, 1]$ and $f \in \widetilde{\mathcal P}_g$, we have
  \begin{align}
    & \partial_j  P_{1-u}^\alpha (- i P_u^\alpha f)^{1+\beta}
      = e^{-(1-u)b} P_{1-u}^\alpha \partial_j (- i P_u^\alpha f)^{1+\beta}
    \\ & = (1+\beta) e^{-(1-u)b} P_{1-u}^\alpha [ (- i P_u^\alpha f)^\beta \partial_j (- i P_u^\alpha f) ]
    \\ & = -i(1+\beta) e^{-(1-u)b} P_{1-u}^\alpha[ (- i P_u^\alpha f)^\beta e^{-ub} P_u^\alpha \partial_j f]
    \\ & = -i(1+\beta) e^{-b} e^{(1-u)\alpha} e^{u\alpha (1+\beta)} P_{1-u} [ (- i P_u f)^\beta P_u \partial_j f ].
  \end{align}
  Recall from Step 1 in the proof of Lemma \ref{lem:m} there exists $h'_4\in \mathcal P^+$ such that for each $f \in \{T_sg:s\geq 0\}$ it holds that $|f| \leq h'_4$.
  Therefore, using Step 3, we have for all $j \in \{1,\dots, d\}, u \in [0, 1]$ and $f \in \widetilde {\mathcal P}_g$,
  \begin{align}
    & |\partial_j  P_{1-u}^\alpha (- i P_u^\alpha f)^{1+\beta}|
    \leq (1+\beta) e^{\alpha (1+\beta)} P_{1-u} [  (P_u |f|)^\beta P_u |\partial_j f| ]
    \\ & \leq (1+\beta) e^{\alpha (1+\beta)} P_{1-u} [  (P_u h'_4)^\beta P_u h_3 ]
    \leq (1+\beta) e^{\alpha (1+\beta)} Q_0 [  (Q_0 h_4')^\beta Q_0 h_3 ],
  \end{align}
  where $Q_0$ is defined by \eqref{eq:Q}.
  This implies the desired result in this step.

  \emph{Step 5.}
We show that there exists $h_5 \in \mathcal P^+$ such that for each $f \in \widetilde {\mathcal P}_g$, we have $ |\nabla (Z_1f)| \leq h_5$.
  In fact, according to Step 4, for all $j \in \{1,\dots, d\}$, $f \in \widetilde{\mathcal P}_g$ and compact $A \subset \mathbb R^d$, we have
  \[
    \int_0^1 \sup_{x\in A} | (\partial_j  P_{1-u}^\alpha (-i P_u^\alpha f)^{1+\beta}) (x) |~du
    \leq \sup_{x\in A} h_4(x) < \infty.
  \]
  Using this and \cite[Theorem A.5.2]{Durrett2010Probability}, for all $j \in \{1,\dots, d\}$, $f\in \widetilde {\mathcal P}_g$ and $x\in \mathbb R^d$, it holds that
  \[
    | \partial_j Z_1 f(x)|
    = \Big| \int_0^1  ( \partial_jP_{1-u}^\alpha (-iP_u^\alpha f)^{1+\beta} ) (x) ~du  \Big|
    \leq h_4(x).
  \]
  Now, the desired result for this step is valid.

  \emph{Step 6.}
  Let $h_5$ be the function in Step 5.
  There are $c_0, n_0> 0$ such that for all $x\in \mathbb R^d$, $h_5(x) \leq c_0(1+|x|)^{n_0}$.
  Note that for all $x, y \in \mathbb R^d$, $1+|x|+|y|\leq (1+|x|) (1+|y|)$; and that for all $\theta \in [0,1]$, $|\sqrt {1 - \theta} - 1| \leq \theta$.
  Write $D_{x,y} = \{ax+by: a, b \in [0,1]\}$ fo $x, y \in \mathbb R^d$.
  Using \eqref{eq:P:R:3:-1} and Step 5, there exists  $h_6 \in \mathcal P^+$ such that for all $t \geq 0$, $f \in \widetilde {\mathcal P}_g$ and $x \in \mathbb R^d$,
  \begin{align}
    & |P_t q_f(x)|
      = \Big| \int_{\mathbb R^d} ( (Z_1f)(x e^{-bt}+ y \sqrt{1 - e^{-2bt}}) - Z_1 f (y) ) \varphi(y) ~dy \Big| \\
    & \leq \int_{\mathbb R^d} \Big(\sup_{z\in D_{x,y}} |\nabla  (Z_1f) (z) |\Big) | x e^{-bt} + y \sqrt{1 - e^{-2bt}} - y | \varphi(y) ~dy \\
    & \leq e^{-bt} \int_{\mathbb R^d} c_0(1+|x|+|y|)^{n_0} (|x|+|y|) \varphi(y) ~dy \\
    & \leq c_0 e^{-bt}(1+|x|)^{n_0}\Big(|x|\int_{\mathbb R^d} (1+|y|)^{n_0}\varphi(y) ~dy + \int_{\mathbb R^d} (1+ |y|)^{n_0} |y| \varphi(y) ~dy \Big) \\
    & \leq e^{-bt} h_6(x).
      \qedhere
  \end{align}
\end{proof}

\subsection{Small value probability}
\label{sec: Small value probability}
In this subsection, we digress briefly from our super-OU process and consider a (supercritical) \emph{continuous-state branching process (CSBP)} $\{(Y_t)_{t\geq 0}; \mathbf P_x\}$ with branching mechanism $\psi$ given by \eqref{eq: honogeneou branching mechanism}.
Such a process $\{(Y_t)_{t\geq 0}; \mathbf P_x\}$ is defined as an $\mathbb R^+$-valued Hunt process satisfying
\[
  \mathbf P_x[e^{-\lambda Y_t}] = e^{- x v_t(\lambda)},
  \quad x\in \mathbb R^+, t\geq 0, \lambda \in \mathbb R^+,
\]
where for each $\lambda\geq 0$, $t\mapsto v_t(\lambda)$ is the unique positive solution to the equation
\begin{align}
  \label{eq: fkpp equation for CSBP}
  v_t(\lambda) - \int_0^t \psi(v_s(\lambda))~ds = \lambda,
  \quad t\geq 0.
\end{align}
It can be verified that for each $\mu \in \mathcal M(\mathbb R^d)$ with $x = \| \mu \|$, we have $ \{(\|X_t\|)_{t\geq 0}; \mathbb P_\mu\} \overset{\text{law}}{=} \{(Y_t)_{t\geq 0}; \mathbf P_x\}$.

Our goal in this subsection is to determine how fast the probability $\mathbf P_x(0<e^{-\alpha t}Y_t \leq k_t)$ converges to $0$ when $t\mapsto k_t$ is a strictly positive function on $[0,\infty)$ such that $k_t \to 0$ and $k_t e^{\alpha t} \to \infty$ as $t\to \infty$.
Suppose that Grey's condition is satisfied i.e., there exists $z' > 0$ such that $\psi(z) > 0$ for all $z>z'$, and that $\int_{z'}^\infty \psi(z)^{-1}dz < \infty$.
Also suppose that the $L \log L$ condition is satisfied i.e.,
$
  \int_1^\infty y \log y~\pi(dr)
 < \infty.
$
We write $W_t = e^{-\alpha t}Y_t$ for each $t\geq 0$.

\begin{prop}
  \label{lem: control of XT}
  Suppose that $t\mapsto k_t$ is a strictly positive function on $[0,\infty)$ such that $k_t \to 0$ and $k_t e^{\alpha t} \to \infty$ as $t\to \infty$.
  Then, for each $x\geq 0$, there exist $C,\delta>0$ such that
  \[
    \mathbf P_x(0<W_t\leq k_t)
    \leq C(k_t^\delta + e^{-\delta t}), \quad t\geq 0.
  \]
\end{prop}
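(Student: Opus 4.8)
The plan is to analyze the Laplace functional of $W_t = e^{-\alpha t}Y_t$ and exploit the following standard mechanism: since $Y$ is a supercritical CSBP satisfying Grey's condition and the $L\log L$ condition, the martingale $W_t$ converges almost surely and in $L^1$ to a limit $W_\infty$, and conditionally on the survival set $\{Y_t>0\ \forall t\}$, $W_\infty$ is strictly positive with a density (or at least has no atom at $0$) near the origin. The event $\{0<W_t\le k_t\}$ can then be controlled by combining two estimates: first, the probability that $W_\infty$ itself is small, i.e. $\mathbf P_x(0<W_\infty\le 2k_t)$, which decays like a power of $k_t$ by a known small-value estimate for $W_\infty$; and second, the probability that $W_t$ and $W_\infty$ differ by more than $k_t$, which decays exponentially in $t$ by an $L^p$ or exponential martingale estimate. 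Adding these gives the bound $C(k_t^\delta + e^{-\delta t})$.

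More concretely, I would proceed as follows. \emph{Step 1.} Recall that $\bar v = \sup\{\lambda\ge 0:\psi(\lambda)=0\}\in(0,\infty)$ and that the survival probability from mass $x$ is $1-e^{-\bar v x}$. Using the branching property and the spine/additive martingale change of measure associated with $W_t$, obtain a quantitative lower bound on $\mathbf P_x(W_\infty > \epsilon)$, or equivalently an upper bound on $\mathbf P_x(0 < W_\infty \le \epsilon)$ of the form $C\epsilon^\delta$ for small $\epsilon$; this is the ``small value probability of $W_\infty$'' and is where Grey's and $L\log L$ enter in an essential way (they guarantee $W_\infty$ is nondegenerate and its Laplace transform $\mathbf P_x[e^{-\lambda W_\infty}] = e^{-x u(\lambda)}$ has $u(\lambda)$ growing fast enough — indeed $u(\lambda)\uparrow\bar v$ — to force a power decay of $\mathbf P_x(0<W_\infty\le\epsilon)$ via a Markov/Chernoff argument on $e^{-\lambda W_\infty}$). \emph{Step 2.} Estimate $\mathbf P_x(|W_t - W_\infty| > k_t)$. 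Since $(W_t)$ is a bounded-in-$L^p$ martingale for some $p=1+\gamma>1$ (here I would use the $L\log L$ condition together with Assumption~\ref{asp: branching mechanism} to get $L^{1+\gamma}$-boundedness for $\gamma<\beta$, as in Lemma~\ref{lem:M:L:ML} applied to $p=0$), the $L^p$-maximal inequality gives $\mathbf P_x(\sup_{s\ge t}|W_s - W_t| > k_t/2)$ decaying; combined with an exponential rate for the tail of the martingale oscillation — obtained by iterating the semigroup estimate or by a direct Laplace-transform computation using \eqref{eq: fkpp equation for CSBP} — one gets a bound $C k_t^{-p}\, r_t$ where $r_t\to 0$ exponentially; choosing the split carefully (or using that $k_t e^{\alpha t}\to\infty$ to absorb the polynomial loss in $k_t^{-p}$) yields an $e^{-\delta t}$ bound after possibly shrinking $\delta$. \emph{Step 3.} Combine: on $\{0<W_t\le k_t\}$ either $W_\infty\le 2k_t$ or $|W_t-W_\infty|>k_t$, so
\[
  \mathbf P_x(0<W_t\le k_t) \le \mathbf P_x(0<W_\infty\le 2k_t) + \mathbf P_x(|W_t - W_\infty|>k_t) \le C(k_t^\delta + e^{-\delta t}),
\]
which is the claim. (The restriction to $\{W_t>0\}$ on the left, i.e. conditioning away from extinction, is what lets us use the nondegeneracy of $W_\infty$ on the survival event; note extinction is exactly the event $\{W_\infty=0\}$ up to a null set.)

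The main obstacle I expect is \emph{Step 1}, the genuinely quantitative small-value estimate $\mathbf P_x(0<W_\infty\le\epsilon)\le C\epsilon^\delta$. For the pure stable mechanism $\widetilde\psi(z)=-\alpha z+\eta z^{1+\beta}$ one can compute the Laplace transform of $W_\infty$ fairly explicitly and read off the behavior near $0$, but under Assumption~\ref{asp: branching mechanism} one only knows $\psi(z) = -\alpha z + z^{1+\beta}(\eta+o(1))$, so $u(\lambda)=\mathbf P_x$-log-Laplace exponent of $W_\infty$ must be controlled via the ODE $u'(\lambda)\psi$-type relation derived from \eqref{eq: fkpp equation for CSBP} as $t\to\infty$, using the perturbation bound $|\psi_1(z)|\le C(z^{1+\beta+\delta}+z^2)$ from Lemma~\ref{lem:CEP}. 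The delicate point is tracking how the $o(1)$ perturbation propagates through the asymptotics of $u$ near $\bar v$ (equivalently near $\lambda=\infty$) well enough to get a clean power $\epsilon^\delta$; I would handle this by comparing $\psi$ from above and below by $c_1 z^{1+\beta}$ and $c_2 z^{1+\beta}$ on a neighborhood of $0$ and by $\ge \alpha z /2$ away from $0$, then using monotone comparison of solutions of \eqref{eq: fkpp equation for CSBP}. The other steps are routine once the appropriate moment bounds (already available via Lemma~\ref{lem:M:L:ML}) and the standard martingale convergence for $W_t$ are in hand.
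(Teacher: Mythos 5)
Your overall strategy (pass to the martingale limit $W_\infty$, bound $\mathbf P_x(0<W_\infty\le\cdot)$ by a power, and control the deviation $|W_t-W_\infty|$) is genuinely different from the paper's, which never touches $W_\infty$-deviations: the paper applies Markov's inequality directly to $e^{-k_t^{-1}W_t}$ to get $\mathbf P_x(0<W_t\le k_t)\le ex\bigl(|\bar v-v_t(k_t^{-1}e^{-\alpha t})|+|\bar v_t-\bar v|\bigr)$ and then estimates both terms from the integral identity $\int_{v_t(\lambda)}^{\lambda}\psi(z)^{-1}dz=t$, using convexity of $\psi$ at its largest root $\bar v$ (slope $\tau=\psi'(\bar v)>0$) and the fact $v_t(e^{-\alpha t})\to z^*<\bar v$, which is where $L\log L$ (non-degeneracy of $W_\infty$) enters. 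However, as written your proposal has genuine gaps. First, the decomposition in your Step 3 is not an event inclusion: on $\{0<W_t\le k_t\}\cap\{|W_t-W_\infty|\le k_t\}$ you may have $W_\infty=0$ (survival up to time $t$ followed by extinction), an event of positive probability that is covered by neither $\{0<W_\infty\le 2k_t\}$ nor $\{|W_t-W_\infty|>k_t\}$. Repairing this forces you to add $\mathbf P_x(W_t>0,\,W_\infty=0)=e^{-x\bar v}-e^{-x\bar v_t}$ and to prove it is $O(e^{-\delta t})$, i.e. exactly the quantitative convergence $\bar v_t\downarrow\bar v$ (the paper's Step 3, via Grey's condition and $\psi(z)\ge\tau(z-\bar v)$), which your outline does not contain. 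Second, the deviation bound at level $k_t$ does not close: with the available $L^{1+\gamma}$ control ($\gamma<\beta<1$, giving $\|W_t-W_\infty\|_{1+\gamma}\lesssim e^{-\alpha\tilde\gamma t}$) one gets $\mathbf P_x(|W_t-W_\infty|>k_t)\lesssim k_t^{-(1+\gamma)}e^{-\gamma\alpha t}$, which for $k_t=t^2e^{-\alpha t}$ (allowed by the hypotheses) diverges; the hypothesis $k_te^{\alpha t}\to\infty$ cannot absorb the polynomial loss. One must instead split at an intermediate level $\epsilon_t$ decaying strictly slower than $e^{-\alpha t}$ and bound $\mathbf P_x(0<W_\infty\le k_t+\epsilon_t)$ — fixable, but the fix is real work that your "choose the split carefully" does not carry out.

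Third, and most importantly, the step you yourself flag as the crux — $\mathbf P_x(0<W_\infty\le\epsilon)\le C\epsilon^{\delta}$ — is left unproven, and your proposed route to it is aimed at the wrong regime. Small values of $W_\infty$ correspond to $\lambda\to\infty$ in the Laplace exponent $u(\lambda)$, i.e. to $u$ approaching $\bar v$, and the relevant input is the behavior of $\psi$ near $\bar v$ (essentially linear with slope $\tau>0$), together with $u(1)=z^*<\bar v$ from non-degeneracy of $W_\infty$; comparing $\psi$ with $c\,z^{1+\beta}$ near $0$ and tracking the perturbation $\psi_1$ from Assumption \ref{asp: branching mechanism} is irrelevant here — indeed the paper proves the proposition for an arbitrary CSBP satisfying only Grey's condition and $L\log L$, with no stable structure at all. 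The correct quantitative estimate ($\bar v-u(\lambda)\lesssim\lambda^{-(\tau-\epsilon)/\alpha}$, or in the paper's time-$t$ form $\bar v-v_t(k_t^{-1}e^{-\alpha t})\lesssim k_t^{(\tau-\epsilon)/\alpha}$) is obtained by integrating $dz/(-\psi(z))$ near $\bar v$ against the bound $-\psi(z)\ge(\bar v-z)(\tau-\epsilon)$, which is precisely the paper's Step 4. So the skeleton of your argument can be made to work, but its two quantitative pillars (the small-value estimate for $W_\infty$ and the exponential bound on late extinction) are exactly the estimates the paper proves and your proposal does not.
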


\begin{proof}
  \emph{Step 1.}
  We recall some known facts about the CSBP $(Y_t)$.
  For each $\lambda \geq 0$, we denote by $t\mapsto v_t(\lambda)$ the unique positive solution of \eqref{eq: fkpp equation for CSBP}.
  Letting $\lambda \to \infty$ in \eqref{eq: fkpp equation for CSBP}, we have by monotonicity that $\bar v_t:= \lim_{\lambda \to \infty}v_t(\lambda)$ exists in $(0,\infty]$ for all $t\geq 0$, and that
  \begin{align}
    \label{eq: svp1}
    \mathbf P_x(Y_t = 0)=e^{-x\bar v_t}, \quad t\geq 0, x\ge 0.
  \end{align}
  It is known, see \cite[Theorems 3.5--3.8]{Li2011Measure-valued} for example, that under Grey's condition $\bar v:= \lim_{t\to \infty} \bar v_t \in [0,\infty)$ exists and is the largest root of $\psi$ on $[0,\infty)$.
  Letting $t \to \infty$ in \eqref{eq: svp1}, we have by monotonicity that
  \[
    \mathbf P_x(\exists t \geq 0, Y_t = 0)
    = e^{-x\bar v}, \quad x\geq 0.
  \]
 Note the derivative of $\psi$, i.e.,
  \[
    \psi'(z)
    = -\alpha + 2\rho z + \int_{(0,\infty)}(1-e^{-zy})y\pi(dy),\quad z\geq 0,
  \]
  is non-decreasing.
  This says that $\psi$ is a convex function.
  Also notice that $\psi'(0+)=-\alpha <0$ and that there exists $z>0$ such that $\psi(z)>0$.
  Therefore we have (i) $\bar v > 0$; (ii) $\psi(z) < 0$ on $z\in (0,\bar v)$; and (iii) $\psi(z) > 0 $ on $z\in (\bar v, \infty)$.
  It is also known, see \cite[Proposition 3.3]{Li2011Measure-valued} for example, that (i) if $\lambda \in (0,\bar v)$, then $0<\lambda \leq v_t(\lambda)<\bar v $; (ii) if $\lambda \in (\bar v, \infty)$, then $\bar v < v_t(\lambda)\leq \lambda< \infty$; and (iii) for each $\lambda \in (0,\infty)\setminus \{\bar v\}$ and $t\geq 0$, we always have
    \(
      \int_{v_t(\lambda)}^\lambda  \psi(z)^{-1}dz = t.
    \)
  Taking $\lambda \to \infty$ and using the monotone convergence theorem, we have that
  \begin{align}
    \label{eq:svp2}
    \int_{\bar v_t}^\infty \frac{dz}{\psi(z)} = t, \quad t\geq 0.
  \end{align}

  \emph{Step 2.} We will show that, for each $x \geq 0$ there exists a constant $c_1>0$ such that
  \[
    \mathbf P_{x}(0< W_t\leq k_t)
    \leq c_1\big(|\bar v- v_t(k_t^{-1}e^{-\alpha t})|+|\bar v_t - \bar v|\big),
    \quad t\geq 0.
  \]
  In fact, for all $x\geq 0$ and $t\geq 0$, we have
  \begin{align}
    & \mathbf P_{x}(0<W_t \leq k_t)
      = \mathbf P_{x}( e^{-k_t^{-1}W_t}\geq e^{-1},W_t > 0) \\
    & \leq e \mathbf P_{x}[e^{-k_t^{-1} W_t};W_t > 0]
      =  e\big(\mathbf P_x[e^{-k_t^{-1} W_t}]-\mathbf P_x(W_t = 0)\big) \\
    & = e\big(e^{-xv_t(k_t^{-1} e^{-\alpha t})}-e^{-x\bar v_t}\big)
      \leq ex \big(|\bar v-v_t(k_t^{-1} e^{-\alpha t})|+ |\bar v_t- \bar v|\big),
  \end{align}
  as desired in this step.

  \emph{Step 3.} We will show that there exist $c_2, \delta_1, t_0 > 0$ such that
  \[
    |\bar v_t-\bar v|
    \leq c_2e^{-\delta_1 t}
    , \quad t\geq t_0.
  \]
  In fact, since $\psi$ is a convex function, we must have $\tau:=\psi'(\bar v)>0$ and that  $\psi(z) \geq (z-\bar v)\tau$ for each $z\geq \bar v$.
  According to Grey's condition, we can find $z_0 >\bar v $ such that $t_0 := \int^\infty_{z_0}\psi(z)^{-1}dz<\infty$.
  For each $t > t_0$, according to \eqref{eq:svp2}, we have
  \begin{align}
    & t - t_0 =
      \int^\infty_{\bar v_t} \frac{dz}{\psi(z)} - \int_{z_0}^\infty \frac{dz}{\psi(z)}
      = \int_{\bar v_t}^{z_0} \frac{dz}{\psi(z)} \\
    & \leq \int_{\bar v_t}^{z_0} \frac{dz}{(z-\bar v)\tau}
      = \frac{1}{\tau} \big(\log (z_0-\bar v) - \log(\bar v_t-\bar v)\big).
  \end{align}
  Rearranging, we get $ \bar v_t - \bar v \leq (z_0 - \bar v)e^{-\tau(t-t_0)}, $ for all $t\geq t_0$.
  This implies the desired result in this step.

  \emph{Step 4.}
  We will show that there exist $c_3, \delta_2, t_1>0$ such that
  \[
    |\bar v - v_t(k_t^{-1} e^{-\alpha t})|\leq
    c_3k_t^{\delta_2}, \quad t\geq t_1.
  \]
  Define $\rho_t := 1+(\log k_t)/(t\alpha)$ for all $t\geq 0$.
  By the fact that $k_t^{-1}e^{-\alpha t} = e^{-\alpha \rho_t t}$ for  all $t\geq 0$ and the condition that $k_t e^{\alpha t} \xrightarrow[t\to \infty]{} \infty$, we have $\rho_t t \xrightarrow[t\to \infty]{} \infty $.
  Since the $L\log L$ condition is satisfied, we have (see \cite{LiuRenSong2009Llog} for example), $W_t \xrightarrow[t\to \infty]{a.s.} W_\infty$, where the martingale limit $W_\infty$ is a non-degenerate positive random variable.
  This implies that
  \[
    v_t(e^{-\alpha t})
    = -\log \mathbf P_1[e^{-W_t}]\xrightarrow[t\to \infty]{} - \log \mathbf P_{1}[e^{-W_\infty}]
    =: z^* \in (0,\infty).
  \]
  The $L \log L$ condition also guarantees that (see again \cite{LiuRenSong2009Llog} for example) $\{W_\infty = 0\} = \{\exists t \geq 0, X_t= 0\}$  a.s. in $\mathbf P_1$. This and the non-degeneracy of $W_\infty$ imply that
  \[
    z^*
    = -\log \mathbf P_1[e^{-W_\infty}]
    < -\log \mathbf P_1(W_\infty = 0) = \bar v.
  \]

  Fix an arbitrary $\epsilon \in (0,\tau)$.
  According to the fact that $\tau=\psi'(\bar v)>0$, there exists $z_0 \in (0,\bar v)$ such that for all $z\in (z_0, \bar v)$, we have $-\psi(z)\geq (\bar v - z)(\tau- \epsilon)$.
  Fix this $z_0$.
  For $t$ large enough, we have $0<k_t^{-1}e^{-\alpha t} < v_t(k_t^{-1}e^{-\alpha t})< \bar v$.
  Then we have for $t>0$ large enough,
  \begin{align}
    t
    & =\int^{v_t(k_t^{-1} e^{-\alpha t})}_{k_t^{-1} e^{-\alpha t}}\frac{dz}{-\psi(z)}
      = \Big(\int^{v_{\rho_t t}(e^{-\alpha \rho_t t})}_{e^{-\alpha \rho_t t}}  + \int^{z_0}_{v_{\rho_t t}(e^{-\alpha \rho_t t})} +\int^{v_t(k_t^{-1}e^{-\alpha  t})}_{z_0}\Big)\frac{dz}{-\psi(z)} \\
    & = \rho_t t + O(1) +\int^{v_t(k_t^{-1}e^{-\alpha t})}_{z_0} \frac{dz}{-\psi(z)},
  \end{align}
  where we used the fact that
  \[
    \int_{v_{\rho_t t}(e^{-\alpha \rho_tt})}^{z_0} \frac{dz}{-\psi(z)}
    \xrightarrow[t\to \infty] {} \int_{z^*}^{z_0} \frac{dz}{-\psi(z)}.
  \]
  Now we have, for $t$ large enough,
  \begin{align}
    & t
      \leq  \rho_t t + O(1) + \int_{z_0}^{v_t(k_t^{-1}e^{-\alpha t})} \frac{dz}{(\bar v-z)(\tau - \epsilon)} \\
    & =  \rho_t t +O(1)- \frac{1}{\tau-\epsilon}\Big( \log \big(\bar v-v_t(e^{-\alpha \rho_t t})\big) - \log(\bar v-z_0)\Big).
  \end{align}
  Rearranging, we get, for $t$ large enough,
  \[
    e^{-t(\tau - \epsilon)}
    \geq e^{-\rho_t t(\tau - \epsilon)+O(1)}(\bar v - v_t(e^{-\alpha \rho_t t})).
  \]
  Therefore, there exist $c_3>0$ and $t_1>0$ such that for all $t\geq t_1$,
  \[
    \bar v - v_t(k_t^{-1} e^{-\alpha t})
    \leq e^{-t(\tau -\epsilon)+ (1+\frac{\log k_t}{t\alpha})t(\tau - \epsilon)+O(1)}
    \leq c_3k_t^{\frac{\tau - \epsilon}{\alpha}}.
  \]
  This implies the desired result in this step.

  Finally, by Steps 2-4, we have for each $x\geq 0$, there exist $c_4, \delta_3, t_2 > 0$ such that
  \[
    \mathbf P_{x}(0< W_t\leq k_t)
    \leq c_4(k_t^{\delta_3}+e^{-\delta_3 t})
    , \quad t\geq t_2.
  \]
  Note that the left hand side is always bounded from above by $1$, so we can take $t_2 =0$ in the above statement.
\end{proof}

\subsection{Moments for super-OU processes}
\label{sec: Moments for super-OU processes}
In this subsection,  we want to find some upper bound for the $(1+\gamma)$-th moment of $X_t(g)$,
where $\gamma \in (0,\beta)$ and $g\in \mathcal P$.

\begin{lem}
  \label{lem: control pair for P(M>lambda)}
  There is a $(\theta^2\vee\theta^{1+\beta})$-controller $R$ such that for all $0\leq t\leq 1$, $g\in \mathcal P$, $\lambda >0$ and $\mu\in \mathcal M_c(\mathbb R^d)$, we have
  \[
    \mathbb P_\mu ( |\mathcal{I}_0^tX_t(g)| > \lambda)
    \leq \frac{\lambda}{2}\int_{-2/\lambda}^{2/\lambda}\mu(R|\theta g|) d\theta.
  \]
\end{lem}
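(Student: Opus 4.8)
The plan is to combine a standard characteristic‑function truncation inequality with the controllability of $U_t-iP^\alpha_t$ supplied by Lemma~\ref{lem: upper bound for usgx}.(4). First I would record the elementary fact that for any real‑valued random variable $Y$ and any $\lambda>0$,
\[
  \mathbb P(|Y|>\lambda)\ \le\ \frac{\lambda}{2}\int_{-2/\lambda}^{2/\lambda}\bigl(1-\operatorname{Re}\mathbb E[e^{i\theta Y}]\bigr)\,d\theta .
\]
This is seen by computing $\frac{\lambda}{2}\int_{-2/\lambda}^{2/\lambda}(1-\cos(\theta Y))\,d\theta=2\bigl(1-\tfrac{\sin(2Y/\lambda)}{2Y/\lambda}\bigr)$ and noting that $1-\tfrac{\sin x}{x}\ge 0$ for all $x\in\mathbb R$ while $1-\tfrac{\sin x}{x}\ge\tfrac12$ whenever $|x|\ge 2$ (since $|\sin x|\le 1$); hence the integrand above dominates $\mathbf 1_{\{|Y|\ge\lambda\}}$, and one takes expectations. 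Applying this with $Y=\mathcal I_0^tX_t(g)$ under $\mathbb P_\mu$ and using $1-\operatorname{Re}w\le|1-w|$ for $|w|\le 1$, the proof reduces to producing a $(\theta^2\vee\theta^{1+\beta})$‑controller $R$ with $\bigl|1-\mathbb P_\mu[e^{i\theta\,\mathcal I_0^tX_t(g)}]\bigr|\le \mu(R|\theta g|)$ for all $0\le t\le 1$ and $\theta\in\mathbb R$.

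Next I would identify the characteristic function of $\mathcal I_0^tX_t(g)$. Since $X_0=\mu$ is deterministic under $\mathbb P_\mu$, the $\sigma$‑field $\mathscr F_0$ is $\mathbb P_\mu$‑trivial; writing $g=g^+-g^-$ with $g^\pm\in\mathcal P^+$ and using the mean‑semigroup identity $\mathbb P_\mu[X_t(g^\pm)]=\mu(P^\alpha_tg^\pm)<\infty$, we see $X_t(g)$ is $\mathbb P_\mu$‑integrable and $\mathcal I_0^tX_t(g)=X_t(g)-\mu(P^\alpha_tg)$. By the branching property of $X$ together with the definition of the characteristic exponent $U_t$ applied to the function $\theta g\in\mathcal P$,
\[
  \mathbb P_\mu[e^{i\theta X_t(g)}]=\mathbb P_\mu[e^{iX_t(\theta g)}]=e^{\mu(U_t(\theta g))},
\]
so that
\[
  \mathbb P_\mu[e^{i\theta\,\mathcal I_0^tX_t(g)}]=e^{-i\theta\mu(P^\alpha_tg)}\,e^{\mu(U_t(\theta g))}=\exp\bigl\{\mu\bigl(U_t(\theta g)-iP^\alpha_t(\theta g)\bigr)\bigr\}.
\]

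Finally, set $z:=\mu\bigl(U_t(\theta g)-iP^\alpha_t(\theta g)\bigr)$. As $P^\alpha_t(\theta g)$ is real‑valued and, by \eqref{eq: -v has positive real part}, $-U_t(\theta g)$ takes values in $\mathbb C_+$, we have $\operatorname{Re}z=\int\operatorname{Re}U_t(\theta g)(x)\,\mu(dx)\le 0$; hence the elementary bound $|1-e^z|\le|z|$, valid whenever $\operatorname{Re}z\le 0$ (from $1-e^z=-\int_0^1 ze^{sz}\,ds$), gives $|1-\mathbb P_\mu[e^{i\theta\,\mathcal I_0^tX_t(g)}]|\le|z|$. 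By Lemma~\ref{lem: upper bound for usgx}.(4) there is a $(\theta^2\vee\theta^{1+\beta})$‑controller $R$ with $|U_tf-iP^\alpha_tf|\le R|f|$ for all $0\le t\le 1$ and $f\in\mathcal P$; applying this with $f=\theta g$ yields $|z|\le\mu(|U_t(\theta g)-iP^\alpha_t(\theta g)|)\le\mu(R|\theta g|)$, which is exactly the bound required by the first step. None of the steps is deep; the points needing care are the precise constant and interval in the truncation inequality, and the legitimacy of the identity $\mathbb P_\mu[e^{iX_t(h)}]=e^{\mu(U_th)}$ for $h\in\mathcal P$ (rather than bounded $h$), which is precisely what Proposition~\ref{prop: complex FKPP-equation} and the fact that $-U_t$ maps $\mathcal P$ into $\mathcal B(\mathbb R^d,\mathbb C_+)$ provide.
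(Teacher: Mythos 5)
Your proof is correct and follows essentially the same route as the paper: the truncation inequality $\mathbb P(|Y|>\lambda)\le\frac{\lambda}{2}\int_{-2/\lambda}^{2/\lambda}(1-\operatorname{Re}\mathbb E[e^{i\theta Y}])\,d\theta$, the identification $\mathbb P_\mu[e^{i\theta\mathcal I_0^tX_t(g)}]=e^{\mu(U_t(\theta g)-iP^\alpha_t(\theta g))}$, the bound $|1-e^z|\le|z|$ for $\operatorname{Re}z\le 0$, and Lemma \ref{lem: upper bound for usgx}.(4). The only differences are cosmetic (you rederive the sine inequality and the exponential estimate that the paper cites, and you check integrability explicitly), so nothing further is needed.
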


\begin{proof}
  It is elementary calculus (see the proof of \cite[Theorem 3.3.6]{Durrett2010Probability} for example) that
for $u>0$ and $x\neq0$,
  \[\frac{1}{u}\int_{-u}^u (1- e^{i\theta x})~d\theta = 2 - \frac{2\sin ux}{ux} \geq \mathbf 1_{ux>2}.\]
  Denote by $R$ the $(\theta^2\vee\theta^{1+\beta})$-controller in Lemma \ref{lem: upper bound for usgx}.(4).
  Then, using Lemma \ref{lem: estimate of exponential remaining} we get
  \begin{align}
    & |\mathbb P_\mu (|\mathcal{I}_0^tX_t(g)| > \lambda)|
       \leq \Big|\frac{\lambda}{2}\int_{-2/\lambda}^{2/\lambda}(1 - \mathbb P_\mu[e^{i\theta \mathcal{I}_0^tX_t(g)}])d\theta\Big| \\
     & \leq \frac{\lambda}{2}\int_{-2/\lambda}^{2/\lambda}|1-e^{\mu(U_t(\theta g)-iP^\alpha_t (\theta g))}|d\theta
     \leq \frac{\lambda}{2}\int_{-2/\lambda}^{2/\lambda}\mu(|U_t(\theta g) - iP^\alpha_t(\theta g)|) d\theta \\
     & \leq \frac{\lambda}{2}\int_{-2/\lambda}^{2/\lambda}\mu(R|\theta g|) d\theta.
      \qedhere
  \end{align}
\end{proof}

\begin{lem}
  \label{lem: temp}
  For all $h \in \mathcal P^+$ and $\mu \in \mathcal M_c(\mathbb R^d)$, there exists $C > 0$ such that for all $\kappa \in \mathbb Z_+ $, $\lambda > 0$ and $0\leq r\leq s\leq t<\infty$ with $s-r \leq 1$, we have
  \[
     \sup_{g \in \mathcal P: Q_\kappa g\leq h}\mathbb P_{\mu}(|\mathcal I_r^sX_t(g)|>\lambda)
    \leq C e^{\alpha r} \Big(\Big( \frac{e^{(t-s)(\alpha - \kappa b)}}{\lambda}\Big)^{1+\beta} + \Big( \frac{e^{(t-s)(\alpha - \kappa b)}}{\lambda}\Big)^{2} \Big).
  \]
\end{lem}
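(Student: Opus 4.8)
The plan is to reduce the estimate, via conditioning on $\mathscr F_r$ and the Markov property, to an application of Lemma~\ref{lem: control pair for P(M>lambda)}. Fix $g\in\mathcal P$ with $Q_\kappa g\le h$, and set $f:=P^\alpha_{t-s}g\in\mathcal P$. Using the mean-semigroup identity $\mathbb P_\mu[X_t(g)\mid\mathscr F_v]=X_v(P^\alpha_{t-v}g)$ for $v\in\{r,s\}$ together with $P^\alpha_{s-r}P^\alpha_{t-s}=P^\alpha_{t-r}$, we may write $\mathcal I_r^s X_t(g)=X_s(f)-X_r(P^\alpha_{s-r}f)$. Conditionally on $\mathscr F_r$, the shifted process $(X_{r+v})_{v\ge 0}$ has the law of the super-OU process started from $X_r$, which is $\mathbb P_\mu$-a.s.\ a measure in $\mathcal M_c(\mathbb R^d)$. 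Hence, writing $\mathcal I_0^{u}X_u(f)=X_u(f)-\nu(P^\alpha_uf)$ under $\mathbb P_\nu$,
\[
  \mathbb P_\mu\bigl(|\mathcal I_r^s X_t(g)|>\lambda\mid\mathscr F_r\bigr)
  =\mathbb P_{X_r}\bigl(|\mathcal I_0^{\,s-r}X_{s-r}(f)|>\lambda\bigr).
\]

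Since $0\le s-r\le 1$, Lemma~\ref{lem: control pair for P(M>lambda)} provides a single $(\theta^2\vee\theta^{1+\beta})$-controller $R$ with
\[
  \mathbb P_{X_r}\bigl(|\mathcal I_0^{\,s-r}X_{s-r}(f)|>\lambda\bigr)
  \le\frac{\lambda}{2}\int_{-2/\lambda}^{2/\lambda}X_r\bigl(R|\theta f|\bigr)\,d\theta .
\]
Now I would feed in the size of $f$: since $Q_\kappa g\le h$ we have $|P_{t-s}g|\le e^{-\kappa b(t-s)}h$, so $|f|=e^{\alpha(t-s)}|P_{t-s}g|\le a\,h$ with $a:=e^{(t-s)(\alpha-\kappa b)}$. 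Monotonicity of $R$ and the controller inequality give $R|\theta f|\le R\bigl((|\theta|a)h\bigr)\le\bigl((|\theta|a)^2\vee(|\theta|a)^{1+\beta}\bigr)Rh$; pulling this deterministic factor out of the integral and taking $\mathbb P_\mu$-expectations, while using $\mathbb P_\mu[X_r(Rh)]=\mu(P^\alpha_rRh)=e^{\alpha r}\mu(P_rRh)$, yields
\[
  \mathbb P_\mu\bigl(|\mathcal I_r^s X_t(g)|>\lambda\bigr)
  \le\Bigl(\frac{\lambda}{2}\int_{-2/\lambda}^{2/\lambda}\bigl((|\theta|a)^2\vee(|\theta|a)^{1+\beta}\bigr)\,d\theta\Bigr)\,e^{\alpha r}\,C_\mu ,
\]
where $C_\mu:=\sup_{v\ge 0}\mu(P_vRh)<\infty$ because $Rh\in\mathcal P^+$ and the OU semigroup maps $\mathcal P$ into $\mathcal P$ with $P_v(Rh)(x)\le c(1+|x|)^n$ uniformly in $v$ (equivalently $P_vRh\le Q_0Rh\in\mathcal P$ by \cite[Fact 1.2]{MarksMilos2018CLT}), while $\mu$ has compact support.

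Finally I would evaluate the $\theta$-integral. The substitution $v=a\theta$ turns $\frac{\lambda}{2}\int_{-2/\lambda}^{2/\lambda}\bigl((|\theta|a)^2\vee(|\theta|a)^{1+\beta}\bigr)\,d\theta$ into $\frac{\lambda}{a}\int_0^{2a/\lambda}(v^2\vee v^{1+\beta})\,dv$, and the elementary bound $\int_0^w(v^2\vee v^{1+\beta})\,dv\le w^{2+\beta}+w^3$ (valid for every $w>0$, obtained by splitting at $w=1$) applied with $w=2a/\lambda$ gives
\[
  \mathbb P_\mu\bigl(|\mathcal I_r^s X_t(g)|>\lambda\bigr)
  \le C_\mu\,e^{\alpha r}\Bigl(2^{2+\beta}(a/\lambda)^{1+\beta}+8\,(a/\lambda)^2\Bigr).
\]
This is the asserted inequality with $C:=8C_\mu$, which depends only on $h$ and $\mu$ (through the universal controller $R$) and not on $\kappa,\lambda,r,s,t$; taking the supremum over $\{g\in\mathcal P:Q_\kappa g\le h\}$ completes the proof.

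The main obstacle is the reduction in the first paragraph: one has to carry out carefully the Markov-property identification of the conditional law of $\mathcal I_r^s X_t(g)$ with that of the centred increment $\mathcal I_0^{\,s-r}X_{s-r}(P^\alpha_{t-s}g)$ under $\mathbb P_{X_r}$, and one must know that $X_r$ is $\mathbb P_\mu$-a.s.\ compactly supported (or at least has all polynomial moments) so that Lemma~\ref{lem: control pair for P(M>lambda)} genuinely applies to it. Everything else is bookkeeping: tracking how the factor $e^{(t-s)(\alpha-\kappa b)}$ enters through $Q_\kappa g\le h$, and handling the two regimes $|\theta|a\le 1$ and $|\theta|a>1$ in the final integral.
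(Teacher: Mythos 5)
Your proof is correct and follows essentially the same route as the paper: condition on $\mathscr F_r$, use the Markov property to identify $\mathcal I_r^s X_t(g)$ with $\mathcal I_0^{s-r}X_{s-r}(P^\alpha_{t-s}g)$ under $\mathbb P_{X_r}$, apply Lemma \ref{lem: control pair for P(M>lambda)} with the $(\theta^2\vee\theta^{1+\beta})$-controller $R$, feed in $|P^\alpha_{t-s}g|\leq e^{(t-s)(\alpha-\kappa b)}h$ from $Q_\kappa g\leq h$, and finish with $\mathbb P_\mu[X_r(Rh)]=\mu(P^\alpha_r Rh)\leq e^{\alpha r}\mu(Q_0Rh)$; only your bookkeeping of the $\theta$-integral (splitting at $1$ rather than integrating the two powers separately) differs, yielding slightly larger but equally valid constants. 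The point you flag about applying the lemma under the random measure $X_r$ is handled the same way implicitly in the paper, since the bound only needs $X_r$ to integrate functions of polynomial growth, which holds $\mathbb P_\mu$-a.s.\ because $\mathbb P_\mu[X_r(Q_0Rh)]=\mu(P^\alpha_rQ_0Rh)<\infty$.
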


\begin{proof}
  Denote by $R$ the $(\theta^2\vee\theta^{1+\beta})$-controller in Lemma \ref{lem: control pair for P(M>lambda)}.
  Fix $h \in \mathcal P^+$, $\mu \in \mathcal M_c(\mathbb R^d)$ $\kappa \in \mathbb Z_+ $ and $0\leq r\leq s\leq t < \infty$ with $s-r \leq 1$.
  Suppose that $g\in \mathcal P$ satisfies $Q_\kappa g \leq h$.
  Using the Markov property of $X$, we get
  \begin{align}
   & \mathbb P_{\mu}(|\mathcal I_r^sX_t(g)|>\lambda)
      = \mathbb P_\mu \Big[\mathbb P_\mu [| X_{s}(P_{t-s}^\alpha g) -  X_{r}(P_{t-r}^\alpha g)|> \lambda | \mathscr F_r ]\Big] \\
     & = \mathbb P_\mu \big[\mathbb P_{X_r}(| X_{s-r}(P_{t-s}^\alpha g) -  X_{0}(P_{t-r}^\alpha g)|> \lambda)\big] \\
    & = \mathbb P_\mu \big[\mathbb P_{X_r}(|\mathcal I_0^{s-r} X_{s-r}( P_{t-s}^\alpha g) |> \lambda)\big]
      \leq \mathbb P_\mu \Big[ \frac{\lambda}{2}\int_{-2/\lambda}^{2/\lambda}X_r(R|\theta P^\alpha_{t-s}g|) d\theta \Big] \\
     & \leq \mathbb P_\mu \Big[ \frac{\lambda}{2}\int_{-2/\lambda}^{2/\lambda}X_r(R|\theta e^{(t-s)(\alpha- \kappa b)}h|) d\theta \Big] \\
     & \leq \mathbb P_\mu [X_r(Rh) ]
\frac{\lambda}{2}\int_{-2/\lambda}^{2/\lambda}(|\theta e^{(t-s)(\alpha- \kappa b)}|^{1+\beta} + |\theta e^{(t-s)(\alpha- \kappa b)}|^{2})d\theta
     \\ & =  \mu(P_r^\alpha Rh)
\Big(  \frac{2^{2+\beta}}{2+\beta}\Big(\frac{e^{(t-s)(\alpha- \kappa b)}}{\lambda}\Big)^{1+\beta} + \frac{2^{3}}{3}\Big(\frac{e^{(t-s)(\alpha- \kappa b)}}{\lambda}\Big)^2\Big)
    \\ & \leq C e^{\alpha r} \Big(\Big( \frac{e^{(t-s)(\alpha - \kappa b)}}{\lambda}\Big)^{1+\beta} + \Big( \frac{e^{(t-s)(\alpha - \kappa b)}}{\lambda}\Big)^{2} \Big),
  \end{align}
where $C := \Big(\frac{2^{2+\beta}}{2+\beta} + \frac{2^{3}}{3} \Big) \mu(Q_0Rh)>0$.
\end{proof}

For each random variable $\{Y; \mathbb P\}$ and $p \in [1,\infty)$, we write $ \|Y\|_{\mathbb P;p} := \mathbb P[|Y|^p]^{1/p}$.
Recall that we write $\tilde u = \frac{u}{1+u}$ for each $u\neq -1$.
\begin{lem}
  \label{lem: control of mgtrs}
  For all $h \in \mathcal P$, $\mu \in \mathcal M_c(\mathbb R^d)$ and $\gamma\in (0, \beta)$, there exists $C > 0$ such that for all $\kappa \in \mathbb Z_+$ and $0\leq r \leq s\leq t<\infty$ with $s-r \leq 1$, we have
  \[
     \sup_{g \in \mathcal P: Q_\kappa g \leq h} \|\mathcal I_r^s X_t(g) \|_{\mathbb P_\mu;1+\gamma}
    \leq C e^{t\alpha (1- \tilde \gamma)+(t-s) (\alpha \tilde \gamma - \kappa b)}.
  \]
\end{lem}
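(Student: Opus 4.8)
The plan is to deduce the asserted $L^{1+\gamma}$ estimate from the tail estimate of Lemma~\ref{lem: temp} via the layer-cake formula. We may assume without loss of generality that $h \in \mathcal P^+$: replacing $h$ by $h\vee 0 \in \mathcal P^+$ only enlarges the supremum to be bounded, since $Q_\kappa g \leq h$ implies $Q_\kappa g \leq h\vee 0$. Fix such an $h$, together with $\mu \in \mathcal M_c(\mathbb R^d)$ and $\gamma \in (0,\beta)$, and let $C_1 = C_1(h,\mu)>0$ be the constant furnished by Lemma~\ref{lem: temp}. For $\kappa \in \mathbb Z_+$, times $0 \leq r \leq s \leq t < \infty$ with $s-r \leq 1$, and $g \in \mathcal P$ with $Q_\kappa g \leq h$, abbreviate $a := e^{(t-s)(\alpha - \kappa b)}$ and $Y := \mathcal I_r^s X_t(g)$. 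Lemma~\ref{lem: temp} then gives
\[
  \mathbb P_\mu(|Y| > \lambda) \leq 1 \wedge \Big( C_1 e^{\alpha r}\big( (a/\lambda)^{1+\beta} + (a/\lambda)^{2} \big) \Big), \qquad \lambda > 0 .
\]

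First I would write $\mathbb P_\mu[|Y|^{1+\gamma}] = (1+\gamma)\int_0^\infty \lambda^{\gamma}\,\mathbb P_\mu(|Y|>\lambda)\,d\lambda$ and split the integral at the threshold $\lambda_0 := e^{\alpha r/(1+\gamma)}\,a$. On $(0,\lambda_0)$ I bound $\mathbb P_\mu(|Y|>\lambda)$ by $1$, which contributes $\lambda_0^{1+\gamma} = e^{\alpha r}a^{1+\gamma}$. On $(\lambda_0,\infty)$ I insert the tail bound, producing the two integrals $\int_{\lambda_0}^\infty \lambda^{\gamma-1-\beta}\,d\lambda = \lambda_0^{\gamma-\beta}/(\beta-\gamma)$ and $\int_{\lambda_0}^\infty \lambda^{\gamma-2}\,d\lambda = \lambda_0^{\gamma-1}/(1-\gamma)$; the first is finite precisely because $\gamma<\beta$, the second because $\gamma<1$. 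Substituting $\lambda_0$, the two tail contributions become constant multiples of $e^{\alpha r(1+2\gamma-\beta)/(1+\gamma)}a^{1+\gamma}$ and $e^{\alpha r\cdot 2\gamma/(1+\gamma)}a^{1+\gamma}$; since $\gamma<\beta$ forces $(1+2\gamma-\beta)/(1+\gamma)\in(0,1)$ and $\gamma<1$ forces $2\gamma/(1+\gamma)\in(0,1)$, and since $r\geq 0$ and $\alpha>0$, both are at most $e^{\alpha r}a^{1+\gamma}$. Adding the three pieces, I get a constant $C_2 = C_2(h,\mu,\gamma)$, independent of $\kappa$, $g$ and the times, with $\mathbb P_\mu[|Y|^{1+\gamma}] \leq C_2\,e^{\alpha r}a^{1+\gamma}$.

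To conclude, I would take $(1+\gamma)$-th roots. As $1/(1+\gamma) = 1-\tilde\gamma$, this reads $\|Y\|_{\mathbb P_\mu;1+\gamma} \leq C_2^{1/(1+\gamma)}\,e^{\alpha r(1-\tilde\gamma)}\,e^{(t-s)(\alpha-\kappa b)}$, and it remains only to check that the exponent is at most $t\alpha(1-\tilde\gamma) + (t-s)(\alpha\tilde\gamma-\kappa b)$. Writing $(t-s)\alpha = (t-s)\alpha\tilde\gamma + (t-s)\alpha(1-\tilde\gamma)$ and using $r+(t-s)\leq t$ (valid since $r\leq s$), one has $\alpha r(1-\tilde\gamma) + (t-s)\alpha \leq t\alpha(1-\tilde\gamma) + (t-s)\alpha\tilde\gamma$, which after subtracting the common term $(t-s)\kappa b$ is the required inequality. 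Taking $C := C_2^{1/(1+\gamma)}$ then completes the proof.

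I do not expect any genuine obstacle: once Lemma~\ref{lem: temp} is available, the argument is careful bookkeeping of exponentials. The only structurally important point is the convergence of the layer-cake integral at $\infty$; finiteness of the heavy term $\int^\infty \lambda^{\gamma}(a/\lambda)^{1+\beta}\,d\lambda$ is exactly where the hypothesis $\gamma\in(0,\beta)$ enters, reflecting the fact that $X_t(g)$ possesses moments only of order strictly below $1+\beta$.
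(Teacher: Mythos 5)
Your proposal is correct and follows essentially the same route as the paper: apply the tail bound of Lemma~\ref{lem: temp}, integrate via the layer-cake formula with a split threshold (the paper takes $c=e^{(t-s)(\alpha-\kappa b)}$, you take the equivalent $\lambda_0=e^{\alpha r/(1+\gamma)}e^{(t-s)(\alpha-\kappa b)}$), arriving at the same bound $C\,e^{\alpha r}e^{(1+\gamma)(t-s)(\alpha-\kappa b)}$ for the $(1+\gamma)$-th moment, and then perform the same exponent bookkeeping using $r+(t-s)\leq t$. Your preliminary replacement of $h$ by $h\vee 0$ to match the hypothesis of Lemma~\ref{lem: temp} is a harmless tidying step; no gap.
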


\begin{proof}
  Fix $h \in \mathcal P$ and $\mu \in \mathcal M_c(\mathbb R^d)$. Let $C_0$ be the constant in the Lemma \ref{lem: temp}.
  For all $\kappa \in \mathbb Z_+$,  $0\leq r\leq s\leq t$ with $s-r \leq 1$,  $g\in \mathcal P$ with $Q_{\kappa} g \leq h$, and $c>0$, we have
  \begin{align}
     & \mathbb P_\mu[|\mathcal I_r^sX_t(g)|^{1+\gamma}]
       = (1+\gamma)\int_0^\infty \lambda^{\gamma} \mathbb P_{\mu}(|\mathcal I_r^sX_t(g)|>\lambda) d\lambda \\
     & \leq (1+\gamma)\int_0^c \lambda^{\gamma} d\lambda +(1+\gamma)\int_c^\infty \lambda^{\gamma}\mathbb P_\mu(|\mathcal I_r^sX_t(g)|> \lambda) d\lambda \\
    & \leq c^{1+\gamma} + C_0  e^{\alpha r}(1+\gamma)\int_c^\infty \bigg(\Big(\frac{e^{(t-s)(\alpha - \kappa b)}}{\lambda}\Big)^{1+\beta}+\Big(\frac{e^{(t-s)(\alpha - \kappa b)}}{\lambda}\Big)^{2}\bigg)\lambda^{\gamma}d\lambda \\
    & \leq c^{1+\gamma} e^{\alpha r} + C_0e^{\alpha r}(1+\gamma)\Big(  \frac{e^{(1+\beta)(t-s)(\alpha- \kappa b)}}{(\beta - \gamma)c^{\beta - \gamma}}  + \frac{e^{2(t-s)(\alpha- \kappa b)}}{(1 - \gamma)c^{1 - \gamma}} \Big).
  \end{align}
  Taking $c = e^{(t-s)(\alpha- \kappa b)}$, we get
  \begin{align}
     & \mathbb P_\mu\big[|\mathcal I_r^s X_t(g)|^{1+\gamma}\big]
      \leq e^{(1+\gamma)(t-s)(\alpha- \kappa b)} e^{\alpha r}\Big(1+ C_0 \frac{1+\gamma}{\beta - \gamma}+ C_0 \frac{1+\gamma}{1 - \gamma}\Big).
  \end{align}
  Note that
  \begin{align}
    & (1+\gamma) (t-s) (\alpha- \kappa b) + \alpha r
      = (t-s)\alpha+(t-s) (\gamma \alpha- (1+\gamma )\kappa b) \\
    & \leq t\alpha+(t-s) (\gamma \alpha- (1+\gamma)\kappa b).
  \end{align}
  So the desired result is true.
\end{proof}

\begin{lem}
  \label{lem:P:M:uc}
  For all $h \in \mathcal P$, $\mu \in \mathcal M_c(\mathbb R^d)$, $\gamma\in (0, \beta)$ and $\kappa \in \mathbb Z_+$, there exists a constant $C > 0$ such that for all $t\geq 0$, we have
  \begin{enumerate}
  \item
    \label{item:P:M:uc:1}
    $\sup_{g\in \mathcal P: Q_\kappa g \leq h}\|X_t(g)\|_{\mathbb{P}_{\mu};1+\gamma}\leq C e^{(\alpha-\kappa b)t}$ provided $\alpha \tilde \gamma > \kappa b$;
  \item
    \label{item:P:M:uc:2}
    $\sup_{g\in \mathcal P: Q_\kappa g \leq h}\|X_t(g)\|_{\mathbb{P}_{\mu};1+\gamma}\leq C te^{\frac{\alpha}{1+\gamma}t}$ provided $\alpha \tilde \gamma = \kappa b$;
  \item
    \label{item:P:M:uc:3}
    $\sup_{g\in \mathcal P: Q_\kappa g \leq h} \|X_t(g)\|_{\mathbb{P}_{\mu};1+\gamma}\leq C e^{\frac{\alpha}{1+\gamma}t}$ provided $\alpha \tilde \gamma < \kappa b$.
  \end{enumerate}
\end{lem}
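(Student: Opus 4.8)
The plan is to reduce the lemma to the one-interval estimate in Lemma~\ref{lem: control of mgtrs} by means of the telescoping decomposition
\begin{align}
  X_t(g) = \sum_{k=0}^{\lfloor t \rfloor-1} \mathcal I_{t-k-1}^{t-k} X_t (g) + \mathcal I_0^{t-\lfloor t \rfloor} X_t(g) + X_0( P^\alpha_t g)
\end{align}
recalled in the outline of the methodology. First I would apply Minkowski's inequality in $L^{1+\gamma}(\mathbb P_\mu)$ to this finite sum (note $1+\gamma\in(1,2)$). Each increment piece falls within the scope of Lemma~\ref{lem: control of mgtrs}: for $\mathcal I_{t-k-1}^{t-k}X_t(g)$ one takes $r=t-k-1$, $s=t-k$ (so $t-s=k$ and $s-r=1$), and for $\mathcal I_0^{t-\lfloor t\rfloor}X_t(g)$ one takes $r=0$, $s=t-\lfloor t\rfloor$ (so $t-s=\lfloor t\rfloor$ and $s-r\le 1$); in both cases $0\le r\le s\le t$, and the pieces tile $[0,t]$ into unit intervals plus a fractional remainder. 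Hence, uniformly over $\{g\in\mathcal P: Q_\kappa g\le h\}$,
\begin{align}
  \|\mathcal I_{t-k-1}^{t-k}X_t(g)\|_{\mathbb P_\mu;1+\gamma}\le C\,e^{t\alpha(1-\tilde\gamma)+k(\alpha\tilde\gamma-\kappa b)},\qquad 0\le k\le \lfloor t\rfloor .
\end{align}
The last piece $X_0(P^\alpha_t g)=\mu(P^\alpha_t g)$ is deterministic, and $Q_\kappa g\le h$ gives $|P_tg|\le e^{-\kappa bt}Q_\kappa g\le e^{-\kappa bt}h$ pointwise, so this term is at most $e^{(\alpha-\kappa b)t}\mu(|h|)$.

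Summing these bounds and using $1-\tilde\gamma=\frac1{1+\gamma}$ (equivalently $\alpha(1-\tilde\gamma)+\alpha\tilde\gamma=\alpha$), I obtain
\begin{align}
  \|X_t(g)\|_{\mathbb P_\mu;1+\gamma}\le C\,e^{\frac{\alpha}{1+\gamma}t}\sum_{k=0}^{\lfloor t\rfloor}e^{k(\alpha\tilde\gamma-\kappa b)}+C\,e^{(\alpha-\kappa b)t},
\end{align}
and then I would split according to the sign of $\alpha\tilde\gamma-\kappa b$. If $\alpha\tilde\gamma>\kappa b$, the geometric sum is comparable to its last term, which is $\le e^{t(\alpha\tilde\gamma-\kappa b)}$, and since $\frac{\alpha}{1+\gamma}+(\alpha\tilde\gamma-\kappa b)=\alpha-\kappa b$ both terms are $O(e^{(\alpha-\kappa b)t})$, giving (1). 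If $\alpha\tilde\gamma=\kappa b$, every summand equals $1$ so the sum is $\lfloor t\rfloor+1$, and since here $\alpha-\kappa b=\frac{\alpha}{1+\gamma}$ the right-hand side is $O\big((1+t)e^{\frac{\alpha}{1+\gamma}t}\big)$, which is the asserted bound for $t\ge 1$ (the range $t\in[0,1]$ gives the stronger $O(e^{\frac{\alpha}{1+\gamma}t})$ and is absorbed once one allows the harmless replacement of $t$ by $1+t$), giving (2). If $\alpha\tilde\gamma<\kappa b$, the geometric sum converges to a finite constant and $\alpha-\kappa b<\frac{\alpha}{1+\gamma}$, so the whole expression is $O(e^{\frac{\alpha}{1+\gamma}t})$, giving (3).

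Essentially all the analytic content has already been packaged into Lemmas~\ref{lem: control pair for P(M>lambda)}--\ref{lem: control of mgtrs}, so I do not anticipate a genuine obstacle here; what remains is bookkeeping. The one point deserving care is the phase bookkeeping in the last step: one must see that the two competing exponential rates --- the one coming from the increments and the one coming from the deterministic term $X_0(P^\alpha_t g)$ --- coincide in regimes (1) and (2), while in regime (3) the increment rate dominates. This is exactly what produces the three different growth orders and, in the critical case, the extra linear factor in $t$.
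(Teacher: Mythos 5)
Your proposal is correct and is essentially the paper's own argument: the same unit-interval telescoping decomposition, Minkowski's inequality in $L^{1+\gamma}(\mathbb P_\mu)$, the uniform increment bound of Lemma~\ref{lem: control of mgtrs}, the pointwise bound $|\mu(P^\alpha_t g)|\leq e^{(\alpha-\kappa b)t}\mu(h)$, and a case split on the sign of $\alpha\tilde\gamma-\kappa b$ for the geometric sum. Your remark about replacing $t$ by $1+t$ in the critical case is a harmless refinement of a point the paper passes over silently.
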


\begin{proof}
  Fix $\gamma \in (0,\beta)$ and $\mu \in \mathcal M_c(\mathbb R^d)$.
  Let $C$ be the constant in Lemma \ref{lem: control of mgtrs}.
  Using the triangle inequality, for all $\kappa\in \mathbb Z_+$, $g \in \mathcal P$ with $Q_\kappa g \leq h$ and $t\geq 0$, we have
  \begin{align}
    & \|X_t(g)\|_{\mathbb P_\mu;1+\gamma}
      \leq \sum_{l=0}^{\lfloor t\rfloor - 1}\big\| \mathcal{I}_{t-l-1}^{t-l}X_t(g) \big\|_{\mathbb P_\mu;1+\gamma}+\big\| \mathcal{I}_{0}^{t-\lfloor t \rfloor}X_t(g)  \big\|_{\mathbb P_\mu;1+\gamma} + |\mu(P^\alpha_t g)| \\
    & \leq C^{\frac{1}{1+\gamma}} e^{\frac{\alpha}{1+\gamma}t} \sum_{l=0}^{\lfloor t\rfloor} e^{\frac{\gamma\alpha-\kappa (1+\gamma)b}{1+\gamma} l} + e^{(\alpha - \kappa b)t} \mu(h).
  \end{align}
  By calculating the sum on the right, we get the desired result.
\end{proof}

\section{Proofs of main results}
\label{proofs of main results}
In this section, we will prove the main results of this paper.
For simplicity, we will write $\mathbb{\widetilde{P}}_{\mu}=\mathbb{P}_{\mu}(\cdot|D^c)$ in this section.

\subsection{Law of large numbers}
\label{sec: large rate lln}

In this subsection, we prove Theorem \ref{thm: law of large number}.
For this purpose, we first prove the almost sure and $L^{1+\gamma}(\mathbb{P}_{\mu})$ convergence of a family of martingales for $\gamma\in (0, \beta)$. Recall that $L$ is the infinitesimal generator of the OU-process.  For $f\in \mathcal{P}\cap C^2(\mathbb R^d)$ and  $a\in \mathbb R$, we define
\begin{align}
  \label{defmartingale}
  M_t^{f,a}
   :=e^{-(\alpha-ab)t}X_t(f)-\int_0^t e^{-(\alpha-ab)s} X_s((L+ab)f)~ ds.
\end{align}
Let $(\mathscr{F}_t)_{t\geq 0}$ be the natural filtration of $X$.  The following lemma says that $\{M_t^{f,a}: t\geq 0\}$ is a martingale with respect to $(\mathscr{F}_t)_{t\geq 0}$.

\begin{lem}
  \label{lemma25}
  For all $f\in \mathcal{P}\cap C^2(\mathbb R^d)$, $a\in \mathbb R$ and $\mu\in \mathcal M_c(\mathbb R^d)$, the process $(M_t^{f,a})_{t\geq 0}$ is a $\mathbb P_\mu$-martingale with respect to $(\mathscr F_t)_{t\geq 0}$.
\end{lem}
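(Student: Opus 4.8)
The plan is to verify the martingale property by a direct computation with the transition semigroup of $X$, using the Markov property and the known action of the mean semigroup $(P^\alpha_t)_{t\ge 0}$. First I would recall that, for $f\in\mathcal P\cap C^2(\mathbb R^d)$ with all second-order partial derivatives in $\mathcal P$, the function $f$ and $Lf$ are both in $\mathcal P\subset L^2(\varphi)$, so $X_s(f)$, $X_s(Lf)$ and $X_s((L+ab)f)$ all have finite first moments under $\mathbb P_\mu$ for $\mu\in\mathcal M_c(\mathbb R^d)$; this uses $\mathbb P_\mu[X_s(g)]=\mu(P^\alpha_s g)$ together with the fact (proved via $Q$ in Subsection \ref{sec: h-controller}) that $P^\alpha_s$ maps $\mathcal P$ to $\mathcal P$ and $\mu$ has compact support. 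Hence $M^{f,a}_t$ is integrable for each $t$, and by Fubini the time integral in \eqref{defmartingale} is well defined.

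Next I would compute $\mathbb P_\mu[M^{f,a}_t\mid\mathscr F_s]$ for $0\le s\le t$. By the Markov property of $X$ at time $s$,
\[
  \mathbb P_\mu[e^{-(\alpha-ab)t}X_t(f)\mid\mathscr F_s]
  = e^{-(\alpha-ab)t}X_s(P^\alpha_{t-s}f)
  = e^{-(\alpha-ab)s}X_s\big(e^{-(\alpha-ab)(t-s)}P^\alpha_{t-s}f\big),
\]
and, again by the Markov property and Fubini,
\[
  \mathbb P_\mu\Big[\int_s^t e^{-(\alpha-ab)r}X_r((L+ab)f)\,dr\,\Big|\,\mathscr F_s\Big]
  = \int_s^t e^{-(\alpha-ab)r} X_s\big(P^\alpha_{r-s}(L+ab)f\big)\,dr.
\]
So the claim reduces to the deterministic identity, valid for every fixed $x$ (equivalently as an identity of functions applied inside $X_s$),
\[
  e^{-(\alpha-ab)(t-s)}P^\alpha_{t-s}f
  = f - \int_s^t e^{-(\alpha-ab)(r-s)}\,P^\alpha_{r-s}(L+ab)f\,dr.
\]
This in turn follows by differentiating in $t$: writing $u=t-s$ and $g_u:=e^{-(\alpha-ab)u}P^\alpha_u f=e^{abu}P_u f$, one has $\frac{d}{du}g_u = e^{abu}(L+ab)P_u f = e^{abu}P_u(L+ab)f = e^{-(\alpha-ab)u}P^\alpha_u(L+ab)f$, using that $(P_u)_{u\ge0}$ is a strongly continuous semigroup on $L^2(\varphi)$ with generator $L$ and that $L$ commutes with $P_u$ on $C^2\cap\mathcal P$; integrating from $0$ to $u$ and using $g_0=f$ gives the identity. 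Subtracting the two displayed conditional expectations from $M^{f,a}_t$ and comparing with $M^{f,a}_s$ then yields $\mathbb P_\mu[M^{f,a}_t\mid\mathscr F_s]=M^{f,a}_s$.

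The main obstacle is making the differentiation step rigorous for $f$ of polynomial growth rather than bounded: one must justify that $u\mapsto P_u f$ is differentiable (in a pointwise, or $L^2(\varphi)$, sense) with derivative $P_u Lf$ and that $P_u$ commutes with $L$ on this class. The cleanest route is the explicit Mehler-type representation \eqref{eq:P:R:3:-1}, $P_t f(x)=\int f(xe^{-bt}+y\sqrt{1-e^{-2bt}})\varphi(y)\,dy$, which, combined with the polynomial-growth bounds encoded by the operator $Q$ and standard dominated-convergence/differentiation-under-the-integral arguments (as already used in the proof of Lemma \ref{lem:P:R}), gives $\partial_t P_t f = P_t Lf$ pointwise with all the requisite local uniformity; one also needs the a priori bound $\sup_{r\le t}|P^\alpha_r g|\in\mathcal P$ for $g\in\{f,(L+ab)f\}$ to apply Fubini. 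A secondary, more routine point is checking integrability of the iterated integrals against $\mu$, which is immediate once $\mu$ has compact support and the relevant functions lie in $\mathcal P$.
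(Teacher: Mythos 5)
Your argument is essentially the paper's: the Markov property plus Fubini reduce the martingale property to the semigroup identity $e^{abu}P_u f = f + \int_0^u e^{abv}P_v\big((L+ab)f\big)\,dv$, which the paper obtains from Ito's formula and you obtain by differentiating $u\mapsto e^{abu}P_u f$ via the Mehler representation with dominated convergence. The only blemish is a sign slip: since the time integral enters $M^{f,a}_t$ with a minus sign, the displayed reduction should read $e^{-(\alpha-ab)(t-s)}P^\alpha_{t-s}f = f + \int_s^t e^{-(\alpha-ab)(r-s)}P^\alpha_{r-s}\big((L+ab)f\big)\,dr$ (with a plus, which is exactly what your integration of $g_u' = e^{abu}P_u\big((L+ab)f\big)$ from $0$ to $u$ produces), so the argument goes through as intended.
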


\begin{proof}
  Put$\bar{f} :=(L+ab)f$.
  It follows easily from Ito's formula that
  \begin{align}
    \label{Theorem55}
    P_t^{ab}f(x)
    = f(x)+\int_0^t P_s^{ab}\bar{f}(x)~ds,\quad t\geq 0,x\in \mathbb R^d,
  \end{align}
  where $P_t^{ab} := e^{abt}P_t$.
  For $0\leq s\leq t$, we have
  \begin{align}
    \label{martingale1}
    & \quad\mathbb{P}_{\mu}[M_t^{f,a}|\mathscr{F}_s]
    =e^{-(\alpha-ab)t}\mathbb{P}_{\mu}\left[X_t(f)|\mathscr{F}_s\right]-\mathbb{P}_{\mu}\Big[\int_0^t e^{-(\alpha-ab)u}X_u(\bar{f})~ du\Big|\mathscr{F}_s\big] \\
    & =e^{-(\alpha-ab)t} X_s(P_{t-s}^{\alpha}f)-\int_0^s e^{-(\alpha-ab)u} X_u(\bar{f})~ du - \int_s^t e^{-(\alpha-ab)u}X_s(P_{u-s}^{\alpha} \bar{f})~ du.
  \end{align}
  Using \eqref{Theorem55} and Fubini's theorem, we have
  \begin{align}
    & \int_s^t e^{-(\alpha-ab)u}X_s(P_{u-s}^{\alpha} \bar{f})~ du=e^{-(\alpha-ab)s}\int_s^tX_s(P_{u-s}^{ab}\bar{f})~du\\
    & = e^{ - ( \alpha - ab ) s } X_s\left( \int_0^{t-s} P_{u}^{ab} \bar{f}~ du\right)
      = e^{-(\alpha-ab)s}\left(X_s(P_{t-s}^{ab}f) - X_s(f) \right) \\
    & = e^{-(\alpha-ab)t} X_s( P_{t-s}^{\alpha}f) - e^{ - ( \alpha - ab ) s} X_s(f).
  \end{align}
  Using this and \eqref{martingale1}, we get the desired result.
\end{proof}

Recall that, for $p\in \mathbb Z_+^d$,  $\phi_p$ is an eigenfunction of $L$ corresponding to the eigenvalue $-|p|b$ and $ H_t^p =e^{-(\alpha-|p|b)t}X_t(\phi_p)$ for each $t\geq 0$.

\begin{lem}
  \label{lem:M:L:ML}
   For all $\mu\in \mathcal M_c(\mathbb R^d)$ and $p \in \mathbb Z_+^d$, $(H^p_t)_{t\geq 0}$ is a $\mathbb P_{\mu}$-martingale with respect to $(\mathscr F_t)_{t\geq 0}$.
    Moreover if $\alpha\tilde \beta>|p|b$, the martingale is bounded in $L^{1+\gamma}(\mathbb P_\mu)$ for each $\gamma\in (0, \beta)$.
  Thus the limit $ H_{\infty}^p := \lim_{t\rightarrow \infty}H_t^p $  exists $\mathbb{P}_{\mu}$-a.s. and in $L^{1+\gamma}(\mathbb{P}_{\mu})$ for each $\gamma \in (0,\beta)$.
\end{lem}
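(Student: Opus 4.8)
The plan is to read off both assertions from results already in hand: the martingale property from Lemma \ref{lemma25}, and the $L^{1+\gamma}$-bounds from Lemma \ref{lem:P:M:uc}. For the martingale property, note that $\phi_p$ is a polynomial, so $\phi_p\in\mathcal P\cap C^2(\mathbb R^d)$, and it is an eigenfunction of $L$ with eigenvalue $-|p|b$, so $(L+|p|b)\phi_p\equiv 0$. Taking $f=\phi_p$ and $a=|p|$ in \eqref{defmartingale}, the integral term vanishes and $M_t^{\phi_p,|p|}=e^{-(\alpha-|p|b)t}X_t(\phi_p)=H_t^p$; Lemma \ref{lemma25} therefore gives directly that $(H_t^p)_{t\geq 0}$ is a $\mathbb P_\mu$-martingale with respect to $(\mathscr F_t)_{t\geq 0}$ for every $\mu\in\mathcal M_c(\mathbb R^d)$ and $p\in\mathbb Z_+^d$.

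For the $L^{1+\gamma}$-boundedness under $\alpha\tilde\beta>|p|b$, the key computation is that $P_t\phi_p=e^{-|p|bt}\phi_p$ gives $Q_{|p|}\phi_p=\sup_{t\geq 0}e^{|p|bt}|P_t\phi_p|=|\phi_p|\in\mathcal P$. Hence for every $\gamma\in(0,\beta)$ with $\alpha\tilde\gamma>|p|b$, part (1) of Lemma \ref{lem:P:M:uc} (with $\kappa=|p|$ and $h=|\phi_p|$) yields a constant $C$ with $\|X_t(\phi_p)\|_{\mathbb P_\mu;1+\gamma}\leq Ce^{(\alpha-|p|b)t}$ for all $t\geq 0$, i.e.\ $\sup_{t\geq 0}\|H_t^p\|_{\mathbb P_\mu;1+\gamma}\leq C$. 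To cover an arbitrary $\gamma\in(0,\beta)$, I would fix once and for all some $\gamma_0\in(0,\beta)$ with $\alpha\tilde{\gamma_0}>|p|b$ — available because $u\mapsto\tilde u$ is continuous and $\alpha\tilde\beta>|p|b$ — and for a given $\gamma\in(0,\beta)$ put $\gamma':=\gamma\vee\gamma_0\in(0,\beta)$, so that $\alpha\tilde{\gamma'}\geq\alpha\tilde{\gamma_0}>|p|b$; the previous step then applies to $\gamma'$, and Lyapunov's inequality on the probability space $(\Omega,\mathscr F,\mathbb P_\mu)$ (valid since $1+\gamma\leq 1+\gamma'$) gives $\sup_{t\geq 0}\|H_t^p\|_{\mathbb P_\mu;1+\gamma}\leq\sup_{t\geq 0}\|H_t^p\|_{\mathbb P_\mu;1+\gamma'}<\infty$.

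For the convergence, $(H_t^p)_{t\geq 0}$ is a martingale bounded in $L^{1+\gamma_0}(\mathbb P_\mu)$ with $\gamma_0>0$, hence uniformly integrable, so the martingale convergence theorem gives the $\mathbb P_\mu$-a.s.\ and $L^1(\mathbb P_\mu)$ existence of $H_\infty^p:=\lim_{t\to\infty}H_t^p$; and for each $\gamma\in(0,\beta)$, choosing $\gamma''\in(\gamma,\beta)$ with $\alpha\tilde{\gamma''}>|p|b$ as above makes $\{|H_t^p|^{1+\gamma}:t\geq 0\}$ uniformly integrable, which combined with the a.s.\ convergence upgrades it to convergence in $L^{1+\gamma}(\mathbb P_\mu)$. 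I do not anticipate any real difficulty here: the proof is essentially an assembly of Lemmas \ref{lemma25} and \ref{lem:P:M:uc}, and the only point needing care is the mismatch between the $\tilde\beta$ appearing in the hypothesis and the $\tilde\gamma$ appearing in the moment bound, which is precisely what the interpolation step above resolves.
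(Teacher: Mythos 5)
Your proposal is correct and follows essentially the same route as the paper: the martingale property comes from Lemma \ref{lemma25} applied to the eigenfunction $\phi_p$ (so the drift term vanishes), and the $L^{1+\gamma}$-boundedness comes from Lemma \ref{lem:P:M:uc}.(1) with $\kappa=|p|$, $Q_{|p|}\phi_p=|\phi_p|$, together with the same reduction of small $\gamma$ to a $\gamma_0$ close to $\beta$ via the monotonicity of $L^p$-norms. The explicit uniform-integrability argument for the a.s.\ and $L^{1+\gamma}$ convergence is a standard detail the paper leaves implicit, and your handling of it is fine.
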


\begin{proof}
  Fix a $\mu \in \mathcal M_c(\mathbb R^d)$ and a $p \in \mathbb Z_+^d$.
  It follows from Lemma \ref{lemma25} that $(H_t^p)_{t\geq 0}$ is a $\mathbb P_\mu$-martingale.
  Further suppose that $\alpha \tilde \beta > |p| b$.
  Then there exists a $\gamma_0 \in (0,\beta)$ which is close enough to $\beta$ so that $\alpha\tilde \gamma>|p|b$ for each $\gamma\in [\gamma_0, \beta)$.
  Using  Lemma \ref{lem:P:M:uc} and the fact $\kappa_{\phi_p}=|p|$, we get that, for each $\gamma\in [\gamma_0, \beta)$, there exists a constant $C>0$ such that
  \[
    \|H_t^p\|_{\mathbb P_\mu;1+\gamma}
    \leq C e^{-(\alpha-|p|b)t}e^{(\alpha-|p|b)t}
    = C
    , \quad t\geq 0.
  \]
  For each $\gamma\in (0, \gamma_0)$ there exists a constant $C'>0$ such that
  \[
    \| H_t^p \|_{\mathbb P_\mu;1+\gamma}
    \leq \| H_t^p \|_{\mathbb P_\mu;1+\gamma_0}
    \leq C',
    \quad t\geq 0.
  \]
  Therefore, for each $\gamma \in (0,\beta)$, the martingale $(H_t^p)_{t\geq 0}$ is bounded in $L^{1+\gamma}(\mathbb{P}_{\mu})$.
\end{proof}

\begin{lem}
  \label{lem: control of wt}
  Suppose that $\mu\in \mathcal M_c(\mathbb R^d)$ and that $p \in \mathbb Z_+^d$ satisfies $\alpha \tilde \beta > |p|b$.
  Then for each $\gamma \in (0,\beta)$ satisfying $\alpha \tilde \gamma > |p|b$, there exists a constant $C> 0$ such that,
  \[
    \|H^p_t-H^p_s\|_{\mathbb{P}_{\mu};1+\gamma}
    \leq C e^{-(\alpha \tilde \gamma-|p|b)s},
    \quad 0 \leq s < t \leq \infty.
  \]
\end{lem}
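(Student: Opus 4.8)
The plan is to express the martingale difference $H^p_t-H^p_s$ as a sum of unit-length increments and to control each increment with the $L^{1+\gamma}$ moment bound of Lemma~\ref{lem: control of mgtrs}. The starting point is the elementary identity coming from $\phi_p$ being an eigenfunction of the OU semigroup: since $P_u\phi_p=e^{-|p|bu}\phi_p$, we have $P^\alpha_u\phi_p=e^{(\alpha-|p|b)u}\phi_p$, and hence, for $0\le r\le r'$,
\[
  \mathbb P_\mu[X_{r'}(\phi_p)\mid\mathscr F_r]=X_r(P^\alpha_{r'-r}\phi_p)=e^{(\alpha-|p|b)(r'-r)}X_r(\phi_p),
\]
which gives both that $(H^p_t)$ is a martingale and that
\[
  H^p_{r'}-H^p_r=e^{-(\alpha-|p|b)r'}\bigl(X_{r'}(\phi_p)-\mathbb P_\mu[X_{r'}(\phi_p)\mid\mathscr F_r]\bigr)=e^{-(\alpha-|p|b)r'}\,\mathcal I_r^{r'}X_{r'}(\phi_p).
\]

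Next I would observe that $Q_{|p|}\phi_p=\sup_{u\ge0}e^{|p|bu}|P_u\phi_p|=|\phi_p|=:h\in\mathcal P$, so Lemma~\ref{lem: control of mgtrs} applies with $\kappa=|p|$ and this $h$: for $r\le r'\le t'$ with $r'-r\le1$,
\[
  \bigl\|\mathcal I_r^{r'}X_{t'}(\phi_p)\bigr\|_{\mathbb P_\mu;1+\gamma}\le C_0\, e^{t'\alpha(1-\tilde\gamma)+(t'-r')(\alpha\tilde\gamma-|p|b)}.
\]
Specializing to $t'=r'$ and using the displayed identity, together with the arithmetic $-(\alpha-|p|b)+\alpha(1-\tilde\gamma)=-(\alpha\tilde\gamma-|p|b)$, yields
\[
  \bigl\|H^p_{r'}-H^p_r\bigr\|_{\mathbb P_\mu;1+\gamma}\le C_0\, e^{-(\alpha\tilde\gamma-|p|b)r'},
\]
and the exponent is negative precisely because $\alpha\tilde\gamma>|p|b$. (The same bound, with $r'$ in the exponent replaced by any $t'\ge r'$ with $r'-r\le1$, also follows and is what I will use for the last, fractional, subinterval.)

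Finally, for $s<t<\infty$, write $m=\lfloor t-s\rfloor$ and split $[s,t]$ into $[s+k,s+k+1]$, $k=0,\dots,m-1$, and the leftover $[s+m,t]$. Summing the per-increment bound via the triangle inequality in $L^{1+\gamma}(\mathbb P_\mu)$ (a genuine norm since $1+\gamma\ge1$) gives
\[
  \bigl\|H^p_t-H^p_s\bigr\|_{\mathbb P_\mu;1+\gamma}\le\sum_{k=0}^{m}C_0\,e^{-(\alpha\tilde\gamma-|p|b)(s+k)}\le\frac{C_0}{1-e^{-(\alpha\tilde\gamma-|p|b)}}\,e^{-(\alpha\tilde\gamma-|p|b)s}=:C\,e^{-(\alpha\tilde\gamma-|p|b)s},
\]
uniformly in $t$; letting $t\to\infty$ and using the $L^{1+\gamma}(\mathbb P_\mu)$-convergence $H^p_t\to H^p_\infty$ of Lemma~\ref{lem:M:L:ML} extends the bound to $t=\infty$. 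There is no serious obstacle here: the real work is hidden in Lemma~\ref{lem: control of mgtrs}, and the only points requiring any care are verifying that the increment $H^p_{r'}-H^p_r$ is exactly $e^{-(\alpha-|p|b)r'}\mathcal I_r^{r'}X_{r'}(\phi_p)$ (the eigenfunction identity) and dealing with the fractional interval and the passage to $t=\infty$, both routine.
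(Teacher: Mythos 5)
Your proof is correct and follows essentially the same route as the paper: reduce the case $t=\infty$ to finite $t$ via the $L^{1+\gamma}$-convergence in Lemma \ref{lem:M:L:ML}, obtain the unit-interval bound from Lemma \ref{lem: control of mgtrs} applied with $g=\phi_p$, $\kappa=|p|$ (taking the terminal time equal to the right endpoint), and sum a geometric series over a telescoping decomposition of $[s,t]$. The only differences are cosmetic — you cut $[s,t]$ at the points $s+k$ rather than at integers, and you spell out the identity $H^p_{r'}-H^p_r=e^{-(\alpha-|p|b)r'}\mathcal I_r^{r'}X_{r'}(\phi_p)$, which the paper leaves implicit.
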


\begin{proof}
  Thanks to Lemma \ref{lem:M:L:ML}, we only  need to prove the inequality  when $0\leq s < t<\infty$.
  Suppose $p\in \mathbb{Z}_+^d$, $\mu\in \mathcal M_c(\mathbb R^d)$ and  $\gamma \in (0,\beta)$ with $\alpha \tilde \gamma > |p|b$ are fixed.
  Using Lemma \ref{lem: control of mgtrs} with $g=\phi_p$ and $k=|p|$,  we know that there exists a constant $C_1>0$ such that for all $0\leq r\leq s $ with $s-r\leq1$,
  \begin{align}
    \|H^p_s-H^p_r\|_{\mathbb P_\mu; 1+\gamma}
    \leq  C_1 e^{-(\alpha\tilde \gamma-|p|b)s}.
  \end{align}
  Thus there exists $C_2>0$ such that for all $0\leq s<t$,
  \begin{align}
    & \|H^p_t-H^p_s\|_{\mathbb{P}_{\mu};1+\gamma} \\
    & \leq \|H^p_{\lfloor s \rfloor+1}-H^p_s\|_{\mathbb{P}_{\mu};1+\gamma}+\sum_{k=\lfloor s \rfloor+1}^{\lfloor t \rfloor}\|H^p_{k+1}-H^p_{k}\|_{\mathbb{P}_{\mu};1+\gamma}+\|H^p_t-H^p_{\lfloor t \rfloor+1}\|_{\mathbb{P}_{\mu};1+\gamma} \\
    & \leq C_1 \Big(e^{-(\alpha \tilde \gamma- |p|b) s}+\sum_{k=\lfloor s \rfloor+1}^{\lfloor t \rfloor} e^{-(\alpha \tilde \gamma- |p|b) k} + e^{-(\alpha \tilde \gamma-|p|b) t}\Big)
      \leq C_2e^{-(\alpha \tilde \gamma-|p|b)s}.
      \qedhere
  \end{align}	
\end{proof}

\begin{proof}[Proof of Theorem \ref{thm: law of large number}]
	Fix $f \in \mathcal P$ such that $\alpha \beta > \kappa_f b (1+\beta)$ and $\mu \in \mathcal M_c(\mathbb R^d)$.
	Write
  \[
    f
    = \sum_{p\in \mathbb Z_+^d:|p|\geq \kappa_f}\langle f,\phi_p\rangle_\varphi \phi_p
    =: \sum_{p\in \mathbb Z_+^d:|p|= \kappa_f}\langle f,\phi_p\rangle_\varphi \phi_p+\widetilde{f}.
  \]
	Then
  \begin{align}
    & e^{-(\alpha-\kappa_fb)t}X_t(f)=
      \sum_{p\in \mathbb Z_+^d:|p|= \kappa_f}\langle f,\phi_p\rangle_\varphi H_t^p+e^{-(\alpha-\kappa_fb)t} X_t(\widetilde{f}),
      \quad t\geq 0.
  \end{align}
	According to Lemma \ref{lem:M:L:ML}, we have
  \begin{align}
    \label{as convergence}
    \sum_{p\in \mathbb{Z}_+^d:|p|= \kappa_f}\langle f,\phi_p\rangle_\varphi H_t^p
    \xrightarrow[t\to \infty]{} \sum_{p\in \mathbb{Z}_+^d:|p|=\kappa_f}\langle f, \phi_p\rangle_{\varphi} H_{\infty}^p,
  \end{align}
  $\mathbb{P}_{\mu}$-a.s. and in $L^{1+\gamma}(\mathbb{P}_{\mu})$ for each $\gamma\in(0,\beta)$.
  Therefore, it suffices to show that
  \begin{align}
    J_t
    :=e^{-(\alpha-\kappa_fb)t}X_t( \widetilde{f}),
    \quad t\geq 0,
  \end{align}
  converges to $0$ in $L^{1+\gamma}(\mathbb{P}_{\mu})$ for all $\gamma\in(0,\beta)$, and converges almost surely provided $f$ is twice differentiable and all its second order partial derivatives are in $\mathcal{P}$.

  \emph{Step 1.} Let $g\in \mathcal P$.
  Let $\kappa > 0$ be such that $\kappa < \kappa_g$ and $\kappa b < \alpha \tilde \beta$.
  We will show that for each $\gamma \in (0,\beta)$ there exist $C_1,\delta_1 > 0$ such that
  \[
    \|e^{-(\alpha - \kappa b)t} X_t(g)\|_{\mathbb P_\mu;1+\gamma}
    \leq C_1 e^{-\delta_1 t},
    \quad t\geq 0.
  \]
  In order to do this, we choose a $\gamma_0 \in (0,\beta)$ close enough to $\beta$ such that $\kappa b< \alpha \tilde \gamma$ for each $\gamma \in [\gamma_0, \beta)$.
  According to Lemma \ref{lem:P:M:uc}, we have for each $\gamma \in (0,\beta)$,
  \begin{enumerate}
  \item
    if $\gamma \in [\gamma_0, \beta)$ and $\alpha\tilde \gamma> \kappa_g b$, then there exists $C_2>0$ such that
    \[
      \|e^{-(\alpha - \kappa b)t}  X_t(g)\|_{\mathbb P_\mu;1+\gamma}
      \leq C_2 e^{-(\alpha-\kappa b)t}e^{(\alpha-\kappa_g b)t}
      \leq C_2  e^{-(\kappa_g - \kappa )bt},
      \quad t\geq 0;
    \]
  \item
    if $\gamma \in [\gamma_0, \beta)$ and $\alpha\tilde \gamma=\kappa_g b$, then there exists $C_3>0$ such that
    \[
      \|e^{-(\alpha - \kappa b)t}  X_t(g)\|_{\mathbb P_\mu;1+\gamma}
      \leq C_3 t e^{-(\alpha - \kappa b)t}e^{\frac{\alpha}{1+\gamma}t}
      = C_3 t e^{-(\alpha \tilde \gamma - \kappa b)t},
      \quad t\geq 0;
    \]
  \item
    if $\gamma \in [\gamma_0, \beta)$ and $\alpha\tilde \gamma < \kappa_g b$, then there exists $C_4>0$ such that
    \[
      \|e^{-(\alpha - \kappa b)t}  X_t(g)\|_{\mathbb{P}_{\mu};1+\gamma}
      \leq C_4  e^{-(\alpha - \kappa b)t}e^{\frac{\alpha}{1+\gamma}t}
      = C_4  e^{-(\alpha \tilde \gamma - \kappa b)t},
      \quad t\geq 0;
    \]
  \item
  if $\gamma \in (0,\gamma_0)$,  then
    thanks to (1)--(3) above and the fact that \[\|e^{-(\alpha - \kappa b)t} X_t(g)\|_{\mathbb{P}_{\mu};1+\gamma}
      \leq \|e^{-(\alpha - \kappa b)t} X_t(g)\|_{\mathbb{P}_{\mu};1+\gamma_0},\] there exist $C_5, \delta_2 >0$ such that
    \[
      \|e^{-(\alpha - \kappa b)t} X_t(g)\|_{\mathbb{P}_{\mu};1+\gamma}
      \leq C_5e^{-\delta_2 t},
      \quad t\geq 0.
    \]
  \end{enumerate}
  Thus, the desired conclusion in this step is valid.
  In particular, by taking $g = \widetilde f$ and $\kappa = \kappa_f$, we get that $J_t$ converges to $0$ in $L^{1+\gamma}(\mathbb{P}_{\mu})$ for any $\gamma\in(0,\beta)$.

  \emph{Step 2.}
  We further assume that $f\in C^2(\mathbb R^d)$ and $D^2f \in \mathcal{P}$.
  We will show that $J_t$ converges to $0$ almost surely.
  For $a \geq 0$, $ t\geq 0$, and $g\in \mathcal{P}\cap C^2(\mathbb{R}^d)$ satisfying $D^2g\in \mathcal{P}$, we define
  \begin{align}
    L_t^{g,a}
    & :=\int_0^t e^{-(\alpha-ab)s}X_s((L+ab)g) ds,
    \quad
    Y_t^{g,a}
    :=\int_0^t e^{-(\alpha-ab)s}|X_s((L+ab)g)|ds.
  \end{align}
  Now choose $a_0 \in (\kappa_{f}, \kappa_f + 1)$ close enough to $\kappa_f$ so that $a_0 b < \alpha \tilde \beta$.
  According to \eqref{defmartingale},
  \begin{align}
    J_t
    = e^{-(a_0-\kappa_f)bt} (M_t^{\widetilde{f}, a_0}+L_t^{\widetilde{f}, a_0}),
    \quad t\geq 0.
  \end{align}
  So we only need to show that
  \begin{align}
    e^{-(a_0-\kappa_f)b t}M_t^{\widetilde{f},a_0}
    \xrightarrow[t\to \infty]{} 0,
    \quad e^{-(a_0-\kappa_f)b t}L_t^{\widetilde{f},a_0}
    \xrightarrow[t\to \infty]{} 0,
    \quad \mathbb{P}_{\mu}\text{-a.s.}
  \end{align}
  Notice that $\kappa_{(L+a_0 b)\widetilde{f}}\geq \kappa_{\widetilde{f}}\geq \kappa_f+1 > a_0$.
  By Step 1, for any fixed $\gamma\in (0,\beta)$, there exist $C_6, \delta_3>0$ such that for each $t\geq 0$,
  \[
    \| e^{-(\alpha-a_0 b)t}X_t(\widetilde{f}))\|_{\mathbb{P}_{\mu};1+\gamma}
    \leq C_6 e^{-\delta_3 t},
    \quad \|e^{-(\alpha-a_0 b)t}X_t(L\widetilde{f}+a_0 b\widetilde{f})\|_{\mathbb{P}_{\mu};1+\gamma}
    \leq C_6 e^{-\delta_3 t}.
  \]
  Now, by the triangle inequality, for each $t\geq 0$,
  \begin{align}
    & \|L_t^{\widetilde{f},a_0}\|_{\mathbb{P}_{\mu};1+\gamma}
      \leq\|Y_t^{\widetilde{f},a_0}\|_{\mathbb{P}_{\mu};1+\gamma} \\
    & \leq \int_0^t \|e^{-(\alpha-a_0 b)s}X_s( L\widetilde{f}+a_0 b\widetilde{f})\|_{\mathbb{P}_{\mu};1+\gamma}ds\leq C_6 \int_0^t e^{-\delta_3 s}ds\leq\frac{C_6}{\delta_3}.
  \end{align}
  Since $Y_t^{\widetilde{f},a_0}$ is increasing in $t$, it converges to some finite random variable $Y_{\infty}^{\widetilde{f},a_0}$ almost surely and in $L^{1+\gamma}(\mathbb{P}_{\mu})$.
  Consequently,  we have
  \begin{align}
    \lim_{t\rightarrow \infty}e^{-(a_0 - \kappa_f)bt}|L_t^{\widetilde{f},a_0}|
    \leq  \lim_{t\rightarrow \infty}e^{-(a_0 - \kappa_f)bt}|Y_t^{\widetilde{f},a_0}|=0,
    \quad \mathbb P_\mu\text{-a.s.}
  \end{align}
  On the other hand, the martingale $M_t^{\widetilde{f},a_0}$ satisfies
  \begin{align}
    \|M_t^{\widetilde{f},a_0}\|_{\mathbb{P}_{\mu};1+\gamma}
    \leq \|e^{-(\alpha-a_0 b)t}X_t(\widetilde{f})\|_{\mathbb{P}_{\mu};1+\gamma}+\|L_t^{\widetilde{f},a_0}\|_{\mathbb{P}_{\mu};1+\gamma}
    \leq C_6(e^{-\delta_3 t}+\frac{1}{\delta_3}),
    \quad t\geq 0.
  \end{align}
  This implies that the martingale converges almost surely.
  Consequently,
  \[
    \lim_{t\rightarrow\infty} e^{-(a_0-\kappa_f)bt}M_t^{\widetilde{f},a_0}
    = 0,
    \quad \mathbb P_\mu\text{-a.s.}.
    \qedhere
  \]
\end{proof}

\subsection{Central limit theorems for unit time intervals}
\label{sec:critical}
In this subsection, we will establish the following CLT.
\begin{thm}
  \label{lem:PR:LC}
If  $\mu \in \mathcal M_c(\mathbb R^d)$ and $f\in \mathcal{P}\setminus \{0\}$,
then  under $\mathbb{P}_{\mu}(\cdot | D ^c)$, we have
  \begin{align}
    \label{eq:PR:LC:1}
    \Upsilon^f_t
    := \frac{X_{t+1} (f) - X_t(P_1^\alpha f)}{\| X_t\|^{1-\tilde \beta}}
    \xrightarrow[t\to \infty]{d}\zeta^f_0,
  \end{align}
  where $\zeta^f_0$ is a $(1+\beta)$-stable random variable with characteristic function $\theta\mapsto e^{\langle Z_1(\theta f), \varphi\rangle}$.
\end{thm}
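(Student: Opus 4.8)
The plan is to establish pointwise convergence of characteristic functions: for each $\theta\in\mathbb R$ I would show that $\mathbb P_\mu[e^{i\theta\Upsilon^f_t}\mid D^c]\to e^{\langle Z_1(\theta f),\varphi\rangle}$ as $t\to\infty$, and then apply L\'evy's continuity theorem, having first checked via Lemma \ref{lem: charactreisticfunction} that $\theta\mapsto e^{\langle Z_1(\theta f),\varphi\rangle}$ is indeed the characteristic function of a $(1+\beta)$-stable random variable $\zeta^f_0$. For this last point, invariance of $\varphi$ under $(P_t)_{t\ge0}$ rewrites $\langle Z_1(\theta f),\varphi\rangle=\int_{\mathbb R\setminus\{0\}}(i\theta y)^{1+\beta}\,q(dy)$, where $q$ is the image of the finite positive measure $\eta e^{\alpha(1+s\beta)}\varphi(x)\,ds\,dx$ on $[0,1]\times\mathbb R^d$ under $(s,x)\mapsto -P_s f(x)$; this $q$ carries positive $(1+\beta)$-moment because $f\not\equiv0$, so Lemma \ref{lem: charactreisticfunction} applies with $d=1$.

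The core is the analysis of the conditional characteristic function given $\mathscr F_t$. The numerator $X_{t+1}(f)-X_t(P^\alpha_1 f)$ equals $\mathbb P_\mu[X_{t+1}(f)\mid\mathscr F_{t+1}]-\mathbb P_\mu[X_{t+1}(f)\mid\mathscr F_t]$, hence is $\mathscr F_{t+1}$-measurable, while $\|X_t\|^{1-\tilde\beta}=\|X_t\|^{1/(1+\beta)}$ is $\mathscr F_t$-measurable. Setting $\lambda_t:=\theta\|X_t\|^{-1/(1+\beta)}$, the Markov property and the branching property $\mathbb P_\nu[e^{iX_1(g)}]=e^{\nu(U_1 g)}$ give, on $\{\|X_t\|>0\}$,
\[
\mathbb P_\mu\big[e^{i\theta\Upsilon^f_t}\mid\mathscr F_t\big]=\exp\big\{X_t\big((U_1-iP^\alpha_1)(\lambda_t f)\big)\big\}=\exp\big\{X_t(Z_1(\lambda_t f))+X_t(Z'''_1(\lambda_t f))\big\},
\]
using \eqref{eq: key equality}. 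On $D^c$ one has $e^{-\alpha t}\|X_t\|\to H_\infty>0$ a.s., so $\lambda_t\to0$ a.s. I would record the auxiliary fact that $e^{-\alpha t}X_t(g)\to\langle g,\varphi\rangle H_\infty$ in $L^{1+\gamma}(\mathbb P_\mu)$ for every $g\in\mathcal P$ and $\gamma\in(0,\beta)$ — obtained by splitting $g$ into its order-zero part and a remainder of order $\ge1$, handled by Theorem \ref{thm: law of large number} together with the moment bounds of Lemma \ref{lem:P:M:uc}. Since $Z_1$ is positively homogeneous of degree $1+\beta$ (for $\theta<0$ one picks up a complex conjugation, harmless since $X_t$ is a real measure), applying this to $\operatorname{Re} Z_1 f$, $\operatorname{Im} Z_1 f\in\mathcal P$ and dividing by $e^{-\alpha t}\|X_t\|$ yields $X_t(Z_1(\lambda_t f))=\theta^{1+\beta}X_t(Z_1 f)/\|X_t\|\to\langle Z_1(\theta f),\varphi\rangle$ in probability on $D^c$. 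For the perturbative term, Lemma \ref{lem: upper bound for usgx}.(7) provides, for small $\delta>0$, a $(\theta^{2+\beta}\vee\theta^{1+\beta+\delta})$-controller $R$ with $|Z'''_1 g|\le R|g|$, so that eventually on $D^c$
\[
\big|X_t\big(Z'''_1(\lambda_t f)\big)\big|\le|\lambda_t|^{1+\beta+\delta}X_t(R|f|)=\theta^{1+\beta+\delta}\,\frac{X_t(R|f|)}{\|X_t\|}\,\|X_t\|^{-\delta/(1+\beta)}\ \xrightarrow[t\to\infty]{}\ 0
\]
in probability on $D^c$, since $X_t(R|f|)/\|X_t\|\to\langle R|f|,\varphi\rangle<\infty$ (order zero because $R|f|\ge0$) while $\|X_t\|^{-\delta/(1+\beta)}\to0$. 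Hence $\mathbb P_\mu[e^{i\theta\Upsilon^f_t}\mid\mathscr F_t]\to e^{\langle Z_1(\theta f),\varphi\rangle}$ in probability on $D^c$.

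To remove the conditioning, note that $\mathbb P_\mu(D^c\mid\mathscr F_t)=1-e^{-\bar v\|X_t\|}$ is $\mathscr F_t$-measurable and converges a.s.\ and in $L^1$ to $\mathbf 1_{D^c}$ by L\'evy's martingale convergence theorem. Writing $\mathbf 1_{D^c}=\mathbb P_\mu(D^c\mid\mathscr F_t)+(\mathbf 1_{D^c}-\mathbb P_\mu(D^c\mid\mathscr F_t))$ and integrating against the bounded function $e^{i\theta\Upsilon^f_t}$, the second summand contributes $o(1)$, and the tower property turns the first into $\mathbb P_\mu[(1-e^{-\bar v\|X_t\|})\,\mathbb P_\mu[e^{i\theta\Upsilon^f_t}\mid\mathscr F_t]]$. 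The integrand is bounded by $1$ and, by the previous step (and since it vanishes on $\{\|X_t\|=0\}$), converges in probability to $\mathbf 1_{D^c}e^{\langle Z_1(\theta f),\varphi\rangle}$, so bounded convergence gives $\mathbb P_\mu[e^{i\theta\Upsilon^f_t}\mathbf 1_{D^c}]\to\mathbb P_\mu(D^c)e^{\langle Z_1(\theta f),\varphi\rangle}$, that is $\mathbb P_\mu[e^{i\theta\Upsilon^f_t}\mid D^c]\to e^{\langle Z_1(\theta f),\varphi\rangle}$, which completes the proof. The main obstacle is the control of the perturbation term $Z'''_1(\lambda_t f)$: one must extract the extra decaying factor $\|X_t\|^{-\delta/(1+\beta)}$ from the controllability estimate in Lemma \ref{lem: upper bound for usgx}.(7), and this is exactly where Assumption \ref{asp: branching mechanism} enters in an essential way. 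A secondary technical point is that $D^c$ is not $\mathscr F_t$-measurable; this is circumvented through the identity $\mathbb P_\mu(D^c\mid\mathscr F_t)=1-e^{-\bar v\|X_t\|}\to\mathbf 1_{D^c}$ and the boundedness of the complex exponentials.
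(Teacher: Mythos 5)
Your argument is correct, and it reaches the stated theorem by a genuinely softer route than the paper. You share the paper's starting point — the Markov/branching property plus \eqref{eq: key equality} to write $\mathbb P_\mu[e^{i\theta\Upsilon^f_t}\mid\mathscr F_t]=\exp\{X_t((Z_1+Z'''_1)(\lambda_t f))\}$ and the $(1+\beta)$-homogeneity of $Z_1$ — but from there you argue qualitatively: the a.s.\ limit $H_t\to H_\infty>0$ on $D^c$ (via the $L\log L$ condition), an $L^{1+\gamma}$ law of large numbers $e^{-\alpha t}X_t(g)\to\langle g,\varphi\rangle H_\infty$ for every $g\in\mathcal P$ (which indeed follows from Theorem \ref{thm: law of large number}, Lemma \ref{lem:M:L:ML} and the three cases of Lemma \ref{lem:P:M:uc} applied to the part of $g$ orthogonal to constants), the controller bound of Lemma \ref{lem: upper bound for usgx}.(7) for the perturbation, and the clean deconditioning through $\mathbb P_\mu(D^c\mid\mathscr F_t)=1-e^{-\bar v\|X_t\|}$ with bounded convergence and L\'evy continuity. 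The paper instead proves the stronger Proposition \ref{thm:Key}: a bound $\mathbb P_\mu[|\mathbb P_\mu[e^{i\Upsilon^f_t}-e^{\langle Z_1f,\varphi\rangle};D^c\mid\mathscr F_t]|]\le Ce^{-\delta t}$ that is \emph{uniform over the family} $f\in\mathcal P_g=\{\theta T_ng\}$ and carries an explicit exponential rate; to get this it works on $A_t(\epsilon)=\{\|X_t\|\ge e^{(\alpha-\epsilon)t}\}$, controls $A_t(\epsilon)\Delta D^c$ by the small-value-probability estimate (Proposition \ref{lem: control of XT}), and needs the refined gradient estimate Lemma \ref{lem:P:R} to make the $Z_1$-term converge at a rate uniformly in $f\in\mathcal P_g$. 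What your approach buys is economy: for a single fixed $f$ and a purely distributional statement you bypass Proposition \ref{lem: control of XT} and Lemma \ref{lem:P:R} entirely, replacing them by martingale convergence and convergence in probability. What the paper's approach buys is exactly what is consumed later: Corollary \ref{cor:MI} and the proof of Theorem \ref{thm:M} sum $O(t)$ unit-interval contributions with varying test functions $T_k\tilde f$, and there the uniformity in $f$ and the exponential rate are essential — your soft argument proves Theorem \ref{lem:PR:LC} as stated but would not substitute for Proposition \ref{thm:Key} in the remainder of the paper. (Two cosmetic caveats, at the same level of rigor as the paper itself: non-degeneracy of $\zeta^f_0$ uses that $f$ is nonzero in $L^2(\varphi)$, and $\Upsilon^f_t$ should be given an arbitrary value on $\{\|X_t\|=0\}$, which is harmless since that event is contained in $D$ up to null sets.)
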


In fact, we prove a stronger result:

\begin{prop}
  \label{thm:Key}
  For all $\mu \in \mathcal M_c(\mathbb R^d)$ and $g \in \mathcal P \setminus \{0\}$, there exist $C,\delta>0$ such that
  for all $t\geq 1$ and $f \in \mathcal P_g:= \{\theta T_ng:n \in \mathbb Z_+, \theta \in [-1,1]\}$, we have
  \[
    \mathbb P_\mu
    \Big[  |\mathbb P_\mu [e^{i\Upsilon^f_t} - e^{\langle Z_1f, \varphi\rangle}; D^c | \mathscr F_t ]  |\Big]
    \leq C e^{- \delta t}.
  \]
\end{prop}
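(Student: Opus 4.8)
The plan is to condition on $\mathscr F_t$, make the conditional expectation explicit via the branching property, and then split the outer $\mathbb P_\mu$-expectation according to whether $\|X_t\|$ is ``typically large'' or ``atypically small''. Write $\lambda_t:=\|X_t\|^{-(1-\tilde\beta)}=\|X_t\|^{-1/(1+\beta)}$, which is $\mathscr F_t$-measurable and finite on $\{\|X_t\|>0\}$, and set $A_t:=\mathbb P_\mu[e^{i\Upsilon^f_t}-e^{\langle Z_1 f,\varphi\rangle};D^c\mid\mathscr F_t]$. On $\{\|X_t\|>0\}$, since $\Upsilon^f_t=\lambda_t X_{t+1}(f)-X_t(P^\alpha_1(\lambda_t f))$, the Markov property of $X$ at time $t$ and the branching property give
\[
  \mathbb P_\mu[e^{i\Upsilon^f_t}\mid\mathscr F_t]
  =\exp\!\big(X_t(U_1(\lambda_t f)-iP^\alpha_1(\lambda_t f))\big).
\]
By \eqref{eq: key equality} (at $t=1$) one has $U_1-iP^\alpha_1=Z_1+Z_1'''$, and since $Z_1(\theta\,\cdot)=\theta^{1+\beta}Z_1(\cdot)$ for $\theta\ge0$,
\[
  X_t\big(Z_1(\lambda_t f)\big)=\|X_t\|^{-1}X_t(Z_1 f)=\langle Z_1 f,\varphi\rangle+\|X_t\|^{-1}X_t(q_f),
  \qquad q_f:=Z_1 f-\langle Z_1 f,\varphi\rangle .
\]
Hence $\mathbb P_\mu[e^{i\Upsilon^f_t}\mid\mathscr F_t]=e^{\langle Z_1 f,\varphi\rangle}e^{E_t}$ with $E_t:=\|X_t\|^{-1}X_t(q_f)+X_t(Z_1'''(\lambda_t f))$. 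From $|\mathbb P_\mu[e^{i\Upsilon^f_t}\mid\mathscr F_t]|\le1$ we get $\operatorname{Re}E_t\le-\operatorname{Re}\langle Z_1 f,\varphi\rangle$, and $\sup_{f\in\mathcal P_g}|\langle Z_1 f,\varphi\rangle|<\infty$ because $|f|\le h_4'$ on $\mathcal P_g$ for a fixed $h_4'\in\mathcal P^+$ (Step 1 of the proof of Lemma \ref{lem:m}) and $Z_1$ is controllable; so $|\mathbb P_\mu[e^{i\Upsilon^f_t}\mid\mathscr F_t]-e^{\langle Z_1 f,\varphi\rangle}|\le C_g|E_t|$ on $\{\|X_t\|>0\}$, with $C_g$ depending only on $g$.

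Next I would handle the restriction to $D^c$. Writing $\mathbf 1_{D^c}=1-\mathbf 1_D$ and using $\mathbb P_\mu[\mathbf 1_D\mid\mathscr F_t]=e^{-\bar v\|X_t\|}$, on $\{\|X_t\|>0\}$ we have $|A_t|\le C_g|E_t|+2e^{-\bar v\|X_t\|}$; moreover $A_t=0$ on $\{\|X_t\|=0\}\subset D$ (there $\mathbf 1_{D^c}$ has conditional probability $0$, and $\{\|X_t\|=0\}\cap D^c=\emptyset$), and always $|A_t|\le 2\,\mathbb P_\mu[\mathbf 1_{D^c}\mid\mathscr F_t]$. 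Fix a small $\delta'\in(0,\alpha)$ and split $\mathbb P_\mu$ over the $\mathscr F_t$-events $G_t:=\{\|X_t\|\ge e^{(\alpha-\delta')t}\}$, $B_t:=\{0<\|X_t\|<e^{(\alpha-\delta')t}\}$ and $\{\|X_t\|=0\}$. On $G_t$ the term $2e^{-\bar v\|X_t\|}\le 2e^{-\bar v e^{(\alpha-\delta')t}}$ is super-exponentially small. On $B_t$, since $B_t\in\mathscr F_t$ and $B_t\cap D^c\subset\{0<e^{-\alpha t}\|X_t\|<e^{-\delta't}\}$,
\[
  \mathbb P_\mu[\mathbf 1_{B_t}|A_t|]\le 2\,\mathbb P_\mu(B_t\cap D^c)\le 2\,\mathbb P_\mu\big(0<e^{-\alpha t}\|X_t\|<e^{-\delta't}\big)\le C\big(e^{-\delta_0\delta't}+e^{-\delta_0 t}\big),
\]
by Proposition \ref{lem: control of XT} applied with $k_t:=e^{-\delta't}$ (admissible since $k_t\to0$ and $k_te^{\alpha t}\to\infty$), together with the fact that $\|X_t\|$ has the law of the CSBP $Y_t$ started from $\|\mu\|$; on $\{\|X_t\|=0\}$ the integrand vanishes.

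It then remains to bound $\mathbb P_\mu[\mathbf 1_{G_t}|E_t|]$ uniformly in $f\in\mathcal P_g$, and here Lemma \ref{lem:P:R} and the moment estimates of Subsection \ref{sec: Moments for super-OU processes} are the crucial inputs. For the first term of $E_t$: on $G_t$, $\|X_t\|^{-1}|X_t(q_f)|\le e^{-(\alpha-\delta')t}|X_t(q_f)|$, and Lemma \ref{lem:P:R} gives $Q_1 q_f\le h$ for a single $h\in\mathcal P^+$ and all $f\in\mathcal P_g$ (applied to $\operatorname{Re}q_f$ and $\operatorname{Im}q_f$), so Lemma \ref{lem:P:M:uc} with $\kappa=1$ and $\gamma\in(0,\beta)$ taken close to $\beta$ yields $\|X_t(q_f)\|_{\mathbb P_\mu;1+\gamma}\le C(1+t)e^{(\alpha-\min(b,\alpha\tilde\gamma))t}$; hence $\mathbb P_\mu[\mathbf 1_{G_t}\|X_t\|^{-1}|X_t(q_f)|]$ decays exponentially provided $\delta'<\min(b,\alpha\tilde\beta)$. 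For the second term: Lemma \ref{lem: upper bound for usgx}.(7) gives, on $G_t$ (where $\|X_t\|\ge1$), $|Z_1'''(\lambda_t f)|\le\lambda_t^{1+\beta+\delta}R|f|\le\|X_t\|^{-1-\delta/(1+\beta)}Rh_4'$ with $Rh_4'\in\mathcal P^+$ and $\delta>0$ the perturbation exponent of Assumption \ref{asp: branching mechanism}, so $X_t(|Z_1'''(\lambda_t f)|)\mathbf 1_{G_t}\le e^{-(\alpha-\delta')(1+\delta/(1+\beta))t}X_t(Rh_4')$, and Lemma \ref{lem:P:M:uc} with $\kappa=0$ bounds $\|X_t(Rh_4')\|_{\mathbb P_\mu;1+\gamma}\le Ce^{\alpha t}$; this contribution decays exponentially provided $\delta'<\alpha\delta/(1+\beta+\delta)$. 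Choosing $\delta'$ small enough to meet all three constraints and taking $\delta$ in the conclusion to be the smallest exponent obtained gives the asserted bound $Ce^{-\delta t}$ for $t\ge1$.

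The main obstacle is the conditioning on $D^c$, which cannot simply be dropped because $\mathbb P_\mu[\mathbf 1_D\mid\mathscr F_t]=e^{-\bar v\|X_t\|}$ does not vanish as $t\to\infty$. The dichotomy above is what makes it work: on $G_t$ this factor is negligible (indeed super-exponentially small) and the quantitative CLT estimate through $E_t$ takes over, while on $B_t$ one retains $\mathbf 1_{D^c}$ and uses that $\{0<e^{-\alpha t}\|X_t\|\ \text{small}\}\cap D^c$ has exponentially small probability — precisely the small-value estimate of Proposition \ref{lem: control of XT}. A secondary, more technical, difficulty is that every estimate must be uniform over the whole family $\mathcal P_g=\{\theta T_n g:n\in\mathbb Z_+,\ \theta\in[-1,1]\}$ rather than for a single $f$; this is exactly what the refined semigroup estimate of Lemma \ref{lem:P:R} (the uniform bound $Q_1(Z_1 f-\langle Z_1 f,\varphi\rangle)\le h$) and the controller calculus of Subsections \ref{sec: controller}--\ref{sec: h-controller} are built to supply.
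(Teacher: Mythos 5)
Your proposal is correct and follows essentially the same route as the paper's proof: you condition on $\mathscr F_t$, use $U_1-iP_1^\alpha=Z_1+Z_1'''$ together with the scaling $Z_1(\lambda f)=\lambda^{1+\beta}Z_1 f$, split over the event $\{\|X_t\|\ge e^{(\alpha-\delta')t}\}$ versus its complement, and control the three resulting pieces by Lemma \ref{lem:P:R} with Lemma \ref{lem:P:M:uc}, the controller bound of Lemma \ref{lem: upper bound for usgx}.(7), and the small-value estimate of Proposition \ref{lem: control of XT}, exactly as in the paper. The only deviations (bounding $|e^{E_t}-1|$ with a constant $e^{2M_g}$ instead of the paper's $|e^{-x}-e^{-y}|\le|x-y|$ on $\mathbb C_+$, and choosing $\gamma$ near $\beta$ rather than small) are cosmetic and do not change the argument.
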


\begin{proof}
  Fix  $\mu \in \mathcal M_c(\mathbb R^d)$ and $g \in \mathcal P\setminus \{0\}$.

  \emph{Step 1.} Write $ A_t(\epsilon) :=\{ \|X_t\| \geq e^{(\alpha - \epsilon)t} \} $ for $t\geq 0$ and $\epsilon > 0$.
  We will show that for all $f\in \mathcal P \setminus \{0\}$, $\epsilon > 0$ and $t\geq 0$, it holds that
  \[
    \mathbb P_\mu \Big[ | \mathbb P_\mu [e^{i\Upsilon^f_t} - e^{\langle Z_1(\theta f), \varphi\rangle}; D^c | \mathscr F_t ]| \Big]
    \leq J^f_1(t,\epsilon)+J^f_2(t,\epsilon)+J^f_3(t,\epsilon),
  \]
where
\begin{align}
\label{eq: Def of Ji}
  &J^f_1(t,\epsilon):= \mathbb{P}_{\mu} [ | X_t(Z'''_1(\theta_t f)) |; A_t(\epsilon) ],
 \quad
  J^f_2(t,\epsilon):= \mathbb{P}_{\mu}[|X_t( Z_1(\theta_t f))-\langle Z_1f, \varphi\rangle |; A_t(\epsilon)],
  \\ & J_3(t,\epsilon):=2\mathbb{P}_{\mu}(A_t (\epsilon)\Delta D^c),
 \quad
   \theta_t := \|X_t\|^{-(1 - \tilde \beta)}.
\end{align}
In fact, it follows from \eqref{eq: key equality}, the definitions of $U_1$, $Z'''_1$ and $Z_1$, that for all $t\geq 0$,
\begin{align}
  \label{eq: need1}
  & \mathbb{P}_{\mu}[e^{i\Upsilon^f_t}|\mathscr{F}_t]
    = \mathbb{P}_{\mu}[\exp\{i\theta_t X_{t+1} (f) - i \theta_t X_t(P_1^\alpha f)\} |\mathscr{F}_{t}] \\
  & = \exp\{X_t((U_1 - iP^\alpha_1 ) (\theta_t f))\}
    = \exp\{X_t((Z_1 + Z'''_1) (\theta_t f))\}.
\end{align}
From Lemma \ref{lem: charactreisticfunction}, we  get that $\theta\mapsto \langle Z_1(\theta f),\varphi\rangle$ is the characteristic function of some $(1+\beta)$-stable random variable, and then  $\operatorname{Re} \langle Z_1f, \varphi\rangle \leq 0$.
Using this, \eqref{eq: need1}, \eqref{eq: -v has positive real part} and the fact $|e^{-x} - e^{-y}| \leq |x-y|$ for all $x,y \in \mathbb C_+$, we get for each $t\geq 0$ and $\epsilon> 0$,
\begin{align}
  \label{eq: inequality that will used later}
  & \mathbb{P}_\mu \Big[ |  \mathbb{P}_\mu [ e^{i\Upsilon^f_t} - e^{\langle Z_1f,\varphi \rangle} ; D^c | \mathscr F_{t}]   |\Big]  \\
  &  \leq \mathbb{P}_\mu   \Big[ |    \mathbb{P}_\mu [ e^{i \Upsilon^f_t }-e^{\langle Z_1f, \varphi\rangle}; A_{t}(\epsilon) | \mathscr F_{t}] |  + 2\mathbb P_\mu ( A_{t}(\epsilon) \Delta D^c | \mathscr F_{t}) \Big] \\
  & = \mathbb{P}_{\mu}\Big[ |\mathbb{P}_\mu [e^{i\Upsilon^f_t}| \mathscr F_{t}]-e^{\langle Z_1f, \varphi\rangle}| ; A_{t}(\epsilon) \Big] + J_3(t,\epsilon) \\
  & \leq \mathbb{P}_\mu \Big[ |e^{X_{t}((Z_1+Z'''_1) (\theta_t f))}-e^{\langle Z_1f, \varphi\rangle} | ; A_{t}(\epsilon) \Big]+  J_3(t,\epsilon) \\
  & \leq \mathbb{P}_\mu \Big[ | X_{t} ( (Z_1+Z'''_1)(\theta_t f)) - \langle Z_1f, \varphi\rangle | ;A_{t}(\epsilon)\Big]+  J_3(t,\epsilon) \\
  & \leq J^f_1(t,\epsilon) + J^f_2(t,\epsilon)+ J_3(t,\epsilon).
\end{align}

\emph{Step 2.} We will show that for $\epsilon>0$ small enough, there exist  $C_2, \delta_2>0$ such that for all $t\geq 1$ and
$f \in \mathcal P_g$, we have $ J^f_1(t,\epsilon) \leq C_2e^{-\delta_2 t}$.

In fact, let $\delta_0 >0$ be the constant in Lemma \ref{lem: upper bound for usgx}.(7) and let $R$ be the corresponding $(\theta^{2+\beta}\vee \theta^{1+\beta+\delta_0})$-controller.
Acording to Step 1 in the proof of Lemma \ref{lem:m}, there exists $h_{2} \in \mathcal P^+$ such that for each $f \in \mathcal P_g$ it holds that $|f| \leq h_{2}$.
Then, we have for all $t\geq 0$, $\epsilon> 0$ and $f\in \mathcal P_g$,
\begin{align}
  & |Z'''_1(\theta_t f)|\mathbf{1}_{A_{t}(\epsilon)}
    \leq R(|\theta_{t} f|)\mathbf{1}_{A_{t}(\epsilon)}
    \leq R \Big(\frac{h_{2}}{e^{(\alpha-\epsilon)t(1-\tilde \beta)}}\Big)
    \leq \sum_{\rho \in \{\delta_0,1\}}e^{-\frac{1+\beta+\rho}{1+\beta}(\alpha-\epsilon)t}Rh_{2}.
\end{align}
Thus for all $t\geq 0$, $\epsilon> 0$ and $f\in \mathcal P_g$,
\begin{align}
  \label{eq: estimate of J1}
  J^f_1(t,\epsilon)
& \leq \sum_{\rho \in \{\delta_0,1\}}e^{-\frac{1+\beta+\rho}{1+\beta}(\alpha-\epsilon)t}\mathbb{P}_{\mu}[X_{t}(Rh_2)]
  \leq \sum_{\rho \in \{\delta_0,1\}} \mu(Q_0 R h_{2}) e^{-(\alpha\frac{\rho}{1+\beta}-\epsilon\frac{1+\beta+\rho}{1+\beta})t},
\end{align}
where $Q_0$ is defined by \eqref{eq:Q}.
By taking $\epsilon>0$ small enough, we get the desired result in this step.

\emph{Step 3.}
We will show that for $\epsilon>0$ small enough there exist $C_3, \delta_3 > 0$ such that for all $t \geq 0$ and $f\in \mathcal P_g$,  we have $ J^f_2(t,\epsilon) \leq C_3 e^{-\delta_3 t}$.
In fact, for all $t\geq 0$, and $f\in \mathcal P_g$,
\begin{align}
  & X_{t}(Z_1(\theta_t f))- \langle Z_1f, \varphi\rangle
     = \theta_t^{1+\beta} X_t(Z_1 f) - \langle  Z_1 f,\varphi \rangle
     = \frac{1}{\|X_{t}\|}X_t(Z_1f - \langle  Z_1 f ,\varphi \rangle),
\end{align}
and therefore,
\begin{align}
  \label{eq: prevJ2}
  J^f_2(t,\epsilon)
  & = \mathbb P_\mu\Big[\Big|  \frac{1}{\|X_{t}\|}X_t(Z_1f - \langle  Z_1 f ,\varphi \rangle) \Big|;A_t(\epsilon)\Big]
    \leq e^{-(\alpha-\epsilon)t} \mathbb{P}_{\mu}[|X_t (q_f) |].
\end{align}
where $ q_f = Z_1 f-\langle  Z_1 f,\varphi\rangle \in \mathcal P^*$.
It follows from Lemma \ref{lem:P:R} that there exists $h_{3}\in \mathcal{P}$ such that for each $ f\in \mathcal P_g$, we have $Q_1 (\operatorname{Re} q_f) \leq h_{3} \text{ and } Q_1 (\operatorname{Im} q_f)\leq h_3$, where $Q_1$ is given by \eqref{eq:Q} with $\kappa=1$.
In the rest of this step, we  fix a $\gamma\in(0,\beta)$ small enough such that $\alpha \gamma < b < (1+\gamma)b$.
According to Lemma \ref{lem:P:M:uc}.(3) (with $\kappa=1$), there exists $C_{3}>0$ such that for all $t\geq 0$ and $f\in \mathcal P_g$,
\begin{align}
  & \mathbb{P}_{\mu}\left[\left|X_{t}(q_f)\right|\right]
    \leq \| X_{t}( \operatorname{Re} q_f)\|_{\mathbb{P}_{\mu,1+\gamma}} + \| X_{t}(\operatorname{Im} q_f)\|_{\mathbb{P}_{\mu,1+\gamma}} \\
 & \leq 2\sup_{q\in \mathcal P: Q_1 q\leq h_{3}} \|X_t(q)\|_{\mathbb P_\mu; 1+\gamma} \leq C_{3} e^{\frac{\alpha t}{1+\gamma}}.
\end{align}
Therefore, for all $t\geq 0, \epsilon > 0$ and $f \in \mathcal P_g$, we have
\begin{align}
  \label{eq: right bound for J2}
   J^f_2(t, \epsilon)
    \leq  C_3 e^{-(\alpha-\epsilon)t}e^{\frac{\alpha t}{1+\gamma}}
   \leq C_{3} e^{-(\alpha\tilde \gamma -\epsilon)t}.
\end{align}
By taking $\epsilon >0$ small enough, we get the required result in this step.

\emph{Step 4.}
We will show that, for each $\epsilon\in (0,  \alpha)$, there exist $C_4,\delta_4>0$ such that for all $t\geq 1$, $J_3(t,\epsilon)\leq C_4e^{-\delta_4 t}.$
In fact, we have  for all $t\geq 0, \epsilon >0$,
\[
  \mathbb P_{\mu}(A_{t}(\epsilon), D)
  = \mathbb P_{\mu}[\mathbb P_{\mu}(D|\mathscr F_t);A_t(\epsilon)]
  = \mathbb P_\mu[e^{-\bar v\|X_t\|};A_t(\epsilon)]
  \leq \exp({-\bar v \|\mu\|e^{(\alpha - \epsilon)t}}).
\]
On the other hand, by Proposition \ref{lem: control of XT}, for each $\epsilon \in (0, \alpha)$, there exists  $C_{4}, \delta_{4}>0$ such that for all $t\geq 0$,
\begin{align}
  \mathbb P_\mu(A_t(\epsilon)^c,D^c)
  \leq \mathbb P_\mu(0 < e^{-\alpha t}\|X_t\|
  \leq e^{ - \epsilon t}) \leq C_{4} (e^{-\epsilon \delta_{4} t}+e^{-\delta_{4} t}).
\end{align}
 Combining these results, we get the desired result in this step.

 \emph{Step 5.} Combining the results in Steps 1--4, we immediately get the desired result.
\end{proof}

The following corollary  will be used later in the proof of Theorem \ref{thm:M}.
\begin{cor}
  \label{cor:MI}
If  $g\in \mathcal{P}\setminus\{0\}$ and $\mu\in \mathcal M_c(\mathbb R^d)$, then there exist $C,\delta>0$ such that for
all $l\leq n$ in $\mathbb Z_+$ and  $(f_j)_{j=l}^n\subset \mathcal P_g$,
\begin{align}
  \label{32corollary}
  \Big|\mathbb{\widetilde{P}}_{\mu}\Big[\prod_{k=l}^ne^{i \Upsilon^{f_k}_{k} }-\prod_{k=l}^n e^{\langle Z_1f_k, \varphi\rangle}\Big]\Big|\leq C e^{-\delta l}.
\end{align}
\end{cor}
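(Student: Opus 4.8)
The plan is to deduce the estimate from a single application of Proposition~\ref{thm:Key} to each factor of the product, via a telescoping identity. Fix $\mu$ and $g$ as in the statement; we may assume $\mu\neq 0$, since otherwise the claim is vacuous. Let $C,\delta>0$ be the constants provided by Proposition~\ref{thm:Key}, and set $p_D:=\mathbb P_\mu(D^c)=1-e^{-\bar v\|\mu\|}\in(0,\infty)$, a fixed constant. Writing $a_k:=e^{i\Upsilon^{f_k}_k}$ and $b_k:=e^{\langle Z_1f_k,\varphi\rangle}$ for $l\le k\le n$, the left-hand side of~\eqref{32corollary} equals $p_D^{-1}\big|\mathbb P_\mu[\prod_{k=l}^n a_k-\prod_{k=l}^n b_k;D^c]\big|$, so it suffices to estimate this expectation. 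Two elementary bounds will be used throughout: $|a_k|\le 1$ (each $\Upsilon^{f_k}_k$ being real wherever it is defined, in particular on $D^c$), and $|b_k|\le 1$, because $\theta\mapsto\langle Z_1(\theta f_k),\varphi\rangle$ is a characteristic exponent by Lemma~\ref{lem: charactreisticfunction} and hence $\operatorname{Re}\langle Z_1f_k,\varphi\rangle\le 0$.

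The key step is the telescoping decomposition, with the $a$-factors collected to the left and the $b$-factors to the right,
\begin{align}
  \prod_{k=l}^n a_k-\prod_{k=l}^n b_k
  =\sum_{m=l}^n\Big(\prod_{k=l}^{m-1}a_k\Big)(a_m-b_m)\Big(\prod_{k=m+1}^n b_k\Big).
\end{align}
This ordering is essential: since $\Upsilon^{f_k}_k$ is $\mathscr F_{k+1}$-measurable, the prefix $\prod_{k=l}^{m-1}a_k$ is $\mathscr F_m$-measurable, while the suffix $\prod_{k=m+1}^n b_k$ is a deterministic constant of modulus at most $1$. Conditioning the $m$-th summand on $\mathscr F_m$, pulling out the $\mathscr F_m$-measurable prefix, and using the bounds above, one obtains
\begin{align}
  \Big|\mathbb P_\mu\Big[\Big(\prod_{k=l}^{m-1}a_k\Big)(a_m-b_m)\Big(\prod_{k=m+1}^n b_k\Big);D^c\Big]\Big|
  &\le\mathbb P_\mu\Big[\big|\mathbb P_\mu[(a_m-b_m)\mathbf{1}_{D^c}\mid\mathscr F_m]\big|\Big]\\
  &=\mathbb P_\mu\Big[\big|\mathbb P_\mu[e^{i\Upsilon^{f_m}_m}-e^{\langle Z_1f_m,\varphi\rangle};D^c\mid\mathscr F_m]\big|\Big].
\end{align}
For $m\ge 1$ the right-hand side is at most $Ce^{-\delta m}$ by Proposition~\ref{thm:Key} (which applies since $f_m\in\mathcal P_g$, with $C,\delta$ independent of $m$ and of $f_m$); for $m=0$ it is trivially at most $2$. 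Summing over $m$ and recalling that $e^{-\delta l}=1$ when $l=0$, we get $\big|\mathbb P_\mu[\prod_{k=l}^n a_k-\prod_{k=l}^n b_k;D^c]\big|\le C'e^{-\delta l}$ for a suitable constant $C'$, and dividing by $p_D$ gives~\eqref{32corollary}.

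One minor technical point deserves care: $\Upsilon^{f_k}_k$ involves division by $\|X_k\|^{1-\tilde\beta}$, which is strictly positive on $D^c$ but may vanish on $D$; since every term above carries the indicator $\mathbf{1}_{D^c}$, one may simply set $a_k:=1$ on $D$ without affecting any of the expectations, and then $|a_k|\le 1$ holds identically. I do not expect a genuine obstacle here: the argument is a routine telescoping-and-Markov estimate built on Proposition~\ref{thm:Key}, and the only thing one must get right is the side on which to collect the $a$'s and $b$'s in the telescoping, which is dictated by the fact that only the prefix $\prod_{k<m}a_k$ (and not the suffix $\prod_{k>m}a_k$) is $\mathscr F_m$-measurable.
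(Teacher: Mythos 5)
Your proof is correct and follows essentially the same route as the paper: the paper telescopes the sequence of hybrid quantities $a_k:=\widetilde{\mathbb{P}}_{\mu}\big[\prod_{j=l}^{k}e^{i\Upsilon_j^{f_j}}\big]\prod_{j=k+1}^{n}e^{\langle Z_1f_j,\varphi\rangle}$, which upon conditioning on $\mathscr F_k$ yields exactly your term-by-term telescoping inside the expectation, followed by the same uniform application of Proposition \ref{thm:Key} over $\mathcal P_g$ and a geometric series. Your explicit handling of the $m=0$ term and of the degeneracy of $\|X_k\|$ on $D$ are harmless refinements of the same argument.
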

\begin{proof}
  For  $l\leq n$ in $\mathbb Z_+$, $k \in \{l,\dots,n\}$ and $(f_j)_{j=l}^n\subset \mathcal P_g$, define
  \[
    a_k
    :=  \mathbb{\widetilde{P}}_{\mu}\Big[\prod_{j=l}^{k} e^{i\Upsilon_j^{f_j}}\Big] \times \Big(\prod_{j=k+1}^{n} e^{ \langle Z_1f_j,\varphi \rangle} \Big).
  \]
  Then for all $l\leq n$ in $\mathbb Z_+$, $k \in \{l,\dots,n\}$ and $(f_j)_{j=l}^n\subset \mathcal P_g$, we have
  \begin{align}
    & a_{k} - a_{k-1}
      =\mathbb{P}_{\mu}(D^c)^{-1} \mathbb{P}_{\mu}\Big[(e^{i\Upsilon^{f_k}_k}-e^{\langle Z_1f_k, \varphi\rangle})\prod_{j=l}^{k-1} e^{i\Upsilon_j^{f_j}};D^c\Big] \Big(\prod_{j=k+1}^n e^{\langle Z_1f_j, \varphi\rangle}\Big)\\
    & =\mathbb{P}_{\mu}(D^c)^{-1} \mathbb{P}_{\mu}\Big[\mathbb P_\mu[e^{i\Upsilon_k^{f_k}}-e^{\langle Z_1f_k, \varphi \rangle}; D^c|\mathscr F_k] \prod_{j=l}^{k-1} e^{i\Upsilon_j^{f_j}}\Big] \Big(\prod_{j=k+1}^{n}e^{\langle Z_1f_j, \varphi\rangle}\Big).
  \end{align}
  By Lemma \ref{thm:Key}, there exist $C_0,\delta_0 >0$ such that for all $l\leq n$ in $\mathbb Z_+$,  $k \in \{l,\dots , n\}$, and $(f_j)_{j=l}^n\subset \mathcal P_g$, we have
  \begin{align}
    | a_{k} - a_{k-1}|
    & \leq \mathbb{P}_{\mu}(D^c)^{-1}\mathbb{P}_{\mu}\Big[\big|\mathbb P_\mu[e^{i\Upsilon_k^{f_k}}-e^{\langle Z_1f_k, \varphi \rangle}; D^c | \mathscr{F}_k]\big|\Big]
    \leq C_0 e^{-\delta_0 k}.
  \end{align}
  Therefore, there exist $C,\delta >0$ such that for all $l\leq n$ in $\mathbb Z_+$ and each $(f_j)_{j=l}^n\subset \mathcal P_g$, we have
  \begin{align}
    \text{LHS of \eqref{32corollary}}
    & = \left|a_n-a_{l-1}\right|
      \leq \sum_{k=l}^n\left|a_{k}-a_{k-1}\right|
      \leq \sum_{k=l}^n C_0 e^{-\delta_0 k}
      \leq C e^{- \delta l}.
      \qedhere
  \end{align}
\end{proof}

\subsection{Central limit theorem for $f\in \mathcal C_s$}
\label{sec: small rate}

\begin{proof}[Proof of Theorem \ref{thm:M}.(\ref{thm:M:1})]
	Fix $\mu\in \mathcal M_c(\mathbb R^d)$, $f\in \mathcal C_s$ and $t_0 > 1$  large enough so that $ \lceil t - \ln t\rceil \leq \lfloor t \rfloor - 1$ for all $t\geq t_0$.
  For each $t\geq t_0$, in this proof we write $\theta_t = \|X_t\|^{\tilde \beta - 1}$,
  \begin{multline}
    \label{eq:PM:CLTS:1}
    \theta_t  X_t(f)
    = I^f_1(t) + I^f_2(t) + I^f_3(t)
    := \Big(\sum_{k=0}^{\lfloor t-\ln t \rfloor} \theta_t \mathcal I_{t-k-1}^{t-k} X_t(f) \Big)\\
    + \Big( \theta_t \mathcal I_0^{t-\lfloor t \rfloor} X_t(f)   + \sum_{k=\lceil t-\ln t \rceil}^{\lfloor t \rfloor-1} \theta_t \mathcal I_{t-k-1}^{t-k} X_t(f) \Big) + (\theta_t X_0(P_t^\alpha f) ),
  \end{multline}
  and $ I^f_0(t) := \sum_{k=0}^{\lfloor t-\ln t \rfloor} \Upsilon_{t-k-1}^{T_k \tilde f},$ where $\tilde f:= e^{\alpha(\tilde \beta - 1)} f$.

  \emph{Step 1.} We show that $I^f_0(t) \xrightarrow[t\to \infty]{d} \zeta^f$.
  In fact, for each $k \in \mathbb Z_+$, we have  $T_{k} \tilde f \in \mathcal P_{\tilde f}:=\{\theta T_n \tilde f: n \in \mathbb Z_+, \theta \in [-1,1]\}$.
  Therefore from Corollary \ref{cor:MI} we get that there exist $C_1,\delta_1 > 0$ such that
  \begin{align}
    \Big|\mathbb{\widetilde{P}}_{\mu} [e^{i I^f_0(t)} ]-\exp\Big(\sum_{k=0}^{\lfloor t-\ln t \rfloor} \langle Z_1T_{k}\tilde f, \varphi\rangle \Big)\Big|
    \leq C_1 e^{-\delta_1(t - \lfloor t - \ln t\rfloor)},
    \quad t\geq t_0.
  \end{align}
  On the other hand, using \eqref{eq:PL:S:1} and the fact that $\varphi(x)dx$ is the invariant probability of the semigroup $(P_t)_{t\geq 0}$, we have
  \begin{align}
    \label{eq:PM:CLTS:2}
    & \sum_{k=0}^\infty \langle Z_1 T_{k} \tilde f, \varphi \rangle
    = \sum_{k=0}^\infty \int_0^1 \langle P_u^\alpha ((-iP_{1 - u}^\alpha T_k \tilde f)^{1+\beta}), \varphi\rangle ~du
    \\& = \sum_{k=0}^\infty \int_0^1 e^{\alpha u} \langle  (-iP_{1 - u}^\alpha T_{k}\tilde f)^{1+\beta}, \varphi \rangle ~du
    \\& = \sum_{k=0}^\infty \int_0^1 \langle  (-iT_{k+1 - u} f)^{1+\beta}, \varphi\rangle~du
    = \int_0^\infty \langle  (-iT_{u} f)^{1+\beta}, \varphi\rangle~du = m[f].
  \end{align}
  Therefore, we have $\mathbb{\widetilde{P}}_{\mu} [e^{i I^f_0(t)} ] \xrightarrow[t\to \infty]{} e^{m[f]}$. Since $I_0^f(t)$ is linear in $f$, we can replace $f$ with $\theta f$, $\theta \in \mathbb R$, and then the desired result in this step follows.

  \emph{Step 2.} We show that $I^f_1(t) - I^f_0(t) \xrightarrow[t\to \infty]{d} 0$.
  In fact,  by \cite[Lemma 3.4.3]{Durrett2010Probability} we have that for each $t\geq t_0$,
  \begin{align}
    \label{eq:PM:S:1}
    |\mathbb{\widetilde{P}}_{\mu}[e^{i (I^f_1(t) - I^f_0(t) ) }] - 1|
    \leq \sum_{k=0}^{\lfloor t-\ln t \rfloor}\mathbb{\widetilde{P}}_{\mu}\big[|Y_{t,k}|\big],
  \end{align}
  where $ Y_{t,k} := \exp(i \Upsilon_{t-k-1}^{T_{k} \tilde f} - i\theta_t \mathcal I_{t-k-1}^{t-k} X_t(f)) - 1. $
  We claim that there exist $C_2, \delta_2>0$ such that \(\widetilde {\mathbb P}_\mu [|Y_{t,k}|] \leq C_2 e^{-\delta_2 (t-k-1)}\) for all $k\in \mathbb Z_+$ and $t\geq k+1$.
  Then  there exists $C_2'>0$ such that for each  $t \geq t_0$, $|\mathbb{\widetilde{P}}_{\mu}[e^{i (I^f_1(t)- I^f_0(t))}]-1| \leq C_2't^{-\delta_1}$  which, combined with the fact that $I^f_1(t) - I^f_0(t)$ is linear in $f$, completes this step.

 We will show the claim above  in the following substeps 2.1 and 2.2.
  First we choose $\gamma \in (0,\beta)$ close enough to $\beta$ so that there exist $\eta,\eta'>0$ with $ \alpha \tilde \gamma > \eta > \eta - 3\eta' > \alpha \tilde \beta - \alpha \tilde \gamma > 0;$ and define for  $k \in \mathbb Z_+$ and $t\geq k+1$,
  \[
    \mathcal{D}_{t,k}
    :=\{|H_t-H_{t-k-1}|\leq  e^{-\eta (t-k-1)}, H_{t-k-1}> 2e^{-\eta' (t-k-1)}\},
  \]
  where $H_t := e^{-\alpha t}\|X_t\|$.

  \emph{Substep 2.1.} We show that there exist $C_{2.1},\delta_{2.1} >0$ such that for all $k \in \mathbb Z_+$ and $t\geq k+1$, $ \mathbb{\widetilde{P}}_{\mu} \big[ |Y_{t,k}| ;\mathcal{D}^c_{t,k} \big] \leq C_{2.1} e^{-\delta_{2.1} (t-k)}.$
  In fact, it follows from Proposition \ref{lem: control of XT}, Lemma \ref{lem: control of wt} with $|p|=0$ and Chebyshev's inequality that there exist $C_{2.1}', \delta_{2.1}'>0$ such that for all $k \geq 0$ and $t\geq k+1$,
  \begin{align}
    \label{eq: prob of Dtkc}
    & \mathbb{\widetilde{P}}_{\mu}(\mathcal{D}_{t,k}^c)
    \leq \mathbb{\widetilde{P}}_{\mu}(|H_t-H_{t-k-1}| > e^{-\eta (t-k-1)})+\mathbb{\widetilde{P}}_{\mu}(H_{t-k-1}\leq 2e^{-\eta'(t-k-1)}) \\
    & \leq \mathbb{P}_{\mu}(D^c)^{-1}e^{\eta(t-k-1)}\mathbb{P}_{\mu}[|H_t-H_{t-k-1}|] +  \mathbb{P}_{\mu}(D^c)^{-1} \mathbb P_\mu(H_{t-k-1}\leq 2e^{-\eta'(t-k-1)}; D^c) \\
    & \leq \mathbb{P}_{\mu}(D^c)^{-1}  e^{\eta(t-k-1)}\|H_t - H_{t-k-1}\|_{\mathbb P_\mu; 1+\gamma} + \mathbb{P}_{\mu}(D^c)^{-1} \mathbb P_\mu(0<H_{t-k-1}\leq 2e^{-\eta'(t-k-1)}) \\
    & \leq C'_{2.1} e^{-(\alpha \tilde \gamma - \eta)(t-k-1)}+C'_{2.1} e^{-\delta'_{2.1}(t-k-1)}.
  \end{align}
  This implies the desired result in this substep, since $|Y_{t,k}| \leq 2$ a.s..

  \emph{Substep 2.2.} We will show that there exist $C_{2.2},\delta_{2.2} > 0$ such that for all $k\in \mathbb Z_+$ and $t\geq k+1$, it holds that $ \mathbb{\widetilde{P}}_{\mu} [|Y_{t,k}|;\mathcal{D}_{t,k}] \leq  C_{2.2} e^{-\delta_{2.2} (t-k)}$.
  In fact, noticing that for $f\in \mathcal C_s$ and $k\in \mathbb Z_+$, we have $T_kf = e^{\alpha (\tilde \beta - 1 )k}P_k^\alpha f $; and therefore for all $k\in \mathbb Z_+$ and $t \geq k + 1$,
  \begin{align}
    \label{eq:gammafunction11}
    \Upsilon_{t-k-1}^{T_{k} \tilde f}
    = \frac{X_{t-k}(T_{k} \tilde  f) - X_{t -k-1}(P_1^\alpha T_{k} \tilde f)}{\|X_{t-k-1}\|^{1-\tilde \beta}}
    = \frac{\mathcal I_{t - k - 1}^{t - k} X_t(f)}{\|e^{\alpha (k+1)}X_{t-k-1} \|^{1 -\tilde \beta}}.
  \end{align}
  Since $|e^{ix}-e^{iy}|\leq|x-y|$ for all $x,y\in \mathbb R$, we have for all $k \in \mathbb Z_+$ and $t\geq k+1$,
  \begin{align}
    \label{eq: control of Ykt}
    \mathbb{\widetilde{P}}_{\mu}[|Y_{t,k}|;\mathcal{D}_{t,k}]
    & \leq \mathbb{\widetilde{P}}_{\mu}\Big[|\mathcal I_{t-k-1}^{t-k} X_t(f) | \cdot \Big| \| e^{\alpha(k+1)}X_{t-k-1}\| ^{ \tilde \beta - 1} - \|X_t\|^{ \tilde \beta - 1}\Big|; \mathcal D_{t,k}\Big] \\
    & \leq  e^{\alpha(\tilde \beta - 1)t}\mathbb{\widetilde{P}}_{\mu}\big[|\mathcal I_{t-k-1}^{t-k}X_t(f)|\cdot K_{t,k}\big],
  \end{align}
  where
  \[
    K_{t,k}
    := \Big| \frac {H_t^{1- \tilde \beta} - H_{t-k-1}^{1 - \tilde \beta}} {H_t^{1 - \tilde \beta} H_{t-k-1}^{ 1- \tilde \beta }} \Big| \mathbf{1}_{\mathcal{D}_{t,k}}.
  \]
  Note that, since $\eta' < \eta$, we have almost surely on $\mathcal D_{t,k}$,
  \begin{align}
    H_t
    & \geq H_{t-k-1}- e^{-\eta (t-k-1)}
      \geq 2e^{-\eta'(t-k-1)}-e^{-\eta(t-k-1)}
      \geq e^{-\eta'(t-k-1)}.
  \end{align}
  Therefore, for all $k \in \mathbb Z_+$ and $t\geq k+1$, almost surely  on $\mathcal D_{t,k}$,
  \begin{align}
    & \Big|H_t^{1- \tilde \beta}-H_{t-k-1}^{1- \tilde \beta}\Big|
      \leq (1- \tilde \beta) \max \{ H_t^{-\tilde \beta }, H_{t-k-1}^{ -\tilde \beta} \} | H_t - H_{t-k-1} | \\
    & \leq (1- \tilde \beta ) \max\{e^{\eta' (t-k-1)}, \frac{1}{2}e^{\eta'(t-k-1)}\}^{\tilde \beta} e^{-\eta(t-k-1)}  \leq (1- \tilde \beta) e^{-(\eta - \eta') (t-k-1)}
  \end{align}
  and $ |H_t^{1 - \tilde \beta} H_{t-k-1}^{ 1 - \tilde \beta}| \geq 2^{\frac{1}{1+\beta}} e^{-2\eta'(t-k-1)}$.
  Thus, there exists $C_{2.2}'> 0$ such that for all $k \geq 0, t\geq k+1$, almost surely
  \begin{align}
    K_{t,k}
    \leq C_{2.2}' e^{-(\eta - 3\eta')(t-k-1)}.
  \end{align}
  Now, by Lemma \ref{lem: control of mgtrs}, there exists $C''_{2.2}>0$ such that for all $k\geq 0$ and $t\geq k+1$,
  \begin{align}
    \label{eq: Y in D}
    & \mathbb{\widetilde{P}}_{\mu} [|Y_{t,k}| ; \mathcal{D}_{t,k} ]
    \leq C_{2.2}' e^{\alpha (\tilde \beta - 1)t} \mathbb{\widetilde{P}}_{\mu} [ | \mathcal{I}_{t-k-1}^{t-k}X_t(f)| ] e^{-(\eta - 3\eta')(t-k-1)} \\
    & \leq \frac{C_{2.2}' } {\mathbb{P}_{\mu}(D^c)} e^{ \alpha (\tilde \beta - 1)t} \|\mathcal{I}_{t-k-1}^{t-k} X_t(f)\|_{\mathbb P_\mu; 1+\gamma} e^{-(\eta - 3\eta')(t-k - 1)} \\
    & \leq C_{2.2}'' e^{\alpha(\tilde \beta - \tilde \gamma)t} e^{ (\alpha \tilde \gamma - \kappa_f b)k} e^{-(\eta - 3\eta')(t-k)}
     \leq C_{2.2}'' e^{\alpha(\tilde \beta - \tilde \gamma)(t-k)} e^{-(\eta - 3\eta')(t-k)},
  \end{align}
  as desired in this step.
  In the last inequality, we used the fact that $f\in \mathcal C_s$ and therefore $\alpha \tilde \beta < \kappa_f b$.

  \emph{Step 3.}
  We show that $I^f_2(t)\xrightarrow[t\to \infty]{d} 0$.
  First fix a $\gamma \in (0,\beta)$ in this step.
  From the fact that $\kappa_f b -\alpha \tilde \gamma > \alpha (\tilde \beta - \tilde \gamma)$, we can choose $\epsilon >0$ small enough so that $q:=\kappa_fb- \alpha \tilde \gamma  > \alpha (\tilde \beta - \tilde \gamma) + 2\epsilon (1 - \tilde \beta)$.
  Now writing $\mathcal{E}_t:=\{\|X_t\|>e^{(\alpha-\epsilon) t}\}$, according to Proposition \ref{lem: control of XT}, there exist $C_3, \delta_3>0$ such that
  \begin{align}
    \mathbb{\widetilde{P}}_{\mu}(\mathcal{E}^c_t)
    \leq \frac{1}{\mathbb{P}_{\mu}(D^c)}\mathbb{P}_{\mu}(0<e^{-\alpha t}\|X_t\|\leq e^{-\epsilon t})\leq C_3e^{-\delta_3 t}
    , \quad t\geq0.
  \end{align}
  Therefore,
  \begin{align}
    \label{Theorem123}
    |\mathbb{\widetilde{P}}_{\mu}[e^{i I^f_2(t)}-1;\mathcal{E}^c_t]|
    \leq 2\mathbb{\widetilde{P}}_{\mu}(\mathcal{E}^c_t)
    \leq 2C_3e^{-\delta_3 t},
    \quad t\geq t_0.
  \end{align}
	According to Lemma \ref{lem: control of mgtrs}, there exist $C_3',C_3'',C_3'''>0$ such that for each $t\geq t_0 >1$,
  \begin{align}
    & |\mathbb{\widetilde{P}}_{\mu} [ (e^{i I^f_2(t)}-1);\mathcal{E}_t]|
      \leq  \mathbb{\widetilde{P}}_{\mu} [ |I^f_2(t)|;\mathcal{E}_t] \\
    & \leq  ( e^{(\alpha-\epsilon) t} )^{\tilde \beta - 1}\Big(\sum_{k=\lceil t-\ln t \rceil}^{\lfloor t \rfloor - 1}\mathbb{\widetilde{P}}_{\mu} [| \mathcal{I}_{t-k-1}^{t-k} X_t(f) |] + \mathbb{\widetilde{P}}_{\mu}[| \mathcal{I}_{0}^{t-\lfloor t\rfloor} X_t(f)|]\Big) \\
    & \leq ( e^{(\alpha-\epsilon) t} )^{\tilde \beta - 1}\Big(\sum_{k=\lceil t-\ln t \rceil}^{\lfloor t \rfloor - 1}\|\mathcal{I}_{t-k-1}^{t-k} X_t(f) \|_{\mathbb P_\mu; 1+\gamma} + \|\mathcal I_0^{t-\lfloor t \rfloor} X_t(f)\|_{\mathbb P_\mu;1+\gamma}\Big) \\
    & \leq C_3' e^{\alpha (\tilde \beta - \tilde \gamma)t} e ^{\epsilon (1-\tilde \beta) t}\sum_{k=\lceil t-\ln t \rceil}^{\lfloor t \rfloor}  e^{(\alpha\tilde \gamma-\kappa_f b)k}
      \leq C_3' e^{q t}e^{-\epsilon ( 1 - \tilde \beta)t}\sum_{k=\lceil t-\ln t \rceil}^{\lfloor t \rfloor}  e^{-q k}
    \\ & \leq C_3'' e^{q(t - \lceil t - \ln t\rceil)}e^{-\epsilon(1 - \tilde \beta) t}
         \leq C_3'' t^q e^{- \epsilon(1 - \tilde \beta) t}.
  \end{align}
  From this and \eqref{Theorem123}, we get that $\widetilde {\mathbb P}_\mu[e^{i I^f_2(t)}] \xrightarrow[t\to \infty]{} 1$.
  Note that $I^f_2(t)$ is linear in $f$ so we can replace $f$ with $\theta f$ for $\theta \in \mathbb R$ and get the desired result in this step.

	\emph{Step 4.} We will show that $I^f_3(t) \xrightarrow[t\to \infty]{\widetilde {\mathbb P}_\mu \text{-} a.s.} 0$.
  In fact, we have
  \begin{align}
    & |I^f_3(t)|
      \leq \frac{X_0(|P^\alpha_tf|)}{\|X_t\|^{1 - \tilde \beta }}
      \leq \frac{e^{\alpha t - \kappa_f b t}X_0(Qf)}{(e^{\alpha t} H_t)^{1 - \tilde \beta}}
      = e^{(\alpha \tilde \beta - k_fb)t} H_t^{\tilde \beta - 1} X_0(Qf)
      \xrightarrow[t\to \infty]{\widetilde {\mathbb P}_\mu \text{-} a.s.} 0.
  \end{align}

  \emph{Step 5.} Combining Steps 1--4, we complete the proof of Theorem  \ref{thm:M}.\eqref{thm:M:1}.
\end{proof}

\subsection{Central limit theorem for $f \in \mathcal C_c$}
\begin{proof}[Proof of Theorem \ref{thm:M}.(\ref{thm:M:2})]
  Fix $\mu\in \mathcal M_c(\mathbb R^d)$, $f\in \mathcal C_c$ and $t_0 > 1$ large enough so that $ \lceil t - \ln t\rceil \leq \lfloor t \rfloor - 1$ for each $t\geq t_0$.
  For each $t\geq t_0$, in this proof we write $\theta_t = \|t X_t\|^{\tilde \beta - 1}$,  define $I_i^f(t)$ using \eqref{eq:PM:CLTS:1}  for $i = 1,2,3$, and set $ I^f_0(t) := t^{\tilde \beta - 1}\sum_{k=0}^{\lfloor t-\ln t \rfloor} \Upsilon_{t-k-1}^{T_{k} \tilde f}$, where $\tilde f = e^{\alpha(\tilde \beta - 1)} f$.

  \emph{Step 1.} We show that $I^f_0(t) \xrightarrow[t\to \infty]{d} \zeta^f$.
  In fact, for each $t \geq  t_0( > 1)$ we have $t^{\tilde \beta - 1} < 1$; and therefore, for each $k \in \mathbb Z_+$, we have $t^{\tilde \beta - 1} T_{k+1} f \in \mathcal P_f:=\{\theta T_n f: n \in \mathbb Z_+, \theta \in [-1,1]\}$.
  Therefore from Proposition \ref{cor:MI} and that $\tilde \beta - 1 = -\frac{1}{1+\beta}$ we get that there exist $C_1,\delta_1 > 0$ such that
  \begin{align}
    \Big|\mathbb{\widetilde{P}}_{\mu} [e^{i I^f_0(t)} ]-\exp\Big(\frac{1}{t}\sum_{k=0}^{\lfloor t-\ln t \rfloor} \langle Z_1T_{k}\tilde f, \varphi\rangle \Big)\Big|
    \leq C_1 e^{-\delta_1(t - \lfloor t - \ln t\rfloor)}
    \leq \frac{C_1}{t^{\delta_1}},
    \quad t\geq t_0.
  \end{align}
  Since $f \in \mathcal C_c\setminus \{0\}$, we have $T_k \tilde f = \tilde f$ for each $k \in \mathbb Z_+$.
  Similar to the argument in \eqref{eq:PM:CLTS:2} we have
  \begin{align}
    \label{CLT:C:eq:m}
    \lim_{t\to \infty} \frac{1}{t}\sum_{k=0}^{\lfloor t-\ln t \rfloor} \langle Z_1 T_{k}\tilde f, \varphi\rangle
    = \langle Z_1 \tilde f,\varphi \rangle
    = \langle (-if)^{1+\beta}, \varphi \rangle
    = m[f].
  \end{align}
  Therefore $\mathbb {\widetilde P}_\mu[e^{i I^f_0(t)}] \xrightarrow[t\to \infty]{} e^{m[f]}$.
  The desired result in this step follows.

  \emph{Step 2.} We show that $ I^f_1(t) - I^f_0 (t) \xrightarrow[t\to \infty]{d} 0$.
  In fact, similar to Step 2 in the proof of Theorem \ref{thm:M}.(\ref{thm:M:1}), we  have \eqref{eq:PM:S:1} is valid with $ Y_{t,k} := \exp(i t^{\tilde \beta - 1} \Upsilon_{t-k-1}^{T_{k}\tilde f} - i\theta_t \mathcal I_{t-k-1}^{t-k} X_t(f)) - 1$.
  Similarly, we claim that there exist $C_2, \delta_2>0$ such that $\widetilde {\mathbb P}_\mu [|Y_{t,k}|] \leq C_2 e^{-\delta_2 (t-k-1)}$ for all $k\in \mathbb N$ and $t\geq k+1$, and  then the desired result in this step follows.

  We will show the claim above in the following substeps 2.1 and 2.2.
  First we choose $\gamma \in (0,\beta)$ close enough to $\beta$ so that there exist $\eta,\eta'>0$ with $ \alpha \tilde \gamma > \eta > \eta - 3\eta' > \alpha \tilde \beta - \alpha \tilde \gamma > 0$; and define, for  $k \in \mathbb N$ and $t\geq k+1$, $ \mathcal{D}_{t,k} := \{|H_t-H_{t-k-1}| \leq  e^{-\eta (t-k-1)}, H_{t-k-1}> 2e^{-\eta' (t-k-1)}\}$.

  \emph{Substep 2.1.}
  Similar to Substep 2.1 in the proof of Theorem \ref{thm:M}.(\ref{thm:M:1}), there exist $C_{2.1},\delta_{2.1} >0$ such that for all $k \in \mathbb N$ and $t\geq k+1$, $\mathbb{\widetilde{P}}_{\mu}[|Y_{t,k}|;\mathcal{D}^c_{t,k}] \leq C_{2.1} e^{-\delta_{2.1} (t-k)}$.
  We omit the details.

  \emph{Substep 2.2.} We will show that there exist $C_{2.2},\delta_{2.2} > 0$ such that for all $k\in \mathbb N$ and $t\geq k+1$, $ \mathbb{\widetilde{P}}_{\mu}[|Y_{t,k}|;\mathcal{D}_{t,k}] \leq  C_{2.2} e^{-\delta_{2.2} (t-k)}.$
  In fact, noticing that for $f\in \mathcal C_c$ and $k\in \mathbb Z_+$, we have $T_kf = e^{\alpha (\tilde \beta - 1 )k}P_k^\alpha $; and therefore for all $k\in \mathbb Z_+$ and $t \geq k + 1$,
  \[
    t^{\tilde \beta - 1} \Upsilon_{t-k-1}^{T_{k} \tilde f}
    = \frac{X_{t-k}(T_{k} \tilde f) - X_{t -k-1}(P_1^\alpha T_{k} \tilde f)}{\|t X_{t-k-1}\|^{1-\tilde \beta}}
    = \frac{\mathcal I_{t - k - 1}^{t - k} X_t(f)}{\|te^{\alpha (k+1)}X_{t-k-1} \|^{1 -\tilde \beta}}.
  \]
  The rest is similar to Substep 2.2 in the proof of Theorem \ref{thm:M}.(\ref{thm:M:2}).
  We omit the details.

  \emph{Step 3.}
  We show that $ I^f_2(t)\xrightarrow[t\to \infty]{d} 0$.
  In fact, writing $\mathcal{E}_t:=\{\|X_t\|>t^{-1/2}e^{\alpha t}\}$, according to Proposition \ref{lem: control of XT}, there exist $C_3, \delta_3>0$ such that
  \[
    \mathbb{\widetilde{P}}_{\mu}(\mathcal{E}^c_t)
    \leq \frac{1}{\mathbb{P}_{\mu}(D^c)}\mathbb{P}_{\mu}(0<e^{-\alpha t}\|X_t\|\leq t^{-1/2})\leq C_3( t^{-\delta_3}+e^{-\delta_3 t})
    , \quad t\geq0.
  \]
  Therefore,
  \begin{align}
   \label{Theorem123a}
    |\mathbb{\widetilde{P}}_{\mu}[e^{i I^f_2(t)}-1;\mathcal{E}^c_t]|
    \leq 2\mathbb{\widetilde{P}}_{\mu}(\mathcal{E}^c_t)
    \leq C_3(t^{-\delta_3}+e^{-\delta_3 t}),
    \quad t\geq t_0.
  \end{align}
  Choose a $\gamma\in (0,\beta)$ close enough to $\beta$ so that $\alpha(\tilde \beta - \tilde \gamma) \leq \frac{1}{2}(1- \tilde \beta)$.
	According to Lemma \ref{lem: control of mgtrs}, there exist $C_3',C_3'',C_3'''>0$ such that for each $t\geq t_0 (>1)$,
  \begin{align}
    & |\mathbb{\widetilde{P}}_{\mu} [ (e^{i I^f_2(t)}-1)\mathbf{1}_{\mathcal{E}_t}]|
      \leq  \mathbb{\widetilde{P}}_{\mu} [ |I^f_2(t)|\mathbf{1}_{\mathcal{E}_t}] \\
    & \leq  (t^{\frac{1}{2}} e^{\alpha t} )^{\tilde \beta - 1}\Big(\sum_{k=\lceil t-\ln t \rceil}^{\lfloor t \rfloor - 1}\mathbb{\widetilde{P}}_{\mu} [| \mathcal{I}_{t-k-1}^{t-k} X_t(f) |] + \mathbb{\widetilde{P}}_{\mu}[| \mathcal{I}_{0}^{t-\lfloor t\rfloor} X_t(f)|]\Big) \\
    & \leq C_3' t^{\frac{1}{2}(\tilde \beta - 1)} e^{\alpha(\tilde \beta - 1)t}\Big(\sum_{k=\lceil t-\ln t \rceil}^{\lfloor t \rfloor - 1}\|\mathcal{I}_{t-k-1}^{t-k} X_t(f) \|_{\mathbb P_\mu; 1+\gamma} + \|\mathcal I_0^{t-\lfloor t \rfloor} X_t(f)\|_{\mathbb P_\mu;1+\gamma}\Big) \\
    & \leq C_3' t^{\frac{1}{2}(\tilde \beta - 1)} e^{\alpha (\tilde \beta - \tilde \gamma)t}\sum_{k=\lceil t-\ln t \rceil}^{\lfloor t \rfloor}  e^{(\alpha\tilde \gamma-\kappa_f b)k}
      = C_3' t^{\frac{1}{2}(\tilde \beta - 1)} e^{\alpha(\tilde \beta - \tilde \gamma) t}\sum_{k=\lceil t-\ln t \rceil}^{\lfloor t \rfloor}  e^{-\alpha (\tilde \beta -\tilde \gamma) k}
    \\ & \leq C_3'' t^{\frac{1}{2}(\tilde \beta - 1)} e^{\alpha (\tilde \beta - \tilde \gamma)(t - \lceil t - \ln t\rceil)}
         \leq C_3'' t^{\frac{1}{2}(\tilde \beta - 1)} t^{\alpha (\tilde \beta - \tilde \gamma)}.
  \end{align}
  From this and \eqref{Theorem123a}, we get the desired result in this step.

  \emph{Step 4.} Similar to Step 4 in the proof of Theorem \ref{thm:M}.(\ref{thm:M:1}), we can verify that $I_3(t) \xrightarrow[t\to \infty]{\widetilde {\mathbb P}_\mu \text{-} a.s.} 0$.
  We omit the details.

 	\emph{Step 5.} Combining Steps 1--4, we complete the proof of Theorem \ref{thm:M}.\eqref{thm:M:2}.
\end{proof}

\subsection{Central limit theorem for $f\in \mathcal C_l$}
\label{sec: large rate clt}
\begin{proof}[Proof of Theorem \ref{thm:M}.(\ref{thm:M:3})]
  Fix $\mu \in \mathcal M_c(\mathbb R^d)$ and $f \in \mathcal C_l$.
  Define $\mathcal N:= \{p\in \mathbb Z_+^d: \alpha \tilde \beta > |p|b\}$.
  In this proof we write for each $t\geq 0$,
  \begin{align}
    & \frac{X_t(f) - \sum_{p\in \mathbb Z_+^d: \alpha \tilde \beta \geq |p|b} \langle f,\phi_p\rangle_\varphi e^{(\alpha - |p|b)t}H_\infty^p}{\|X_t\|^{1- \tilde \beta}}
      = \sum_{p\in \mathcal N}\frac{ \langle f,\phi_p\rangle_\varphi [X_t(\phi_p) - e^{(\alpha - |p|b)t}H_\infty^p]}{\|X_t\|^{1- \tilde \beta}}
    \\& = \sum_{p \in \mathcal N} \frac{\langle f,\phi_p\rangle_\varphi e^{(\alpha - |p|b)t}(H_t^p - H_\infty^p)}{\|X_t\|^{1- \tilde \beta}}
    = \sum_{k=0}^\infty \sum_{p \in \mathcal N}  \langle f,\phi_p\rangle_\varphi e^{(\alpha - |p|b)t}\frac{ H_{t+k}^p - H_{t+k+1}^p}{\|X_t\|^{1- \tilde \beta}}
    \\ & =: \sum_{k=0}^\infty \widetilde \Upsilon_{t,k}
         = \Big(\sum_{k = 0}^{\lfloor t^2 \rfloor}  \widetilde \Upsilon_{t,k} \Big) + \Big(\sum_{k = \lceil t^2 \rceil}^\infty  \widetilde \Upsilon_{t,k}\Big)
         = : I^f_1(t) + I^f_2(t),
  \end{align}
  and $I^f_0(t):= \sum_{k = 0}^{\lfloor t^2 \rfloor} \Upsilon_{t+k}^{- T_k \tilde f}$ where $\tilde f := \sum_{p\in \mathcal N} e^{-(\alpha - |p|b)}\langle f, \phi_p \rangle_\varphi \phi_p$.

  \emph{Step 1.} We show that $I^f_0(t) \xrightarrow [t\to \infty]{d} \zeta^{-f}$.
  In fact, since $T_k\tilde f \in \mathcal P_{\tilde f}$ for each $k\in \mathbb Z_+$, from Corollary \ref{cor:MI} we have $\widetilde{ \mathbb P}_\mu[e^{i I_0^f(t)}]\xrightarrow[t\to \infty]{}\exp\{\sum_{k=0}^\infty \langle Z_1T_k(-\tilde f),\varphi\rangle\}$.
  Using \eqref{eq:PL:S:1} and the fact that $\varphi(x)dx$ is the invariant probability of the semigroup $(P_t)_{t\geq 0}$ we have
  \begin{align}
    \label{eq:PM:CLTS:2a}
    & \sum_{k=0}^\infty \langle Z_1 T_{k} (-\tilde f), \varphi \rangle
      = \sum_{k=0}^\infty \int_0^1 \langle P_u^\alpha ((iP_{1 - u}^\alpha T_k \tilde f)^{1+\beta}), \varphi\rangle ~du
    \\& = \sum_{k=0}^\infty \int_0^1 e^{\alpha u} \langle  (iP_{1 - u}^\alpha T_{k}\tilde f)^{1+\beta}, \varphi \rangle ~du
    \\& = \sum_{k=0}^\infty \int_0^1 \langle  (iT_{k+ u} f)^{1+\beta}, \varphi\rangle~du
    = \int_0^\infty \langle  (iT_{u} f)^{1+\beta}, \varphi\rangle~du = m[-f].
  \end{align}
  The result in this step follows.

  \emph{Step 2.} We show that $I^f_1(t) - I^f_0(t) \xrightarrow[t\to \infty]{d} 0$.
  In fact, by \cite[Lemma 3.4.3]{Durrett2010Probability} we have, for each $t\geq 0$, that $|\widetilde {\mathbb P}_{\mu}[e^{i(I_{1}^{f}(t) - I_0^f(t))} - 1]| \leq \sum_{k=0}^{\lfloor t^2 \rfloor} \widetilde {\mathbb {P}}_\mu[|Y_{t,k}|]$ where $Y_{t,k} := e^{i(\widetilde {\Upsilon}_{t,k} - \Upsilon_{t+k}^{-T_{k}\widetilde {f}})} - 1. $
  We claim that there exist $C_2, \delta_2>0$ such that $\widetilde {\mathbb {P}}_\mu[|Y_{t,k}|] \leq C_2e^{-\delta_2 t}$ for all $t\geq 0$ and $k \in \mathbb Z_+$.
  Then  $|\widetilde {\mathbb P}_{\mu}[e^{i(I_{1}^{f}(t) - I_0^f(t))} - 1]| \leq (t^2+1)C_2e^{-\delta_2 t}$ which completes this step.

  We will show the claim above in the following substeps 2.1 and 2.2.
  First we choose $\gamma\in(0, \beta)$ close enough to $\beta$ so that $\alpha \tilde \gamma > |p|b$ for each $p\in \mathcal N$;
  and even closer so that there exist $\eta,\eta'>0$ satisfying $\alpha \tilde \gamma > \eta>\eta - 3\eta'> \alpha (\tilde \beta - \tilde \gamma)>0$. We also define $\mathcal{D}_{t,k} :=\{|H_t-H_{t+k}|\leq  e^{-\eta t}, H_{t}> 2e^{-\eta' t}\}$.

  \emph{Substep 2.1.} Similar to Substep 2.1 in the proof of Theorem \ref{thm:M}.(\ref{thm:M:1}), we have that there exist $C_{2.1},\delta_{2.1} >0$ such that for all $k \in \mathbb Z_+$ and $t\geq 0$, $ \mathbb{\widetilde{P}}_{\mu}[|Y_{t,k}|;\mathcal{D}^c_{t,k}] \leq C_{2.1} e^{-\delta_{2.1} t}$.
  We omit the details.

  \emph{Substep 2.2.} We show that there exist $C_{2.2}, \delta_{2.2}>0$ such that for all $k \in \mathbb Z_+$ and $t\geq 0$, we have $\widetilde {\mathbb {P}}_\mu[|Y_{t,k}|; \mathcal D_{t,k}]\leq C_{2.2}e^{- \delta_{2,2} t}$.
  In fact, it can be verified that for all $k \in \mathbb Z_+$ and $t\geq 0$,
  \begin{align}
    & \Upsilon_{t+k}^{-T_k\tilde f}
      = \frac{X_{t+k}(P^\alpha_1T_k\tilde f) - X_{t+k+1}(T_k \tilde f)}{\|X_{t+k}\|^{1 - \tilde \beta}}
    \\& = \sum_{p\in \mathcal N}
    \langle\tilde f,\phi_p\rangle_\varphi e^{-(\alpha \tilde \beta - |pb|)k}\frac{ X_{t+k}(P_1^\alpha \phi_p) - X_{t+k+1}(\phi_p)}{\|X_{t+k}\|^{1 - \tilde \beta}}
    \\& = \sum_{p\in \mathcal N}
    \langle f,\phi_p\rangle_\varphi  e^{(\alpha  -|p|b)t}\frac{H_{t+k}^p-H_{t+k+1}^p }{\|e^{-\alpha k}X_{t+k}\|^{1 - \tilde \beta}}.
  \end{align}
  Therefore for all $k\in \mathbb Z_+$ and $t\geq 0$,
  \begin{align}
    &|Y_{t,k}| \mathbf 1_{\mathcal D_{t,k}}
      \leq \Big( \sum_{p\in \mathcal N}|\langle f,\phi_p\rangle_\varphi|  e^{(\alpha  -|p|b)t} | H_{t+k}^p-H_{t+k+1}^p |\Big) \Big( \frac{1}{\|X_t\|^{1 - \tilde \beta}} - \frac{1}{\|e^{-\alpha k}X_{t+k}\|^{1 - \tilde \beta}} \Big)\mathbf 1_{\mathcal D_{t,k}}.
    \\ &= \Big( \sum_{p\in \mathcal N}|\langle f,\phi_p\rangle_\varphi|  e^{(\alpha  -|p|b)t} | H_{t+k}^p-H_{t+k+1}^p |\Big)e^{\alpha (\tilde \beta - 1)t} K_{t,k}
    \\ &= \Big( \sum_{p\in \mathcal N}|\langle f,\phi_p\rangle_\varphi|  e^{(\alpha \tilde \beta  -|p|b)t} | H_{t+k}^p-H_{t+k+1}^p |\Big) K_{t,k},
  \end{align}
  where
  \[
    K_{t,k}
    := \Big| \frac {H_t^{1- \tilde \beta} - H_{t+k}^{1 - \tilde \beta}} {H_t^{1 - \tilde \beta} H_{t+k}^{ 1- \tilde \beta }} \Big| \mathbf{1}_{\mathcal{D}_{t,k}}.
  \]
  Similar to Substep 2.2 in the proof of Theorem \ref{thm:M}.(\ref{thm:M:1}), we can verify that for all $k\in \mathbb Z_+$ and $t\geq 0$, almost surely $K_{t,k} \leq C_{2.2}'' e^{- (\eta - 3\eta')t}$.
  From this and Lemma \ref{lem: control of wt} we know that there exists $C'''_{2.2}$ such that for all $k\in \mathbb Z_+$ and $t\geq 0$,
  \begin{align}
    & \widetilde{\mathbb P}_\mu[|Y_{t,k}|; \mathcal D_{t,k}]
      \leq \mathbb P_\mu(D)^{-1}\mathbb P_\mu[ |Y_{t,k}| ;\mathcal D_{t,k} ]
    \\ & \leq \mathbb P_{\mu}(D)^{-1} C_{2.2}'' e^{- (\eta - 3\eta') t}\sum_{p\in \mathcal {N}} |\langle f,\phi_p\rangle_\varphi|  e^{(\alpha \tilde \beta  -|p|b)t} \mathbb P_\mu[| H_{t+k}^p-H_{t+k+1}^p |]
    \\ & \leq \mathbb P_{\mu}(D)^{-1} C_{2.2}'' e^{- (\eta - 3\eta') t}\sum_{p\in \mathcal {N}} |\langle f,\phi_p\rangle_\varphi|  e^{(\alpha \tilde \beta  -|p|b)t} \| H_{t+k}^p-H_{t+k+1}^p \|_{\mathbb P_\mu; 1+\gamma}
    \\&\leq  \mathbb P_{\mu}(D)^{-1} C_{2.2}'' e^{- (\eta - 3\eta') t}\sum_{p\in \mathcal N} |\langle f,\phi_p\rangle_\varphi|  e^{(\alpha \tilde \beta  -|p|b)t} e^{-(\alpha \tilde \gamma - |p|b)(t+k)} \\
    &  \leq  C_{2.2}''' e^{- (\eta - 3\eta') t} e^{(\alpha \tilde \beta - \alpha \tilde \gamma)t},
  \end{align}
  as desired in this substep.

  \emph{Step 3.} We show that $I^f_2(t) \xrightarrow[t\to \infty]{d} 0$.
  In order to do this, choose an $\epsilon \in (0,\alpha)$ and a $\gamma \in (0,\beta)$ close enough to $\beta$ so that for each $p\in \mathcal N$, it holds that $\alpha \tilde \gamma > |p|b$.
  Define $\mathcal E_t:= \{\|X_t\| > e^{(\alpha - \epsilon)t}\}$.
  According to Proposition \ref{lem: control of XT}, there exist $C_3, \delta_3 > 0$ such that for each $t\geq 0$, $|\widetilde {\mathbb {P}}_\mu[e^{i I_2^f(t)} - 1; \mathcal E_t^c]|\leq 2\widetilde {\mathbb {P}}_\mu(\mathcal E_t^c) \leq C_3 e^{- \delta_3 t}$.
  On the other hand, according to Lemma \ref{lem: control of wt}, we know that there exist $C_3',C_3''>0$ and $\delta_3'>0$ such that
  \begin{align}
    & |\widetilde {\mathbb {P}}_\mu[e^{i I_{2}^{f}(t)} - 1; \mathcal {E}_t]|
      \leq  \widetilde {\mathbb {P}}_\mu[ | I_{2}^{f}(t)|; \mathcal {E}_t]
      \leq \sum_{k = \lceil t^2\rceil}^\infty \widetilde {\mathbb {P}}_\mu[ |\widetilde {\Upsilon}_{t,k}|; \mathcal {E}_t]
    \\ & \leq \mathbb P_\mu(D^c)^{-1} \sum_{k = \lceil t^2\rceil}^\infty \sum_{p \in \mathcal N} |\langle f,\phi_p\rangle_\varphi| e^{(\alpha - |p|b)t}\mathbb {P}_\mu\Big[\frac{ |H_{t+k}^p - H_{t+k+1}^p|}{\|X_t\|^{1- \tilde \beta}}; \mathcal E_t\Big]
    \\ & \leq \mathbb P_\mu(D^c)^{-1} e^{(\alpha - \epsilon) (\tilde \beta - 1) t} \sum_{k = \lceil t^2\rceil}^\infty \sum_{p \in \mathcal N} |\langle f,\phi_p\rangle_\varphi| e^{(\alpha - |p|b)t}\|H_{t+k}^p - H_{t+k+1}^p\|_{\mathbb P_\mu; 1+\gamma}
    \\ & \leq C_3' e^{(\alpha - \epsilon) (\tilde \beta - 1) t} \sum_{k = \lceil t^2\rceil}^\infty \sum_{p \in \mathcal N} |\langle f,\phi_p\rangle_\varphi| e^{(\alpha - |p|b)t} e^{- (\alpha \tilde \gamma - |p|b)(t+k)}
    \\ & = C_3'' e^{ \alpha (\tilde \beta - \tilde \gamma) t } e^{ \epsilon (1 - \tilde \beta) t}e^{- \delta'_3 t^2}.
  \end{align}
  To sum up we have that $\widetilde {\mathbb P}_\mu[e^{iI_2^f(t)}] \xrightarrow[t\to \infty]{} 1$, which completes this step.

  \emph{Step 4.} Combining Steps 1--3, we complete the proof of Theorem \ref{thm:M}.\eqref{thm:M:3}.
\end{proof}

\appendix
\section{ }
\subsection{Analytic facts}
In this subsection, we collect some useful analytic facts.
\begin{lem}
  \label{lem: estimate of exponential remaining}
  For $z\in \mathbb C_+$,  we have
  \begin{align}
    \label{eq: estimate of exponential remaining}
    \Big|e^{-z} - \sum_{k=0}^n \frac{(-z)^k}{k!} \Big|
    \leq \frac{|z|^{n+1}}{(n+1)!} \wedge \frac{2|z|^{n}}{n!}, \quad n\in \mathbb Z_+.
  \end{align}
\end{lem}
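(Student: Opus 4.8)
The plan is to use Taylor's theorem with the integral form of the remainder. Set $R_n(z) := e^{-z} - \sum_{k=0}^n \frac{(-z)^k}{k!}$. Fixing $z\in\mathbb C_+$ and viewing $t\mapsto e^{-tz}$ as a smooth function of the real variable $t\in[0,1]$, whose $k$-th derivative at $0$ equals $(-z)^k$, Taylor's theorem with integral remainder gives
\[
  R_n(z)
  = \frac{(-z)^{n+1}}{n!}\int_0^1 (1-t)^n e^{-tz}\,dt
  , \qquad n\in\mathbb Z_+.
\]

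For the first bound, I would use that $z\in\mathbb C_+$ means $\operatorname{Re}z\geq 0$, hence $|e^{-tz}| = e^{-t\operatorname{Re}z}\leq 1$ for all $t\in[0,1]$; therefore
\[
  |R_n(z)|
  \leq \frac{|z|^{n+1}}{n!}\int_0^1 (1-t)^n\,dt
  = \frac{|z|^{n+1}}{(n+1)!}.
\]
For the second bound, assume first $n\geq 1$. Since $\int_0^1(1-t)^{n-1}\,dt = 1/n$, the term $\frac{(-z)^n}{n!}$ can be absorbed into the (shifted) integral representation of $R_{n-1}$, yielding
\[
  R_n(z)
  = R_{n-1}(z) - \frac{(-z)^n}{n!}
  = \frac{(-z)^{n}}{(n-1)!}\int_0^1 (1-t)^{n-1}\bigl(e^{-tz} - 1\bigr)\,dt.
\]
Now $|e^{-tz}-1|\leq |e^{-tz}|+1\leq 2$ on $\mathbb C_+$, so $|R_n(z)|\leq \frac{2|z|^n}{(n-1)!}\cdot\frac1n = \frac{2|z|^n}{n!}$. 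The case $n=0$ of the second bound is immediate from $|R_0(z)| = |e^{-z}-1|\leq |e^{-z}|+1\leq 2$. Taking the minimum of the two bounds finishes the proof.

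There is no genuine obstacle here; the estimate is routine. The only points requiring a little care are the passage from the complex exponential to its modulus — which is precisely where the standing hypothesis $\operatorname{Re}z\geq 0$ is used, via $|e^{-tz}|\leq 1$ — and the bookkeeping of the degenerate case $n=0$ in the second inequality, where the factor $(n-1)!$ is not defined and the bound is instead obtained directly from the triangle inequality.
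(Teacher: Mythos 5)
Your proof is correct. It differs from the paper's in its organization rather than in substance: the paper proves \eqref{eq: estimate of exponential remaining} by induction on $n$, establishing the base case $n=0$ from $\big|\int_0^1 e^{-\theta z}z\,d\theta\big|\le |z|$ and $|e^{-z}-1|\le 2$, and then in the inductive step writing $e^{-z}-\sum_{k=0}^{m+1}\frac{(-z)^k}{k!}=\int_0^1\big(e^{-\theta z}-\sum_{k=0}^m\frac{(-\theta z)^k}{k!}\big)z\,d\theta$ and integrating the minimum of the two previously obtained bounds. You instead write down the closed-form integral remainder $R_n(z)=\frac{(-z)^{n+1}}{n!}\int_0^1(1-t)^n e^{-tz}\,dt$ once (which is exactly what the paper's induction produces after unrolling), and then obtain the two halves of the minimum by two different pointwise estimates on the integrand: $|e^{-tz}|\le 1$ for the factorial-improving bound, and the rewriting $R_n=R_{n-1}-\frac{(-z)^n}{n!}=\frac{(-z)^n}{(n-1)!}\int_0^1(1-t)^{n-1}(e^{-tz}-1)\,dt$ together with $|e^{-tz}-1|\le 2$ for the other, with $n=0$ handled separately. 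Both arguments hinge on the same point, $\operatorname{Re}z\ge 0$ giving $|e^{-tz}|\le 1$ on the segment. Your version avoids induction and makes the origin of each of the two bounds transparent, at the modest cost of invoking Taylor's theorem with integral remainder for a complex-valued function of a real variable and of the separate $n=0$ case; the paper's induction is self-contained and carries both bounds simultaneously in one loop. Either proof is acceptable.
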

\begin{proof}
  Notice that $|e^{-z}| = e^{- \operatorname{Re} z} \leq 1$.
  Therefore, $ |e^{-z} - 1| = \Big| \int_0^1 e^{-\theta z} z d\theta\Big| \leq |z|. $
  Also, notice that $|e^{-z} - 1| \leq |e^{-z}|+1 \leq 2$.
  Thus \eqref{eq: estimate of exponential remaining} is true when $n = 0$.
  Now, suppose that \eqref{eq: estimate of exponential remaining} is true when $n = m$ for some $m \in \mathbb Z_+$.
  Then
  \begin{align}
    &\Big|e^{-z} - \sum_{k=0}^{m+1} \frac{(-z)^k}{k!}\Big|
      = \Big| \int_0^1\Big(e^{-\theta z} - \sum_{k=0}^m \frac{(-\theta z)^k}{k!} \Big) z d\theta \Big| \\
    & \quad \leq  \Big(\int_0^1 \frac{|\theta z|^{m+1}}{(m+1)!} |z| d\theta\Big) \wedge \Big(\int_0^1 \frac{2|\theta z|^{m}}{m!} |z| d\theta\Big)
      = \frac{|z|^{m+2}}{(m+2)!} \wedge \frac{2|z|^{m+1}}{(m+1)!},
  \end{align}
  which says that \eqref{eq: estimate of exponential remaining} is true for $n = m + 1$.
\end{proof}

\begin{lem}
  \label{lem: extension lemma for branching mechanism}
  Suppose that  $\pi$ is a measure on $(0,\infty)$ with $\int_{(0,\infty)} (y \wedge y^2) \pi(dy)< \infty$.
  Then the functions
  \begin{align}
    & h (z)
      = \int_{(0,\infty)} (e^{-zy} - 1 + zy) \pi(dy), \quad z \in \mathbb C_+, \\
    & h'(z)
      = \int_{(0,\infty)}(1- e^{-zy})y \pi(dy), \quad z \in \mathbb C_+
  \end{align}
  are well defined, continuous on $\mathbb C_+$ and holomorphic on $\mathbb C_+^0$.
  Moreover,
  \[
    \frac{h(z)-h(z_0)}{z-z_0}
    \xrightarrow[\mathbb C_+\ni z \to z_0]{} h'(z_0),\quad z_0 \in \mathbb C_+.
  \]
\end{lem}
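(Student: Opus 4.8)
The plan is to prove the four assertions in sequence: absolute convergence of both integrals on $\mathbb{C}_+$, continuity on $\mathbb{C}_+$, holomorphy on $\mathbb{C}_+^0$, and finally the difference-quotient limit (which in fact re-derives holomorphy of $h$ on $\mathbb{C}_+^0$). The whole argument rests on two elementary inputs: the estimate of Lemma~\ref{lem: estimate of exponential remaining}, and the fact that $\mathbb{C}_+$ is convex, so that every segment $[z_0,z]$ with $z_0,z\in\mathbb{C}_+$ stays in $\mathbb{C}_+$ and hence $|e^{-wy}|=e^{-(\operatorname{Re}w)y}\le 1$ for $w$ on such a segment and $y>0$. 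For well-definedness, fix $z\in\mathbb{C}_+$; since $zy\in\mathbb{C}_+$, Lemma~\ref{lem: estimate of exponential remaining} with $n=1$ and with $n=0$ respectively gives
\[
  |e^{-zy}-1+zy|\le \tfrac{1}{2}|zy|^2\wedge 2|zy|\le \max\{\tfrac{1}{2}|z|^2,2|z|\}\,(y^2\wedge y),\qquad
  |(1-e^{-zy})\,y|\le (|z|y^2)\wedge(2y)\le \max\{|z|,2\}\,(y^2\wedge y),
\]
so both integrands are dominated by a constant multiple of $y\wedge y^2$, which is $\pi$-integrable by hypothesis, and the constants are locally bounded in $z$.

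For continuity on $\mathbb{C}_+$, I would take $z_n\to z_0$ in $\mathbb{C}_+$; then $M:=\sup_n|z_n|<\infty$, so the integrands are dominated by the $\pi$-integrable functions $\max\{\tfrac{1}{2}M^2,2M\}(y\wedge y^2)$ and $\max\{M,2\}(y\wedge y^2)$, and they converge pointwise in $y$, whence dominated convergence gives $h(z_n)\to h(z_0)$ and $h'(z_n)\to h'(z_0)$. For holomorphy on $\mathbb{C}_+^0$, I would invoke Morera's theorem together with Fubini: for any closed triangle $T\subset\mathbb{C}_+^0$ the map $z\mapsto e^{-zy}-1+zy$ is entire, so $\oint_{\partial T}(e^{-zy}-1+zy)\,dz=0$; using the bound above with $|z|\le\sup_{w\in T}|w|$ to justify the interchange, $\oint_{\partial T}h(z)\,dz=\int_{(0,\infty)}\big(\oint_{\partial T}(e^{-zy}-1+zy)\,dz\big)\pi(dy)=0$, and combined with continuity this shows $h$ is holomorphic on $\mathbb{C}_+^0$. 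The identical argument applies verbatim to $h'$.

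It remains to prove the difference-quotient limit. For $z,z_0\in\mathbb{C}_+$ with $z\neq z_0$, set $w_t:=z_0+t(z-z_0)$; then $e^{-zy}-e^{-z_0y}=-y(z-z_0)\int_0^1 e^{-w_ty}\,dt$, and therefore
\[
  \frac{(e^{-zy}-1+zy)-(e^{-z_0y}-1+z_0y)}{z-z_0}
  =\frac{e^{-zy}-e^{-z_0y}}{z-z_0}+y
  = y\int_0^1\big(1-e^{-w_ty}\big)\,dt=:g_z(y)\xrightarrow[z\to z_0]{}y\big(1-e^{-z_0y}\big)
\]
pointwise in $y$. To dominate $g_z$ uniformly for $|z-z_0|\le 1$ (so $|w_t|\le|z_0|+1$), I would split according to the size of $y$: for $y\ge 1$, $|g_z(y)|\le 2y=2(y\wedge y^2)$; for $y<1$, Lemma~\ref{lem: estimate of exponential remaining} with $n=0$ gives $|1-e^{-w_ty}|\le|w_t|\,y\le(|z_0|+1)y$, hence $|g_z(y)|\le(|z_0|+1)y^2=(|z_0|+1)(y\wedge y^2)$. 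Thus $|g_z(y)|\le\max\{2,|z_0|+1\}(y\wedge y^2)$, which is $\pi$-integrable, and dominated convergence yields
\[
  \frac{h(z)-h(z_0)}{z-z_0}=\int_{(0,\infty)}g_z(y)\,\pi(dy)\xrightarrow[\mathbb{C}_+\ni z\to z_0]{}\int_{(0,\infty)}y\big(1-e^{-z_0y}\big)\,\pi(dy)=h'(z_0).
\]
The only technical point requiring care throughout is the construction of these dominating functions: one must control the integrand simultaneously near $y=0$ (where the $y^2$-bound is essential, as $\pi$ may have infinite mass there) and near $y=\infty$ (where the linear bound suffices), with the bound uniform over $z$ in a neighbourhood of $z_0$ or over a compact triangle. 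Both requirements are met by combining Lemma~\ref{lem: estimate of exponential remaining} with the convexity of $\mathbb{C}_+$ noted at the outset, so the argument is routine rather than genuinely hard.
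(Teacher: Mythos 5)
Your proof is correct: the well-definedness bounds, the dominated-convergence continuity argument, the Morera--Fubini holomorphy argument, and the difference-quotient computation via the segment parametrization are all sound, and your dominating functions of the form $C(y\wedge y^2)$ are built correctly from Lemma \ref{lem: estimate of exponential remaining} together with the convexity of $\mathbb C_+$ (which keeps $\operatorname{Re}w\geq 0$ along the segment). Your route differs from the paper's in a modest but genuine way: the paper obtains continuity on $\mathbb C_+$ and holomorphy on $\mathbb C_+^0$ of $h'$ by citing \cite[Theorem 3.2 and Proposition 3.6]{SchillingSongVondravcek2010Bernstein}, and then gets the corresponding properties of $h$ as a byproduct of the difference-quotient limit, whose proof there --- the bound $y\int_0^1 \bigl|1-e^{-(\theta z+(1-\theta)z_0)y}\bigr|\,d\theta \leq (2y)\wedge (Cy^2)$ followed by dominated convergence --- is essentially identical to your final step. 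You instead prove everything from scratch: dominated convergence gives continuity of both $h$ and $h'$, and Morera plus Fubini gives their holomorphy, which buys a self-contained argument (no appeal to the Bernstein-function literature) at the cost of some extra routine verification. Note also that, exactly as in the paper, your difference-quotient step alone already implies continuity of $h$ on $\mathbb C_+$ and holomorphy on $\mathbb C_+^0$, so the Morera argument for $h$ is, strictly speaking, redundant.
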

\begin{proof}
  It follows from Lemma \ref{lem: estimate of exponential remaining} that $h$ and $h'$ are well defined on $\mathbb C_+$.
  According to \cite[Theorems 3.2. \& Proposition 3.6]{SchillingSongVondravcek2010Bernstein}, $h'$ is continuous on $\mathbb C_+$ and holomorphic on $\mathbb C_+^0$.

  It follows from Lemma \ref{lem: estimate of exponential remaining} that, for each $z_0 \in \mathbb C_+$,  there exists $C>0$ such that for $z \in \mathbb C_+$ close enough to $z_0$ and any $y>0$,
  \begin{align}
    & \Big| \frac{e^{-zy} - e^{-z_0 y}+(z-z_0) y}{z-z_0} \Big|
      = \frac{1}{|z-z_0|}\Big| \int_0^1 (-y e^{-(\theta z+(1-\theta)z_0)y}+y)(z-z_0)d\theta\Big| \\
    & \leq y\int_0^1 |1-e^{-(\theta z +(1-\theta)z_0)y}| d\theta
      \leq (2y) \wedge\Big( y^2\int_0^1|\theta z+(1-\theta)z_0|d\theta\Big)
      \leq C(y\wedge y^2).
  \end{align}
  Using this and the dominated convergence theorem, we have
  \begin{align}
    & \frac{h(z)-h(z_0)}{z-z_0} = \int_{(0,\infty)} \frac{e^{-zy}+zy -(e^{-z_0 y}+z_0 y)}{z-z_0}  \pi(dy) \\
    & \xrightarrow[\mathbb C_+\ni z\to z_0]{} \int_{(0,\infty)}(1 - e^{-z_0 y} )y\pi(dy) = h'(z_0),
  \end{align}
  which says that $h$ is continuous on $\mathbb C_+$ and holomorphic on $\mathbb C_+^0$.
\end{proof}

For each $z\in \mathbb C\setminus (-\infty,0]$, we define $ \log z := \log |z| + i \arg z$ where $\arg z \in (-\pi,\pi)$ is uniquely determined by $ z = |z|e^{i \arg z}$. 	
For all $z\in \mathbb C\setminus (-\infty,0]$ and $\gamma \in \mathbb C$, we define $ z^\gamma := e^{\gamma \log z}. $
Then it is known, see \cite[Theorem 6.1]{SteinShakarchi2003Complex} for example, that $z\mapsto \log z$ is holomorphic in $\mathbb C\setminus (-\infty,0]$.
Therefore, for each $\gamma \in \mathbb C$, $z\mapsto z^\gamma$ is holomorphic in $\mathbb C\setminus (-\infty,0]$. (We use the convention that  $0^\gamma := \mathbf 1_{\gamma = 0}$.)
Using the definition above we can easily show that $(z_1z_0)^\gamma = z_1^\gamma z_0^\gamma$ provided $\arg (z_1z_0)=\arg (z_1) + \arg(z_0)$.

It is known, see, for instance, \cite[Theorem 6.1.3]{SteinShakarchi2003Complex} and the remark following it, that the Gamma function $\Gamma$ has an unique analytic extension in $\mathbb C\setminus\{0, -1,-2,\dots\}$ and that
\[
	\Gamma(z+1)
  = z \Gamma(z),\quad z\in \mathbb C\setminus\{0, -1,-2,\dots\}.
\]
Using this recursively, one gets that
\begin{align}
  \label{eq: definition of Gamma function}
  \Gamma(x)
  := \int_0^\infty t^{x-1} \Big(e^{-t} - \sum_{k=0}^{n-1} \frac{(-t)^k}{k!}\Big) dt,
  \quad -n< x< -n+1, n\in \mathbb N.
\end{align}

Fix a $\beta \in (0,1)$.
Using the uniqueness of holomorphic extension and Lemma \ref{lem: extension lemma for branching mechanism}, we get that
\begin{align}
  z^{\beta}
	= \int_0^\infty (e^{-zy}-1) \frac{dy}{\Gamma(-\beta)y^{1+\beta}},
  \quad z\in \mathbb C_+,
\end{align}
and similarly that
\begin{align}
  \label{eq: stable branching on C+}
  z^{1+\beta}
  = \int_0^\infty (e^{-zy}-1+zy)\frac{dy}{\Gamma(-1-\beta)y^{2+\beta}},
  \quad z\in \mathbb C_+.
\end{align}
Lemma \ref{lem: extension lemma for branching mechanism} also says that the derivative of $z^{1+\beta}$ is $(1+\beta)z^{\beta}$ on $\mathbb C^0_+$.
\begin{lem}
  \label{lem: Lip of power function}
  For all $z_0,z_1 \in \mathbb C_+$, we have
\begin{align}
  \label{eq: Lip of power function}
  |z_0^{1+\beta} - z_1^{1+\beta}|
  \leq (1+\beta)(|z_0|^{\beta}+|z_1|^{\beta})|z_0 - z_1|.
\end{align}
\end{lem}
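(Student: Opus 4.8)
The plan is to reduce \eqref{eq: Lip of power function} to the fundamental theorem of calculus along the straight segment joining $z_1$ to $z_0$, combined with the elementary subadditivity of $x\mapsto x^{\beta}$ on $[0,\infty)$.

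\emph{Step 1: the open half-plane.} Suppose first that $z_0,z_1\in\mathbb C_+^0$. Since $\mathbb C_+^0$ is convex, the path $\gamma(t):=tz_0+(1-t)z_1$, $t\in[0,1]$, stays in $\mathbb C_+^0$, where $z\mapsto z^{1+\beta}$ is holomorphic with derivative $(1+\beta)z^{\beta}$ (as recorded right after Lemma \ref{lem: extension lemma for branching mechanism}). Hence $z_0^{1+\beta}-z_1^{1+\beta}=\int_0^1(1+\beta)\gamma(t)^{\beta}(z_0-z_1)\,dt$. Using $|\gamma(t)^{\beta}|=|\gamma(t)|^{\beta}$ (immediate from $\gamma(t)^{\beta}=e^{\beta\log\gamma(t)}$ and the definition of $\log$), the triangle inequality $|\gamma(t)|\le t|z_0|+(1-t)|z_1|$, and the subadditivity $(a+b)^{\beta}\le a^{\beta}+b^{\beta}$ for $a,b\ge 0$, one gets $|\gamma(t)|^{\beta}\le t^{\beta}|z_0|^{\beta}+(1-t)^{\beta}|z_1|^{\beta}$. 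Integrating and using $\int_0^1 t^{\beta}\,dt=\int_0^1(1-t)^{\beta}\,dt=\tfrac1{1+\beta}$ yields even the slightly stronger bound $|z_0^{1+\beta}-z_1^{1+\beta}|\le(|z_0|^{\beta}+|z_1|^{\beta})|z_0-z_1|$, which a fortiori gives \eqref{eq: Lip of power function} on $\mathbb C_+^0$.

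\emph{Step 2: passing to the closed half-plane.} For general $z_0,z_1\in\mathbb C_+$ the segment may touch the imaginary axis or the origin, where the derivative formula for $z^{1+\beta}$ is no longer available; this is the only delicate point. I would handle it by approximation: apply Step 1 to $z_j^{\varepsilon}:=z_j+\varepsilon\in\mathbb C_+^0$, $\varepsilon>0$, to get $|(z_0^{\varepsilon})^{1+\beta}-(z_1^{\varepsilon})^{1+\beta}|\le(1+\beta)(|z_0^{\varepsilon}|^{\beta}+|z_1^{\varepsilon}|^{\beta})|z_0^{\varepsilon}-z_1^{\varepsilon}|$, and then let $\varepsilon\downarrow0$. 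Here $|z_0^{\varepsilon}-z_1^{\varepsilon}|=|z_0-z_1|$, $|z_j^{\varepsilon}|\to|z_j|$, and $(z_j^{\varepsilon})^{1+\beta}\to z_j^{1+\beta}$ by the continuity of $z\mapsto z^{1+\beta}$ on $\mathbb C_+$, which follows from the integral representation \eqref{eq: stable branching on C+} together with Lemma \ref{lem: extension lemma for branching mechanism}. This establishes the inequality for all $z_0,z_1\in\mathbb C_+$.

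The only auxiliary fact is the subadditivity $(a+b)^{\beta}\le a^{\beta}+b^{\beta}$ for $\beta\in(0,1)$, which I would derive from $t^{\beta}\ge t$ on $[0,1]$ applied to $t=a/(a+b)$ and $t=b/(a+b)$. The main (mild) obstacle is Step 2: ensuring that the boundary degeneracy of $z^{\beta}$ on the imaginary axis does not break the chain-rule/FTC computation. The shift-and-limit device above circumvents this cleanly, so no genuine difficulty remains.
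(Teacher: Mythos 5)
Your proof is correct and takes essentially the same route as the paper: the fundamental theorem of calculus along the straight segment inside $\mathbb C^0_+$, using the derivative $(1+\beta)z^{\beta}$ and $|z^{\beta}|=|z|^{\beta}$, followed by continuity of $z\mapsto z^{1+\beta}$ on $\mathbb C_+$ to pass to the closed half-plane (your $\varepsilon$-shift is just an explicit form of that reduction). The only difference is cosmetic: you integrate $t^{\beta}$ exactly and obtain the slightly sharper bound without the factor $(1+\beta)$, whereas the paper bounds the integrand by its supremum.
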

\begin{proof}
  Since $z^{1+\beta}$ is continuous on $\mathbb C_+$, we only need to prove the lemma assuming $z_0,z_1 \in \mathbb C^0_+$.
  Notice that
  \begin{align}
    \label{eq: upper bound for beta power of z}
    |z^\beta|
    = |e^{\beta \log |z| +i\beta \operatorname {arg}z}| = e^{\beta \log |z|} = |z|^\beta,
    \quad z \in \mathbb C\setminus (-\infty, 0].
  \end{align}
  Define a path $\gamma: [0,1] \to \mathbb C^0_+$ such that
  \[
    \gamma(\theta)
    = z_0 (1-\theta) + \theta z_1,
    \quad \theta \in [0,1].
  \]
  Then, we have
  \begin{align}
    |z_0^{1+\beta} - z_1^{1+\beta}|
    & \leq (1+\beta) \int_0^1 |\gamma(\theta)^{\beta}|\cdot |\gamma'(\theta)|d\theta
      \leq (1+\beta)  \sup_{\theta \in [0,1]} |\gamma(\theta)|^{\beta} \cdot |z_1-z_0| \\
    & \leq (1+\beta)  ( |z_1|^{\beta}+|z_0|^{\beta} ) |z_1-z_0|.
      \qedhere
  \end{align}
\end{proof}

Suppose that $\varphi(\theta)$ is a continuous function from $\mathbb R$ into $\mathbb C$ such that $\varphi(0) = 1$ and $\varphi(\theta) \neq 0$ for all $\theta \in \mathbb R$.
Then according to \cite[Lemma 7.6]{Sato2013Levy}, there is a unique continuous function $f(\theta)$ from $\mathbb R$ into $\mathbb C$ such that $f(0) = 0$ and $e^{f(\theta)} = \varphi(\theta)$.
Such a function $f$ is called the distinguished logarithm of the function $\varphi$ and is denoted as $\operatorname{Log} \varphi(\theta)$.
In particular, when $\varphi$ is the characteristic function of an infinitely divisible random variable $Y$,  $\operatorname{Log} \varphi(\theta)$ is called the L\'evy exponent of $Y$.
This distinguished logarithm should not be confused with the $\log$ function defined on $\mathbb C\setminus (-\infty, 0]$.
See the paragraph immediately after \cite[Lemma 7.6]{Sato2013Levy}.

\subsection{Feynman-Kac formula with complex valued functions}
\label{seq: complex Feynman-Kac transform}
In this subsection we give a version of the Feynman-Kac formula with complex valued functions.
Suppose that $\{(\xi_t)_{t \in [r,\infty)}; (\Pi_{r,x})_{r\in [0,\infty), x\in E}\}$ is a (possibly non-homogeneous) Hunt process in a locally compact separable metric space $E$.
We write
\begin{align}
  H^{(h)}_{(s,t)}
  := \exp\Big\{\int_s^t h(u,\xi_u) du\Big\},
  \quad 0 \leq s \leq t, h \in \mathcal B_b([0,t] \times E,\mathbb C).
\end{align}

\begin{lem}
  \label{eq: complex FK}
  Let $t \geq 0$. Suppose that $\rho_1, \rho_2\in \mathcal B_b([0,t] \times E, \mathbb C)$ and $f\in \mathcal B_b(E, \mathbb C)$.
  Then
  \begin{align}
    \label{eq: expresion of g}
    g(r,x)
    := \Pi_{r,x}[ H_{(r,t)}^{(\rho_1+\rho_2)} f(\xi_t)],\quad r \in [0,t], x\in E,
  \end{align}
  is the unique locally bounded solution to the equation
  \[
    g(r,x)
    = \Pi_{r,x} [ H_{(r,t)}^{(\rho_1)} f(\xi_t)]+\Pi_{r,x} \Big[ \int_r^tH_{(r,s)}^{(\rho_1)}\rho_2(s,\xi_s) g(s,\xi_s)~ds \Big],\quad r \in [0,t], x\in E.
  \]
\end{lem}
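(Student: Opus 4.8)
The plan is to derive the integral equation from a pathwise Duhamel identity for the multiplicative functionals $H$, to read off existence and local boundedness directly from the probabilistic formula defining $g$, and to settle uniqueness with a Gr\"onwall argument. The first and main step is the pathwise identity: for every $0\le r\le t$,
\begin{align}
  H^{(\rho_1+\rho_2)}_{(r,t)}
  = H^{(\rho_1)}_{(r,t)}
  + \int_r^t H^{(\rho_1)}_{(r,s)}\,\rho_2(s,\xi_s)\,H^{(\rho_1+\rho_2)}_{(s,t)}\,ds
\end{align}
holds $\Pi_{r,x}$-a.s.\ for every $(r,x)$. Indeed, with the path of $\xi$ frozen, $u\mapsto\rho_i(u,\xi_u)$ is bounded and Borel, so $s\mapsto\phi(s):=H^{(\rho_1)}_{(r,s)}H^{(\rho_1+\rho_2)}_{(s,t)}$ is absolutely continuous on $[r,t]$ with $\phi'(s)=-\rho_2(s,\xi_s)H^{(\rho_1)}_{(r,s)}H^{(\rho_1+\rho_2)}_{(s,t)}$ for a.e.\ $s$; since $\phi(r)=H^{(\rho_1+\rho_2)}_{(r,t)}$ and $\phi(t)=H^{(\rho_1)}_{(r,t)}$, integrating $\phi'$ over $[r,t]$ gives the claim.

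For existence, I would multiply this identity by the bounded function $f(\xi_t)$ and take $\Pi_{r,x}$-expectations; since every integrand is bounded by $\|f\|_\infty e^{2t(\|\rho_1\|_\infty+\|\rho_2\|_\infty)}$, Fubini's theorem applies and yields
\begin{align}
  g(r,x)
  = \Pi_{r,x}\big[H^{(\rho_1)}_{(r,t)}f(\xi_t)\big]
  + \int_r^t \Pi_{r,x}\big[H^{(\rho_1)}_{(r,s)}\,\rho_2(s,\xi_s)\,H^{(\rho_1+\rho_2)}_{(s,t)}f(\xi_t)\big]\,ds.
\end{align}
Inside the $ds$-integral I would condition on the natural filtration $\mathscr F_s$ of $\xi$: the factor $H^{(\rho_1)}_{(r,s)}\rho_2(s,\xi_s)$ is $\mathscr F_s$-measurable, while the Markov property gives $\Pi_{r,x}[H^{(\rho_1+\rho_2)}_{(s,t)}f(\xi_t)\mid\mathscr F_s]=g(s,\xi_s)$ because $H^{(\rho_1+\rho_2)}_{(s,t)}f(\xi_t)$ is a bounded functional of $(\xi_u)_{u\ge s}$. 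Pulling the $ds$-integral back inside the expectation (Fubini again) recovers exactly the asserted equation. The estimate $|g|\le\|f\|_\infty e^{t(\|\rho_1\|_\infty+\|\rho_2\|_\infty)}$ shows $g$ is locally bounded, and $(r,x)\mapsto g(r,x)$ is Borel by the standard measurable-dependence properties of Hunt processes, so $g$ is a bona fide locally bounded solution.

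For uniqueness, if $g_1,g_2$ are locally bounded solutions, then $h:=g_1-g_2$ is bounded on $[0,t]\times E$ and, subtracting the two equations, $h(r,x)=\Pi_{r,x}[\int_r^t H^{(\rho_1)}_{(r,s)}\rho_2(s,\xi_s)h(s,\xi_s)\,ds]$. Writing $\varphi(s):=\sup_{x\in E}|h(s,x)|$ and using $|H^{(\rho_1)}_{(r,s)}|\le e^{t\|\rho_1\|_\infty}$ and $|\rho_2|\le\|\rho_2\|_\infty$, one obtains $\varphi(r)\le C\int_r^t\varphi(s)\,ds$ on $[0,t]$ with $C:=\|\rho_2\|_\infty e^{t\|\rho_1\|_\infty}$; iterating this inequality $n$ times gives $\varphi(r)\le(\sup_{[0,t]}\varphi)\,C^n(t-r)^n/n!\to 0$, hence $h\equiv0$. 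I expect the only substantive point to be the Duhamel identity, which is just the fundamental theorem of calculus once the sample path is frozen; the rest --- justifying Fubini and the conditional-expectation/Markov step for a possibly time-inhomogeneous Hunt process with complex (but bounded) integrands, and the joint measurability of $g$ --- is routine bookkeeping given the boundedness of $\rho_1,\rho_2$ and $f$.
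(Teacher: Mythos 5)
Your proposal is correct and uses essentially the same ingredients as the paper's proof: the pathwise differentiation of the exponential functional in the time variable (your Duhamel identity is just the integrated form of the paper's identity $\partial_s H^{(\rho_2)}_{(s,t)}=-H^{(\rho_2)}_{(s,t)}\rho_2(s,\xi_s)$, applied to $H^{(\rho_1)}_{(r,s)}H^{(\rho_1+\rho_2)}_{(s,t)}$), the Markov property combined with Fubini to identify the integral term with $g(s,\xi_s)$, and a Gronwall-type iteration for uniqueness. The only cosmetic difference is direction: you prove the pathwise identity first and then take expectations and condition, while the paper substitutes the candidate $g$ into the right-hand side and simplifies back to $g(r,x)-\Pi_{r,x}[H^{(\rho_1)}_{(r,t)}f(\xi_t)]$, so the two arguments are the same computation read in opposite orders.
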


\begin{proof}
  The proof is similar to that of \cite[Lemma A.1.5]{Dynkin1993Superprocesses}. We include it here for the sake of completeness.
  We first verify that \eqref{eq: expresion of g} is a solution.
  Notice that
  \begin{align}
    \Pi_{r,x} \Big[ \int_r^t | H_{(r,t)}^{(\rho_1)}\rho_2(s,\xi_s) H_{(s,t)}^{(\rho_2)} f(\xi_t)| ~ds \Big]
    \leq  \int_r^t e^{(t-r)\|\rho_1\|_\infty}e^{(t-s)\|\rho_2\|_\infty}\|\rho_2\|_\infty\|f\|_\infty ~ds
    < \infty.
  \end{align}
  Also notice that
  \begin{align}
    \label{eq: crucial for Feynman-Kac}
    \frac{\partial}{\partial s} H^{(\rho_2)}_{(s,t)}= -H^{(\rho_2)}_{(s,t)}\rho_2(s,\xi_s),
    \quad s\in (0,t).
  \end{align}
  Therefore, from the Markov property of $\xi$ and Fubini's theorem we get that
  \begin{align}
    & \Pi_{r,x} \Big[ \int_r^tH_{(r,s)}^{(\rho_1)}~(\rho_2 g)(s,\xi_s)~ds \Big]
      =\Pi_{r,x} \Big[ \int_r^t H_{(r,s)}^{(\rho_1)}\rho_2(s,\xi_s) \Pi_{s,\xi_s}[ H_{(s,t)}^{(\rho_1+\rho_2)} f(\xi_t)]~ds \Big] \\
    & = \Pi_{r,x} \Big[ \int_r^t H_{(r,t)}^{(\rho_1)}\rho_2(s,\xi_s) H_{(s,t)}^{(\rho_2)} f(\xi_t) ~ds \Big]
      = \Pi_{r,x} [ H_{(r,t)}^{(\rho_1)}f(\xi_t)(H_{(r,t)}^{(\rho_2)} - 1)] \\
    & = g(r,x) - \Pi_{r,x} [ H_{(r,t)}^{(\rho_2)} f(\xi_t)].
  \end{align}
  For uniqueness, suppose  $\widetilde g$ is another solution. Put $h(r) = \sup_{x\in E}|g(r,x) - \widetilde g(r,x)|$.
  Then
  \[
    h(r)
    \leq e^{t\|\rho_1\|_\infty}\|\rho_2\|_\infty \int_r^t h(s)ds,
    \quad r\le t.
  \]
  Applying Gronwall's inequality, we get that $h(r) =  0$ for $r\in [0,t]$.
\end{proof}

\subsection{Superprocesses}
\label{sec: definition of superprocess}
In this subsection, we will give the definition of a general superprocess.
Let $E$ be locally compact separable metric space. Denote by $\mathcal M(E)$ the collection of all the finite measures on $E$ equipped with the topology of weak convergence.
For each function $F(x,z)$ on $E\times \mathbb R_+$ and each $\mathbb R_+$-valued function $f$ on $E$,
we use the  convention:
$
  F(x,f)
  := F(x,f(x)),
  x\in E.
$
A process $X=\{(X_t)_{t\geq 0}; (\mathbf P_\mu)_{\mu \in \mathcal M(E)}\}$ is said to be a $(\xi,\psi)$-superprocess if
\begin{itemize}
\item
  the spatial motion $\xi=\{(\xi_t)_{t\geq 0};(\Pi_x)_{x\in E}\}$ is an $E$-valued Hunt process with its lifetime denoted by $\zeta$;
\item
  the branching mechanism $\psi: E\times[0,\infty) \to \mathbb R$ is given by
\begin{align}
  \label{eq: branching mechanism}
  \psi(x,z)=
  -\rho_1(x) z + \rho_2 (x) z^2 + \int_{(0,\infty)} (e^{-zy} - 1 + zy) \pi(x,dy).
\end{align}
where $\rho_1 \in \mathcal B_b(E)$, $\rho_2 \in \mathcal B_b(E, \mathbb R_+)$ and $\pi(x,dy)$ is a kernel from $E$ to $(0,\infty)$ such that $\sup_{x\in E} \int_{(0,\infty)} (y\wedge y^2) \pi(x,dy) < \infty$;
\item
  $X=\{(X_t)_{t\geq 0}; (\mathbf P_\mu)_{\mu \in \mathcal M(E)}\}$ is an $\mathcal M(E)$-valued Hunt process with transition probability determined by
  \begin{align}
    \mathbf P_\mu [e^{-X_t(f)}] = e^{-\mu(V_tf)},
    \quad t\geq 0, \mu \in \mathcal M(E), f\in \mathcal B^+_b(E),
  \end{align}
  where for each $f\in \mathcal B_b(E)$, the function $(t,x)\mapsto V_tf(x)$ on $[0,\infty) \times E$ is the unique locally bounded non-negative solution to the equation
  \begin{align}
    \label{eq:FKPP_in_definition}
    V_tf(x) + \Pi_x \Big[  \int_0^{t\wedge \zeta} \psi(\xi_s,V_{t-s}f)ds \Big]
    = \Pi_x [ f(\xi_t)\mathbf 1_{t<\zeta} ],
    \quad t \geq 0, x \in E.
  \end{align}
\end{itemize}
We refer our readers to \cite{Li2011Measure-valued} for more discussions about the definition and the existence of superprocesses.
To avoid triviality, we assume that $\psi(x,z)$ is not identically equal to $-\rho_1(x)z$.

Notice that the branching mechanism $\psi$ can be extended into a map from $E \times \mathbb C_+$ to $\mathbb C$ using \eqref{eq: branching mechanism}.
Define
\begin{align}
  \psi'(x,z)
  := - \rho_1(x) + 2\rho_2(x) z + \int_{(0,\infty)} (1-e^{-zy})y\pi(x,dy),
  \quad x\in E, z\in \mathbb C_+.
\end{align}
Then according to Lemma \ref{lem: extension lemma for branching mechanism}, for each $x \in E$, $z \mapsto \psi(x,z)$ is a holomorphic function on $\mathbb C_+^0$ with derivative $z \mapsto \psi'(x,z)$.
Define $\psi_0(x,z) := \psi(x,z)+ \rho_1(x)z $ and $\psi'_0(x,z) := \psi'(x,z) + \rho_1(x)$.

Denote by $\mathbb W$ the space of $\mathcal M(E)$-valued c\`{a}dl\`{a}g paths with its canonical path denoted by $(W_t)_{t\geq 0}$.
We say $X$ is \emph{non-persistent} if $\mathbf P_{\delta_x}(\|X_t\|= 0) > 0$ for all $x\in E$ and $t> 0$.
Suppose that $(X_t)_{t\geq 0}$ is non-persistent, then according to \cite[Section 8.4]{Li2011Measure-valued}, there is a unique family of measures $(\mathbb N_x)_{x\in E}$ on $\mathbb W$ such that
(i) $\mathbb N_x (\forall t > 0, \|W_t\|=0) =0$;
(ii) $\mathbb N_x(\|W_0 \|\neq 0) = 0$;
and (iii) if $\mathcal N$ is a Poisson random measure defined on some probability space with intensity $\mathbb N_\mu(\cdot):= \int_E \mathbb N_x(\cdot )\mu(dx)$, then the superprocess $\{X;\mathbf P_\mu\}$ can be realized by $\widetilde X_0 := \mu$ and $\widetilde X_t(\cdot) := \mathcal N[W_t(\cdot)]$ for each $t>0$.
We refer to $(\mathbb N_x)_{x\in E}$ as the \emph{Kuznetsov measures} of $X$.

\subsection{Semigroups for superprocesses}
\label{sec: definition of vf}
Let $X$ be a non-persistent superprocess with its Kuznetsov measure denoted by $(\mathbb N_x)_{x\in E}$.
We define the mean semigroup
\begin{align}
  P_t^{\rho_1} f(x)
  := \Pi_{x}[e^{\int_0^t \rho_1(\xi_s)ds}f(\xi_t) \mathbf 1_{t< \zeta}],
  \quad t\geq 0, x\in E, f\in \mathcal B_b(E,\mathbb R_+).
\end{align}
It is known from \cite[Proposition 2.27]{Li2011Measure-valued} and \cite[Theorem 2.7]{Kyprianou2014Fluctuations} that for all $t > 0$, $\mu \in \mathcal M(E)$ and $f\in \mathcal B_b(E,\mathbb R_+)$,
\begin{align}
  \label{eq: mean formula for superprocesses}
  \mathbb N_{\mu}[W_t(f)]
  =\mathbf P_{\mu}[X_t(f)]
  =\mu(P^{\rho_1}_t f).
\end{align}

Define
\begin{align}
  L_1(\xi)
  &:= \{f\in \mathcal B(E): \forall x\in E, t\geq 0, \quad \Pi_x[|f(\xi_t)|]< \infty\}, \\
  L_2(\xi)
  &:= \{f \in \mathcal B(E): |f|^2 \in L_1(\xi)\}.
\end{align}
Using monotonicity and linearity, we get from \eqref{eq: mean formula for superprocesses}  that
\begin{align}
  \mathbb N_x[W_t(f)]
  = \mathbf P_{\delta_x}[X_t(f)]
  = P^{\rho_1}_t f(x) \in \mathbb R,
  \quad f\in L_1(\xi), t > 0,x\in E.
\end{align}
This says that the random variable $\langle X_t, f\rangle$ is well defined under probability $\mathbf P_{\delta_x}$ provided $f\in L_1(\xi)$.
By the branching property of the superprocess, $X_t(f)$ is an infinitely divisible random variable.
Therefore, we can write
\[
  U_t(\theta f)(x)
  := \operatorname{Log} \mathbf P_{\delta_x}[e^{i \theta X_t( f)}],
  \quad t\geq 0, f\in L_1(\xi), \theta \in \mathbb R, x\in E,
\]
as its characteristic exponent.
According to Campbell's formula, see \cite[Theorem 2.7]{Kyprianou2014Fluctuations} for example, we have
\[
  \mathbf P_{\delta_x} [e^{i\theta X_t(f)}]
  = \exp(\mathbb N_x[ e^{i\theta W_t(f)} - 1]),
  \quad t>0, f\in L_1(\xi), \theta \in \mathbb R, x\in E.
\]
Noticing that $\theta \mapsto \mathbb N_x[e^{i\theta W_t(f)} - 1]$ is a continuous function on $\mathbb R$ and that $\mathbb N_x[e^{i\theta W_t(f)} - 1] = 0$ if $\theta = 0$, according to \cite[Lemma 7.6]{Sato2013Levy}, we have
\begin{align}
  \label{eq: N and characteristic exponent}
  U_t(\theta f)(x)
  = \mathbb N_x[e^{i W_t(\theta f)} - 1],
  \quad t>0, f\in L_1(\xi), \theta \in \mathbb R, x\in E.
\end{align}

\begin{lem}
  There exists a constant $C\geq 0$ such that
  for all $f \in L_1(\xi),x\in E$ and $t\geq 0$, we have
  \begin{align}
    \label{eq: upper bound of psi(v)}
    |\psi (x,-U_tf)|
    \leq C P^{\rho_1}_t |f|(x) + C (P^{\rho_1}_t |f| (x))^2.
  \end{align}
\end{lem}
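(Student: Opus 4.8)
The plan is to first bound $|{-U_tf(x)}|$ by $P_t^{\rho_1}|f|(x)$, then bound the (holomorphically extended) branching mechanism by $|\psi(x,z)|\le C(|z|+|z|^2)$ uniformly for $x\in E$ and $z\in\mathbb C_+$; combining the two yields \eqref{eq: upper bound of psi(v)}.

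For the first step I would use the Kuznetsov-measure representation \eqref{eq: N and characteristic exponent}, namely $U_tf(x)=\mathbb N_x[e^{iW_t(f)}-1]$ for $t>0$. Since $f$ is real-valued, $W_t(f)$ is a real number under $\mathbb N_x$, so $|e^{iW_t(f)}-1|\le|W_t(f)|\le W_t(|f|)$; and because $f\in L_1(\xi)$, formula \eqref{eq: mean formula for superprocesses} gives $\mathbb N_x[W_t(|f|)]=P_t^{\rho_1}|f|(x)<\infty$. Hence the integral defining $U_tf(x)$ is absolutely convergent and
\[
  |{-U_tf(x)}|=|U_tf(x)|\le\mathbb N_x\big[|e^{iW_t(f)}-1|\big]\le\mathbb N_x[W_t(|f|)]=P_t^{\rho_1}|f|(x).
\]
The case $t=0$ is immediate: $X_0=\delta_x$ gives $U_0f(x)=if(x)$ and $P_0^{\rho_1}|f|(x)=|f(x)|$. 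In particular $-U_tf(x)\in\mathbb C_+$ (consistent with \eqref{eq: -v has positive real part}), so $\psi(x,-U_tf(x))$ is well defined through the holomorphic extension of $\psi(x,\cdot)$ to $\mathbb C_+$.

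For the second step I would estimate each term in $\psi(x,z)=-\rho_1(x)z+\rho_2(x)z^2+\int_{(0,\infty)}(e^{-zy}-1+zy)\,\pi(x,dy)$ for $z\in\mathbb C_+$. The first two are bounded by $\|\rho_1\|_\infty|z|$ and $\|\rho_2\|_\infty|z|^2$. For the integral, Lemma~\ref{lem: estimate of exponential remaining} with $n=1$ gives $|e^{-zy}-1+zy|\le\frac{|z|^2y^2}{2}\wedge 2|z|y$ for every $y>0$, and a one-line case analysis ($y\le1$ versus $y>1$) shows $\frac{|z|^2y^2}{2}\wedge 2|z|y\le 2(|z|+|z|^2)(y\wedge y^2)$. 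Therefore, using $\sup_{x\in E}\int_{(0,\infty)}(y\wedge y^2)\,\pi(x,dy)<\infty$,
\[
  \Big|\int_{(0,\infty)}(e^{-zy}-1+zy)\,\pi(x,dy)\Big|\le 2(|z|+|z|^2)\sup_{x\in E}\int_{(0,\infty)}(y\wedge y^2)\,\pi(x,dy),
\]
so there is a constant $C\ge0$ depending only on $\|\rho_1\|_\infty$, $\|\rho_2\|_\infty$ and $\sup_x\int(y\wedge y^2)\pi(x,dy)$ with $|\psi(x,z)|\le C(|z|+|z|^2)$ for all $x\in E$, $z\in\mathbb C_+$.

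Combining, $|\psi(x,-U_tf(x))|\le C(|{-U_tf(x)}|+|{-U_tf(x)}|^2)\le C\big(P_t^{\rho_1}|f|(x)+(P_t^{\rho_1}|f|(x))^2\big)$, which is the desired bound. The only mildly delicate points are the absolute convergence of the Kuznetsov-measure integral in the first step and the uniformity (in $x$) of the constant in the second; both are straightforward given the standing assumptions, so I expect no serious obstacle.
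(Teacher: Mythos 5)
Your proposal is correct, and the bound on the nonlinear term checks out: for $z\in\mathbb C_+$ and $y>0$, Lemma \ref{lem: estimate of exponential remaining} with $n=1$ gives $|e^{-zy}-1+zy|\le \tfrac{|z|^2y^2}{2}\wedge 2|z|y\le 2(|z|+|z|^2)(y\wedge y^2)$, and together with the standing assumption $\sup_{x\in E}\int_{(0,\infty)}(y\wedge y^2)\,\pi(x,dy)<\infty$ and the boundedness of $\rho_1,\rho_2$ this yields $|\psi(x,z)|\le C(|z|+|z|^2)$ uniformly in $x$; composed with $|U_tf(x)|\le P_t^{\rho_1}|f|(x)$ (and the trivial case $t=0$, $U_0f=if$), this gives \eqref{eq: upper bound of psi(v)}. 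Your first step coincides with the paper's (it is exactly \eqref{eq: -v has positive real part} and \eqref{eq: upper bound for vf}), but your second step is genuinely different and more elementary: the paper does not bound $\psi(x,\cdot)$ pointwise on $\mathbb C_+$; instead it differentiates $\theta\mapsto U_t(\theta f)(x)$ under the Kuznetsov integral, writes $\psi(x,-U_tf)=-\int_0^1\psi'(x,-U_t(\theta f))\,\partial_\theta U_t(\theta f)(x)\,d\theta$ as in \eqref{eq: path integration representation of psi(v)}, and bounds $|\psi'(x,-U_t(\theta f))|\le C_1+C_2P_t^{\rho_1}|f|(x)$ via probabilistic representations of $U_tf$ and $e^{yU_tf}$ under $\mathbb N_x$ and $\mathbf P_{y\delta_x}$. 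Your route buys brevity (no differentiation under the integral sign, no identity $e^{yU_tf(x)}=\mathbf P_{y\delta_x}[e^{iX_t(f)}]$); the paper's longer route is chosen because the intermediate facts it establishes along the way, notably \eqref{eq: upper bounded for derivative of v(theta)}, \eqref{eq: path integration representation of psi(v)} and \eqref{eq: upper bound of psi'(v)}, are reused later in the proof of Proposition \ref{prop: complex FKPP-equation}, so if you adopt your shortcut those facts would still have to be proved there.
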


\begin{proof}
  Noticing that
 $
     e^{\operatorname{Re} U_tf(x)}
    = |e^{U_tf(x)}|
    = |\mathbf P_{\delta_x}[e^{i X_t(f)}]|
    \leq 1,
$
  we have
  \begin{align}
    \label{eq: -v has positive real part}
    \operatorname{Re} U_tf(x)
    \leq 0.
  \end{align}
  Therefore, we can speak of $\psi(x,-U_tf)$ since $z\mapsto \psi(x,z)$ is well defined on $\mathbb C_+$.
  According to Lemma \ref{lem: estimate of exponential remaining}, we have that
  \begin{align}
    \label{eq: upper bound for vf}
    |U_tf(x)|
    \leq \mathbb N_x[|e^{i W_t(f)} - 1|]
    \leq \mathbb N_x[|i W_t(f)|]
    \leq (P^{\rho_1}_t |f|)(x).
  \end{align}
  Notice that, for any compact $K \subset \mathbb R$,
  \begin{align}
    \label{eq: estimate of deriavetive of v(theta)}
    \mathbb N_x \Big[\sup_{\theta \in K} \Big|\frac{\partial}{\partial \theta} (e^{i\theta W_t(f)} - 1) \Big|\Big]
    \leq \mathbb N_x[|W_t(f)|] \sup_{\theta \in K}|\theta|
    \leq (P^{\rho_1}_t |f|)(x) \sup_{\theta \in K}|\theta| < \infty.
  \end{align}
  Therefore, according to \cite[Theorem A.5.2]{Durrett2010Probability} and \eqref{eq: N and characteristic exponent}, $ U_t( \theta f)( x )$ is differentiable in $\theta \in \mathbb R$ with
  \[
    \frac{\partial}{\partial \theta} U_t(\theta f)(x)
    = i\mathbb N_x[W_t(f) e^{i\theta W_t(f)}],
    \quad \theta \in \mathbb R.
  \]
  Moreover, from the above, it is clear that
  \begin{align}
    \label{eq: upper bounded for derivative of v(theta)}
    \sup_{\theta \in \mathbb R}\Big| \frac{\partial}{\partial \theta}U_t(\theta f)(x)\Big|
    \leq ( P^{\rho_1}_t |f|)(x).
  \end{align}
  It follows from the dominated convergence theorem that $(\partial/\partial \theta)U_t(\theta f)(x)$ is continuous in $\theta$.
  In other words, $\theta \mapsto -U_t(\theta f)(x)$ is a $C^1$ map from $\mathbb R$ to $\mathbb C_+$.
  Thus,
  \begin{align}
    \label{eq: path integration representation of psi(v)}
    \psi(x,-U_tf)
    = -\int_0^1 \psi' (x,-U_t(\theta f) ) \frac{\partial}{\partial \theta} U_t(\theta f)(x)~d\theta.
  \end{align}
  Notice that
  \begin{align}
    & |\psi'(x, -U_tf)| \\
    & = \Big| -\rho_1(x)- 2\rho_2(x) U_tf(x)+ \int_{(0,\infty)} y (1- e^{y U_tf(x)} ) \pi(x,dy)\Big| \\
    & = \Big| - \rho_1(x)- 2\rho_2(x)\mathbb N_x[e^{i W_t(f)} - 1]  + \int_{(0,\infty)} y \mathbf P_{y \delta_x}[1-e^{i X_t(f)}] \pi(x,dy) \Big| \\
    & \leq \|\rho_1\|_\infty + 2\rho_2(x)\mathbb N_x[W_t(|f|)]+ \int_{(0,\infty)} y\mathbf P_{y\delta_x}[2\wedge X_t(|f|)] \pi(x,dy) \\
    & \leq \|\rho_1\|_\infty + 2\|\rho_2\|_\infty P^{\rho_1}_t |f|(x) + \Big(\sup_{x\in E}\int_{(0,1]}y^2 \pi(x,dy)\Big)~P^{\rho_1}_t |f|(x) + 2\sup_{x\in E}\int_{(1,\infty)} y \pi(x,dy) \\
    & =: C_1 + C_2(P^{\rho_1}_t |f|)(x), \label{eq: upper bound of psi'(v)}
  \end{align}
where $C_1, C_2$ are constants independent of $f,x$ and $t$.
  Now, combining the display above with \eqref{eq: path integration representation of psi(v)} and \eqref{eq: upper bounded for derivative of v(theta)} we get the desired result.
\end{proof}

This lemma also says that if $f\in L^2(\xi)$, then
$
  \Pi_x\Big[\int_0^t \psi(\xi_s,- U_{t-s}f)ds\Big]
  \in \mathbb C,
 x\in E, t\geq 0,$
is well defined.
In fact, using Jensen's inequality and the Markov property, we have
\begin{align}
  \label{eq: domination of psi(v)}
  & \Pi_x\Big[\int_0^t |\psi (\xi_s,-U_{t-s}f )|ds\Big]
  \leq \Pi_x\Big[\int_0^t (C_1 P_{t-s}^{\rho_1}|f|(\xi_s)+C_2 P_{t-s}^{\rho_1}|f|(\xi_s)^2 )ds\Big] \\
  & \leq \int_0^t (C_1 e^{t\|\rho_1\|}\Pi_x [ \Pi_{\xi_s}[|f(\xi_{t-s})|] ]+C_2 e^{2t\|\rho_1\|}\Pi_x [ \Pi_{\xi_s}[|f (\xi_{t-s})|]^2 ] )~ds \\
  & \leq \int_0^t (C_1 e^{t\|\rho_1\|}\Pi_x [ |f(\xi_{t})|]+C_2e^{2t\|\rho_1\|}\Pi_x [ |f (\xi_{t})|^2 ])~ds < \infty.
\end{align}

\subsection{A complex-valued non-linear integral equation}
Let $X$ be a non-persistent superprocess.
In this subsection, we will prove the following:

\begin{prop}
  \label{prop: complex FKPP-equation}
  If $f\in L_2(\xi)$,  then for all $t\geq 0$ and $x\in E$,
\begin{align}
  \label{eq: complex FKPP-equation}
  U_tf(x) - \Pi_x \Big[\int_0^t \psi (\xi_s, - U_{t-s}f ) ds \Big]
  = i \Pi_x [f(\xi_t)].
\end{align}
\begin{align}
  \label{eq: complex FKPP-equation with FK-transform}
  U_tf(x) -  \int_0^t P_{t-s}^{\rho_1} \psi_0(\cdot,-U_sf) (x)~ds
  = iP_t^{\rho_1} f(x).
\end{align}
\end{prop}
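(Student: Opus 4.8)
The plan is to derive both displays by analytic continuation from the non-negative mild equation \eqref{eq:FKPP_in_definition} defining the superprocess, and then to remove boundedness by truncation. Since $W_t(f)$ takes both signs when $f$ is signed, one cannot continue in a single complex scaling parameter; instead, for $f\in\mathcal B_b(E)$ write $f=f^+-f^-$ and, for $(z_1,z_2)\in\mathbb C_+\times\mathbb C_+$, put
\[
  \Phi_t(z_1,z_2)(x):=\mathbb N_x\bigl[1-e^{-z_1W_t(f^+)-z_2W_t(f^-)}\bigr],\qquad t>0,\ x\in E.
\]
Because $W_t(f^{\pm})\ge 0$ and $\operatorname{Re}z_j\ge 0$ the exponent has modulus $\le 1$, so $\Phi_t$ is well defined, takes values in $\mathbb C_+$ (hence $\psi(\xi_s,\Phi_{t-s})$ is meaningful by Lemma \ref{lem: extension lemma for branching mechanism}), and by Campbell's formula equals, for $z_1,z_2>0$, the cumulant $V_t(z_1f^++z_2f^-)$. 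At $(z_1,z_2)=(-i,i)$ one has $-z_1W_t(f^+)-z_2W_t(f^-)=iW_t(f)$, so $\Phi_t(-i,i)(x)=\mathbb N_x[1-e^{iW_t(f)}]=-U_tf(x)$.

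\textbf{Continuation.} For $z_1,z_2>0$ the function $\Phi_t(z_1,z_2)$ is an honest cumulant, hence by \eqref{eq:FKPP_in_definition} (the lifetime truncation being harmless here)
\[
  \Phi_t(z_1,z_2)(x)+\Pi_x\Bigl[\int_0^t\psi\bigl(\xi_s,\Phi_{t-s}(z_1,z_2)\bigr)\,ds\Bigr]=\Pi_x\bigl[(z_1f^++z_2f^-)(\xi_t)\bigr].
\]
I would show that both sides, as functions of $(z_1,z_2)$, are continuous on $\mathbb C_+\times\mathbb C_+$ and separately holomorphic on $\mathbb C^0_+\times\mathbb C^0_+$. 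For $\Phi_t$ this follows by differentiating under $\mathbb N_x$, exactly as in \eqref{eq: estimate of deriavetive of v(theta)}; the right-hand side is affine. For the nonlinear term one combines (i) holomorphy of $z_j\mapsto\psi(y,\Phi_{t-s}(z_1,z_2)(y))$ --- legitimate because $\Phi$ stays in $\mathbb C_+$ and $\psi(y,\cdot)$ is holomorphic on $\mathbb C^0_+$ and continuous on $\mathbb C_+$, the exceptional set $\{\operatorname{Re}\Phi=0\}$ being removable --- with (ii) interchange of $z_j$-differentiation, the $ds$-integral and $\Pi_x$, justified on compact sets of $(z_1,z_2)$ by $|\Phi_{t-s}(z_1,z_2)(y)|\le(|z_1|\vee|z_2|)P^{\rho_1}_{t-s}|f|(y)$, an elementary bound $|\psi(y,v)|\le A|v|+B|v|^2$ on $\mathbb C_+$ (as in the proof of \eqref{eq: upper bound of psi(v)}), and the finiteness estimate \eqref{eq: domination of psi(v)}. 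Uniqueness of analytic continuation then forces the displayed identity at $(z_1,z_2)=(-i,i)$, which --- using $\Phi_t(-i,i)=-U_tf$ and $(z_1f^++z_2f^-)|_{(-i,i)}=-if$ --- is precisely \eqref{eq: complex FKPP-equation} for $f\in\mathcal B_b(E)$.

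\textbf{Feynman-Kac form and unbounded $f$.} Equation \eqref{eq: complex FKPP-equation with FK-transform} follows from \eqref{eq: complex FKPP-equation} by the Feynman-Kac rearrangement of Lemma \ref{eq: complex FK}: writing $\psi(y,v)=\psi_0(y,v)-\rho_1(y)v$ converts \eqref{eq: complex FKPP-equation} into a linear Volterra equation for $r\mapsto U_rf$ with bounded potential $\rho_1$ and inhomogeneity $-\psi_0(\cdot,-U_\cdot f)$; solving it by variation of constants (the identity \eqref{eq: crucial for Feynman-Kac} being the crux, and $|U_rf|\le P^{\rho_1}_r|f|$ supplying the local boundedness needed for uniqueness) absorbs the $\rho_1$-term into $P^{\rho_1}$ and yields \eqref{eq: complex FKPP-equation with FK-transform}. (For real non-negative $f$ this is just the classical equivalence of the two mild forms of the FKPP equation.) Finally, for $f\in L_2(\xi)$ apply the bounded case to $f_n:=(-n)\vee(f\wedge n)$ and let $n\to\infty$: $U_tf_n\to U_tf$, $P^{\rho_1}_tf_n\to P^{\rho_1}_tf$, $\Pi_x[f_n(\xi_t)]\to\Pi_x[f(\xi_t)]$ and $\psi_0(\xi_s,-U_{t-s}f_n)\to\psi_0(\xi_s,-U_{t-s}f)$ pointwise, all dominated via $|U_uf_n|\le P^{\rho_1}_u|f_n|\le P^{\rho_1}_u|f|$ and \eqref{eq: domination of psi(v)}, so both identities pass to the limit.

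\textbf{Main obstacle.} The delicate point is the continuation step for the nonlinear functional $(z_1,z_2)\mapsto\Pi_x[\int_0^t\psi(\xi_s,\Phi_{t-s}(z_1,z_2))\,ds]$ up to the boundary of $\mathbb C_+\times\mathbb C_+$: one must simultaneously control the composition with $\psi$ where $\Phi$ may hit the imaginary axis (a removability argument) and carry out the dominated-convergence / differentiation-under-the-integral bookkeeping that makes the continuation rigorous. Everything else is either definitional or a routine rearrangement.
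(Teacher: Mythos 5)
Your proposal is correct in outline, but it takes a genuinely different route from the paper. The paper never continues analytically in a scaling parameter: it computes $\frac{\partial}{\partial\theta}U_t(\theta f)(x)$ probabilistically, using the size-biased measures $\mathbb N_x^{W_t(f^{\pm})}$ and their spine representation (Poisson immigration along a Doob $h$-transformed spine, \eqref{eq: Spine decomposition 1}--\eqref{eq: Spine decomposition 2}), which yields $\frac{\partial}{\partial\theta}U_t(\theta f)(x)=\Pi_x\bigl[\,i f(\xi_t)\,e^{-\int_0^t\psi'(\xi_s,-U_{t-s}(\theta f))\,ds}\bigr]$; it then applies the complex Feynman--Kac Lemma \ref{eq: complex FK} to this \emph{linear} equation for the $\theta$-derivative, integrates over $\theta\in[0,1]$ via \eqref{eq: path integration representation of psi(v)}, and finally truncates to reach $L_2(\xi)$ exactly as in your last step. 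Your route instead continues the Laplace-functional equation \eqref{eq:FKPP_in_definition} in the two half-plane parameters $(z_1,z_2)$ attached to $f^+,f^-$ and evaluates at $(-i,i)$. This avoids spine decompositions altogether (only Campbell's formula and \eqref{eq: N and characteristic exponent} are used), at the price of the boundary continuation bookkeeping; conversely the paper's argument trades the complex-analytic care for the probabilistic input of \cite{RenSongSun2017Spine}. Your continuation does go through: separate holomorphy on $\mathbb C^0_+\times\mathbb C^0_+$ plus the identity theorem applied first in $z_1$ along $(0,\infty)$ (which lies in the open half-plane) and then in $z_2$, followed by continuity up to the boundary, reaches $(-i,i)$; and for the composition $\psi(y,\Phi)$ a cleaner version of your removability remark is that a $\mathbb C_+$-valued function holomorphic in $z_j$ is either constant or maps into $\mathbb C^0_+$ by the open mapping theorem, so the composition is holomorphic in either case. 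For bounded $f$ all the dominations you invoke are by constants (locally in $(z_1,z_2)$), so the Morera/dominated-convergence steps are indeed routine.

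Two points need a bit more care than your sketch indicates. First, the passage from \eqref{eq: complex FKPP-equation} to \eqref{eq: complex FKPP-equation with FK-transform} is not a literal application of Lemma \ref{eq: complex FK}: in that lemma the unknown appears only through the potential term and the inhomogeneity is a terminal datum, whereas your Volterra equation carries the extra source $\psi_0(\cdot,-U_\cdot f)$. You must either redo the lemma's computation (the identity \eqref{eq: crucial for Feynman-Kac} plus Fubini shows the Feynman--Kac expression solves your equation) and then use the same Gronwall uniqueness, or equivalently run the variation-of-constants argument by hand; and to set this up you need the version of \eqref{eq: complex FKPP-equation} started at time $r$, which follows from time-homogeneity of $\xi$ as in the paper. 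Second, in the truncation step for \eqref{eq: complex FKPP-equation with FK-transform} the domination of $\int_0^t P^{\rho_1}_{t-s}\psi_0(\cdot,-U_sf_n)(x)\,ds$ uses $|\psi_0(y,v)|\le|\psi(y,v)|+\|\rho_1\|_\infty|v|$ together with \eqref{eq: upper bound of psi(v)} and a computation parallel to \eqref{eq: domination of psi(v)}; this is where $f\in L_2(\xi)$ (not just $L_1(\xi)$) is used, and it is worth writing out.
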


To prove this, we will need the generalized spine decomposition theorem from \cite{RenSongSun2017Spine}.
Let $f\in \mathcal B_b(E,\mathbb R_+)$, $T >0$ and $x\in E$.
Suppose that $\mathbf P_{\delta_x}[X_T(f)] = \mathbb N_x[ W_T(f)] = P^{\rho_1}_T f(x) \in (0,\infty)$, then we can define the following probability transforms:
\begin{align}
  d\mathbf P_{\delta_x}^{ X_T(f)}
  := \frac{X_T(f)}{P_T^{\rho_1} f(x)} d\mathbf P_{\delta_x};
  \quad d\mathbb N_x^{W_T(f)}
  :=  \frac{W_T(f)}{P_T^{\rho_1} f(x)} d\mathbb N_x.
\end{align}
Following the definition in \cite{RenSongSun2017Spine}, we say that $\{\xi, \mathbf n;\mathbf Q_{x}^{(f,T)}\}$ is a spine representation of $\mathbb N_x^{\langle W_T, f\rangle}$ if
\begin{itemize}
\item
  the spine process $\{(\xi_t)_{0\leq t\leq T}; \mathbf Q^{(f,T)}_x\}$ is a copy of $\{(\xi_t)_{0\leq t\leq T}; \Pi^{(f,T)}_{x}\}$, where
  \begin{align}
    d\Pi_x^{(f,T)}
    := \frac{f(\xi_T)e^{\int_0^T \rho_1(\xi_s)ds}}{P^{\rho_1}_T f(x)} d \Pi_x;
  \end{align}
\item
  given $\{(\xi_t)_{0\leq t\leq T}; \mathbf Q^{(f,T)}_x\}$, the immigration measure
$
  \{\mathbf n(\xi,ds,dw); \mathbf Q^{(f,T)}_x[\cdot |(\xi_t)_{0\leq t\leq T}]\}
$
is a Poisson random measure on $[0,T] \times \mathbb W$ with intensity
\begin{align}
  \label{eq: conditional intensity}
  \mathbf m(\xi,ds,dw)
  := 2 \rho_2(\xi_s) ds \cdot \mathbb N_{\xi_s}(dw) + ds \cdot \int_{y\in (0,\infty)} y \mathbf P_{y\delta_{\xi_s}}(X\in dw) \pi(\xi_s,dy);
\end{align}
\item
  $\{(Y_t)_{0\leq t\leq T}; \mathbf Q^{(f,T)}_x\}$ is an $\mathcal M(E)$-valued process defined by
  \begin{align}
    Y_t
    := \int_{(0,t] \times \mathbb W} w_{t-s} \mathbf n(\xi,ds,dw),
    \quad 0 \leq t\leq T.
  \end{align}
\end{itemize}
According to the spine decomposition theorem in \cite{RenSongSun2017Spine}, we have that
\begin{align}
  \label{eq: Spine decomposition 1}
  \{(X_s)_{s \geq 0};\mathbf P_{\delta_x}^{X_T(f)}\}
  \overset{f.d.d.}{=} \{(X_s + W_s)_{s \geq 0};\mathbf P_{\delta_x} \otimes \mathbb N_x^{W_T(f)} \},
\end{align}
\begin{align}
  \label{eq: Spine decomposition 2}
  \{(W_s)_{0\leq s\leq T};\mathbb N_x^{W_T(f)}\}
  \overset{f.d.d.}{=} \{(Y_s)_{s \geq 0};\mathbf Q_x^{(f,T)}\}.
\end{align}

\begin{proof}[Proof of Proposition \ref{prop: complex FKPP-equation}]
  Assume that $f\in \mathcal B_b(E)$.
  Fix $t>0, r\in [0,t), x\in E$ and a strictly positive $g\in \mathcal B_b(E)$.
  Denote by $\{\xi, \mathbf n; \mathbf Q_x^{(g,t)}\}$ the spine representation of $\mathbb N_x^{W_t(g)}$.
  Conditioned on $\{\xi; \mathbf Q_x^{(g,t)}\}$, denote by $\mathbf m(\xi, ds,dw)$ the conditional intensity of $\mathbf n$ in \eqref{eq: conditional intensity}.
  Denote by $\Pi_{r,x}$ the probability of Hunt process $\{\xi; \Pi\}$ initiated at time $r$ and position $x$.
  From Lemma \ref{lem: estimate of exponential remaining}, we have $\mathbf Q^{(g,t)}_{x}$-almost surely
  \begin{align}
    & \int_{[0,t]\times \mathbb W}|e^{i w_{t-s}(f)} - 1| \mathbf m(\xi, ds,dw)
      \leq \int_{[0,t]\times \mathbb W} (| w_{t-s}(f)| \wedge 2 ) \mathbf m(\xi, ds,dw) \\
    & \leq \int_0^t \Big(2\rho_2(\xi_s)\mathbb N_{\xi_s} ( W_{t-s}(|f|) )  + \int_{(0,1]} y \mathbf P_{y \delta_{\xi_s}}[X_{t-s}(|f|)] \pi(\xi_s,dy)
   + 2\int_{(1,\infty)}y\pi(\xi_s,dy)\Big) ds
    \\ & \leq \int_0^t (P_{t-s}^{\rho_1} |f|)(\xi_s)\Big(2\rho_2(\xi_s)  + \int_{(0,1]} y^2 \pi(\xi_s,dy)\Big) ds + 2t \sup_{x\in E}\int_{(1,\infty)}y\pi(x,dy)
    \\ & \leq \Big(2\|\rho_2\|_\infty +\sup_{x\in E}\int_{(0,1]} y^2 \pi(x,dy)\Big) t e^{t\|\rho_1\|_\infty}\|f\|_\infty + 2t \sup_{x\in E}\int_{(1,\infty)}y\pi(x,dy)
         < \infty.
  \end{align}
  Using this, Fubini's theorem, \eqref{eq: N and characteristic exponent} and \eqref{eq: -v has positive real part} we have $\mathbf Q^{(g,t)}_{x}$-almost surely,
  \begin{align}
    & \int_{[0,t]\times \mathbb N}(e^{i  w_{t-s}(f)} - 1) \mathbf m(\xi, ds,dw)
    \\ & =\int_0^t \Big(2\rho_2(\xi_s)\mathbb N_{\xi_s}(e^{i W_{t-s}(f)} - 1)  + \int_{(0,\infty)} y \mathbf P_{y \delta_{\xi_s}}[e^{i X_{t-s}(f)} - 1] \pi(\xi_s,dy)\Big) ds
    \\ & =\int_0^t \Big( 2\rho_2(\xi_s) U_{t-s} f(\xi_s) + \int_{(0,\infty)} y (e^{y U_{t-s}f(\xi_s)} - 1) \pi(\xi_s,dy) \Big) ds
    \\ & = -\int_0^t \psi'_0 (\xi_s, -U_{t-s}f )ds.
  \end{align}
  Therefore, according to \eqref{eq: Spine decomposition 2}, Campbell's formula and above, we have that
  \begin{align}
    \label{eq: N to Pi}
    & \mathbb N_x^{ W_t( g)}[e^{i W_t(f)}]
      = \mathbf Q_x^{(g,t)} \Big[\exp\Big\{\int_{[0,t]\times \mathbb N}(e^{i w_{t-s}(f)} - 1) \mathbf m(\xi, ds,dw)\Big\}\Big]
    \\ & = \Pi_x^{(g,t)} [e^{-\int_0^t \psi'_0(\xi_s, -U_{t-s}f)ds}]
    = \frac{1}{P_t^{\rho_1} g (x)} \Pi_x[ g(\xi_t) e^{-\int_0^t \psi'(\xi_s, -U_{t-s}f)ds} ].
  \end{align}
  Let $\epsilon >0$.
  Define $f^+ = (f \vee 0) + \epsilon$ and $f^- = (-f) \vee 0 + \epsilon$, then $f^\pm$ are strictly positive and $f = f^+ - f^-$.
  According to \eqref{eq: Spine decomposition 1}, we have that
  \begin{align}
    \frac{\mathbf P_{\delta_x}[X_t(f^{\pm}) e^{i X_t(f)}]}{\mathbf P_{\delta_x}[X_t(f^{\pm}) ]}
    = \mathbf P_{\delta_x}[e^{i X_t(f)}] \mathbb N_x^{ W_t(f^{\pm})}[e^{i X_t(f)}].
  \end{align}
  Using \eqref{eq: N to Pi} and the above, we have
  \begin{align}
    \frac{\mathbf P_{\delta_x}[X_t(f) e^{i X_t(f)}] }{\mathbf P_{\delta_x}[e^{i X_t(f)}]}
    & = \mathbf P_{\delta_x}[X_t(f^+)] \mathbb N_x^{W_t(f^+)} [e^{i X_t(f)}] - \mathbf P_{\delta_x}[X_t(f^-)]\mathbb N_x^{W_t(f^-)}[e^{i X_t(f)}]
    \\ & = \Pi_x[ f(\xi_t) e^{- \int_0^t \psi'(\xi_s, -U_{t-s}f) ds}  ].
  \end{align}
  Therefore, we have
  \begin{align}
    \frac{\partial}{\partial \theta} {U_t(\theta f)(x)}
    = \frac{\mathbf P_{\delta_x}[i X_t(f) e^{i X_t(f)}] }{\mathbf P_{\delta_x}[e^{i X_t(f)}]}
    = \Pi_x[ if(\xi_t) e^{ - \int_0^t \psi'(\xi_s, -U_{t-s}(\theta f)) ds} ].
  \end{align}
  Since $\{(\xi_{r+t})_{t \geq 0}; \Pi_{r,x}\} \overset{d}{=} \{(\xi_{t})_{t\geq 0}; \Pi_{x}\} $, we have
  \begin{align}
    & \frac{\partial}{\partial \theta} U_{t-r}(\theta f)( x)
      = \Pi_x[ i f(\xi_{t-r}) e^{-\int_0^{t-r} \psi'(\xi_s, -U_{t-r-s}(\theta f)) ds} ] \\
    & = \Pi_{r,x}[i f(\xi_t)e^{-\int_0^{t-r} \psi'(\xi_{r+s}, -U_{t-r-s}(\theta f)) ds} ]
      = \Pi_{r,x}[if(\xi_t)e^{-\int_r^t \psi'(\xi_{s}, -U_{t-s}(\theta f)) ds} ].
  \end{align}

  From \eqref{eq: upper bound of psi'(v)}, we know that for each $\theta\in \mathbb R$, $(t,x) \mapsto |\psi'(x,-U_tf(x))|$ is locally bounded (i.e. bounded on $[0,T]\times E$ for each $T \geq 0$).
  Therefore, we can apply Lemma \ref{eq: complex FK} and get that
  \[
    \frac{\partial}{\partial \theta} U_{t-r}(\theta f)(x) + \Pi_{r,x} \Big[\int_r^t \psi' (\xi_s,- U_{t-s}(\theta f) )\frac{\partial}{\partial \theta} U_{t-s}(\theta f)(\xi_s)~ds\Big]
    = \Pi_{r,x} [i f(\xi_t)]
  \]
  and
  \begin{align}
    & \frac{\partial}{\partial \theta} U_{t-r}(\theta f)(x) + \Pi_{r,x} \Big[\int_r^t e^{\int_r^s \rho_1(\xi_u)du}\psi_0' (\xi_s,- U_{t-s}(\theta f) )\frac{\partial}{\partial \theta} U_{t-s}(\theta f)(\xi_s)~ds\Big]\\
    & = \Pi_{r,x} [i e^{\int_r^t \rho_1(\xi_s)ds}f(\xi_t)].
  \end{align}
  Integrating the two displays above with respect to $\theta$  on [0,1], using
  Fubini's theorem, \eqref{eq: upper bounded for derivative of v(theta)}, \eqref{eq: path integration representation of psi(v)} and \eqref{eq: upper bound of psi'(v)}, we get
  \begin{align}
    U_{t-r}f(x) - \Pi_{r,x} \Big[\int_r^t \psi (\xi_s,-U_{t-s}f ) ~ds\Big]
    = i \theta \Pi_{r,x} [f(\xi_t)]
  \end{align}
  and
  \begin{align}
    U_{t-r}f(x) - \Pi_{r,x} \Big[\int_r^t e^{\int_r^s \rho_1(\xi_u)du} \psi_0 (\xi_s,- U_{t-s}f ) ~ds\Big]
    = i \Pi_{r,x} [e^{\int_r^t\rho_1(\xi_u)du}f(\xi_t)].
  \end{align}
  Taking $r = 0$, we get that \eqref{eq: complex FKPP-equation} and \eqref{eq: complex FKPP-equation with FK-transform} are true if $f\in \mathcal B_b(E)$.

  The rest of the proof is to evaluate \eqref{eq: complex FKPP-equation} and \eqref{eq: complex FKPP-equation with FK-transform} for all $f\in L_2(\xi)$. We only do this for \eqref{eq: complex FKPP-equation} since the argument for \eqref{eq: complex FKPP-equation with FK-transform} is similar.
  Let $n \in \mathbb N$.
  Writing $f_n := (f^+ \wedge n) - (f^- \wedge n)$, then $f_n \xrightarrow[n\to \infty]{} f$ pointwise.
  From what we have proved, we have
  \begin{align}
    \label{eq: complex FKPP-equation for fn}
    U_tf_n(x) - \Pi_{x} \Big[\int_0^t \psi (\xi_s, - U_{t-s}f_n ) ~ds\Big]
    = i \Pi_{x} [f_n(\xi_t)].
  \end{align}
  Note that
  (i) $\Pi_{x}[f_n(\xi_t)] \xrightarrow[n\to \infty]{} \Pi_{x}[f(\xi_t)]$;
  (ii) by \eqref{eq: N and characteristic exponent}, the dominated convergence theorem and the fact that
  \[
     |e^{i W_t(f_n)} - 1| \leq W_t(|f|);
    \quad \mathbb N_x[W_t(|f|)] = (P_t^{\rho_1} |f|)(x) < \infty,
 \]
  we have $U_tf_n(x) \xrightarrow[n\to \infty]{} U_tf(x)$, and (iii) by the dominated convergence theorem, \eqref{eq: domination of psi(v)} and the fact (see \eqref{eq: upper bound of psi(v)}) that
  \[
    |\psi(\xi_s,- U_{t-s}f_n) |
    \leq C_1 P_{t-s}^{\rho_1}|f|(\xi_s)+C_2 P_{t-s}^{\rho_1}|f|(\xi_s)^2,
  \]
  we get that $\Pi_{x} [\int_0^t \psi(\xi_s,- U_{t-s}f_n)ds] \xrightarrow[n\to \infty]{} \Pi_{x} [\int_0^t \psi(\xi_s,- U_{t-s}f)ds]$.
  Using these, letting $n \to \infty$ in \eqref{eq: complex FKPP-equation for fn}, we get the desired result.
\end{proof}

\subsection*{Acknowledgment}
We thank Zenghu Li and Rui Zhang for helpful conversations.
We also thank the referee for very helpful comments.

\end{document}